\documentclass[11pt]{article}
\usepackage{fullpage}
\usepackage{math-nate}
\usepackage{multirow}
\usepackage[numbers]{natbib}
\usepackage[all]{xy}
\usepackage{hyperref}
\usepackage{url}

\numberwithin{equation}{section}

\newcommand{\nm}{\norm{\cdot}}
\newcommand{\inner}[2]{\langle #1 , #2 \rangle}
\newcommand{\tr}{\operatorname{tr}}
\newcommand{\dom}{\operatorname{dom}}
\newcommand{\grad}{\nabla}
\newcommand{\mathquote}[1]{\text{`` $ #1 $ ''}}

\begin{document}
\title{Analysis and Probability on Infinite-Dimensional Spaces}
\author{Nathaniel Eldredge}
\maketitle

\section*{Preface}

I wrote these lecture notes for a graduate topics course I taught at
Cornell University in Fall 2011 (Math 7770).  The ostensible primary
goal of the course was for the students to learn some of the
fundamental results and techniques in the study of probability on
infinite-dimensional spaces, particularly Gaussian measures on Banach
spaces (also known as abstract Wiener spaces).  As others who have
taught such courses will understand, a nontrivial secondary goal of
the course was for the instructor (i.e., me) to do the same.  These
notes only scratch the very surface of the subject, but I tried to use them
to work through some of the basics and see how they fit together into
a bigger picture.  In addition to theorems and proofs, I've left in
some more informal discussions that attempt to develop intuition.

Most of the material here comes from the books
\cite{kuo-gaussian-book, nualart, bogachev-gaussian-book}, and the
lecture notes prepared by Bruce Driver for the 2010 Cornell
Probability Summer School \cite{driver-cpss-notes,
  driver-probability}.  If you are looking to learn more, these are
great places to look.\footnote{In reading these notes in conjunction
  with \cite{nualart}, you should identify Nualart's abstract
  probability space $(\Omega, \mathcal{F}, P)$ with our Banach space
  $(W, \mathcal{B}, \mu)$.  His ``Gaussian process'' $h \mapsto W(h)$
  should be viewed as corresponding to our map $T$ defined in Section
  \ref{sec-cameron-martin}; his indexing Hilbert space $H$ may be
  identified with the Cameron--Martin space, and his $W(h)$ is the
  random variable, defined on the Banach space, that we have denoted
  by $Th$ or $\langle h, \cdot \rangle$. There's a general principle
  in this area that all the ``action'' takes place on the
  Cameron--Martin space, so one doesn't really lose much by dropping
  the Banach space structure on the space $W$ and replacing it with an
  generic $\Omega$ (and moreover generality is gained). Nonetheless, I
  found it helpful in building intuition to work on a concrete space
  $W$; this also gives one the opportunity to explore how the
  topologies of $W$ and $H$ interact.}

Any text marked \textbf{Question N} is something that I found myself
wondering while writing this, but didn't ever resolve.  I'm not
proposing them as open problems; the answers could be
well-known, just not by me.  If you know the answer to any of them,
I'd be happy to hear about it!   There are also
still a few places where proofs are rough or have some gaps that I
never got around to filling in.

On the other hand, something marked
\textbf{Exercise N} is really meant as an exercise.  

I would like to take this opportunity to thank the graduate students
who attended the course.  These notes were much improved by their
questions and contributions.  I'd also like to thank several
colleagues who sat in on the course or otherwise contributed to these
notes, particularly Clinton Conley, Bruce Driver, Leonard Gross, Ambar
Sengupta, and Benjamin Steinhurst.  Obviously, the many deficiencies
in these notes are my responsibility and not theirs. 

Questions and comments on these notes are most welcome.  I am now at
the University of Northern Colorado, and you can email me at
\verb|neldredge@unco.edu|.

\section{Introduction}

\subsection{Why analysis and probability on $\mathbb{R}^n$ is nice}

Classically, real analysis is usually based on the study of real-valued
functions on finite-dimensional Euclidean space $\R^n$, and operations
on those functions involving limits, differentiation, and
integration.  Why is $\R^n$ such a nice space for this theory?
\begin{itemize}
\item $\R^n$ is a nice topological space, so limits behave well.
  Specifically, it is a complete separable metric space, and it's
  locally compact.
  
\item $\R^n$  has a nice algebraic structure: it's a vector space, so
  translation and scaling make sense.  This is where differentiation
  comes in: the derivative of a function just measures how it changes
  under infinitesimal translation.

\item $\R^n$ has a natural measure space structure; namely, Lebesgue
  measure $m$ on the Borel $\sigma$-algebra.  The most important property
  of Lebesgue measure is that it is invariant under translation.  This
  leads to nice interactions between differentiation and integration,
  such as integration by parts, and it gives nice functional-analytic
  properties to differentiation operators: for instance, the Laplacian
  $\Delta$ is a self-adjoint operator on the Hilbert space $L^2(\R^n, m)$.
\end{itemize}

Of course, a lot of analysis only involves local properties, and so it
can be done on spaces that are locally like $\R^n$: e.g. manifolds.
Let's set this idea aside for now.

\subsection{Why infinite-dimensional spaces might be less nice}

The fundamental idea in this course will be: how can we do analysis
when we replace $\R^n$ by an infinite dimensional space?  First we
should ask: what sort of space should we use?  Separable Banach spaces seem to
be nice.  They have a nice topology (complete separable metric spaces)
and are vector spaces.  But what's missing is Lebesgue measure.
Specifically:

\begin{theorem}
  ``There is no infinite-dimensional Lebesgue measure.''  Let $W$ be
  an infinite-dimensional separable Banach space.  There does not exist a
  translation-invariant Borel measure on $W$ which assigns
  positive finite measure to open balls.  In fact, any
  translation-invariant Borel measure $m$ on $W$ is either the zero measure or
  assigns infinite measure to every open set.
\end{theorem}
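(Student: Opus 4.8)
The plan is to prove the final ``In fact'' dichotomy directly; the first assertion then follows for free, since a measure that is identically zero does not assign \emph{positive} measure to balls, while a measure assigning \emph{infinite} measure to every nonempty open set does not assign \emph{finite} measure to balls, so in either case it cannot assign positive finite measure to open balls. So let $m$ be a translation-invariant Borel measure on $W$; I want to show that if $m$ is not the zero measure, then $m(U) = \infty$ for every nonempty open set $U$.

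The crux is a packing estimate that uses infinite-dimensionality through the failure of local compactness, and I would extract it from Riesz's lemma. Since $W$ is infinite-dimensional, every finite-dimensional subspace is proper and closed, so one can build inductively a sequence $(u_n)_{n \ge 1}$ in $W$ with $\|u_n\| = 1$ and $\|u_n - u_m\| \ge \tfrac12$ for $n \ne m$: having chosen $u_1, \dots, u_{n-1}$, apply Riesz's lemma to their span to obtain a unit vector $u_n$ with $\mathrm{dist}(u_n, \mathrm{span}\{u_1,\dots,u_{n-1}\}) \ge \tfrac12$. Rescaling, for any $r > 0$ the points $v_n := \tfrac r3 u_n$ satisfy $\|v_n\| < r$ and $\|v_n - v_m\| \ge \tfrac r6$, so the open balls $B(v_n, \tfrac{r}{12})$ are pairwise disjoint and all contained in $B(0,r)$. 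Hence $B(0,r)$ contains infinitely many pairwise disjoint translates of the fixed ball $B(0,\tfrac{r}{12})$.

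Now the measure theory is routine. Translation is a homeomorphism, so translates of Borel sets are Borel and $m(B(x,\rho)) = m(B(0,\rho))$ for all $x \in W$ and $\rho > 0$. Suppose $m(B(0,r_0)) < \infty$ for some $r_0 > 0$. Countable additivity applied to the disjoint balls above gives $\infty > m(B(0,r_0)) \ge \sum_{n=1}^\infty m(B(0,\tfrac{r_0}{12}))$, forcing $m(B(0,\tfrac{r_0}{12})) = 0$. Since $W$ is separable it is a countable union of open balls of radius $\tfrac{r_0}{12}$ (centered at a dense sequence), each of $m$-measure $0$, so $m(W) = 0$ by countable subadditivity, i.e.\ $m \equiv 0$. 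Taking the contrapositive: if $m \not\equiv 0$ then $m(B(0,r)) = \infty$ for every $r > 0$, and since every nonempty open set contains an open ball, $m$ assigns infinite measure to every nonempty open set. The only genuinely infinite-dimensional ingredient is the packing step of the previous paragraph --- in $\R^n$ it fails, consistent with the existence of Lebesgue measure --- so that is where I expect the real content to lie; the rest is bookkeeping with translation invariance, separability, and countable (sub)additivity.
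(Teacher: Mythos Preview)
Your proof is correct and follows essentially the same approach as the paper's: both use Riesz's lemma to pack infinitely many disjoint translates of a small ball inside a larger one, then invoke translation invariance together with separability to force the dichotomy. The only differences are cosmetic (your specific constants, and your contrapositive phrasing of the case split).
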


\begin{proof}
  Essentially, the problem is that inside any ball $B(x,r)$, one can
  find infinitely many disjoint balls $B(x_i, s)$ of some fixed smaller
  radius $s$.  By translation invariance, all the $B(x_i, s)$ have the
  same measure.  If that measure is positive, then $m(B(x,r)) =
  \infty$.  If that measure is zero, then we observe that $W$ can be
  covered by countably many balls of radius $s$ (by separability) and
  so $m$ is the zero measure.

  The first sentence is essentially Riesz's lemma: given any proper
  closed subspace $E$, one can find a point $x$ with $||x|| \le 1$ and
  $d(x,E) > 1/2$.  (Start by picking any $y \notin E$, so that $d(y,E)
  > 0$; then by
  definition there is an $z \in E$ with $d(y,z) < 2d(y,E)$.  Now look
  at $y-z$ and rescale as needed.)  Now let's look at $B(0,2)$ for
  concreteness.  Construct $x_1, x_2, \dots$ inductively by letting
  $E_n = \spanop\{x_1, \dots, x_n\}$ (which is closed) and choosing
  $x_{n+1}$ as in Riesz's lemma with $||x_{n+1}|| \le 1$ and
  $d(x_{n+1}, E_n) > 1/2$.  In particular, $d(x_{n+1}, x_i) > 1/2$ for
  $i \le n$.  Since our space is infinite dimensional, the
  finite-dimensional subspaces $E_n$ are always proper and the
  induction can continue, producing a sequence $\{x_i\}$ with $d(x_i,
  x_j) > 1/2$ for $i \ne j$, and thus the balls $B(x_i, 1/4)$ are
  pairwise disjoint.
\end{proof}

\begin{exercise}
  Prove the above theorem for $W$ an infinite-dimensional Hausdorff
  topological vector space.  (Do we need separability?)
\end{exercise}

A more intuitive idea why infinite-dimensional Lebesgue measure can't
exist comes from considering the effect of scaling.  In $\R^n$, the
measure of $B(0,2)$ is $2^n$ times larger than $B(0,1)$.  When $n =
\infty$ this suggests that one cannot get sensible numbers for the
measures of balls.

There are nontrivial translation-invariant Borel measures on
infinite-dimensional spaces: for instance, counting measure.  But
these measures are useless for analysis since they cannot say anything
helpful about open sets.

So we are going to have to give up on translation invariance, at least
for now.  Later, as it turns out, we will study some measures that
recover a little bit of this: they are \emph{quasi}-invariant under
\emph{some} translations.  This will be explained in due course.

\subsection{Probability measures in infinite dimensions}

If we just wanted to think about Borel measures on
infinite-dimensional topological vector spaces, we actually have lots
of examples from probability, that we deal with every day.

\begin{example}\label{product-example}
  Let $(\Omega, \mathcal{F}, \mathbb{P})$ be a probability space and $X_1, X_2,
  \dots$ a sequence of random variables.  Consider the infinite
  product space $\R^\infty$, thought of as the space of all sequences
  $\{x(i)\}_{i=1}^\infty$ of real numbers.  This is a topological
  vector space when equipped with its product topology.  We can equip
  $\R^\infty$ with its Borel $\sigma$-algebra, which is the same as
  the product Borel $\sigma$-algebra (verify).  Then the map from
  $\Omega$ to $\R^\infty$ which sends $\omega$ to the sequence $x(i) =
  X_i(\omega)$ is measurable.  The pushforward of
  $\mathbb{P}$ under this map gives a Borel probability measure $\mu$ on
  $\R^\infty$.

  The Kolmogorov extension theorem guarantees lots of choices for the
  joint distribution of the $X_i$, and hence lots of probability
  measures $\mu$.  Perhaps the simplest interesting case is when the
  $X_i$ are iid with distribution $\nu$, in which case $\mu$ is the
  infinite product measure $\mu = \prod_{i=1}^\infty \nu$.  Note that
  in general one can only take the infinite product of
  \emph{probability} measures (essentially because the only number $a$
  with $0 < \prod_{i=1}^\infty a < \infty$ is $a=1$).
\end{example}

\begin{example}
  Let $(\Omega, \mathcal{F}, \mathbb{P})$ be a probability space, and
  $\{X_t : 0 \le t \le 1\}$ be any stochastic process.  We could play
  the same game as before, getting a probability measure on
  $\R^{[0,1]}$ (with its product $\sigma$-algebra).  This case is not
  as pleasant because nothing is countable.  In particular, the Borel
  $\sigma$-algebra generated by the product topology is not the same
  as the product $\sigma$-algebra (exercise: verify this, perhaps by
  showing that the latter does not contain singleton sets.)  Also, the
  product topology on $\R^{[0,1]}$ is rather nasty; for example it is
  not first countable.  (In contrast, $\R^\infty$ with its product
  topology is actually a Polish space.)  So we will avoid examples like
  this one.
\end{example}

\begin{example}
  As before, but now assume $\{X_t : 0 \le t \le 1\}$ is a
  \emph{continuous} stochastic process.  We can then map $\Omega$ into
  the Banach space $C([0,1])$ in the natural way, by sending $\omega$
  to the continuous function $X_\cdot(\omega)$.  One can check that
  this map is measurable when $C([0,1])$ is equipped with its Borel
  $\sigma$-algebra.  (Hint: $||x|| \le 1$ if and only if $|x(t)| \le
  1$ for all $t$ in a countable dense subset of $[0,1]$.)  So by
  pushing forward $\mathbb{P}$ we get a Borel probability measure on
  $C([0,1])$.  For example, if $X_t$ is Brownian motion, this is the classical
  Wiener measure.
\end{example}

So probability measures seem more promising.  We are going to
concentrate on Gaussian probability measures.  Let's start by looking
at them in finite dimensions.

\subsection{Gaussian measures in finite dimensions}

In one dimension everyone knows what Gaussian means.  We are going to
require our measures / random variables to be centered (mean zero) to
have fewer letters floating around.  However we are going to include
the degenerate case of zero variance.

\begin{definition}
  A Borel probability measure $\mu$ on $\R$ is \textbf{Gaussian} with variance $\sigma^2$ iff
  \begin{equation*}
    \mu(B) = \int_B \frac{1}{\sqrt{2 \pi} \sigma} e^{-x^2/2\sigma^2}\,dx
  \end{equation*}
  for all Borel sets $B \subset \R$.  We also want to allow the case
  $\sigma = 0$, which corresponds to $\mu = \delta_0$ being a Dirac
  mass at 0.

  We could also specify $\mu$ in terms of its Fourier
  transform (or characteristic function).  The above condition is
  equivalent to having
  \begin{equation*}
    \int_\R e^{i \lambda x} \mu(dx) = e^{-\sigma^2 \lambda^2/2}
  \end{equation*}
  for all $\lambda \in \R$.  (Note $\sigma = 0$ is naturally included
  in this formulation.)

  A random variable $X$ on some probability space $(\Omega,
  \mathcal{F}, \mathbb{P})$ is Gaussian with variance $\sigma^2$ if its
  distribution measure is Gaussian, i.e.
  \begin{align*}
    \mathbb{P}(X \in B) = \int_B \frac{1}{\sqrt{2 \pi} \sigma} e^{-x^2/2\sigma^2}\,dx.
  \end{align*}
  for all Borel sets $B$.  For $\sigma = 0$ we have the constant
  random variable $X = 0$.  Equivalently
  \begin{equation*}
    \mathbb{E}[e^{i \lambda X}] = e^{- \sigma^2 \lambda^2/2}
  \end{equation*}
  for all $\lambda \in \R$.
\end{definition}

Let's make a trivial observation: $\mu$ is not translation invariant.
However, translation doesn't mess it up completely.

\begin{notation}
  Let $\mu$ be a measure on a vector space $W$.  For $y \in W$, we
  denote by $\mu_y$ the translated measure defined by $\mu_y(A) =
  \mu(A-y)$.  In other words, $\int_W f(x) \mu_y(dx) = \int_W f(x+y)$.
\end{notation}

\begin{exercise}
  Check that I didn't screw up the signs in the previous paragraph.
\end{exercise}

\begin{definition}
  A measure $\mu$ on a vector space $W$ is said to be
  \textbf{quasi-invariant} under translation by $y \in W$ if the
  measures $\mu, \mu_y$ are mutually absolutely continuous (or
  \textbf{equivalent}); that is, if $\mu(A)=0 \Leftrightarrow \mu_y(A)
  = 0$ for measurable sets $A \subset W$.
\end{definition}

Intuitively, quasi-invariance means that translation can change the
measure of a set, but it doesn't change whether or not the measure is
zero.

One way I like to think about equivalent measures is with the
following baby example.  Suppose I have two dice which look identical
on the surface, but one of them is fair, and the other produces
numbers according to the distribution $(0.1,0.1,0.1,0.1,0.1,0.5)$
(i.e. it comes up $6$ half the time).  (Note that they induce
equivalent measures on $\{1,2,3,4,5,6\}$: in both cases the only set
of measure zero is the empty set.)  I pick one of the dice and ask you
to determine which one it is.  If you roll a lot of 6s, you will have
a strong suspicion that it's the unfair die, but you can't absolutely
rule out the possibility that it's the fair die and you are just
unlucky.

On the other hand, suppose one of my dice always comes up even, and
the other always comes up odd.  In this case the induced measures are
mutually singular: there is a set (namely $\{1,3,5\}$) to which one
gives measure 0 and the other gives measure 1.  If I give you one of
these dice, then all you have to do is roll it once and see whether
the number is even or odd, and you can be (almost) sure which die you
have.

For Gaussian measures on $\R$, note that if $\sigma \ne 0$, then $\mu$
is quasi-invariant under translation by any $y \in \R$.  This is a
trivial fact: both $\mu$ and $\mu_y$ have positive densities with
respect to Lebesgue measure $m$, so $\mu(A) = 0$ iff $\mu_y(A) = 0$
iff $m(A) = 0$.  We'll also note that, as absolutely continuous
measures, they have a Radon-Nikodym derivative, which we can compute
just by dividing the densities:
\begin{align*}
  \frac{d \mu_y}{d \mu}(x) &= \frac{\frac{1}{\sqrt{2 \pi} \sigma} e^{-(x-y)^2/2\sigma}}
{\frac{1}{\sqrt{2 \pi} \sigma} e^{-x^2/2\sigma^2}} \\
&= e^{-\frac{y^2}{2\sigma^2} + \frac{xy}{\sigma^2}}.
\end{align*}
Just pay attention to the form of this expression, as we will see it
again later.

On the other hand, in the degenerate case $\sigma = 0$, then $\mu =
\delta_0$ and $\mu_y = \delta_y$ are mutually singular.

Now let's look at the $n$-dimensional case.

\begin{definition}
  An $n$-dimensional random vector $\mathbf{X} = (X_1, \dots, X_n)$ is
  \textbf{Gaussian} if and only if $\mathbf{\lambda} \cdot \mathbf{X} := \sum
  \lambda_i X_i$ is a Gaussian random variable for all
  $\mathbf{\lambda} = (\lambda_1, \dots, \lambda_n) \in \R^n$.
\end{definition}

Or in terms of measures:

\begin{definition}
  Let $\mu$ be a Borel probability measure on $\R^n$.  For each
  $\mathbf{\lambda} \in \R^n$, we can think of the map $\R^n \ni \mathbf{x}
  \mapsto \mathbf{\lambda} \cdot \mathbf{x} \in \R$ as a random
  variable on the probability space $(\R^n, \mathcal{B}_{\R^n},
  \mu)$.  $\mu$ is \textbf{Gaussian} if and only if this random
  variable is Gaussian for each $\mathbf{\lambda}$.
\end{definition}

Of course, we know that the distribution of $\mathbf{X}$ is uniquely
determined by its $n \times n$ covariance matrix $\Sigma$, where
$\Sigma_{ij} = \Cov(X_i, X_j)$.  Note that $\Sigma$ is clearly
symmetric and positive semidefinite.  Furthermore, any symmetric,
positive semidefinite matrix $\Sigma$ can arise as a covariance
matrix: let $\mathbf{Z} = (Z_1, \dots, Z_n)$ where $Z_i$ are iid
Gaussian with variance $1$, and set $\mathbf{X} = \Sigma^{1/2} \mathbf{Z}$. 

A consequence of this is that if $(X_1, \dots, X_n)$ has a joint
Gaussian distribution, then the $X_i$ are independent if and only if
they are uncorrelated (i.e. $\Cov(X_i, X_j) = 0$ for $i \ne j$,
i.e. $\Sigma$ is diagonal).  Note that this \emph{fails} if all we
know is that each $X_i$ has a Gaussian distribution.

\begin{proposition}
  $\mathbf{X}$ is Gaussian if and only if it has characteristic
  function
  \begin{equation*}
    \mathbb{E}[e^{i \mathbf{\lambda} \cdot \mathbf{X}}] = e^{- \frac{1}{2}    \mathbf{\lambda} \cdot \Sigma \mathbf{\lambda}}
  \end{equation*}
  where $\Sigma$ is the covariance matrix of $\mathbf{X}$.
\end{proposition}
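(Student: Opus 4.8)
The plan is to reduce everything to the one-dimensional case, where the equivalence between Gaussianity and the form of the characteristic function has already been recorded in the definition. The only tool needed is the trivial identity, valid for every real $t$ and every $\mathbf{\lambda}\in\R^n$,
\begin{equation*}
  \mathbb{E}\bigl[e^{it(\mathbf{\lambda}\cdot\mathbf{X})}\bigr] = \mathbb{E}\bigl[e^{i(t\mathbf{\lambda})\cdot\mathbf{X}}\bigr],
\end{equation*}
which identifies the characteristic function of the scalar random variable $Y_{\mathbf{\lambda}} := \mathbf{\lambda}\cdot\mathbf{X}$, evaluated at $t$, with the joint characteristic function of $\mathbf{X}$ evaluated at the vector $t\mathbf{\lambda}$.

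For the forward direction, suppose $\mathbf{X}$ is Gaussian. I would first note that $\Sigma$ is well-defined: each coordinate $X_i$ is a one-dimensional Gaussian, hence has finite variance, so all covariances $\Cov(X_i,X_j)$ exist. Now fix $\mathbf{\lambda}$. By hypothesis $Y_{\mathbf{\lambda}}$ is a (centered) Gaussian random variable, and expanding the variance of a linear combination gives $\operatorname{Var}(Y_{\mathbf{\lambda}}) = \sum_{i,j}\lambda_i\lambda_j\Cov(X_i,X_j) = \mathbf{\lambda}\cdot\Sigma\mathbf{\lambda}$. Applying the one-dimensional characteristic-function formula from the definition to $Y_{\mathbf{\lambda}}$ at $t=1$ then yields $\mathbb{E}[e^{i\mathbf{\lambda}\cdot\mathbf{X}}] = e^{-\frac12\mathbf{\lambda}\cdot\Sigma\mathbf{\lambda}}$, which is exactly the claim.

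For the converse, suppose $\mathbb{E}[e^{i\mathbf{\lambda}\cdot\mathbf{X}}] = e^{-\frac12\mathbf{\lambda}\cdot\Sigma\mathbf{\lambda}}$ for all $\mathbf{\lambda}$, where $\Sigma$ is symmetric positive semidefinite. Fix $\mathbf{\lambda}$; applying the bridge identity with $t\mathbf{\lambda}$ shows that the characteristic function of $Y_{\mathbf{\lambda}}$ at $t$ equals $e^{-\frac12 t^2(\mathbf{\lambda}\cdot\Sigma\mathbf{\lambda})}$. Since $\mathbf{\lambda}\cdot\Sigma\mathbf{\lambda}\ge 0$, this is precisely the characteristic function of a one-dimensional centered Gaussian of variance $\mathbf{\lambda}\cdot\Sigma\mathbf{\lambda}$ (the degenerate case $\mathbf{\lambda}\cdot\Sigma\mathbf{\lambda}=0$ being the $\sigma=0$ case), so the one-dimensional definition tells us $Y_{\mathbf{\lambda}}$ is Gaussian. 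As $\mathbf{\lambda}$ was arbitrary, $\mathbf{X}$ is Gaussian.

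There is really no hard step here; the content is just the reduction to one dimension. The only points deserving care are (i) checking in the forward direction that $\Sigma$ exists — which is where one invokes that a one-dimensional Gaussian has finite variance — and (ii) the status of $\Sigma$ in the converse: if one does not wish to assume a priori that $\mathbf{X}$ has a covariance matrix, one should first establish the formula with an unknown symmetric matrix in place of $\Sigma$, and then recover its entries by differentiating the characteristic function twice at the origin, which simultaneously confirms that the matrix is symmetric and positive semidefinite and equals the covariance matrix of $\mathbf{X}$.
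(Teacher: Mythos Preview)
Your argument is correct and is exactly the standard reduction to the one-dimensional case via the identity $\mathbb{E}[e^{it(\mathbf{\lambda}\cdot\mathbf{X})}] = \mathbb{E}[e^{i(t\mathbf{\lambda})\cdot\mathbf{X}}]$. The paper itself does not supply a proof of this proposition; it is stated as a well-known fact and immediately restated in measure-theoretic language, so there is nothing to compare against. Your remarks about the existence of $\Sigma$ in the forward direction and its identification in the converse are appropriate points of care, though for this paper's purposes the result is simply being quoted.
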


Or, in terms of measures:

\begin{proposition}
  A probability measure $\mu$ on $\R^n$ is Gaussian if and only if
  \begin{equation*}
    \int_{\R^n} e^{i \mathbf{\lambda} \cdot \mathbf{x}} \mu(d\mathbf{x}) = e^{-
    \frac{1}{2}\mathbf{\lambda} \cdot \Sigma \mathbf{\lambda}}
  \end{equation*}
  for some $n \times n$ matrix $\Sigma$, which is necessarily positive
  semidefinite and can be chosen symmetric.
\end{proposition}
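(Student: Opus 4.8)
The plan is to reduce everything to the one-dimensional characterization of Gaussians via characteristic functions, together with the elementary linear algebra of covariance matrices; both implications are essentially bookkeeping.

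For the forward direction, suppose $\mu$ is Gaussian. Applying the definition with $\mathbf{\lambda} = e_i$ shows that each coordinate map $\mathbf{x} \mapsto x_i$ is a one-dimensional centered Gaussian random variable on $(\R^n, \mathcal{B}_{\R^n}, \mu)$, hence has mean zero and finite variance; in particular the covariance matrix $\Sigma_{ij} = \Cov(X_i, X_j) = \int x_i x_j\,\mu(d\mathbf{x})$ is well defined, symmetric, and positive semidefinite (for any $\mathbf{\lambda}$ we have $\mathbf{\lambda}\cdot\Sigma\mathbf{\lambda} = \Var(\mathbf{\lambda}\cdot\mathbf{X}) \ge 0$). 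Now fix $\mathbf{\lambda}$ and set $Y := \mathbf{\lambda}\cdot\mathbf{x}$. By hypothesis $Y$ is Gaussian; it is centered with $\Var(Y) = \sum_{i,j}\lambda_i\lambda_j\Cov(X_i,X_j) = \mathbf{\lambda}\cdot\Sigma\mathbf{\lambda}$, so the one-dimensional characteristic-function formula built into the definition of a Gaussian random variable, evaluated at the argument $1$, gives $\int_{\R^n} e^{i\mathbf{\lambda}\cdot\mathbf{x}}\,\mu(d\mathbf{x}) = \E[e^{iY}] = e^{-\frac{1}{2}\mathbf{\lambda}\cdot\Sigma\mathbf{\lambda}}$.

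For the converse, suppose $\int e^{i\mathbf{\lambda}\cdot\mathbf{x}}\,\mu(d\mathbf{x}) = e^{-\frac{1}{2}\mathbf{\lambda}\cdot\Sigma\mathbf{\lambda}}$ for some $n\times n$ matrix $\Sigma$. Replacing $\Sigma$ by its symmetrization $\frac{1}{2}(\Sigma + \Sigma^{T})$ changes nothing, since $\mathbf{\lambda}\cdot\Sigma\mathbf{\lambda}$ depends only on the symmetric part of $\Sigma$, so we may assume $\Sigma = \Sigma^{T}$; this already proves the ``can be chosen symmetric'' clause. Since the left-hand side is a characteristic function it has modulus at most $1$, which forces $\mathbf{\lambda}\cdot\Sigma\mathbf{\lambda} \ge 0$ for all $\mathbf{\lambda}$, i.e. $\Sigma$ is positive semidefinite. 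Now fix $\mathbf{\lambda}$ and let $Y := \mathbf{\lambda}\cdot\mathbf{x}$ as a random variable on $(\R^n, \mathcal{B}_{\R^n}, \mu)$; for $t \in \R$ we compute $\E[e^{itY}] = \int e^{i(t\mathbf{\lambda})\cdot\mathbf{x}}\,\mu(d\mathbf{x}) = e^{-\frac{1}{2}t^2(\mathbf{\lambda}\cdot\Sigma\mathbf{\lambda})}$, which is exactly the characteristic function of a centered Gaussian of variance $\sigma^2 := \mathbf{\lambda}\cdot\Sigma\mathbf{\lambda} \ge 0$. By the uniqueness statement in the one-dimensional definition (Fourier inversion on $\R$), $Y$ is Gaussian; since $\mathbf{\lambda}$ was arbitrary, $\mu$ is Gaussian by definition, and polarization of $\sigma^2 = \mathbf{\lambda}\cdot\Sigma\mathbf{\lambda}$ (equivalently, differentiating the characteristic function twice at the origin, which is justified since $\Sigma$ positive semidefinite forces finite second moments) identifies $\Sigma$ with the covariance matrix of $\mu$.

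There is no substantive obstacle here; the only points needing a moment's care are (i) observing that $\mathbf{\lambda}\cdot\Sigma\mathbf{\lambda}$ sees only the symmetric part of $\Sigma$, so symmetry is automatic, and (ii) the appeal to the one-dimensional theory --- specifically, that a probability measure on $\R$ with characteristic function $e^{-\sigma^2 t^2/2}$ must be the Gaussian of variance $\sigma^2$ --- which is where uniqueness of characteristic functions enters, and which is precisely the content of the earlier one-dimensional definition. One should also make sure that ``Gaussian measure on $\R^n$'' is taken in the sense defined above, so that the coordinatewise reduction $\mathbf{\lambda} = e_i$ is available to supply finite second moments in the forward direction.
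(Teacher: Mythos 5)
Your proof is correct, and it is exactly the reduction to the one-dimensional characteristic-function characterization that the paper has in mind; the paper states this proposition without proof, treating it as immediate from the definitions. The only cosmetic quibble is in your final parenthetical: the cleanest justification that $\Sigma$ is the covariance matrix is that you have already shown each $Y = \mathbf{\lambda}\cdot\mathbf{x}$ is $N(0, \mathbf{\lambda}\cdot\Sigma\mathbf{\lambda})$, hence $\Var(\mathbf{\lambda}\cdot\mathbf{X}) = \mathbf{\lambda}\cdot\Sigma\mathbf{\lambda}$ for all $\mathbf{\lambda}$, and polarization finishes it --- no need to invoke differentiation of the characteristic function.
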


We would like to work more abstractly and basis-free, in preparation
for the move to infinite dimensions.  The map $\mathbf{x} \mapsto
\mathbf{\lambda} \cdot \mathbf{x}$ is really just a linear functional
on $\R^n$.  So let's write:

\begin{definition}\label{finite-gaussian-good}
  Let $\mu$ be a Borel probability measure on a finite-dimensional
  topological vector space $W$.  Then each $f \in W^*$ can be seen as a random
  variable on the probability space $(W, \mathcal{B}_W, \mu)$.  $\mu$
  is \textbf{Gaussian} if and only if, for each $f \in W^*$, this
  random variable is Gaussian.  

  Equivalently, $\mu$ is Gaussian iff
  the pushforward $\mu \circ f^{-1}$ is a Gaussian measure on $\R$ for
  each $f \in W^*$.
\end{definition}

Of course we have not done anything here because a finite-dimensional
topological vector space $W$ is just some $\R^n$ with
its usual topology.  Every linear functional $f \in W^*$ is of the
form $f(\mathbf{x}) = \mathbf{\lambda} \cdot \mathbf{x}$, and all such
linear functionals are continuous, hence measurable.

If $f, g \in W^*$ are thought of as Gaussian random variables on $(W,
\mu)$, then $q(f,g) = \Cov(f,g)$ is a symmetric, positive semidefinite,
bilinear form on $W^*$.  We'll also write $q(f) = q(f,f) = \Var(f)$.
$q$ is the basis-free analogue of the covariance matrix; we could call
it the covariance form.  As we argued above, in this
finite-dimensional case, any such bilinear form can arise as a
covariance form.

Another way to think about this is that since each $f \in W^*$ is
Gaussian, it is certainly square-integrable, i.e. $\int_W |f(x)|^2
\mu(dx) = E|f|^2 = \Var(f) < \infty$.  So $V^*$ can be thought of as a
subspace of $L^2(V, \mu)$.  Then $q$ is nothing but the restriction of
the $L^2$ inner product to the subspace $W^*$.  

(Technically, $q$ may be degenerate, in which case it is actually the
quotient of $W^*$ by the kernel of $q$ that we identify as a subspace
of $L^2(W, \mu)$.)

\begin{exercise}
  The support of the measure $\mu$ is given by
  \begin{equation*}
    \supp \mu = \bigcap_{q(f,f) = 0} \ker f.
  \end{equation*}
  One could write $\supp \mu = (\ker q)^\perp$.  In particular, if $q$
  is positive definite, then $\mu$ has full support.  (Recall that the
  support of a measure $\mu$ is defined as the smallest closed set
  with full measure.)
\end{exercise}

\begin{exercise}
  The restriction of $\mu$ to its support is a nondegenerate Gaussian
  measure (i.e. the covariance form is positive definite).
\end{exercise}

\begin{exercise}
  $\mu$ is quasi-invariant under translation by $y$ if and only if $y
  \in \supp \mu$.
  If $y \notin \supp \mu$, then $\mu$ and $\mu_y$ are
  mutually singular.  (We'll see that in infinite dimensions, the
  situation is more complex.)
\end{exercise}

In terms of characteristic functions, then, we have

\begin{proposition}
  A Borel probability measure $\mu$ on a finite-dimensional
  topological vector space $W$ is Gaussian if and only if, for each $f
  \in W^*$, we have
  \begin{equation*}
    \int_W e^{i f(x)} \mu(dx) = e^{-\frac{1}{2}q(f,f)}
  \end{equation*}
  where $q$ is some positive semidefinite symmetric bilinear form on $W^*$.
\end{proposition}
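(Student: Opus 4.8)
The plan is to show that both implications fall out directly from the one-dimensional characterization of Gaussianity via characteristic functions (recorded in the definition of a Gaussian random variable) together with Definition \ref{finite-gaussian-good}, identifying $q$ with the covariance form.

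For the forward direction, suppose $\mu$ is Gaussian. Fix $f \in W^*$; by Definition \ref{finite-gaussian-good} the random variable $x \mapsto f(x)$ on $(W, \mathcal{B}_W, \mu)$ is Gaussian, hence — in our centered convention — has mean zero and some variance $\sigma_f^2 \in [0,\infty)$, and by the one-dimensional characteristic-function formula $\int_W e^{i f(x)}\,\mu(dx) = \mathbb{E}[e^{i f}] = e^{-\sigma_f^2/2}$. It remains to see that $f \mapsto \sigma_f^2$ is the diagonal of a positive semidefinite symmetric bilinear form. Since every $f \in W^*$ is square integrable (a Gaussian random variable has all moments), I would set $q(f,g) := \Cov(f,g) = \int_W f(x) g(x)\,\mu(dx)$, which is finite by Cauchy--Schwarz, visibly bilinear and symmetric in $(f,g)$, satisfies $q(f,f) = \int_W f^2\,d\mu \ge 0$, and obeys $q(f,f) = \Var(f) = \sigma_f^2$. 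Substituting gives the claimed formula.

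For the converse, suppose $\int_W e^{i f(x)}\,\mu(dx) = e^{-\frac12 q(f,f)}$ for all $f \in W^*$, with $q$ positive semidefinite, symmetric, and bilinear. Fix $f \in W^*$ and apply the hypothesis to $\lambda f \in W^*$ for each $\lambda \in \R$: using bilinearity, $\mathbb{E}[e^{i \lambda f}] = e^{-\frac12 q(\lambda f, \lambda f)} = e^{-\frac12 \lambda^2 q(f,f)}$. Writing $\sigma^2 := q(f,f) \ge 0$, this is exactly the characteristic-function criterion for the random variable $f$ on $(W,\mu)$ to be Gaussian with variance $\sigma^2$ (the degenerate case $\sigma = 0$ included, which is why that criterion was phrased via characteristic functions). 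Since $f$ was arbitrary, Definition \ref{finite-gaussian-good} gives that $\mu$ is Gaussian.

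I do not expect a real obstacle; the two points needing a little care are (i) verifying that $q := \Cov$ is finite and bilinear on $W^*$, i.e. that $W^*$ embeds into $L^2(W,\mu)$, which relies on square-integrability of Gaussian random variables, and (ii) keeping the variance-zero case inside the one-dimensional definition. One could append the remark that $q$ is forced to be the covariance form: the forward direction shows any Gaussian $\mu$ has characteristic functional $e^{-\frac12 \Cov(f,f)}$, whence $q(f,f) = \Cov(f,f)$ for all $f$ and so $q = \Cov$ by polarization; in particular, had one only assumed the formula with some symmetric bilinear $q$, positivity would be automatic, since $|\mathbb{E}[e^{i \lambda f}]| \le 1$ forces $q(f,f) \ge 0$.
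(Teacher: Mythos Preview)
Your proof is correct and follows exactly the approach implicit in the paper: the paper does not give a standalone proof of this proposition but presents it as the basis-free restatement of the preceding $\R^n$ version, which in turn rests on the one-dimensional characteristic-function characterization. Your argument spells out precisely that reduction, and your closing remark that positive semidefiniteness of $q$ is automatic (from $|\hat{\mu}| \le 1$) is a nice bonus.
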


\section{Infinite-dimensional Gaussian measures}

Definition \ref{finite-gaussian-good} will generalize pretty
immediately to infinite-dimensional topological vector spaces.  There
is just one problem.  An arbitrary linear functional on a topological
vector space can be nasty; in particular, it need not be Borel
measurable, in which case it doesn't represent a random variable.  But
continuous linear functionals are much nicer, and are Borel measurable
for sure, so we'll restrict our attention to them.\footnote{In the
case of separable Banach spaces, or more generally Polish topological
vector spaces, this sufficient condition is also necessary: a linear
functional is Borel measurable if and only if it is continuous.  Even
the weaker assumption of so-called Baire measurability is sufficient,
in fact.  See 9.C of \cite{kechris}.  So we are not giving up anything by requiring continuity.
Thanks to Clinton Conley for explaining this to me and providing a
reference.  This sort of goes to show that a linear functional on a
separable Banach space is either continuous or really really nasty.}

As usual, $W^*$ will denote the continuous dual of $W$.

\begin{definition}
  Let $W$ be a topological vector space, and $\mu$ a Borel probability
  measure on $W$.  $\mu$ is \textbf{Gaussian} iff, for each continuous
  linear functional $f \in W^*$, the pushforward $\mu \circ f^{-1}$ is a
  Gaussian measure on $\R$, i.e. $f$ is a Gaussian random variable on
  $(W, \mathcal{B}_W, \mu)$.
\end{definition}

As before, we get a covariance form $q$ on $W^*$ where $q(f,g) =
\Cov(f,g)$.  Again, $W^*$ can be identified as a subspace of $L^2(W,
\mu)$, and $q$ is the restriction of the $L^2$ inner product.

\begin{proposition}
  A Borel probability measure $\mu$ on a topological vector space $W$
  is Gaussian if and only if, for each $f \in W^*$, we have
  \begin{equation*}
    \int_W e^{i f(x)} \mu(dx) = e^{-\frac{1}{2}q(f,f)}
  \end{equation*}
  where $q$ is some positive semidefinite symmetric bilinear form on $W^*$.
\end{proposition}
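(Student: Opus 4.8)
The plan is to reduce the statement to the one-dimensional characterization of Gaussian measures by their characteristic functions, exactly mirroring the finite-dimensional propositions above, exploiting that $W^*$ is a vector space and hence closed under scaling. The one fact that makes ``testing one functional at a time'' suffice is that for $f \in W^*$ and $\lambda \in \R$ we also have $\lambda f \in W^*$. The basic translation device is the change-of-variables formula for pushforwards: for any $f \in W^*$ and any $\lambda \in \R$,
\[
  \int_W e^{i \lambda f(x)}\,\mu(dx) = \int_\R e^{i \lambda t}\,(\mu \circ f^{-1})(dt),
\]
so the left-hand side, viewed as a function of $\lambda$, is precisely the characteristic function of the real Borel probability measure $\mu \circ f^{-1}$.

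For the forward direction, assume $\mu$ is Gaussian. Then each $f \in W^*$ is a centered Gaussian random variable on $(W, \mathcal{B}_W, \mu)$, hence square-integrable with mean zero, so I may set $q(f,g) = \Cov(f,g) = \int_W f g\,d\mu$; Cauchy--Schwarz shows this is a well-defined bilinear form on $W^*$, it is visibly symmetric, and $q(f,f) = \int_W f^2\,d\mu \ge 0$ gives positive semidefiniteness. Since the law $\mu \circ f^{-1}$ of $f$ is a centered Gaussian on $\R$ of variance $\Var(f) = q(f,f)$, setting $\lambda = 1$ in its characteristic function yields $\int_W e^{i f}\,d\mu = e^{-\tfrac12 q(f,f)}$, which is the asserted identity.

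For the converse, suppose $\int_W e^{i f}\,d\mu = e^{-\tfrac12 q(f,f)}$ holds for all $f \in W^*$ with $q$ a positive semidefinite symmetric bilinear form on $W^*$. Fix $f \in W^*$; applying the hypothesis to $\lambda f \in W^*$ and using bilinearity of $q$ gives $\int_W e^{i \lambda f}\,d\mu = e^{-\tfrac12 \lambda^2 q(f,f)}$ for every $\lambda \in \R$. By the translation device, $\mu \circ f^{-1}$ has characteristic function $\lambda \mapsto e^{-\tfrac12 \sigma^2 \lambda^2}$ with $\sigma^2 := q(f,f) \ge 0$, which is exactly the characteristic function of the centered Gaussian measure on $\R$ of variance $\sigma^2$ (with $\sigma = 0$ giving $\delta_0$). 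Since a finite Borel measure on $\R$ is determined by its characteristic function, $\mu \circ f^{-1}$ is that Gaussian measure; as $f$ was arbitrary, $\mu$ is Gaussian.

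I do not expect a genuine obstacle here: the proof is essentially a bookkeeping translation of the finite-dimensional argument. The only points deserving a word of care are that $W^*$ is closed under scalar multiplication (so a single functional already encodes the full one-dimensional characteristic function via its scalings), that each $f \in W^*$ genuinely lies in $L^2(W,\mu)$ so that the covariance form is defined (which follows from Gaussian random variables having finite second moments), and the appeal to uniqueness of the Fourier transform on $\R$; none of these is more than routine.
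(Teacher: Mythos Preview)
Your proof is correct and is precisely the intended argument. The paper does not actually supply a proof for this proposition---it is stated as a direct analogue of the finite-dimensional version and of the one-dimensional characteristic-function characterization, and your write-up fills in exactly those routine details (closure of $W^*$ under scaling, square-integrability of Gaussian functionals so that $q$ is well defined, and uniqueness of the one-dimensional Fourier transform).
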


\begin{exercise}
  If $f_1, \dots, f_n$ in $W^*$, then $(f_1, \dots, f_n)$ has a joint
  Gaussian distribution.
\end{exercise}

\begin{exercise}
  Any $q$-orthogonal subset of $W^*$ is an independent set of random
  variables on $(W, \mu)$.
\end{exercise}

For the most part, we will concentrate on the case that $W$ is a
separable Banach space.  But as motivation, we first want to look at a
single case where it isn't.  If you want more detail on the general
theory for topological vector spaces, see Bogachev \cite{bogachev-gaussian-book}.

\section{Motivating example: $\R^\infty$ with product Gaussian
  measure}

As in Example \ref{product-example}, let's take $W = \R^\infty$ with
its product topology.  Let's record some basic facts about this
topological vector space.

\begin{exercise}
  $W$ is a Fr\'echet space (its topology is generated by a countable
  family of seminorms).
\end{exercise}

\begin{exercise}
  $W$ is a Polish space.  (So we are justified in doing all our
  topological arguments with sequences.)
\end{exercise}

\begin{exercise}
  The topology of $W$ does not come from any norm.
\end{exercise}

\begin{exercise}
  The Borel $\sigma$-algebra of $W$ is the same as the product
  $\sigma$-algebra.
\end{exercise}

\begin{exercise}
  Every continuous linear functional $f \in W^*$ is of the form
  \begin{equation*}
    f(x) = \sum_{i=1}^n a_i x(i)
  \end{equation*}
  for some $a_1, \dots, a_n \in \R$.  Thus $W^*$ can be identified
  with $c_{00}$, the set of all real sequences which are eventually zero.
\end{exercise}

Let's write $e_i$ for the element of $W$ with $e_i(j) = \delta_{ij}$,
and $\pi_i$ for the projection onto the $i$'th coordinate $\pi_i(x) =
x(i)$. (Note $\pi_i \in W^*$; indeed they form a basis.)  

As in Example \ref{product-example}, we choose $\mu$ to be an infinite
product of Gaussian measures with variance $1$.  Equivalently, $\mu$
is the distribution of an iid sequence of standard Gaussian random
variables.  So the random variables $\pi_i$ are iid standard Gaussian.

\begin{exercise}
  $\mu$ is a Gaussian measure.  The covariance form of $\mu$ is given
  by
  \begin{equation*}
    q(f,g) = \sum_{i=1}^\infty f(e_i) g(e_i).
  \end{equation*}
  (Note that the sum is actually finite.)
\end{exercise}

\begin{exercise}
  $\mu$ has full support.
\end{exercise}

$q$ is actually positive definite: the only $f \in W^*$ with $q(f,f) =
0$ is $f=0$.  So $W^*$ is an honest subspace of $L^2(W, \mu)$.  It is
not a closed subspace, though; that is, $W^*$ is not complete in the
$q$ inner product.  Let $K$ denote the $L^2(W,\mu)$-closure of $W^*$.

\begin{exercise}\label{Kexercise}
  Show that $K$ consists of all functions $f : W \to \R$ of the form
  \begin{equation}\label{Kdef}
    f(x) = \sum_{i=1}^\infty a_i x(i)
  \end{equation}
  where $\sum_{i=1}^\infty |a_i|^2 < \infty$.  This formula requires
  some explanation.  For an arbitrary $x \in W$, the sum in
  (\ref{Kdef}) may not converge. However, show that it does converge
  for $\mu$-a.e. $x \in W$.  (Hint: Sums of independent random
  variables converge a.s. as soon as they converge in $L^2$; see
  Theorem 2.5.3 of Durrett \cite{durrett}.)  Note well that the measure-1 set on
  which (\ref{Kdef}) converges depends on $f$, and there will not be a
  single measure-1 set where convergence holds for every $f$.
  Moreover, show each $f \in K$ is a Gaussian random variable.

($K$ is isomorphic to $\ell^2$; this should make sense,
  since it is the completion of $W^* = c_{00}$ in the $q$ inner
  product, which is really the $\ell^2$ inner product.)
\end{exercise}

Now let's think about how $\mu$ behaves under translation.  A first
guess, by analogy with the case of product Gaussian measure on $\R^n$,
is that it is quasi-invariant under all translations.  But let's look
closer at the finite-dimensional case.  If $\nu$ is standard Gaussian
measure on $\R^n$, i.e.
\begin{equation*}
  d\nu = \frac{1}{(2 \pi)^{n/2}} e^{-|x|^2/2} dx
\end{equation*}
then a simple calculation shows
\begin{equation}\label{radon-finite}
  \frac{d \nu_y}{d \nu}(x) = e^{-\frac{1}{2} |y|^2 + x \cdot y}.
\end{equation}
Note that the Euclidean norm of $y$ appears.  Sending $n \to \infty$,
the Euclidean norm becomes the $\ell^2$ norm.  This suggests that
$\ell^2$ should play a special role.  In particular, translation by
$y$ is not going to produce a reasonable positive Radon-Nikodym
derivative if $\norm{y}_{\ell^2} = \infty$.

Let's denote $\ell^2$, considered as a subset of $W$, by $H$.  $H$
has a Hilbert space structure coming from the $\ell^2$ norm, which
we'll denote by $\nm_H$.  We note that, as shown in Exercise
\ref{Kexercise}, that for fixed $h \in H$, $(h,x)_H$ makes sense not
only for $x \in H$ but for $\mu$-a.e. $x \in W$, and $(h,\cdot)_H$ is
a Gaussian random variable on $(W,\mu)$ with variance $\norm{h}_H^2$.

\begin{theorem}
  [Special case of the Cameron-Martin theorem]  If $h \in H$, then
  $\mu$ is quasi-invariant under translation by $h$, and
  \begin{equation}\label{radon-infinite}
    \frac{d \mu_h}{d \mu}(x) = e^{-\frac{1}{2} \norm{h}_H^2 + (h,x)_H}.
  \end{equation}
  Conversely, if $y \notin H$, then $\mu, \mu_y$ are mutually singular.
\end{theorem}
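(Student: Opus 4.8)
The plan is to split into the two claims: (i) quasi-invariance with the stated Radon--Nikodym derivative when $h \in H$, and (ii) mutual singularity when $y \notin H$.

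For (i), I would work coordinate-by-coordinate and take a limit. Fix $h = (h_1, h_2, \dots) \in H$, and for each $n$ let $\mu^{(n)}$ be the standard Gaussian measure on $\R^n$ (the law of $(\pi_1, \dots, \pi_n)$) and $h^{(n)} = (h_1, \dots, h_n)$. By the finite-dimensional computation (\ref{radon-finite}),
\begin{equation*}
  \frac{d\mu^{(n)}_{h^{(n)}}}{d\mu^{(n)}}(x) = \exp\Bigl(-\tfrac{1}{2}\textstyle\sum_{i=1}^n h_i^2 + \sum_{i=1}^n h_i x(i)\Bigr) =: Z_n(x).
\end{equation*}
Pulling back to $W$ via the projection onto the first $n$ coordinates, $Z_n$ is a nonnegative martingale with respect to the filtration $\mathcal{F}_n = \sigma(\pi_1, \dots, \pi_n)$, with $\int_W Z_n \, d\mu = 1$. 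I would then argue that $Z_n \to Z_\infty := \exp\bigl(-\tfrac12\norm{h}_H^2 + (h,\cdot)_H\bigr)$ both $\mu$-a.e.\ (the exponent converges a.e.\ by the convergence result for sums of independent $L^2$ random variables, exactly as in Exercise \ref{Kexercise}) and in $L^1(\mu)$. The $L^1$ convergence is the crucial point, since it gives $\int_W Z_\infty \, d\mu = 1$ and hence that $Z_\infty \, d\mu$ is a probability measure; one clean way to get it is uniform integrability of $\{Z_n\}$, which follows from a uniform $L^2$ (or $L^p$ for some $p > 1$) bound — a direct Gaussian moment computation shows $\int_W Z_n^p \, d\mu = \exp\bigl(\tfrac{p(p-1)}{2}\sum_{i=1}^n h_i^2\bigr) \le \exp\bigl(\tfrac{p(p-1)}{2}\norm{h}_H^2\bigr)$, uniformly in $n$. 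Finally, to identify $Z_\infty \, d\mu$ with $\mu_h$, I would check that both measures have the same integral against every bounded $\mathcal{F}_n$-measurable function (for $Z_\infty$ this reduces to the finite-dimensional identity via the martingale property and $L^1$ convergence), and then invoke a monotone class / $\pi$-$\lambda$ argument together with the fact (an earlier exercise) that the Borel $\sigma$-algebra of $W$ equals the product $\sigma$-algebra $\bigvee_n \mathcal{F}_n$. Since $Z_\infty > 0$ everywhere, $\mu$ and $\mu_h$ are mutually absolutely continuous.

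For (ii), suppose $y \in W \setminus H$, so $\sum_i y_i^2 = \infty$. Here I would use a law-of-large-numbers style dichotomy — essentially the Kakutani dichotomy for infinite product measures. The translated measure $\mu_y$ is the infinite product $\bigotimes_i N(y_i, 1)$, while $\mu = \bigotimes_i N(0,1)$. Consider the random variables $S_n(x) = \sum_{i=1}^n \bigl(y_i x(i) - \tfrac12 y_i^2\bigr)$. Under $\mu$, $S_n \to -\infty$ a.s.: its mean is $-\tfrac12 \sum_{i=1}^n y_i^2 \to -\infty$ while its variance $\sum_{i=1}^n y_i^2$ also diverges, but a Borel--Cantelli / three-series type argument (or directly: $S_n/(\sum_{i\le n} y_i^2) \to -\tfrac12$ a.s.\ by the strong law, using $\sum y_i^2 = \infty$) forces $S_n \to -\infty$ $\mu$-a.s. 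Symmetrically, writing $S_n$ in terms of the $\mu_y$-coordinates $x(i) = y_i + w(i)$ with $w$ having law $\mu$, one gets that $S_n \to +\infty$ $\mu_y$-a.s. Therefore the set $A = \{x : S_n(x) \to -\infty\}$ satisfies $\mu(A) = 1$ and $\mu_y(A) = 0$, exhibiting mutual singularity.

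The main obstacle is the $L^1$-convergence step in part (i): a.e.\ convergence of the densities is cheap, but turning "$Z_n \to Z_\infty$ a.e.\ and $\int Z_n = 1$" into "$\int Z_\infty = 1$" genuinely requires the uniform integrability input, and it is precisely this that rules out the pathology where mass escapes to infinity — the same phenomenon that produces singularity in part (ii). Everything else (the finite-dimensional formula, the martingale structure, the product-$\sigma$-algebra identification) is bookkeeping, but this uniform-integrability estimate is where the hypothesis $\norm{h}_H < \infty$ is really used, and it must be stated carefully.
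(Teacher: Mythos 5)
Your proposal is correct, but it takes a genuinely different route from the paper in both halves. For the forward direction, the paper never takes a limit of densities: it checks the candidate identity $\mu_h(B)=\int_B e^{-\frac12\norm{h}_H^2+(h,x)_H}\,\mu(dx)$ directly on cylinder sets $B=B_1\times\dots\times B_n\times\R\times\cdots$, factoring $\mu=\mu^n\times\mu^\infty$ so that the head factor is exactly the finite-dimensional formula (\ref{radon-finite}) and the tail factor is $E[e^{X-\sigma^2/2}]=1$ for the Gaussian $X=(h_\infty,\cdot)_{\ell^2}$; a $\pi$-system argument finishes. You instead realize the density as the a.e.\ and $L^1$ limit of the positive martingale $Z_n$, with uniform integrability supplied by your (correct) uniform $L^p$ bound $\int Z_n^p\,d\mu=\exp(\tfrac{p(p-1)}{2}\sum_{i\le n}h_i^2)$. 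Your route is longer but is the standard Kakutani/martingale argument, it generalizes beyond product structure, and it isolates exactly where $\norm{h}_H<\infty$ is used; the paper's route is shorter precisely because the tail integral can be evaluated in closed form. For the converse, the paper invokes Lemma \ref{ell2} (a uniform boundedness argument) to produce $g\in\ell^2$ with $\sum y(i)g(i)$ divergent and separates the measures by the convergence set of $\sum x(i)g(i)$, whereas you separate them by the a.s.\ drift of the log-likelihood sums $S_n\to-\infty$ under $\mu$ and $S_n\to+\infty$ under $\mu_y$. That is also valid, with one small gloss: the step $\sum_{i\le n}y_ix(i)/\sigma_n^2\to 0$ a.s.\ (where $\sigma_n^2=\sum_{i\le n}y_i^2\to\infty$) is not the i.i.d.\ strong law but the weighted/martingale SLLN; it does hold here because $\sum_i y_i^2/\sigma_i^4\le\sum_i(\sigma_{i-1}^{-2}-\sigma_i^{-2})<\infty$ telescopes, so Kolmogorov's theorem plus Kronecker's lemma applies. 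Both converse arguments produce an explicit separating Borel set; yours has the advantage of being the exact infinite-dimensional shadow of the finite-dimensional likelihood-ratio computation, while the paper's reuses machinery (Lemma \ref{ell2}, Exercise \ref{Kexercise}) already established.
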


\begin{proof}
  We are trying to show that
  \begin{equation}\label{radon2}
    \mu_h(B) = \int_B e^{-\frac{1}{2} \norm{h}_H^2 + (h,x)_H} \mu(dx)
  \end{equation}
  for all Borel sets $B \subset W$.  It is sufficient to consider the
  case where $B$ is a ``cylinder set'' of the form $B = B_1 \times
  \dots \times B_n \times \R \times \dots$, since the collection of
  all cylinder sets is a $\pi$-system which generates the Borel
  $\sigma$-algebra.  But this effectively takes us back to the
  $n$-dimensional setting, and unwinding notation will show that in
  this case (\ref{radon-infinite}) is the same as
  (\ref{radon-finite}).

  This is a bit messy to write out; here is an attempt.  If you don't
  like it I encourage you to try to just work it out yourself.

  Since $W$ is a product space, let us
  decompose it as $W = \R^n \times R^\infty$, writing $x \in W$ as
  $(x_n, x_\infty)$ with $x_n = (x(1), \dots, x(n))$ and $x_\infty =
  (x(n+1), \dots)$.  Then $\mu$ factors as $\mu^n \times \mu^\infty$,
  where $\mu^n$ is standard Gaussian measure on $\R^n$ and
  $\mu^\infty$ is again product Gaussian measure on $\R^\infty$.
  $\mu_h$ factors as $\mu^n_{h_n} \times \mu^\infty_{h_\infty}$.
  Also, the integrand in (\ref{radon2}) factors as
  \begin{equation*}
    e^{-\frac{1}{2} |h_n|^2 + h_n \cdot x_n} e^{-\frac{1}{2}
    ||h_\infty||_{\ell_\infty}^2 + (h_\infty, x_\infty)_{\ell^2}}.
  \end{equation*}
  So by Tonelli's theorem the right side of (\ref{radon2}) equals
  \begin{equation*}
    \int_{B_1 \times \dots \times B_n} e^{-\frac{1}{2} |h_n|^2 + h_n
    \cdot x_n} \mu^n(dx_n) \int_{\R^\infty} e^{-\frac{1}{2}
    ||h_\infty||_{\ell_\infty}^2 + (h_\infty, x_\infty)_{\ell^2}} \mu^\infty(dx_\infty).
  \end{equation*}
  The first factor is equal to $\mu^n_{h_n}(B_1 \times \dots \times
  B_n)$ as shown in (\ref{radon-finite}).  Since $(h_\infty,
  \cdot)_{\ell^2}$ is a Gaussian random variable on $(\R^\infty,
  \mu^\infty)$ with variance $||h_\infty||^2_{\ell^2}$, the second
  factor is of the form $E[e^{X - \sigma^2/2}]$ for $X \sim N(0,
  \sigma^2)$, which is easily computed to be $1$.  Since
  $\mu^\infty_{h_\infty}(\R^\infty) = 1$ also (it is a probability
  measure), we are done with the forward direction.

  For the converse direction, suppose $h \notin H$.  Then, by the
  contrapositive of Lemma \ref{ell2}, there exists $g \in \ell^2$ such
  that $\sum h(i) g(i)$ diverges.  Let $A = \{ x \in W : \sum x(i)
  g(i) \text{ converges}\}$; this set is clearly Borel, and we know
  $\mu(A) = 1$ by Exercise \ref{Kexercise}.  But if $\sum x(i) g(i)$
  converges, then $\sum (x-h)(i)g(i)$ diverges, so $A-h$ is disjoint
  from $A$ and $\mu_h(A) = \mu(A-h) = 0$.
\end{proof}

We call $H$ the \textbf{Cameron--Martin space} associated to $(W, \mu)$.

\begin{exercise}
  $H$ is dense in $W$, and the inclusion map $H \hookrightarrow W$ is
  continuous (with respect to the $\ell^2$ topology on $H$ and the
  product topology on $W$).
\end{exercise}

Although $H$ is dense, there are several senses in which it is small.

\begin{proposition}
  $\mu(H) = 0$.
\end{proposition}

\begin{proof}
  For $x \in W$, $x \in H$ iff $\sum_i |\pi_i(x)|^2 < \infty$.  Note
  that the $\pi_i$ are iid $N(0,1)$ random variables on $(W, \mu)$.
  So by the strong law of large numbers, for $\mu$-a.e. $x \in W$ we
  have $\frac{1}{n} \sum_{i=1}^n
  |\pi_i(x)|^2 \to 1$; in particular $\sum_i |\pi_i(x)|^2 = \infty$.
\end{proof}

\begin{exercise}
  Any bounded subset of $H$ is precompact and nowhere dense in $W$.
  In particular, $H$ is meager in $W$.
\end{exercise}

So $\mu$ is quasi-invariant only under translation by elements of the
small subset $H$.

\section{Abstract Wiener space}

Much of this section comes from Bruce Driver's notes \cite{driver-probability} and from Kuo's
book \cite{kuo-gaussian-book}.

\begin{definition}
  An \textbf{abstract Wiener space} is a pair $(W,\mu)$ consisting of
  a separable Banach space $W$ and a Gaussian measure $\mu$ on $W$.
\end{definition}

Later we will write an abstract Wiener space as $(W,H,\mu)$ where $H$
is the Cameron--Martin space.  Technically this is redundant because
$H$ will be completely determined by $W$ and $\mu$.  Len Gross's
original development
\cite{gross-measurable-hilbert-1962,gross-abstract-wiener} went the other way, starting with $H$ and
choosing a $(W,\mu)$ to match it, and this choice is not unique.
We'll discuss this more later.

\begin{definition}
  $(W,\mu)$ is \textbf{non-degenerate} if the covariance form $q$ on
  $W^*$ is positive definite.
\end{definition}

\begin{exercise}\label{full-support-implies-nondegenerate}
  If $\mu$ has full support (i.e. $\mu(U) > 0$ for every nonempty open
  $U$) then $(W,\mu)$ is non-degenerate.  (For the converse, see
  Exercise \ref{nondegenerate-implies-full-support} below.)
\end{exercise}

From now on, we will assume $(W,\mu)$ is non-degenerate unless
otherwise specified.  (This assumption is really harmless, as will be
justified in Remark
\ref{rk-non-degenerate} below.)  So $W^*$ is honestly (injectively) embedded into
$L^2(\mu)$, and $q$ is the restriction to $W^*$ of the $L^2(\mu)$
inner product.  As before, we let $K$ denote the closure of $W^*$ in
$L^2(\mu)$.

Note that we now have two different topologies on $W^*$: the operator norm
topology (under which it is complete), and the topology induced by the
$q$ or $L^2$ inner product (under which, as we shall see, it is not
complete).  The interplay between them will be a big part of what we
do here.

\subsection{Measure-theoretic technicalities}

The main point of this subsection is that the continuous linear
functionals $f \in W^*$, and other functions you can easily build from
them, are the only functions on $W$ that you really have to care
about.

Let $\mathcal{B}$ denote the Borel $\sigma$-algebra on $W$.

\begin{lemma}\label{weak-sigma-field}
  Let $\sigma(W^*)$ be the $\sigma$-field on $W$ generated by $W^*$,
  i.e. the smallest $\sigma$-field that makes every $f \in W^*$
  measurable.  Then $\sigma(W^*) = \mathcal{B}$.
\end{lemma}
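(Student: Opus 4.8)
The plan is to prove the two inclusions separately. The inclusion $\sigma(W^*) \subseteq \mathcal{B}$ is immediate: every $f \in W^*$ is continuous, hence Borel measurable, and $\sigma(W^*)$ is by definition the smallest $\sigma$-field making all such $f$ measurable, so it is contained in $\mathcal{B}$. For the reverse inclusion I would use separability of $W$: fix a countable dense set $\{x_n\}_{n \ge 1}$ in $W$. Then $\mathcal{B}$ is generated by the countable family of closed balls $\overline{B}(x_n, r)$ with $x_n$ in this set and $r$ rational, since every open set is a countable union of such balls. So it suffices to show that each closed ball lies in $\sigma(W^*)$.

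To do that, I would build a countable norming family in $W^*$. For each $n$, the Hahn--Banach theorem provides $f_n \in W^*$ with $\norm{f_n} = 1$ and $f_n(x_n) = \norm{x_n}$. I claim $\norm{x} = \sup_n |f_n(x)|$ for every $x \in W$. The inequality $\sup_n |f_n(x)| \le \norm{x}$ is clear. Conversely, given $x$ and $\varepsilon > 0$, choose $x_n$ with $\norm{x - x_n} < \varepsilon$; then $|f_n(x)| \ge |f_n(x_n)| - |f_n(x - x_n)| \ge \norm{x_n} - \varepsilon \ge \norm{x} - 2\varepsilon$, and letting $\varepsilon \to 0$ gives $\sup_n |f_n(x)| \ge \norm{x}$.

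Finally, for fixed $y \in W$ the function $x \mapsto \norm{x - y} = \sup_n |f_n(x) - f_n(y)|$ is a countable supremum of $\sigma(W^*)$-measurable functions, hence $\sigma(W^*)$-measurable; therefore $\overline{B}(y, r) = \{x \in W : \norm{x - y} \le r\} \in \sigma(W^*)$. Applying this with $y = x_n$ and $r$ rational shows that all the generating balls lie in $\sigma(W^*)$, so $\mathcal{B} \subseteq \sigma(W^*)$, completing the proof.

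The only real content is the construction of the countable norming family, and this is exactly the step where separability of $W$ is used: without it one would at best express $\norm{\cdot}$ as an uncountable supremum of continuous linear functionals, which need not be measurable with respect to the $\sigma$-field generated by countably many of them. I would also remark in passing that the argument proves slightly more, namely that the weak-topology Borel $\sigma$-algebra and the norm-topology Borel $\sigma$-algebra on a separable Banach space coincide.
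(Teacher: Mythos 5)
Your proof is correct and follows essentially the same route as the paper's: a countable norming family $\{f_n\}$ obtained by applying Hahn--Banach at each point of a countable dense set, combined with separability to write every open set as a countable union of closed balls. The only cosmetic difference is that you package the key step as the identity $\norm{x} = \sup_n |f_n(x)|$, whereas the paper writes the equivalent set-theoretic identity $B = \bigcap_n \{x : |f_n(x)| \le 1\}$ directly.
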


Note that the \emph{topology} generated by $W^*$ is not the same as
the original topology on $W$; instead it's the weak topology.

\begin{proof}
  Since each $f \in B^*$ is Borel measurable, $\sigma(W^*) \subset
  \mathcal{B}$ is automatic.

  Let $B$ be the closed unit ball of $W$; we will show $B \in
  \sigma(W^*)$.  Let $\{x_n\}$ be a countable dense subset of $W$.  By
  the Hahn-Banach theorem, for each $x_n$ there exists $f_n \in W^*$
  with $||f_n||_{W^*} =1$ and $f_n(x_n) = ||x_n||$.  I claim that
  \begin{equation}\label{B-cap}
    B = \bigcap_{n = 1}^\infty \{x : |f_n(x)| \le 1\}.
  \end{equation}
  The $\subset$ direction is clear because for $x \in B$, $|f_n(x)|
  \le ||f_n||\cdot ||x|| = ||x|| \le 1$.  For the reverse direction, suppose
  $|f_n(x)| \le 1$ for all $n$.  Choose a sequence $x_{n_k} \to x$; in
  particular $f_{n_k}(x_{n_k}) = ||x_{n_k}|| \to ||x||$.  But since
  $||f_{n_k}|| = 1$, we have 
  $||f_{n_k}(x_{n_k}) - f_{n_k}(x)|| \le ||x_{n_k} - x|| \to 0$, so
  $||x|| = \lim f_{n_k}(x_{n_k}) = \lim f_{n_k}(x) \le 1$.  We have thus
  shown $B \in \sigma(W^*)$, since the right side of (\ref{B-cap}) is
  a countable intersection of sets from $W^*$.

  If you want to show $B(y,r) \in \sigma(W^*)$, we have
  \begin{equation*}
    B(y,r) = \bigcap_{n = 1}^\infty \{x : |f_n(x) - f_n(y))| < r\}.
  \end{equation*}

  Now note that any open subset $U$ of $W$ is a \emph{countable} union of
  closed balls (by separability) so $U \in \sigma(W^*)$ also.  Thus
  $\mathcal{B} \subset \sigma(W^*)$ and we are done.
\end{proof}

Note that we used the separability of $W$, but we did not assume that
$W^*$ is separable.  

\begin{exercise}
If $W$ is not separable, Lemma \ref{weak-sigma-field} may be false.  For a
counterexample, consider $W = \ell^2(E)$ for some uncountable set $E$.
One can show that $\sigma(W^*)$ consists only of sets that depend on
countably many coordinates.  More precisely, for $A \subset E$ let
$\pi_A : \ell^2(E) \to \ell^2(A)$ be the restriction map.  Show that
$\sigma(W^*)$ is exactly the set of all $\pi_A^{-1}(B)$, where $A$ is
countable and $B \subset \ell^2(A)$ is Borel.  In particular,
$\sigma(W^*)$ doesn't contain any singletons (in fact, every nonempty
subset of $\sigma(W^*)$ is non-separable).
\end{exercise}

\begin{question}
  Is Lemma \ref{weak-sigma-field} \emph{always} false for
  non-separable $W$?
\end{question}

Functionals $f \in W^*$ are good for approximation in several senses.
We are just going to quote the following results.  The proofs can be
found in \cite{driver-probability}, and are mostly along the same lines that you prove
approximation theorems in basic measure theory.  For this subsection,
assume $\mu$ is a Borel probability measure on $W$ (not necessarily
Gaussian).

\begin{notation}
  Let $\mathcal{F} C_c^\infty(W)$ denote the ``smooth cylinder
  functions'' on $W$: those functions $F : W \to \R$ of the form $F(x)
  = \varphi(f_1(x), \dots, f_n(x))$ for some $f_1, \dots, f_n \in W^*$
  and $\varphi \in C_c^\infty(\R)$.  (Note despite the notation that
  $F$ is not compactly supported; in fact there are no nontrivial
  continuous functions on $W$ with compact support.)
\end{notation}

\begin{notation}
  Let $\mathcal{T}$ be the ``trigonometric polynomials'' on $W$: those
  functions $F : W \to \R$ of the form $F(x) = a_1 e^{i f_1(x)} +
  \dots + a_n e^{i f_n(x)}$ for $a_1, \dots, a_n \in \R$, $f_1, \dots,
  f_n \in W^*$.
\end{notation}

\begin{theorem}
  $\mathcal{F} C^\infty_c$ and $\mathcal{T}$ are each dense in $L^p(W,
  \mu)$ for any $1 \le p < \infty$.
\end{theorem}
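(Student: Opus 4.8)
The plan is to use Lemma~\ref{weak-sigma-field} to push every approximation problem down to a finite-dimensional Euclidean space, where the corresponding density statements are classical. Fix $1 \le p < \infty$. Since $\mu$ is a probability measure, the bounded $\mathcal{B}$-measurable functions are dense in $L^p(W,\mu)$, so it suffices to approximate those. Moreover, inspecting the proof of Lemma~\ref{weak-sigma-field} one sees that $\mathcal{B} = \sigma(W^*)$ is generated by the \emph{countable} family $\{f_j\}_{j\ge 1} \subset W^*$ consisting of the Hahn--Banach functionals attached to a countable dense subset of $W$. Put $T_n := (f_1,\dots,f_n) : W \to \mathbb{R}^n$ and $\mathcal{G}_n := \sigma(f_1,\dots,f_n) = \sigma(T_n)$, so $\mathcal{G}_n \uparrow \mathcal{B}$.

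For $\mathcal{F}C^\infty_c$: let $g$ be bounded and $\mathcal{B}$-measurable, and set $g_n := \mathbb{E}[g \mid \mathcal{G}_n]$. By L\'evy's upward martingale convergence theorem (equivalently, a routine monotone-class argument) $g_n \to \mathbb{E}[g\mid\mathcal{B}] = g$ almost surely, and since $|g_n| \le \|g\|_\infty$ and $\mu$ is finite, dominated convergence gives $g_n \to g$ in $L^p(\mu)$. Being $\sigma(T_n)$-measurable, $g_n = h_n \circ T_n$ for some Borel $h_n : \mathbb{R}^n \to \mathbb{R}$, which we may take with $\|h_n\|_\infty \le \|g\|_\infty$. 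Given $\epsilon > 0$, pick $n$ with $\|g - g_n\|_{L^p(\mu)} < \epsilon/2$; letting $\nu_n := \mu \circ T_n^{-1}$ be the (finite) pushforward on $\mathbb{R}^n$, pick $\psi \in C^\infty_c(\mathbb{R}^n)$ with $\|\psi - h_n\|_{L^p(\nu_n)} < \epsilon/2$, which exists because $C^\infty_c(\mathbb{R}^n)$ is dense in $L^p$ of any finite Borel measure on $\mathbb{R}^n$ (approximate $h_n$ by a continuous compactly supported function via regularity/Lusin, then mollify). Then $\psi \circ T_n \in \mathcal{F}C^\infty_c(W)$ and $\|g - \psi \circ T_n\|_{L^p(\mu)} \le \|g - g_n\|_{L^p(\mu)} + \|h_n - \psi\|_{L^p(\nu_n)} < \epsilon$. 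Hence $\overline{\mathcal{F}C^\infty_c} = L^p(W,\mu)$. Note this argument approximates each $g$ directly, so it is unaffected by the fact that $\mathcal{F}C^\infty_c$ is not itself a linear subspace (a sum $\varphi(T) + \psi(S)$ generally fails to have compact support in the combined variables).

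For $\mathcal{T}$: since $\overline{\mathcal{T}}$ is closed, it suffices to show $\overline{\mathcal{T}} \supseteq \mathcal{F}C^\infty_c$, which with the previous paragraph gives $\overline{\mathcal{T}} = L^p(W,\mu)$. Fix $F = \varphi \circ T$ with $T = (f_1,\dots,f_n)$, $\varphi \in C^\infty_c(\mathbb{R}^n)$ supported in $(-R,R)^n$, and $\nu := \mu \circ T^{-1}$. Given $\epsilon > 0$, choose $L > 2R$ with $\nu\big(\mathbb{R}^n \setminus [-L/2,L/2)^n\big) < \epsilon$. The $L$-periodic extension $\widetilde\varphi$ of $\varphi$ (in each coordinate) is $C^\infty$ on the torus $(\mathbb{R}/L\mathbb{Z})^n$, since $\varphi$ and all its derivatives vanish near the boundary of $[-L/2,L/2]^n$; hence the partial sums of its multiple Fourier series converge to $\widetilde\varphi$ uniformly on $\mathbb{R}^n$. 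Pick such a partial sum $\psi$ with $\|\psi - \varphi\|_{\infty,[-L/2,L/2)^n} < \epsilon$ and $\|\psi\|_\infty \le \|\varphi\|_\infty + 1$. As $\psi$ is a finite combination of exponentials $e^{i\lambda\cdot t}$ with $\lambda \in \frac{2\pi}{L}\mathbb{Z}^n$, we have $\psi \circ T = \sum_\lambda a_\lambda e^{i g_\lambda(x)}$ with $g_\lambda := \lambda \cdot (f_1,\dots,f_n) \in W^*$, so $\psi \circ T \in \mathcal{T}$ (interpreting $\mathcal{T}$ in complex $L^p$, or equivalently as real combinations of the $\cos g_\lambda$ and $\sin g_\lambda$; density in real $L^p$ then follows by taking real parts). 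Splitting $\mathbb{R}^n$ into $[-L/2,L/2)^n$, where $|\psi - \varphi| < \epsilon$, and its complement, where $\varphi \equiv 0$ and $|\psi| \le \|\varphi\|_\infty + 1$, yields $\|\psi \circ T - F\|_{L^p(\mu)}^p = \int_{\mathbb{R}^n} |\psi - \varphi|^p \, d\nu \le \epsilon^p + (\|\varphi\|_\infty+1)^p\,\epsilon$, which tends to $0$ as $\epsilon \to 0$. Hence $F \in \overline{\mathcal{T}}$.

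The only genuine obstacle is the tail estimate in the last paragraph: a trigonometric polynomial approximating the compactly supported $\varphi$ well on a box is periodic, hence emphatically \emph{not} small far from $\operatorname{supp}\varphi$, so uniform approximation on all of $\mathbb{R}^n$ is impossible; one must first discard a set of small $\nu$-measure (using finiteness of $\nu$) and only then approximate uniformly on a fixed large box. Everything else — the reduction to bounded functions, the martingale/pushforward passage to $\mathbb{R}^n$ underwritten by $\sigma(W^*) = \mathcal{B}$, and the classical density of $C^\infty_c$ and of trigonometric polynomials in $L^p$ of a finite measure — is routine.
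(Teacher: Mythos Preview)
Your proof is correct. The paper does not actually give a proof here; it only remarks that ``a nice way to prove this is via Dynkin's multiplicative system theorem,'' referring to Driver's notes for details. So there is a genuine methodological difference to describe.

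The Dynkin approach the paper has in mind exploits that both $\mathcal{T}$ and $\mathcal{F}C^\infty_c$ are closed under multiplication (for the latter, $\varphi(f_1,\dots,f_n)\cdot\psi(g_1,\dots,g_m)$ is again a compactly supported smooth function of the combined variables), contain the constants, and generate the Borel $\sigma$-algebra via Lemma~\ref{weak-sigma-field}. One then checks that the $L^p$-closure, restricted to bounded functions, is stable under bounded pointwise convergence, and the functional monotone-class machinery does the rest in one stroke. This is slick and treats both classes uniformly, but it is nonconstructive: you never see the approximants.

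Your route is more hands-on. For $\mathcal{F}C^\infty_c$ you use the martingale convergence $g_n = \mathbb{E}[g\mid\mathcal{G}_n]\to g$ along the filtration generated by countably many separating functionals, then push forward to $\mathbb{R}^n$ and invoke classical density of $C^\infty_c$ in $L^p$ of a finite Borel measure. For $\mathcal{T}$ you bootstrap from the first part by approximating each $\varphi\in C^\infty_c(\mathbb{R}^n)$ by a Fourier partial sum on a large torus, with the key observation that the periodic tail is controlled in $L^p(\nu)$ by choosing the period large enough. This buys you explicit approximants and makes the finite-dimensional reduction completely transparent; the cost is that the two classes need separate arguments and a bit more bookkeeping (the tail estimate, the real-vs-complex coefficients remark). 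Both approaches rest on the same foundation, namely $\sigma(W^*)=\mathcal{B}$.
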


A nice way to prove this is via Dynkin's multiplicative system theorem
(a functional version of the $\pi$-$\lambda$ theorem).

\begin{theorem}[Uniqueness of the Fourier transform]\label{fourier-unique}
  Let $\mu$, $\nu$ be two Borel probability measures on $W$.  If $\int
  e^{i f(x)} \mu(dx) = \int e^{i f(x)} \nu(dx)$ for all $f \in W^*$,
  then $\mu = \nu$.
\end{theorem}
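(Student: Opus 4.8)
The plan is to reduce to the classical finite-dimensional uniqueness theorem for characteristic functions, and then pass from finite-dimensional marginals to all of $\mathcal{B}$ using Lemma \ref{weak-sigma-field}.

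First I would fix finitely many functionals $f_1, \dots, f_n \in W^*$ and form the continuous (hence Borel) map $T = (f_1, \dots, f_n) \colon W \to \R^n$. Let $\mu_T = \mu \circ T^{-1}$ and $\nu_T = \nu \circ T^{-1}$ be the pushforwards, which are Borel probability measures on $\R^n$. For any $\lambda = (\lambda_1, \dots, \lambda_n) \in \R^n$ the functional $\sum_{j} \lambda_j f_j$ lies in $W^*$, so the hypothesis gives
\begin{equation*}
  \int_{\R^n} e^{i \lambda \cdot y}\, \mu_T(dy) = \int_W e^{i \sum_j \lambda_j f_j(x)}\, \mu(dx) = \int_W e^{i \sum_j \lambda_j f_j(x)}\, \nu(dx) = \int_{\R^n} e^{i \lambda \cdot y}\, \nu_T(dy).
\end{equation*}
Thus $\mu_T$ and $\nu_T$ have the same characteristic function on $\R^n$, so by the classical uniqueness of the Fourier transform on $\R^n$ we get $\mu_T = \nu_T$. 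Equivalently, $\mu(T^{-1}(B)) = \nu(T^{-1}(B))$ for every Borel $B \subseteq \R^n$; in other words, $\mu$ and $\nu$ agree on every cylinder set.

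Next I would let $\mathcal{C}$ be the collection of all such cylinder sets. One checks directly that $\mathcal{C}$ is a $\pi$-system (given two cylinder sets, list all the functionals occurring in either one and rewrite both as cylinder sets over the same tuple of functionals; their intersection is then visibly a cylinder set) and that $W \in \mathcal{C}$. Since $\sigma(\mathcal{C}) = \sigma(W^*) = \mathcal{B}$ by Lemma \ref{weak-sigma-field}, and $\mu, \nu$ are probability measures agreeing on the generating $\pi$-system $\mathcal{C}$, Dynkin's $\pi$-$\lambda$ theorem yields $\mu = \nu$ on $\mathcal{B}$.

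There is no genuinely hard step: the substantive inputs are Lemma \ref{weak-sigma-field} (already proved, and it is here that the separability of $W$ is used --- without it, finite-dimensional marginals need not determine the measure) and the classical finite-dimensional Fourier uniqueness theorem (standard, via Fourier inversion or a Stone--Weierstrass argument). The only thing demanding a little care is the bookkeeping in the middle step, namely verifying that the cylinder sets form a $\pi$-system and not merely a generating family, so that $\pi$-$\lambda$ applies. Alternatively one can bypass cylinder sets: put $\rho = \frac{1}{2}(\mu+\nu)$, so $\mu,\nu \ll \rho$; by linearity the hypothesis gives $\int F\, d\mu = \int F\, d\nu$ for all $F \in \mathcal{T}$; since $\mathcal{T}$ is dense in $L^1(W,\rho)$ and $\frac{d\mu}{d\rho}-\frac{d\nu}{d\rho}$ is a bounded, hence $L^1(W,\rho)^*$, function annihilating $\mathcal{T}$, it vanishes $\rho$-a.e., so $\mu=\nu$.
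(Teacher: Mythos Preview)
Your proof is correct. Note, though, that the paper does not actually give its own proof of this theorem: it is one of the results explicitly ``quoted'' from \cite{driver-probability}, with only the remark that the proofs run along the same lines as standard approximation theorems in measure theory. The placement of the statement --- immediately after the density of $\mathcal{T}$ in $L^p(W,\mu)$ for arbitrary Borel probability $\mu$ --- strongly suggests that the intended argument is precisely your second, ``alternative'' route via $\rho = \tfrac{1}{2}(\mu+\nu)$ and the density of $\mathcal{T}$ in $L^1(W,\rho)$.

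Your primary approach (push forward to $\R^n$, invoke classical Fourier uniqueness, then use that cylinder sets form a $\pi$-system generating $\mathcal{B}$ via Lemma~\ref{weak-sigma-field}) is a perfectly good and standard alternative. It has the minor advantage of being self-contained --- it does not lean on the $L^p$-density theorem for $\mathcal{T}$, which the paper also leaves unproved --- at the cost of invoking the finite-dimensional uniqueness theorem as a black box. Both routes ultimately rest on Lemma~\ref{weak-sigma-field}, which is where separability of $W$ enters, as you correctly point out.
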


We could think of the Fourier transform of $\mu$ as the map $\hat{\mu}
: W^* \to \R$ defined by $\hat{\mu}(f) = \int e^{i f(x)} \mu(dx)$.
The previous theorem says that $\hat{\mu}$ completely determines
$\mu$.

\subsection{Fernique's theorem}

The first result we want to prove is Fernique's theorem
\cite{fernique70}, which in some sense says that a Gaussian measure
has Gaussian tails: the probability of a randomly sampled point being
at least a distance $t$ from the origin decays like $e^{-t^2}$.  In
one dimension this is easy to prove: if $\mu$ is a Gaussian measure on
$\R$ with, say, variance 1, we have
\begin{equation}\label{gaussian-tail-1d}
\begin{split}
  \mu(\{x : |x| > t\}) &= 2 \int_t^\infty \frac{1}{\sqrt{2 \pi}}
  e^{-x^2/2}\,dx \\ 
  &\le \frac{2}{\sqrt{2\pi}} \int_t^\infty \frac{x}{t} e^{-x^2/2}\,dx
  \\
  &= \frac{2}{t \sqrt{2 \pi}} e^{-t^2/2}
\end{split}
\end{equation}
where the second line uses the fact that $\frac{x}{t} \ge 1$ for $x
\ge t$, and the third line computes the integral directly.

This is sort of like a heat kernel estimate.

\begin{theorem}[Fernique \cite{fernique70}]\label{fernique-theorem}
  Let $(W,\mu)$ be an abstract Wiener space.  There exist $\epsilon >
  0$, $C > 0$ such that for all $t$,
  \begin{equation*}
    \mu(\{x : ||x||_W \ge t\}) \le C e^{-\epsilon t^2}.
  \end{equation*}
\end{theorem}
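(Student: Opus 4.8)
The plan is to use the classical Fernique argument based on a symmetry/rotation trick. The key observation is that if $X, Y$ are independent $W$-valued random variables each with law $\mu$, then the rotated pair
\[
  \left( \frac{X - Y}{\sqrt 2}, \ \frac{X + Y}{\sqrt 2} \right)
\]
has the same joint law as $(X, Y)$. This follows because both components are again Gaussian (being continuous linear images of the Gaussian $(X,Y)$ on $W \times W$) and a computation of the covariance form against functionals $f \in W^*$ shows the joint Fourier transform is unchanged; then Theorem \ref{fourier-unique} gives equality of laws. This step is where I'd need to be a little careful to phrase things in terms of $W^*$ and the covariance form, but it is essentially the finite-dimensional fact that the standard Gaussian on $\R^{2n}$ is invariant under the orthogonal rotation by $\pi/4$.

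**Next I would** turn this into a recursion on tail probabilities. Write $p(t) = \mu(\{\|x\|_W > t\})$. Using independence and the rotation invariance, for any $s < t$,
\[
  p(s)\, p(t) = \Pr(\|X\| \le s)\, \Pr(\|Y\| > t)
\]
wait — more precisely one estimates $\Pr(\|X\| > t)\Pr(\|X\| \le s)$ from below by the probability that $\|X - Y\|/\sqrt 2 \le s$ and $\|X + Y\|/\sqrt 2 > t$, and then via the triangle inequality $\|X\|, \|Y\| \le s$ forces $\|X-Y\|/\sqrt2 \le s\sqrt 2$ while one of $\|X\|,\|Y\|$ exceeding $\frac{t+s}{\sqrt 2}\cdot\sqrt2$... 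The clean form one wants is roughly
\[
  \frac{1 - p(t)}{\text{something}} \cdot p(t) \ \le \ \left( p\!\left( \frac{t - s}{\sqrt 2} \right) \right)^2 \qu
\]
— i.e. an inequality of the shape $p(\tau_{n+1}) \le C\, p(\tau_n)^2$ along a geometric-type sequence $\tau_n$. Choosing $\tau_0$ large enough that $p(\tau_0) < \tfrac12$ (possible since $\mu$ is a probability measure and $\|\cdot\|_W$ is a.s. finite, so $p(t) \to 0$), iteration of $p(\tau_{n+1}) \le C p(\tau_n)^2$ gives doubly-exponential decay $p(\tau_n) \le (C p(\tau_0))^{2^n}/C$, and since $\tau_n$ grows only geometrically (like $\tau_0 \cdot 2^{n/2}$ up to additive constants), $2^n$ is comparable to $\tau_n^2$. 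Back-substituting yields $p(t) \le C' e^{-\epsilon t^2}$ for the chosen subsequence, and monotonicity of $p$ extends the bound to all $t$.

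**The main obstacle** I expect is getting the recursion inequality with the right constants so that the geometric growth of $\tau_n$ matches the doubly-exponential decay — in particular pinning down the precise triangle-inequality bookkeeping that relates $\{\|X-Y\|/\sqrt2 \le s\} \cap \{\|X+Y\|/\sqrt2 > t\}$ to events about $\|X\|$ and $\|Y\|$, and checking that the base case $p(\tau_0) < 1/2$ (or whatever threshold makes the iteration contract) is attainable. That base case needs only that $\|\cdot\|_W$ is $\mu$-measurable and finite $\mu$-a.e., which holds since $W$ is separable so the norm is Borel and takes finite values everywhere on $W$. A secondary point worth stating carefully is that $\mu \times \mu$ on $W \times W$ is again Gaussian with $(W\times W)^* = W^* \times W^*$, which is what legitimizes applying the uniqueness-of-Fourier-transform theorem to identify the law of the rotated pair.
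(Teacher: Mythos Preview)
Your approach is essentially identical to the paper's: the $R_{\pi/4}$ rotation invariance of $\mu \times \mu$, the recursion on tail probabilities via a triangle-inequality estimate, iteration along a sequence growing like $2^{n/2}$, and monotonicity to interpolate. The precise inequality the paper derives (and that you correctly flag as the point needing care) is $\mu(\|x\| \le s)\,\mu(\|x\| \ge t) \le \mu\bigl(\|x\| \ge \tfrac{t-s}{\sqrt 2}\bigr)^2$, obtained by noting that $\|\tfrac{x-y}{\sqrt 2}\| \le s$ and $\|\tfrac{x+y}{\sqrt 2}\| \ge t$ together force $\|x\|,\|y\| \ge \tfrac{t-s}{\sqrt 2}$.
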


The proof is surprisingly elementary and quite ingenious.

Let's prove Fernique's theorem.  We follow Driver's proof
\cite[Section 43.1]{driver-probability}.  Some of
the details will be sketched; refer to Driver to see them filled in.

The key idea is that products of Gaussian measures are
``rotation-invariant.''

\begin{lemma}
  Let $(W,\mu)$ be an abstract Wiener space.  Then the product
  measure $\mu^2 = \mu \times \mu$ is a Gaussian measure on $W^2$.  
\end{lemma}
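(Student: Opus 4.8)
The plan is to verify the definition directly: I must show that every continuous linear functional $F \in (W^2)^*$ is a Gaussian random variable on $(W^2, \mu^2)$. First I would identify $(W^2)^*$ concretely. Since $W^2 = W \times W$ with the product topology (equivalently, any of the standard product norms), its continuous dual is $(W^2)^* \cong W^* \times W^*$: every $F \in (W^2)^*$ is of the form $F(x,y) = f(x) + g(y)$ for some $f, g \in W^*$. This is a standard fact about finite products of Banach spaces, and I would state it with at most a one-line justification.

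Next, given such an $F$, I need to show $F$ is Gaussian under $\mu^2$. Write $F(x,y) = f(x) + g(y)$. On the probability space $(W^2, \mathcal B_{W^2}, \mu^2)$, the map $(x,y) \mapsto f(x)$ depends only on the first coordinate and $(x,y)\mapsto g(y)$ only on the second; since $\mu^2$ is a product measure, these two random variables are independent. Each is Gaussian: $f$ is Gaussian on $(W,\mu)$ by hypothesis (that is what it means for $\mu$ to be a Gaussian measure), and likewise $g$. A sum of two independent Gaussian random variables is Gaussian — this is the one-dimensional fact, easily seen from the characteristic function $\mathbb E[e^{i\lambda(f+g)}] = \mathbb E[e^{i\lambda f}]\,\mathbb E[e^{i\lambda g}] = e^{-\lambda^2 q(f,f)/2}\,e^{-\lambda^2 q(g,g)/2} = e^{-\lambda^2(q(f,f)+q(g,g))/2}$, which is the characteristic function of a centered Gaussian with variance $q(f,f)+q(g,g)$. (The degenerate case where one or both variances vanish is included automatically in this formulation.) Hence $F$ is Gaussian, and since $F$ was arbitrary, $\mu^2$ is a Gaussian measure on $W^2$.

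Finally I should check the ambient space is of the right type: $W^2$ is again a separable Banach space (a finite product of separable Banach spaces is one), so $(W^2, \mu^2)$ is an abstract Wiener space, not merely a Gaussian measure on a topological vector space. I would also note in passing that the covariance form of $\mu^2$ is $q^2(f \oplus g,\, f' \oplus g') = q(f,f') + q(g,g')$, i.e. the orthogonal direct sum of two copies of $q$, which is exactly the statement that $\mu^2$ is rotation-invariant in the sense wanted for the Fernique argument: an orthogonal transformation of $W^* \times W^*$ preserving this form corresponds to a measure-preserving map of $W^2$.

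There is no real obstacle here; the only mild subtlety is the identification $(W^2)^* = W^* \times W^*$, and the fact that independence of $f$ and $g$ as random variables on $W^2$ comes from the product structure of $\mu^2$ rather than from any orthogonality in $W^*$. Everything else is the one-dimensional computation above together with the definitions already in place.
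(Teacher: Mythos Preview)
Your proof is correct and essentially the same as the paper's: both decompose an arbitrary $F \in (W^2)^*$ as $F(x,y) = f(x) + g(y)$ with $f,g \in W^*$, then compute the characteristic function via the product structure (Fubini/independence) to get $e^{-\lambda^2(q(f,f)+q(g,g))/2}$. Your additional remarks about $W^2$ being a separable Banach space and the form of $q^2$ match what the paper observes as well.
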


If you're worried about technicalities, you can check the following: $W^2$ is a Banach space
under the norm $||(x,y)||_{W^2} := ||x||_W + ||y||_W$; the norm
topology is the same as the product topology; the Borel $\sigma$-field
on $W^2$ is the same as the product of the Borel $\sigma$-fields on
$W$.

\begin{proof}
  Let $F \in (W^2)^*$.  If we set $f(x) = F(x,0)$, $g(y) = F(0,y)$, we
  see that $f,g \in W^*$ and $F(x,y) =f(x) + g(y)$.  Now when we
  compute the Fourier transform of $F$, we find
  \begin{align*}
    \int_{W^2} e^{i \lambda F(x,y)} \mu^2(dx,dy) &= \int_W e^{i \lambda
    f(x)} \mu(dx)
    \int_W e^{i \lambda g(y)} \mu(dy) \\
    &= e^{-\frac{1}{2}\lambda^2 (q(f,f) + q(g,g))}.
  \end{align*}
\end{proof}

\begin{proposition}
  For $\theta \in \R$, define the ``rotation'' $R_\theta$ on $W^2$ by
  \begin{equation*}
    R_\theta(x,y) = (x \cos\theta - y \sin\theta, x \sin \theta + y
    \cos \theta).
  \end{equation*}
  (We are actually only going to use $R_{\pi/4}(x,y) =
  \frac{1}{\sqrt{2}} (x-y, x+y)$.)  If $\mu$ is Gaussian, $\mu^2$ is
  invariant under $R_\theta$.
\end{proposition}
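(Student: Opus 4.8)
The plan is to verify invariance by comparing Fourier transforms, using the uniqueness theorem for the Fourier transform (Theorem \ref{fourier-unique}) applied to the two Borel probability measures $\mu^2$ and $(R_\theta)_*\mu^2 = \mu^2 \circ R_\theta^{-1}$ on $W^2$. Since we have already shown that $\mu^2$ is a Gaussian measure on $W^2$ with some covariance form $Q$ on $(W^2)^*$, and since the pushforward of a Gaussian measure under a continuous linear map is again Gaussian, both measures are Gaussian; so it suffices to check that their covariance forms agree, or equivalently that $\int_{W^2} e^{iF} \, d\mu^2 = \int_{W^2} e^{iF}\, d(R_\theta)_*\mu^2$ for every $F \in (W^2)^*$.

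First I would fix $F \in (W^2)^*$ and, exactly as in the proof of the preceding lemma, write $F(x,y) = f(x) + g(y)$ with $f(x) = F(x,0)$, $g(y) = F(0,y)$, both in $W^*$. The right-hand integral is $\int_{W^2} e^{iF(R_\theta(x,y))}\,\mu^2(dx,dy)$, and since $R_\theta$ is linear, $F \circ R_\theta$ is again a continuous linear functional on $W^2$; explicitly,
\begin{equation*}
  F(R_\theta(x,y)) = f(x\cos\theta - y\sin\theta) + g(x\sin\theta + y\cos\theta) = (\cos\theta\, f + \sin\theta\, g)(x) + (-\sin\theta\, f + \cos\theta\, g)(y).
\end{equation*}
Writing $u = \cos\theta\, f + \sin\theta\, g \in W^*$ and $v = -\sin\theta\, f + \cos\theta\, g \in W^*$, and using the product structure together with the fact that $\mu$ is Gaussian with covariance form $q$, the right-hand side becomes $e^{-\frac12(q(u,u) + q(v,v))}$, while the left-hand side is $e^{-\frac12(q(f,f) + q(g,g))}$. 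So the whole proposition reduces to the identity $q(u,u) + q(v,v) = q(f,f) + q(g,g)$.

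The last step is the only computational point, and it is where one must be a little careful since $q$ need not come from an orthogonal structure on $W^*$ — but it is bilinear and symmetric, so expanding is routine: $q(u,u) = \cos^2\theta\, q(f,f) + 2\sin\theta\cos\theta\, q(f,g) + \sin^2\theta\, q(g,g)$ and $q(v,v) = \sin^2\theta\, q(f,f) - 2\sin\theta\cos\theta\, q(f,g) + \cos^2\theta\, q(g,g)$, and the cross terms cancel while the $\sin^2 + \cos^2 = 1$ terms reassemble to $q(f,f) + q(g,g)$. Thus the Fourier transforms of $\mu^2$ and $(R_\theta)_*\mu^2$ agree on all of $(W^2)^*$, and Theorem \ref{fourier-unique} gives $(R_\theta)_*\mu^2 = \mu^2$, i.e. $\mu^2$ is $R_\theta$-invariant. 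I do not anticipate a genuine obstacle here; the only thing to keep honest is that all the maps involved ($R_\theta$, $F\circ R_\theta$) are continuous and linear so that everything lands in $W^*$ and the Gaussian Fourier formula applies, and that one invokes Fubini/Tonelli legitimately to split the product integral — both of which are immediate.
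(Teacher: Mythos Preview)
Your proposal is correct and takes essentially the same approach as the paper: compare Fourier transforms of $\mu^2$ and $(R_\theta)_*\mu^2$, decompose $F\in (W^2)^*$ as $f(x)+g(y)$, split the product integral, and use $\sin^2\theta+\cos^2\theta=1$ together with the cancellation of the $q(f,g)$ cross terms. Your write-up is in fact slightly more explicit about the cross-term cancellation than the paper's, which simply displays the final exponent.
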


Invariance of $\mu^2$ under $R_{\pi/4}$ is the only hypothesis we need in order
to prove Fernique's theorem.  You might think this is a very weak
hypothesis, and hence Fernique's theorem should apply to many other
classes of measures.  However, it can actually be shown that any
measure $\mu$ satisfying this condition must in fact be Gaussian, so
no generality is really gained.

\begin{proof}
  Let $\nu = \mu^2 \circ R_\theta^{-1}$; we must show that
  $\nu = \mu^2$.  It is enough to compare their Fourier transforms.
  Let $F \in (W^2)^*$, so $W(x,y) = f(x) + g(y)$, and then
  \begin{align*}
    \int_{W^2} e^{i (f(x) + g(y))} \nu(dx,dy) &= \int_{W^2} e^{i (f(Tx) +
    g(Ty))} \mu^2(dx,dy) \\
    &= \int_{W^2} e^{i (\cos \theta f(x) - \sin \theta f(y) +
    \sin\theta g(x) + \cos\theta g(y))}\, \mu^2(dx,dy) \\
    &= \int_{W} e^{i (\cos \theta f(x) +\sin\theta g(x))}\, \mu(dx)
    \int_{W} e^{i (- \sin \theta f(y) + \cos\theta g(y))}\, \mu(dy)
    \\
    &= e^{-\frac{1}{2}(\cancel{(\sin^2 \theta + \cos^2 \theta)}q(f,f)
    + \cancel{(\sin^2 \theta + \cos^2 \theta)}q(g,g))} \\
    &= \int_{W^2} e^{i (f(x) + g(y))} \mu^2(dx,dy).
\end{align*}
\end{proof}

We can now really prove Fernique's theorem.

\begin{proof}
  In this proof we shall write $\mu(\norm{x} \le t)$ as shorthand for
  $\mu(\{x : \norm{x} \le t\})$, etc.  

  Let $0 \le s \le t$, and consider
  \begin{align*}
    \mu(\norm{x} \le s) \mu(\norm{x} \ge t) &= \mu^2(\{(x,y) : \norm{x} \le s,
    \norm{y} \ge t\}) \\
    &=  \mu^2\left(\norm{\frac{1}{\sqrt{2}}(x-y)} \le s,
    \norm{\frac{1}{\sqrt{2}}(x+y)} \ge t\right)
  \end{align*}
  by $R_{\pi/4}$ invariance of $\mu^2$.  Now some gymnastics with the
  triangle inequality shows that if we have $\norm{\frac{1}{\sqrt{2}}(x-y)}
  \le s$ and $\norm{\frac{1}{\sqrt{2}}(x+y)} \ge t$, then $\norm{x},
  \norm{y} \ge \frac{t-s}{\sqrt{2}}$.  So we have
  \begin{align*}
    \mu(\norm{x} \le s) \mu(\norm{x} \ge t) &\le \mu^2\left(\norm{x} \ge
    \frac{t-s}{\sqrt{2}}, \norm{y} \ge \frac{t-s}{\sqrt{2}}\right)\\ 
    &= \left(\mu\left(\norm{x} \ge \frac{t-s}{\sqrt{2}}\right)\right)^2.
  \end{align*}
  If we rearrange and let $a(t) = \frac{\mu(\norm{x} \ge
  t)}{\mu(\norm{x} \le s)}$, this gives
  \begin{equation}\label{a-relation}
    a(t) \le a\left(\frac{t-s}{\sqrt{2}}\right)^2.
  \end{equation}
  Once and for all, fix an $s$ large enough that $\mu(\norm{x} \ge s)
  < \mu(\norm{x} \le s)$ (so that $a(s) < 1$).  Now we'll iterate
  (\ref{a-relation}).  Set $t_0 = s$, $t_{n+1} = \sqrt{2}(t_n + s)$,
  so that (\ref{a-relation}) reads $a(t_{n+1}) \le a(t_n)^2$, which by
  iteration implies $a(t_n) \le a(s)^{2^n}$.

  Since $t_n \uparrow \infty$, for any $r \ge s$ we have $t_n \le r
  \le t_{n+1}$ for some $n$.  Note that
  \begin{equation*}
    t_{n+1} = s \sum_{k=0}^{n+1} 2^{k/2} \le C 2^{n/2}
  \end{equation*}
  since the largest term dominates.  $a$ is decreasing so we have
  \begin{align*}
    a(r) \le a(t_n) \le a(s)^{2^n} \le a(s)^{r^2/C^2}
  \end{align*}
  so that $a(r) \le e^{-\epsilon r^2}$, taking $\epsilon = -\log(a(s))
  /C^2$.  Since $a(r) = \mu(\norm{x} \le s) \mu(\norm{x} \ge r)$ we
  are done. 
\end{proof}

\begin{corollary}
  If $\epsilon$ is as provided by Fernique's theorem, for $\epsilon' <
  \epsilon$ we have  $\int_W
  e^{\epsilon' ||x||^2} \mu(dx) < \infty$.
\end{corollary}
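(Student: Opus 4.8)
The plan is to reduce the integral to the tail bound furnished by Fernique's theorem, via the layer-cake (distribution function) formula. First I would write, using Tonelli on the identity $e^{\epsilon'\norm{x}^2} = \int_0^\infty \mathbf{1}_{\{\lambda < e^{\epsilon'\norm{x}^2}\}}\,d\lambda$,
\begin{equation*}
  \int_W e^{\epsilon' \norm{x}^2}\, \mu(dx) = \int_0^\infty \mu\!\left(e^{\epsilon' \norm{x}^2} > \lambda\right) d\lambda.
\end{equation*}
Since $e^{\epsilon'\norm{x}^2} \ge 1$ always, the integrand equals $1$ for $\lambda \in (0,1]$, contributing a harmless $1$ to the total; for $\lambda > 1$ the event in question is exactly $\{\norm{x} > \sqrt{(\log \lambda)/\epsilon'}\}$.

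Next I would change variables $\lambda = e^{\epsilon' t^2}$, so that $t$ runs over $[0,\infty)$ as $\lambda$ runs over $[1,\infty)$ and $d\lambda = 2\epsilon' t\, e^{\epsilon' t^2}\, dt$. This turns the $\lambda > 1$ part of the integral into $\int_0^\infty \mu(\norm{x} > t)\, 2\epsilon' t\, e^{\epsilon' t^2}\, dt$. Now I apply Fernique's theorem, $\mu(\norm{x} \ge t) \le C e^{-\epsilon t^2}$, to bound this by
\begin{equation*}
  2 C \epsilon' \int_0^\infty t\, e^{-(\epsilon - \epsilon') t^2}\, dt = \frac{C \epsilon'}{\epsilon - \epsilon'} < \infty,
\end{equation*}
which is finite precisely because $\epsilon - \epsilon' > 0$. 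Adding the contribution of $1$ from $\lambda \le 1$ gives the stated bound on $\int_W e^{\epsilon'\norm{x}^2}\,\mu(dx)$.

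There is no real obstacle here; the one point worth noting is that the strict inequality $\epsilon' < \epsilon$ is exactly what is needed for convergence of the final Gaussian-type integral, so this argument genuinely requires it. One could equally well avoid the change of variables by splitting $W$ into the annuli $A_n = \{n \le \norm{x} < n+1\}$ and summing the series $\sum_n e^{\epsilon'(n+1)^2}\, \mu(A_n) \le \sum_n C\, e^{\epsilon'(n+1)^2 - \epsilon n^2}$, which converges for the same reason; all routes come down to the same cancellation of the exponents.
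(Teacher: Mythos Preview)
Your proof is correct and follows essentially the same approach as the paper: apply the layer-cake formula $\int_W e^{\epsilon'\norm{x}^2}\,\mu(dx) = \int_0^\infty \mu(e^{\epsilon'\norm{x}^2} > \lambda)\,d\lambda$, then feed in Fernique's tail bound. The only cosmetic difference is that the paper bounds the integrand directly by $C\lambda^{-\epsilon/\epsilon'}$ (integrable at infinity since $\epsilon/\epsilon' > 1$), whereas you first change variables $\lambda = e^{\epsilon' t^2}$; your treatment of the region $\lambda \le 1$ is in fact more careful than the paper's.
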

\begin{proof}
  Standard trick: for a nonnegative random variable $X$, $EX =
  \int_0^\infty P(X > t)\,dt$.  So
  \begin{align*}
    \int_W e^{\epsilon' ||x||^2}\mu(dx) &= \int_0^\infty \mu(\{x :
    e^{\epsilon' ||x||^2} > t\})dt \\
    &= \int_0^\infty \mu\left(\left\{x :
    ||x|| > \sqrt{\frac{\log t}{\epsilon'}}\right\}\right)dt  \\
    &\le \int_0^\infty t^{-\epsilon/\epsilon'}\,dt < \infty.
  \end{align*}
\end{proof}

The following corollary is very convenient for dominated convergence
arguments.

\begin{corollary}
  For any $p > 0$, $\int_W ||x||_W^p \mu(dx) <\infty$.
\end{corollary}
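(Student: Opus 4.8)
The plan is to deduce this immediately from the previous corollary by comparing polynomial and exponential growth. Fix $p > 0$ and pick some $\epsilon' < \epsilon$, where $\epsilon$ is the constant from Fernique's theorem, so that $\int_W e^{\epsilon' \|x\|_W^2}\,\mu(dx) < \infty$. Since $e^{\epsilon' t^2}$ grows faster than any power of $t$ as $t \to \infty$, there is a constant $C_p$ (depending on $p$ and $\epsilon'$) with $t^p \le C_p e^{\epsilon' t^2}$ for all $t \ge 0$. Applying this with $t = \|x\|_W$ and integrating gives
\begin{equation*}
  \int_W \|x\|_W^p\,\mu(dx) \le C_p \int_W e^{\epsilon' \|x\|_W^2}\,\mu(dx) < \infty.
\end{equation*}

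An alternative route, if one prefers not to invoke the previous corollary, is to go back to the tail bound directly. Using the same layer-cake formula as before,
\begin{equation*}
  \int_W \|x\|_W^p\,\mu(dx) = \int_0^\infty \mu(\|x\|_W^p > t)\,dt = \int_0^\infty \mu\bigl(\|x\|_W > t^{1/p}\bigr)\,dt \le C\int_0^\infty e^{-\epsilon t^{2/p}}\,dt,
\end{equation*}
and the last integral converges since $2/p > 0$ (substitute $u = t^{2/p}$ to reduce it to a Gamma integral). Either way, the argument is a one-liner.

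There is essentially no obstacle here: the only thing to be slightly careful about is the elementary inequality $t^p \le C_p e^{\epsilon' t^2}$ (equivalently, that $\sup_{t \ge 0} t^p e^{-\epsilon' t^2} < \infty$), which follows because $t^p e^{-\epsilon' t^2} \to 0$ as $t \to \infty$ and the function is continuous on $[0,\infty)$. I would present the first version, since it makes the dependence on Fernique's theorem transparent and is the cleanest.
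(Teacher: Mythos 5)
Your proof is correct and is essentially the paper's own argument, which simply notes that $t^p$ grows more slowly than $e^{\epsilon t^2}$ and invokes the preceding corollary. The alternative layer-cake computation is also fine, but the first version is exactly what the paper intends.
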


\begin{proof}
  $t^p$ grows more slowly than $e^{\epsilon t^2}$.
\end{proof}

\begin{corollary}
  The inclusion $W^* \hookrightarrow L^2(\mu)$ is bounded.  In
  particular, the $L^2$ norm on $W^*$ is weaker than the operator norm.
\end{corollary}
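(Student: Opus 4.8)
The plan is to bound the $L^2(\mu)$ norm of a functional $f \in W^*$ directly in terms of its operator norm, using the second-moment bound supplied by the previous corollary. The only input beyond a trivial pointwise estimate is the finiteness of $\int_W \norm{x}_W^2 \,\mu(dx)$, so the argument is short.

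First I would note the pointwise inequality $|f(x)| \le \norm{f}_{W^*}\,\norm{x}_W$, valid for every $f \in W^*$ and $x \in W$ simply by definition of the operator norm. Squaring and integrating against $\mu$ then gives
\begin{equation*}
  \norm{f}_{L^2(\mu)}^2 = \int_W |f(x)|^2 \,\mu(dx) \le \norm{f}_{W^*}^2 \int_W \norm{x}_W^2\,\mu(dx).
\end{equation*}
By the preceding corollary (the case $p = 2$), the quantity $C := \int_W \norm{x}_W^2\,\mu(dx)$ is finite. Hence $\norm{f}_{L^2(\mu)} \le \sqrt{C}\,\norm{f}_{W^*}$ for all $f \in W^*$, which is exactly the assertion that the inclusion $W^* \hookrightarrow L^2(\mu)$ is a bounded linear map. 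Consequently, if $f_n \to f$ in operator norm then $f_n \to f$ in $L^2(\mu)$, i.e.\ the $L^2$ (equivalently, the $q$) topology on $W^*$ is coarser than the operator-norm topology.

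I do not anticipate any genuine obstacle: the whole content is that Fernique's theorem (through the moment corollary) upgrades the fact that each individual $f \in W^*$ lies in $L^2(\mu)$ to the uniform statement that the embedding is bounded. The one thing worth flagging is that this uniformity is really what is being used later when one passes to the $L^2$-closure $K$ of $W^*$; without the bounded-embedding property there would be no reason for that closure to behave well.
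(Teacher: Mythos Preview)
Your proof is correct and is essentially identical to the paper's own argument: both use the pointwise bound $|f(x)| \le \norm{f}_{W^*}\norm{x}_W$, square and integrate, and then invoke the moment corollary to see that $\int_W \norm{x}_W^2\,\mu(dx) < \infty$.
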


\begin{proof}
  For $f \in W^*$, $||f||_{L^2}^2 = \int_W |f(x)|^2 \mu(dx) \le
  ||f||_{W^*}^2 \int_W ||x||^2 \mu(dx) \le C ||f||_{W^*}^2$ by the
  previous corollary.
\end{proof}

(This would be a good time to look at Exercises
\ref{topologies-first}--\ref{topologies-last} to get some practice
working with different topologies on a set.)

Actually we can say more than the previous corollary.  Recall that an
operator $T : X \to Y$ on normed spaces is said to be \textbf{compact}
if it maps bounded sets to precompact sets, or equivalently if for
every bounded sequence $\{x_n\} \subset X$, the sequence $\{T x_n\}
\subset Y$ has a convergent subsequence.

\begin{proposition}
  The inclusion $W^* \hookrightarrow L^2(\mu)$ is compact.
\end{proposition}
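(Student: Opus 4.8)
The plan is to show that the inclusion $W^* \hookrightarrow L^2(\mu)$ maps the unit ball of $W^*$ to a precompact subset of $L^2(\mu)$. I would take a bounded sequence $\{f_n\} \subset W^*$ with $\|f_n\|_{W^*} \le 1$ and try to extract a subsequence converging in $L^2(\mu)$. The natural first move is to pass to a weak-$*$ convergent subsequence: since $W$ is separable, the closed unit ball of $W^*$ is weak-$*$ sequentially compact (Banach--Alaoglu plus separability of $W$), so after relabeling we may assume $f_n \to f$ weak-$*$ for some $f \in W^*$ with $\|f\|_{W^*} \le 1$. Replacing $f_n$ by $f_n - f$, it suffices to show that if $f_n \to 0$ weak-$*$ then $f_n \to 0$ in $L^2(\mu)$.

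Now $f_n \to 0$ weak-$*$ means exactly $f_n(x) \to 0$ for every fixed $x \in W$, i.e. $f_n \to 0$ pointwise $\mu$-a.e. (in fact everywhere). To upgrade this to $L^2(\mu)$ convergence I would invoke a dominated-convergence / uniform-integrability argument: we have $|f_n(x)|^2 \le \|f_n\|_{W^*}^2 \|x\|_W^2 \le \|x\|_W^2$, and by the corollary to Fernique's theorem $\int_W \|x\|_W^2 \, \mu(dx) < \infty$, so $\|x\|_W^2$ is an integrable dominating function independent of $n$. By the dominated convergence theorem, $\int_W |f_n(x)|^2 \mu(dx) \to 0$, which is exactly $\|f_n\|_{L^2(\mu)} \to 0$. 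This completes the argument: every bounded sequence in $W^*$ has an $L^2(\mu)$-convergent subsequence, so the inclusion is compact.

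The main obstacle — really the only subtle point — is justifying the initial extraction of a weak-$*$ convergent subsequence, since this uses separability of $W$ in an essential way (it is what makes the dual unit ball metrizable in the weak-$*$ topology, hence sequentially compact). Everything after that is routine: the key insight is that weak-$*$ convergence of functionals is the same as pointwise convergence on $W$, and Fernique's theorem supplies the uniform integrable bound $\|x\|_W^2 \in L^1(\mu)$ that turns pointwise convergence into $L^2$ convergence. One should note that Gaussianity of $\mu$ enters only through Fernique's theorem; without it the inclusion need not even be bounded.
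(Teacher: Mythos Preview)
Your proof is correct and matches the paper's approach essentially line for line: extract a weak-$*$ convergent subsequence via Alaoglu (using separability of $W$), interpret weak-$*$ convergence as pointwise convergence, and apply dominated convergence with the Fernique-supplied bound $\|x\|_W^2 \in L^1(\mu)$. The only cosmetic difference is that you reduce to the case $f_n \to 0$ by subtracting the limit, whereas the paper works directly with the limit $f$; this changes nothing.
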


\begin{proof}
  Suppose $\{f_n\}$ is a bounded sequence in $W^*$; say
  $\norm{f_n}_{W^*} \le 1$ for all $n$.  By Alaoglu's theorem there is
  a weak-* convergent subsequence $f_{n_k}$, which is to say that
  $f_{n_k}$ converges pointwise to some $f \in W^*$.  Note also that
  $|f_{n_k}(x)| \le \norm{x}_W$ for all $k$, and $\int_W \norm{x}_W^2
  \mu(dx) < \infty$ as we showed.  So by dominated convergence,
  $f_{n_k} \to f$ in $L^2(W,\mu)$, and we found an $L^2$-convergent
  subsequence.
\end{proof}

This fact is rather significant: since compact maps on
infinite-dimensional spaces can't have continuous inverses, this shows
that the $W^*$ and $L^2$ topologies on $W^*$ must be quite different.
In particular:

\begin{corollary}
  $W^*$ is not complete in the $q$ inner product (i.e. in the
  $L^2(\mu)$ inner product), except in the trivial case that $W$ is
  finite dimensional.
\end{corollary}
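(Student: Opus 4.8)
The plan is to leverage the compactness of the inclusion $\iota : W^* \hookrightarrow L^2(\mu)$ just proved, together with the boundedness estimate $\norm{f}_{L^2(\mu)} \le C^{1/2}\norm{f}_{W^*}$ from the earlier corollary, and the standing fact that $(W^*, \norm{\cdot}_{W^*})$ is always a Banach space (being the dual of a normed space). I would argue by contrapositive: assume $W^*$ is complete in the $q$ inner product and deduce that $W$ is finite-dimensional.

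First I would note that, by non-degeneracy, $q(f,f) = \norm{f}_{L^2(\mu)}^2$, so the assumption says precisely that $(W^*, \norm{\cdot}_{L^2(\mu)})$ is itself a Banach space. We thus have two Banach-space norms on the one vector space $W^*$, with the $L^2(\mu)$ norm dominated by a constant multiple of the operator norm. The two-norms theorem (a standard consequence of the open mapping theorem) then forces the two norms to be equivalent; that is, the identity map $(W^*, \norm{\cdot}_{W^*}) \to (W^*, \norm{\cdot}_{L^2(\mu)})$ — which is exactly the inclusion $\iota$ — is a topological isomorphism.

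Next I would observe that a topological isomorphism cannot be compact unless the underlying space is finite-dimensional: it carries the closed unit ball of $W^*$ to a set that is simultaneously a neighborhood of the origin (by openness) and precompact (by compactness of $\iota$), so the closed unit ball of $W^*$ is precompact, and by Riesz's lemma — the very argument used in the proof of the ``no infinite-dimensional Lebesgue measure'' theorem — this forces $\dim W^* < \infty$, hence $\dim W < \infty$. For the ``trivial case'' in the opposite direction, when $\dim W < \infty$ we have $\dim W^* < \infty$ and $q$ is a genuine inner product on a finite-dimensional space, which is automatically complete, so the exceptional clause is sharp. I do not anticipate a real obstacle; the only things to be careful about are that the open mapping theorem requires \emph{both} norms to be complete — which is exactly why the completeness of $(W^*,q)$ is assumed for contradiction — and that compactness of the identity operator is genuinely incompatible with infinite dimension, both of which are routine once the compactness of $\iota$ is in hand.
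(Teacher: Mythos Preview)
Your proposal is correct and follows essentially the same route as the paper: assume completeness of $(W^*,q)$, apply the open mapping theorem to see that the identity map $(W^*,\norm{\cdot}_{W^*})\to(W^*,q)$ is a homeomorphism, then use the previously established compactness of that map to conclude that the unit ball is precompact and hence $W^*$ (and thus $W$) is finite dimensional. You add the explicit mention of Riesz's lemma and the converse direction, which the paper omits, but the argument is the same.
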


\begin{proof}
  We've shown the
  identity map $(W^*, \norm{\cdot}_{W^*}) \to (W^*, q)$ is continuous
  and bijective.  If $(W^*, q)$ is complete, then by the open mapping
  theorem, this identity map is a homeomorphism, i.e. the $W^*$ and
  $q$ norms are equivalent.  But the identity map is also compact,
  which means that bounded sets, such as the unit ball, are precompact
  (under either topology).  This means that $W^*$ is locally compact
  and hence finite dimensional.
\end{proof}

So the closure $K$ of $W^*$ in $L^2(W,\mu)$ is a \emph{proper}
superset of $W^*$.

\subsection{The Cameron--Martin space}\label{sec-cameron-martin}

Our goal is to find a Hilbert space $H \subset W$ which will play the
same role that $\ell^2$ played for $\R^\infty$.  The key is that, for
$h \in H$, the map $W^* \ni f \mapsto f(h)$ should be continuous with
respect to the $q$ inner product on $W^*$.

As before, let $K$ be the closure of $W^*$ in $L^2(W,\mu)$.  We'll
continue to denote the covariance form on $K$ (and on $W^*$) by $q$.
We'll also use $m$ to denote the inclusion map $m : W^* \to K$.
Recall that we previously argued that $m$ is compact.

\begin{lemma}
  Every $k \in K$ is a Gaussian random variable on $(W,\mu)$.
\end{lemma}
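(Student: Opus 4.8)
The plan is to show that $K$, being the $L^2(\mu)$-closure of $W^*$, consists entirely of Gaussian random variables by a limiting argument. First I would recall that every $f \in W^*$ is Gaussian by definition of $\mu$ being a Gaussian measure, so the claim is really that Gaussianity is preserved under $L^2$ limits. Given $k \in K$, pick a sequence $f_n \in W^*$ with $f_n \to k$ in $L^2(\mu)$. Passing to a subsequence I may also assume $f_n \to k$ $\mu$-a.e., though the $L^2$ convergence alone will suffice. Since $f_n \to k$ in $L^2$, the variances $q(f_n, f_n) = \|f_n\|_{L^2}^2$ converge to $\sigma^2 := \|k\|_{L^2}^2$, and the means (all zero) trivially converge to zero.

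The key step is to pass to the limit in characteristic functions. For each $\lambda \in \R$, $L^2$ convergence implies $f_n \to k$ in distribution (or directly: $|e^{i\lambda f_n} - e^{i \lambda k}| \le |\lambda| |f_n - k|$, so $\E[e^{i\lambda f_n}] \to \E[e^{i \lambda k}]$ by Cauchy--Schwarz and the $L^2$ convergence). On the other hand, since each $f_n$ is Gaussian with variance $q(f_n, f_n)$, we have $\E[e^{i \lambda f_n}] = e^{-\frac{1}{2} \lambda^2 q(f_n, f_n)} \to e^{-\frac{1}{2}\lambda^2 \sigma^2}$. Therefore $\E[e^{i \lambda k}] = e^{-\frac{1}{2}\lambda^2 \sigma^2}$ for all $\lambda$, which is exactly the statement that $k$ is a (centered) Gaussian random variable with variance $\sigma^2$ — using the characteristic-function characterization of Gaussianity from the definition in Section 1.

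I do not anticipate a serious obstacle here; the only thing to be slightly careful about is that the conclusion "$k$ is Gaussian" should be read as "the pushforward of $\mu$ under $k$ is a Gaussian measure on $\R$," and the degenerate case $\sigma = 0$ (i.e. $k = 0$ $\mu$-a.e.) is automatically included in the characteristic-function formulation, as was noted when Gaussian measures on $\R$ were first defined. If one prefers to avoid invoking the characteristic-function criterion, an alternative is to note that the $f_n$, being Gaussian, have uniformly bounded $L^p$ norms for every $p$ (their $L^p$ norm is a fixed constant times $\|f_n\|_{L^2}$), so the convergence $f_n \to k$ upgrades to convergence in every $L^p$; then all moments of $k$ match those of a Gaussian, and a Gaussian is determined by its moments. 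But the characteristic-function route is cleaner and is the one I would write up.
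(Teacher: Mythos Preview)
Your proof is correct and follows essentially the same approach as the paper: take $f_n \in W^*$ converging to $k$ in $L^2(\mu)$, and pass to the limit in characteristic functions to conclude $k$ is Gaussian. The paper packages the limit step as a separate lemma (that a distributional limit of centered Gaussians is Gaussian), while you inline the characteristic-function computation directly, but the argument is the same.
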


\begin{proof}
  Since $W^*$ is dense in $K$, there is a sequence $f_n \in W^*$
  converging to $k$ in $L^2(W,\mu)$.  In particular, they converge in
  distribution.  By Lemma \ref{limit-of-gaussian}, $k$ is Gaussian.
\end{proof}

\begin{definition}
  The \textbf{Cameron--Martin space} $H$ of $(W,\mu)$ consists of
  those $h \in W$ such that the evaluation functional $f \mapsto f(h)$
  on $W^*$ is continuous with respect to the $q$ inner product.
\end{definition}

$H$ is obviously a vector space.

For $h \in H$, the map $W^* \ni f \mapsto f(h)$ extends by continuity
to a continuous linear functional on $K$.  Since $K$ is a Hilbert
space this may be identified with an element of $K$ itself.  Thus we
have a mapping $T : H \to K$ such that for $f \in W^*$,
\begin{equation*}
  q(Th, f) = f(h).
\end{equation*}
A natural norm on $H$ is defined by
\begin{equation*}
  \norm{h}_H = \sup\left\{\frac{|f(h)|}{\sqrt{q(f,f)}} : f \in W^*, f \ne 0\right\}.
\end{equation*}
This makes $T$ into an isometry, so $\norm{\cdot}_H$ is in fact
induced by an inner product $\inner{\cdot}{\cdot}_H$ on $H$.

Next, we note that $H$ is continuously embedded into $W$.  We have
previously shown (using Fernique) that the embedding of $W^*$ into $K$
is continuous, i.e. $q(f,f) \le C^2 \norm{f}_{W^*}^2$.  So for $h \in
H$ and $f \in W^*$, we have
\begin{equation*}
  \frac{|f(h)|}{\norm{f}_{W^*}} \le C \frac{|f(h)|}{\sqrt{q(f,f)}}.
\end{equation*}
When we take the supremum over all nonzero $f \in W^*$, the left side
becomes $\norm{h}_W$ (by Hahn--Banach) and the right side becomes $C
\norm{h}_H$.  So we have $\norm{h}_W \le C \norm{h}_H$ and the
inclusion $i : H \hookrightarrow W$ is continuous.

(Redundant given the next paragraph.)  Next, we check that $(H, \norm{\cdot}_H)$ is complete.  Suppose $h_n$
is Cauchy in $H$-norm.  In particular, it is bounded in $H$ norm, so
say $\norm{h_n}_H \le M$ for all $n$.  Since the inclusion of $H$ into
$W$ is bounded, $h_n$ is also Cauchy in $W$-norm, hence converges in
$W$-norm to some $x \in W$.  Now fix $\epsilon > 0$, and choose $n$ so
large that $||h_n - h_m||_H < \epsilon$ for all $m \ge n$.  Given a
nonzero $f \in W^*$, we can choose $m \ge n$ so large that $|f(h_m -
x)| \le \epsilon \sqrt{q(f,f)}$.  Then
\begin{equation*}
  \frac{f(h_n - x)}{\sqrt{q(f,f)}} \le \frac{|f(h_n -
  h_m)}{\sqrt{q(f,f)}} + \frac{|f(h_m - x)|}{\sqrt{q(f,f)}} <
  \norm{h_n - h_m}_H + \epsilon < 2\epsilon.
\end{equation*}
We can then take the supremum over $f$ to find that $\norm{h_n - x}_H
< 2 \epsilon$, so $h_n \to x$ in $H$-norm.

Next, we claim the inverse of $T$ is given by
\begin{equation*}
  J k = \int_W x k(x) \mu(dx)
\end{equation*}
where the integral is in the sense of Bochner.  (To see that the
integral exists, note that by Fernique $\norm{\cdot} \in L^2(W,
\mu)$.)  For $f \in W^*, k \in K$, we have
\begin{equation*}
  \abs{f(Jk)} = \abs{\int_W f(x)
  k(x)\mu(dx)} = \abs{q(f,k)} \le \sqrt{q(f,f) q(k,k)}
\end{equation*}
whence $\norm{\int_W x k(x) \mu(dx)}_H \le \sqrt{q(k,k)}$.  So $J$ is a
continuous operator from $K$ to $H$.  Next, for $f \in W^*$ we have
\begin{equation*}
  q(TJk, f) = f(Jk) = q(k,f)
\end{equation*}
as we just argued.  Since $W^*$ is dense in $K$, we have $TJk = k$.
In particular, $T$ is surjective, and hence unitary.

\begin{question}
  Could we have done this without the Bochner integral?
\end{question}

We previously showed that the inclusion map $i : H \to W$ is
continuous, and it's clearly 1-1.  It has an adjoint operator $i^* :
W^* \to H$.  We note that for $f \in W^*$ and $h \in H$, we have
\begin{align*}
  q(f, Th) = f(h) = \inner{i^* f}{h}_H = q(Ti^*f, Th).
\end{align*}
Since $T$ is surjective we have $q(f,k) = q(T i^* f,k)$ for all $k \in
K$; thus $T i^*$ is precisely the inclusion map $m : W^* \to K$.
Since $m$ is compact and 1-1 and $T$ is unitary, it follows that $i^*$ is
compact and 1-1.  

Since $i^*$ is 1-1, it follows that $H$ is dense in $W$: if $f \in
W^*$ vanishes on $H$, it means that for all $h \in H$, $0 = f(h) =
\inner{i^* f}{h}_H$, so $i^* f = 0$ and $f = 0$.  The Hahn--Banach
theorem then implies $H$ is dense in $W$.  Moreover, Schauder's
theorem from functional analysis (see for example \cite[Theorem
VI.3.4]{conway}) states that an operator between Banach spaces is
compact iff its adjoint is compact, so $i$ is compact as well.  In
particular, $H$ is not equal to $W$, and is not complete in the $W$
norm.

We can sum up all these results with a diagram.

\begin{theorem}
The following diagram commutes.
\begin{equation}
\xymatrix{
 & & W^* \ar@{.>}[ddll]_{i^*} \ar@{.>}[rr]^m & & K \ar@{->}@/^/[ddllll]^J  \\ \\
  H  \ar@{.>}[rr]^i \ar@{->}@/^/[uurrrr]^T & & W
}
\end{equation}
All spaces are complete in their own norms.  All dotted arrows are
compact, 1-1, and have dense image.  All solid arrows are unitary.
\end{theorem}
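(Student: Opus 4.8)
The plan is essentially bookkeeping: every assertion in the theorem has already been proved in the preceding paragraphs, so I would organize the argument as a short sequence of citations to those results, taking care only that the logical order is acyclic. First I would record completeness. The space $W$ is complete by hypothesis (it is a Banach space); $K$ is complete because it was defined as the closure of $W^*$ in $L^2(W,\mu)$, hence is a closed subspace of a complete space; and $(H,\norm{h}_H)$ is complete — either by the direct Cauchy-sequence argument already given, or, more cheaply, because once we know $T:H\to K$ is a surjective isometry, $H$ inherits completeness from $K$.

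Next I would assemble commutativity. The two relations that require an argument are $Ti^* = m$ and $TJ = \mathrm{id}_K$. For the first, test against an arbitrary $f\in W^*$: using $q(f,Th)=f(h)=\inner{i^*f}{h}_H = q(Ti^*f,Th)$ and surjectivity of $T$ we get $q(f,k)=q(Ti^*f,k)$ for all $k\in K$, so $Ti^*f = mf$. For the second, $q(TJk,f)=f(Jk)=q(k,f)$ for all $f\in W^*$, and density of $W^*$ in $K$ gives $TJk=k$. Since $T$ is an isometry and $TJ=\mathrm{id}_K$ is onto, $T$ is surjective, hence unitary; then $J=T^{-1}$ is unitary too and $JT=\mathrm{id}_H$, which closes the remaining triangle. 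The inclusion $i$ carries no further commutativity constraint beyond being the pre-adjoint of $i^*$.

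Then I would dispatch the three dotted arrows. For $m$: it is $1$-$1$ because nondegeneracy means $q$ is positive definite on $W^*$; it has dense image by the very definition of $K$; and it is compact by the proposition proved via Alaoglu's theorem plus dominated convergence (using Fernique to know $\norm{\cdot}_W\in L^2(\mu)$). For $i^*$: since $i^* = T^{-1}m$ with $T$ unitary, it inherits compactness and injectivity from $m$, and $T(i^*(W^*))=m(W^*)$ is dense in $K$, so $i^*(W^*)$ is dense in $H$. For $i$: injectivity is trivial (set inclusion); density of $i(H)$ follows from $i^*$ being $1$-$1$ together with Hahn--Banach (a functional vanishing on $H$ would be annihilated by $i^*$, hence zero); and compactness of $i$ follows from compactness of its adjoint $i^*$ by Schauder's theorem. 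Finally, compactness of $i$ (a compact map cannot be invertible in infinite dimensions) records that $H\neq W$ and that $H$ is not closed in the $W$-norm.

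There is no genuine obstacle here, since all the analysis was done earlier; the one point deserving care — and the only place the write-up could turn circular — is the order of deduction. I would fix the chain ``$m$ compact, $1$-$1$, dense image $\Rightarrow$ $T$ unitary (hence $H$ complete) $\Rightarrow$ $i^*$ compact, $1$-$1$, dense image $\Rightarrow$ $H$ dense in $W$ $\Rightarrow$ $i$ compact via Schauder,'' and then simply verify that no later link is invoked to justify an earlier one.
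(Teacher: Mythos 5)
Your proposal is correct and follows essentially the same route as the paper: the theorem is a summary of the preceding development, and you assemble the same pieces (the relations $Ti^*=m$ and $TJ=\mathrm{id}_K$, compactness of $m$ via Alaoglu and Fernique, transfer to $i^*$ through the unitary $T$, density of $H$ via Hahn--Banach, and compactness of $i$ via Schauder) in the same logical order. Your explicit attention to the acyclicity of the chain of deductions is a nice touch but does not constitute a different argument.
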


Sometimes it's convenient to work things out with a basis.

\begin{proposition}\label{ek-basis}
  There exists a sequence $\{e_k\}_{k=1}^\infty \subset W^*$ which is
  an orthonormal basis for $K$.  $e_k$ are iid $N(0,1)$ random
  variables under $\mu$.  For $h \in H$, we have $\norm{h}_H^2
  = \sum_{k=1}^\infty |e_k(h)|^2$, and the sum is infinite for $h \in
  W \backslash H$.  
\end{proposition}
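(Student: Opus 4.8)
The plan is to produce the basis $\{e_k\}$ by a Gram--Schmidt procedure in the Hilbert space $K$, being slightly careful about which vectors are fed in, and then to read off the remaining assertions from the properties of $T$ and $J$ already established.

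First I would note that $K$ is a separable Hilbert space: it is a closed subspace of $L^2(W,\mu)$, which is separable because $W$ is a separable metric space and hence $\mathcal{B}$ is countably generated. Now choose two countable subsets of $W^*$: a set $\{f_n\}$ whose span is $L^2$-dense in $K$ (possible because $W^*$ is dense in the separable space $K$), and a set $\{g_n\}$ that separates the points of $W$ (possible by Hahn--Banach together with separability of $W$: take a dense sequence $\{x_n\}$ in $W$ and pick $g_n\in W^*$ with $\norm{g_n}_{W^*}=1$ and $g_n(x_n)=\norm{x_n}_W$). Apply Gram--Schmidt with respect to $q$ --- a genuine inner product on $W^*$ by the standing non-degeneracy hypothesis --- to the combined sequence, discarding any vector that is $q$-dependent on its predecessors. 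This produces a family $\{e_k\}\subset W^*$ (each $e_k$ is a finite linear combination of input vectors, hence lies in $W^*$), orthonormal for $q$, with $\operatorname{span}\{e_k\}=\operatorname{span}(\{f_n\}\cup\{g_n\})$. Since this span is $L^2$-dense in $K$, $\{e_k\}$ is an orthonormal basis of $K$; and since it contains each $g_n$, it separates the points of $W$, i.e.\ $\bigcap_k\ker e_k=\{0\}$. (When $W$ is infinite-dimensional --- the case of interest, and the one forced by writing $\{e_k\}_{k=1}^\infty$, since $K$ finite-dimensional would force $W^*$ and hence $W$ finite-dimensional --- this list is genuinely infinite.)

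Two of the remaining claims are then immediate. Each $e_k\in W^*\subset K$ is a centered Gaussian random variable (by the definition of a Gaussian measure, or by the lemma that every element of $K$ is Gaussian) with variance $q(e_k,e_k)=1$, so $e_k\sim N(0,1)$; and because $\{e_k\}$ is $q$-orthogonal it is an independent family by the earlier exercise. For $h\in H$, the defining relation $q(Th,e_k)=e_k(h)$ combined with Parseval in $K$ gives $\norm{Th}_K^2=\sum_k|q(Th,e_k)|^2=\sum_k|e_k(h)|^2$, and this equals $\norm{h}_H^2$ since $T$ is an isometry; in particular the sum is finite.

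The last assertion --- that $\sum_k|e_k(h)|^2=\infty$ for $h\in W\setminus H$ --- is the part I expect to require the most care, and it is precisely here that the point-separating property of $\{e_k\}$ is used: an arbitrary orthonormal basis of $K$ need not separate the points of $W$, because $L^2$-density in $K$ is strictly weaker than operator-norm density in $W^*$. I would prove the contrapositive. Suppose $h\in W$ satisfies $\sum_k|e_k(h)|^2<\infty$. Then $k_h:=\sum_k e_k(h)\,e_k$ converges in $K$, so $h':=Jk_h\in H$. Using $f(Jk)=q(f,k)$ for $f\in W^*$, $k\in K$, together with continuity of $q(f,\cdot)$ on $K$, we get $f(h')=q(f,k_h)=\sum_k e_k(h)\,q(f,e_k)$ for every $f\in W^*$; taking $f=e_l$ yields $e_l(h')=e_l(h)$ for all $l$. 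Since $\{e_l\}$ separates points, $h=h'\in H$. The temptation to shortcut this --- writing $f(h)=\sum_k q(f,e_k)\,e_k(h)$ directly from the $L^2$-expansion $f=\sum_k q(f,e_k)e_k$ --- must be resisted, since evaluation at a fixed point of $W$ is not an $L^2$-continuous operation; routing through $J$ and invoking point-separation is what makes the argument rigorous.
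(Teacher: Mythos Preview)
Your proof is correct, and your caution about the last step is warranted. The first three claims are handled essentially as the paper does (Gram--Schmidt for the basis, joint Gaussianity plus orthonormality for the iid claim, Parseval via the isometry $T$ for $\|h\|_H^2 = \sum_k |e_k(h)|^2$). For the converse the paper argues differently: it bounds $|f(x)|^2 \le M\, q(f,f)$ for $f \in E := \operatorname{span}\{e_k\}$ by Cauchy--Schwarz, and then asserts that since $E$ is $q$-dense in $W^*$ the same bound extends to all $f \in W^*$, giving $x \in H$ by definition. That extension is precisely the step you flag as illegitimate, and it genuinely fails for an arbitrary orthonormal basis of $K$ drawn from $W^*$: in the model $W=\R^\infty$, $W^*=c_{00}$, $K\cong\ell^2$, the subspace $V=\{v\in c_{00}:\sum_j v_j=0\}$ is $\ell^2$-dense but proper in $c_{00}$, so Lemma~\ref{dense-subspace-basis} applied to a dense sequence lying inside $V$ yields a basis $\{e_k\}\subset V$ for which $x=(1,1,1,\dots)\in W\setminus H$ satisfies $e_k(x)=0$ for all $k$. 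Your deliberate inclusion of a point-separating family in the Gram--Schmidt input is exactly what repairs this, and the route through $J$ then closes the argument cleanly.
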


\begin{proof}
  The existence of $\{e_k\}$ is proved in Lemma
  \ref{dense-subspace-basis}.  They are jointly Gaussian random
  variables since $\mu$ is a Gaussian measure.  Orthonormality means
  they each have variance 1 and are uncorrelated, so are iid.

  If $h \in H$, then $\sum_k |e_k(h)|^2 = \sum_k |q(e_k, Th)|^2 =
  \norm{Th}_K^2 = \norm{h}_H^2$ since $T$ is an isometry.  Conversely,
  suppose $x \in W$ and $M := \sum_k |e_k(x)|^2 < \infty$.  Let $E
  \subset X^*$ be the linear span of $\{e_k\}$, i.e. the set of all $f \in
  W^*$ of the form $f = \sum_{k=1}^n a_k e_k$.  For such $f$ we have
  \begin{align*}
    |f(x)|^2 &= \abs{\sum_{k=1}^n a_k e_k(x)}^2 \\ 
    &\le \left(\sum_{k=1}^n |a_k|^2\right) \left(\sum_{k=1}^n
    |e_k(x)|^2\right) && \text{(Cauchy--Schwarz)} \\
    \le M q(f,f)
  \end{align*}
  Thus $x \mapsto f(x)$ is a bounded linear functional on $(E,
  q)$. $(E,q)$ is dense in $(W^*, q)$ so the same bound holds for all
  $f \in X^*$.  Thus by definition we have $x \in H$.
\end{proof}

\begin{proposition}
  $\mu(H) = 0$.
\end{proposition}

\begin{proof}
  For $h \in H$ we have $\sum |e_k(h)|^2 < \infty$.  But since $e_k$
  are iid, by the strong law of large numbers we have that $\sum
  |e_k(x)|^2 = +\infty$ for $\mu$-a.e. $x$. 
\end{proof}

\begin{notation}
  Fix $h \in H$.  Then $\inner{h}{x}_H$ is unambiguous for all $x \in
  H$.  If we interpret $\inner{h}{x}_H$ as $(Th)(x)$, it is also
  well-defined for almost every $x$, and so $\inner{h}{\cdot}_H$ is a
  Gaussian random variable on $(W,\mu)$ with variance $\norm{h}_H^2$.  
\end{notation}

\begin{theorem}[Cameron--Martin]
  For $h \in H$, $\mu_h$ is absolutely continuous with respect to
  $\mu$, and 
  \begin{equation*}
    \frac{d \mu_h}{d \mu}(x) = e^{- \frac{1}{2} \norm{h}_H^2 +
    \inner{h}{x}_H}.
  \end{equation*}
  For $x \in W \backslash H$, $\mu_x$ and $\mu$ are singular.
\end{theorem}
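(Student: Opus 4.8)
The plan is to treat the two assertions separately, in each case reusing machinery already set up.

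\emph{Absolute continuity.} Define a candidate measure $\nu$ on $W$ by $\nu(B)=\int_B e^{-\frac12\norm{h}_H^2+\inner{h}{x}_H}\,\mu(dx)$. This makes sense because $\inner{h}{\cdot}_H=Th$ is a genuine $\mu$-measurable function (a Gaussian random variable of variance $\norm{h}_H^2$, as recorded just before the theorem), and the exponential lies in $L^1(\mu)$ since the one-dimensional Gaussian moment generating function is finite; indeed $\nu(W)=e^{-\frac12\norm{h}_H^2}\,\mathbb{E}_\mu[e^{Th}]=e^{-\frac12\norm{h}_H^2}e^{\frac12\norm{h}_H^2}=1$, so $\nu$ is a Borel probability measure, manifestly absolutely continuous with respect to $\mu$ with exactly the density claimed. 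By the uniqueness theorem for Fourier transforms (Theorem~\ref{fourier-unique}), it then suffices to show $\widehat{\nu}(f)=\widehat{\mu_h}(f)$ for every $f\in W^*$.

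\emph{The Fourier computation.} On one side, the change of variables defining $\mu_h$ together with the Gaussianity of $\mu$ gives $\widehat{\mu_h}(f)=\int_W e^{if(x+h)}\,\mu(dx)=e^{if(h)}e^{-\frac12 q(f,f)}$. On the other side, $f$ and $Th$ both lie in $K\subset L^2(W,\mu)$ and are \emph{jointly} Gaussian: writing $Th=\lim g_n$ with $g_n\in W^*$ in $L^2$, each pair $(f,g_n)$ is jointly Gaussian by the earlier exercise, and $L^2$-convergence forces convergence in distribution, so $(f,Th)$ is Gaussian (cf.\ Lemma~\ref{limit-of-gaussian}). Its covariance data is $\Var(f)=q(f,f)$, $\Var(Th)=\norm{Th}_K^2=\norm{h}_H^2$ (isometry of $T$), and $\Cov(f,Th)=q(f,Th)=f(h)$ (defining property of $T$). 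Splitting $f=\frac{f(h)}{\norm{h}_H^2}Th+R$ with $R$ independent of $Th$ (legitimate since $(f,Th)$ is jointly Gaussian and this $R$ is uncorrelated with $Th$), then applying the one-dimensional Gaussian characteristic function to $R$ and the one-dimensional Gaussian moment generating function (with a complex exponent) to $Th$, one obtains $\mathbb{E}_\mu[e^{if+Th}]=\exp\!\big(if(h)-\frac12 q(f,f)+\frac12\norm{h}_H^2\big)$. Hence $\widehat{\nu}(f)=e^{-\frac12\norm{h}_H^2}\,\mathbb{E}_\mu[e^{if+Th}]=e^{if(h)}e^{-\frac12 q(f,f)}=\widehat{\mu_h}(f)$, as required. (The degenerate case $h=0$ is trivial since $\mu_0=\mu$.)

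\emph{Singularity.} For $x\in W\setminus H$ I would reduce to the special case of Cameron--Martin already proved for $\R^\infty$. Let $\{e_k\}$ be the orthonormal basis of $K$ from Proposition~\ref{ek-basis}, and let $\Phi:W\to\R^\infty$ be the Borel map $\Phi(z)=(e_k(z))_{k\ge1}$. Since the $e_k$ are i.i.d.\ $N(0,1)$ under $\mu$, the pushforward $\Phi_*\mu$ is precisely the product Gaussian measure $\gamma$ on $\R^\infty$; and since each $e_k$ is linear, $\Phi(z+x)=\Phi(z)+\Phi(x)$, from which (using $\mu_x(\cdot)=\mu(\cdot-x)$) one checks $\Phi_*(\mu_x)=\gamma_{\Phi(x)}$. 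By Proposition~\ref{ek-basis}, $x\notin H$ is equivalent to $\sum_k|e_k(x)|^2=\infty$, i.e.\ $\Phi(x)\notin\ell^2$; hence by the $\R^\infty$ theorem $\gamma_{\Phi(x)}$ and $\gamma$ are mutually singular. Picking a Borel $C\subset\R^\infty$ with $\gamma(C)=1$ and $\gamma_{\Phi(x)}(C)=0$, the set $A=\Phi^{-1}(C)$ is Borel in $W$ with $\mu(A)=1$ and $\mu_x(A)=0$, so $\mu_x\perp\mu$. (Alternatively one can mimic the $\R^\infty$ argument directly: since $(e_k(x))\notin\ell^2$, choose $(g_k)\in\ell^2$ with $\sum_k g_k e_k(x)$ divergent, put $A=\{z:\sum_k g_k e_k(z)\ \text{converges}\}$, note $\mu(A)=1$ because $\sum_k g_k^2<\infty$, and observe that $A-x$ is disjoint from $A$ by linearity of the $e_k$, whence $\mu_x(A)=\mu(A-x)=0$.)

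\emph{Where the work is.} Nearly all of the ingredients — measurability and integrability in the definition of $\nu$, density of $W^*$ in $K$, the measure-theoretic bookkeeping for $\Phi$, and the $\R^\infty$ theorem itself — are already in hand from earlier in the notes, so the proof is mostly assembly. The one genuinely computational step, and the only place I expect to need a little care, is the identity $\mathbb{E}_\mu[e^{if+Th}]=\exp(if(h)-\frac12 q(f,f)+\frac12\norm{h}_H^2)$ together with the short argument that $f$ and $Th$ are jointly Gaussian.
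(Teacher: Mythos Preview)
Your proof is correct and follows essentially the same strategy as the paper: compare Fourier transforms for the absolute continuity part, and use the orthonormal basis $\{e_k\}$ from Proposition~\ref{ek-basis} together with Lemma~\ref{ell2} for the singularity part. The paper handles the key computation $\mathbb{E}_\mu[e^{if+Th}]$ a bit more briskly, by formally writing the integrand as $e^{i(f-iTh)}$ and invoking $e^{-\frac12 q(f-iTh,f-iTh)}$ for the ``complex Gaussian'' $f-iTh$; your decomposition $f=\frac{f(h)}{\norm{h}_H^2}Th+R$ into independent pieces is a more careful way of justifying the same identity. For singularity, the paper uses exactly your parenthetical ``alternative'' argument (choose $(g_k)\in\ell^2$ with $\sum g_k e_k(x)$ divergent and set $A=\{z:\sum g_k e_k(z)\ \text{converges}\}$), rather than your primary push-forward-to-$\R^\infty$ reduction; both are fine, and in fact your alternative is word-for-word the paper's route.
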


\begin{proof}
  Suppose $h \in H$.  We have to show $\mu_h(dx) = e^{- \frac{1}{2}
    \norm{h}_H^2 + \inner{h}{x}_H} \mu(dx)$.  It is enough to show
    their Fourier transforms are the same (Theorem
    \ref{fourier-unique}).  For $f \in W^*$ we have
    \begin{equation*}
      \int_W e^{i f(x)} \mu_h(dx) = \int_W e^{i f(x+h)} \mu(dx) = e^{i
      f(h) - \frac{1}{2} q(f,f)}.
    \end{equation*}
    On the other hand,
    \begin{align*}
      \int_W e^{i f(x)} e^{- \frac{1}{2}
    \norm{h}_H^2 + \inner{h}{x}_H} \mu(dx) &= e^{- \frac{1}{2}
    \norm{h}_H^2} \int_W e^{i (f-iTh)(x)} \mu(dx) \\
      &= e^{- \frac{1}{2}
    \norm{h}_H^2} e^{-\frac{1}{2} q(f-iTh, f-iTh)}
    \end{align*}
    since $f - iTh$ is a complex Gaussian random variable (we will let
    the reader check that everything works fine with complex numbers
    here).  But we have
    \begin{align*}
      q(f-iTh, f-iTh) = q(f,f) - 2 i q(f, Th) - q(Th, Th) = q(f,f) -
      2i f(h) - \norm{h}_H^2
    \end{align*}
    by properties of $T$, and so in fact the Fourier transforms are
    equal.

    Conversely, if $x \in W \backslash H$, by Lemma \ref{ek-basis} we
    have $\sum_k |e_k(x)|^2 = \infty$.  By Lemma \ref{ell2} there
    exists $a \in \ell^2$ such that $\sum a_k e_k(x)$ diverges.  Set
    $A = \{y \in W : \sum a_k e_k(y) \text{ converges}\}$.  We know
    that $\sum_k a_k e_k$ converges in $L^2(W,\mu)$, and is a sum of
    independent random variables (under $\mu$), hence it converges
    $\mu$-a.s.  Thus $\mu(A) = 1$.  However, if $y \in A$, then $\sum
    a_k e_k(y-x)$ diverges, so $A-x$ is disjoint from $A$, and thus
    $\mu_x(A) = \mu(A-x) = 0$.      
\end{proof}

\begin{exercise}\label{nondegenerate-implies-full-support}
  $\mu$ has full support, i.e. for any nonempty open set $U$, $\mu(U)
  > 0$.  This is the converse of Exercise
  \ref{full-support-implies-nondegenerate}.  (Hint: First show this
  for $U \ni 0$.  Then note any nonempty open $U$ contains a
  neighborhood of some $h \in H$.  Translate.)  (Question: Can we
  prove this without needing the Cameron--Martin hammer?  I think yes,
  look for references.)
\end{exercise}

\begin{remark}\label{rk-non-degenerate}
  There really isn't any generality lost by assuming that $(W,\mu)$ is
  non-degenerate.  If you want to study the degenerate case, let $F =
  \{f \in W^* : q(f,f) = 0\}$ be the kernel of $q$, and 
  consider the closed subspace
  \begin{equation*}
    W_0 := \bigcap_{f \in F} \ker f \subset W.
  \end{equation*}
  We claim that $\mu(W_0) = 1$.  For each $f \in F$, the condition
  $q(f,f) = \int f^2\,d\mu = 0$ implies that $f = 0$ $\mu$-almost
  everywhere, so $\mu(\ker f) = 1$, but as written, $W_0$ is an
  uncountable intersection of such sets.  To fix that, note that since
  $W$ is separable, the unit ball $B^*$ of $W^*$ is weak-* compact
  metrizable, hence weak-* separable metrizable, hence so is its
  subset $F \cap B^*$.  So we can choose a countable weak-* sequence
  $\{f_n\} \subset F \cap B^*$.  Then I claim
  \begin{equation*}
    W_0 = \bigcap_n \ker f_n.
  \end{equation*}
  The $\subset$ inclusion is obvious.  To see the other direction,
  suppose $x \in \bigcap_n \ker f_n$ and $f \in F$; we will show $f(x)
  = 0$.  By rescaling, we can assume without loss of generality that
  $f \in B^*$.  Now choose a subsequence $f_{n_k}$ converging weak-*
  to $f$; since $f_{n_k}(x) = 0$ by assumption, we have $f(x) = 0$
  also.  Now $W_0$ is written as a \emph{countable} intersection of
  measure-$1$ subsets, so $\mu(W_0) = 1$.

  We can now work on the abstract Wiener space $(W_0, \mu|_{W_0})$.
  Note that the covariance form $q_0$ defined on $W_0^*$ by $q_0(f_0,
  f_0) = \int_{W_0} f_0^2\,d\mu$ agrees with $q$, since given any
  extension $f \in W^*$ of $f_0$ will satisfy
  \begin{equation*} q(f,f) = \int_W
  f^2\,d\mu = \int_{W_0} f^2\,d\mu = \int_{W_0} f_0^2\,d\mu = q_0(f_0,
  f_0).
  \end{equation*}

  This makes it easy to see that that $q_0$ is positive definite on
  $W_0^*$.  Suppose $q_0(f_0, f_0) = 0$ and use Hahn--Banach to choose
  an extension $f \in W^*$ of $f_0$.  Then $q(f,f) = 0$, so by
  definition of $W_0$, we have $W_0 \subset \ker f$; that is, $f$
  vanishes on $W_0$, so the restriction $f_0 = f|_{W_0}$ is the zero
  functional.

  It now follows, from the previous exercise, that the support of
  $\mu$ is precisely $W_0$.  So $(W_0, \mu|_{W_0})$ is a
  non-degenerate abstract Wiener space, and we can do all our work on
  this smaller space.
  
  I'd like to thank Philipp Wacker for suggesting this remark and sorting out
  some of the details.
\end{remark}

\subsection{Example: Gaussian processes}

Recall that a one-dimensional stochastic process $X_t, 0 \le t \le 1$
is said to be \textbf{Gaussian} if, for any $t_1, \dots, t_n \ge 0$,
the random vector $(X_{t_1}, \dots, X_{t_n})$ has a joint Gaussian
distribution.  If the process is continuous, its distribution gives a
probability measure $\mu$ on $W = C([0,1])$.  If there is any good in
the world, this ought to be an example of a Gaussian measure.

By the Riesz representation theorem, we know exactly what $W^*$ is:
it's the set of all finite signed Borel measures $\nu$ on $[0,1]$.  We
don't yet know that all of these measures represent Gaussian random
variables, but we know that some of them do.  Let $\delta_t$ denote
the measure putting unit mass at $t$, so $\delta_t(\omega) = \int_0^1
\omega(t)\,d\delta_t = \omega(t)$.  We know that $\{\delta_t\}_{t \in
  [0,1]}$ are jointly Gaussian.  If we let $E \subset W^*$ be their linear span,
i.e. the set of all finitely supported signed measures, i.e. the set
of measures $\nu = \sum_{i=1}^n a_i \delta_{t_i}$, then all measures
in $E$ are Gaussian random variables.

\begin{lemma}
  $E$ is weak-* dense in $W^*$, and dense in $K$.
\end{lemma}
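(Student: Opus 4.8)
Both halves of the lemma rest on the same principle: a linear subspace of a (pre)dual is dense exactly when no nonzero functional annihilates it, and any functional annihilating $E$ is forced to vanish at every point of $[0,1]$ by the point masses $\delta_t$. For the weak-$*$ density in $W^*$, I would invoke the standard consequence of Hahn--Banach separation that a linear subspace $S \subseteq W^*$ is weak-$*$ dense iff its pre-annihilator $S_\perp = \{\omega \in W : \langle \nu, \omega\rangle = 0 \text{ for all } \nu \in S\}$ is $\{0\}$ (here using that the dual of $W^*$ in its weak-$*$ topology is $W = C([0,1])$). Since every $\delta_t \in E$, any $\omega \in E_\perp$ satisfies $\omega(t) = \langle\delta_t,\omega\rangle = 0$ for all $t$, so $\omega = 0$; hence $E_\perp = \{0\}$ and $E$ is weak-$*$ dense.

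For density in $K$: because $E \subseteq W^* \subseteq K$ and $K$ is a Hilbert space under the $q$ (that is, $L^2(\mu)$) inner product, it suffices to show the only $k \in K$ with $q(k,\delta_t) = 0$ for all $t$ is $k = 0$. The plan is to upgrade ``$q(k,\delta_t) = 0$ for all $t$'' to ``$q(k,\nu) = 0$ for all $\nu \in W^*$'', which forces $k \perp K$ (as $W^*$ is dense in $K$) and hence $k = 0$. Recalling that $\delta_t$, as a random variable on $(W,\mu)$, is the evaluation $\omega \mapsto \omega(t)$ --- so $q(k,\delta_t) = \int_W k(\omega)\,\omega(t)\,\mu(d\omega)$ --- the upgrade comes from the Fubini identity
\begin{equation*}
  q(k,\nu) = \int_W k(\omega)\Bigl(\int_0^1 \omega(t)\,d\nu(t)\Bigr)\mu(d\omega) = \int_0^1\Bigl(\int_W k(\omega)\,\omega(t)\,\mu(d\omega)\Bigr)d\nu(t) = \int_0^1 q(k,\delta_t)\,d\nu(t) = 0.
\end{equation*}

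The step requiring care --- and the main, though mild, obstacle --- is justifying this interchange of integrals, i.e.\ verifying $\int_W\int_0^1 |k(\omega)|\,|\omega(t)|\,d|\nu|(t)\,\mu(d\omega) < \infty$, where $|\nu|$ is the total variation measure. I would get this from the pointwise bound $\int_0^1|\omega(t)|\,d|\nu|(t) \le \norm{\nu}_{W^*}\norm{\omega}_W$, Cauchy--Schwarz in $L^2(\mu)$, and the integrability $\norm{\cdot}_W \in L^2(\mu)$ furnished by Fernique's theorem; the joint measurability of $(\omega,t) \mapsto \omega(t)$ on $C([0,1]) \times [0,1]$ is free, since that map is continuous. (If one worries about invoking Fernique before $\mu$ is known to be Gaussian, note that only $\sup_t \int_W \omega(t)^2\,\mu(d\omega) < \infty$ is actually needed, and this follows directly from the Gaussianity of the process together with the L\'evy continuity theorem.)
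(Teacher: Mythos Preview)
Your proof is correct but takes a genuinely different route from the paper. The paper argues constructively: given $\nu \in W^*$, it builds Riemann-sum approximants $\nu_{\mathcal{P}} = \sum_j \nu((t_{j-1},t_j])\,\delta_{t_j} \in E$ and checks $\nu_{\mathcal{P}} \to \nu$ weak-$*$ via uniform continuity of test functions; the same sequence, being bounded in total variation, is then shown (in a subsequent lemma, after the corollary that $\mu$ is Gaussian) to converge in $L^2(\mu)$ by Fernique and dominated convergence. You instead argue by duality: the pre-annihilator of $E$ in $W$ is trivially $\{0\}$ (since $\delta_t \in E$ forces $\omega(t)=0$), giving weak-$*$ density via Hahn--Banach; and any $k \in K$ orthogonal to all $\delta_t$ is orthogonal to all of $W^*$ by the Fubini identity $q(k,\nu) = \int_0^1 q(k,\delta_t)\,d\nu(t)$, hence $k=0$. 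The paper's approach has the virtue of producing explicit approximants and feeding directly into the corollary that $\mu$ is Gaussian (each $\nu$ is a pointwise limit of elements of $E$). Your approach is cleaner and more modular, and your parenthetical correctly flags and resolves the only subtlety: the paper defers the $K$-density claim until after Gaussianity is established precisely so that Fernique is available, whereas your alternative bound via $\sup_t \int \omega(t)^2\,d\mu < \infty$ (continuity of $t \mapsto a(t,t)$) sidesteps that dependence.
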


\begin{proof}
  Suppose $\nu \in W^*$.  Given a partition $\mathcal{P} = \{0 = t_0 < \dots < t_n
  = 1\}$ of $[0,1]$, set $\nu_{\mathcal{P}} = \sum_{j=1}^n \int
  1_{(t_{j-1}, t_j]} d\nu \delta_{t_j}$.  Then for each $\omega \in
  C([0,1])$, $\int \omega \,d\nu_{\mathcal{P}} = \int
  \omega_{\mathcal{P}} \,d\nu$, where
  \begin{equation*}\omega_{\mathcal{P}} =
    \sum_{j=1}^n \omega(t_j) 1_{(t_{j-1}, t_j]}.
  \end{equation*}
  But by uniform
  continuity, as the mesh size of $\mathcal{P}$ goes to 0, we have
  $\omega_{\mathcal{P}} \to \omega$ uniformly, and so $\int
  \omega_{\mathcal{P}} d\nu \to \int
  \omega \, d\nu$.  Thus $\nu_{\mathcal{P}} \to \nu$ weakly-*.
\end{proof}

\begin{corollary}
  $\mu$ is a Gaussian measure.
\end{corollary}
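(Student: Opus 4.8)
The plan is to verify the definition of a Gaussian measure directly: I must show that for every $\nu \in W^*$, the random variable $\omega \mapsto \int_0^1 \omega\,d\nu$ on $(W,\mathcal{B},\mu)$ has a centered Gaussian law on $\R$ (with the degenerate case $\sigma=0$ permitted). This already holds for every $\nu \in E$, since such a $\nu$ is a finite linear combination $\sum_j a_j \delta_{t_j}$ of the jointly Gaussian family $\{\delta_t\}_{t \in [0,1]}$, and a linear combination of jointly Gaussian random variables is Gaussian. The lemma just proved provides the bridge from $E$ to all of $W^*$.

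First I would fix $\nu \in W^*$ and choose a genuine \emph{sequence} of partitions $\mathcal{P}_n$ of $[0,1]$ with mesh tending to $0$ (for instance the dyadic partitions), forming the finitely supported measures $\nu_{\mathcal{P}_n} \in E$ exactly as in the proof of the preceding lemma. That proof shows $\int \omega\,d\nu_{\mathcal{P}_n} = \int \omega_{\mathcal{P}_n}\,d\nu \to \int \omega\,d\nu$ for every fixed $\omega \in C([0,1])$, because $\omega_{\mathcal{P}_n} \to \omega$ uniformly by uniform continuity and $\nu$ is a finite signed measure. Read as a statement about random variables on $(W,\mu)$, this says precisely that $\nu_{\mathcal{P}_n} \to \nu$ pointwise on all of $W$, hence $\mu$-almost surely, hence in distribution.

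Now I would invoke the stability of the Gaussian class: each $\nu_{\mathcal{P}_n}$ is a centered Gaussian random variable and $\nu_{\mathcal{P}_n} \to \nu$ in distribution, so by Lemma \ref{limit-of-gaussian} the limit $\nu$ is centered Gaussian as well. Since $\nu \in W^*$ was arbitrary, every continuous linear functional is a Gaussian random variable on $(W,\mu)$, which is exactly the statement that $\mu$ is a Gaussian measure.

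The only point requiring a little care is that a sequence of partitions, rather than the net of all partitions, suffices to realize the weak-$*$ convergence $\nu_{\mathcal{P}_n} \to \nu$; this is immediate once one uses mesh-shrinking partitions together with the uniform-continuity argument above. Beyond that there is really no obstacle, since the substantive work was done in showing that $E$ consists of Gaussians and is weak-$*$ dense in $W^*$. (An equivalent route avoids the named lemma by computing characteristic functions: $\int_W e^{i\lambda \nu}\,d\mu = \lim_n \int_W e^{i\lambda \nu_{\mathcal{P}_n}}\,d\mu = \lim_n e^{-\lambda^2 \Var(\nu_{\mathcal{P}_n})/2}$ by bounded convergence, and the existence of this limit for every $\lambda$ forces $\Var(\nu_{\mathcal{P}_n})$ to converge to a finite $\sigma^2$ with limiting value $e^{-\lambda^2\sigma^2/2}$, which is the characteristic function of $N(0,\sigma^2)$.)
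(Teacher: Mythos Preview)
Your proof is correct and follows essentially the same approach as the paper's one-line argument: approximate an arbitrary $\nu \in W^*$ pointwise by elements of $E$ (which are Gaussian by construction) and invoke Lemma~\ref{limit-of-gaussian}. You have simply spelled out the details---in particular, the passage from a net of partitions to a mesh-shrinking sequence---that the paper leaves implicit.
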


\begin{proof}
  Every $\nu \in W^*$ is a pointwise limit of a sequence of Gaussian
  random variables, hence Gaussian.
\end{proof}

\begin{lemma}
  $E$ is dense in $K$.
\end{lemma}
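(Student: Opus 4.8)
The plan is to upgrade the weak-$*$ approximation from the lemma above to convergence in the $L^2(\mu)$ norm, with Fernique's theorem supplying the dominating function. Since $W^*$ is dense in $K$ by the very definition of $K$, it suffices to show that every $\nu \in W^*$ lies in the $L^2(\mu)$-closure of $E$, i.e.\ that $E$ is dense in $W^*$ in the $q$ norm.

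Fix $\nu \in W^*$ and a sequence of partitions $\mathcal{P}_n$ of $[0,1]$ with mesh tending to $0$, and let $\nu_{\mathcal{P}_n} \in E$ be the finitely supported measures built in the proof above, with associated step functions $\omega_{\mathcal{P}_n}$, so that for every $\omega \in W = C([0,1])$,
\begin{equation*}
  (\nu - \nu_{\mathcal{P}_n})(\omega) = \int_0^1 (\omega - \omega_{\mathcal{P}_n})\,d\nu .
\end{equation*}
As shown there, $\omega_{\mathcal{P}_n} \to \omega$ uniformly by uniform continuity of $\omega$, so $(\nu - \nu_{\mathcal{P}_n})(\omega) \to 0$ for every $\omega$; equivalently (this is just what weak-$*$ convergence of measures means), the random variables $\nu_{\mathcal{P}_n}$ on $(W,\mu)$ converge pointwise to $\nu$. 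To turn this pointwise convergence into $L^2(\mu)$ convergence I would dominate: since $\omega_{\mathcal{P}_n}$ takes only values among $\{\omega(t_j)\}$ we have $\norm{\omega_{\mathcal{P}_n}}_\infty \le \norm{\omega}_W$, hence
\begin{equation*}
  \abs{(\nu - \nu_{\mathcal{P}_n})(\omega)} \le \norm{\omega - \omega_{\mathcal{P}_n}}_\infty\,\norm{\nu}_{W^*} \le 2\,\norm{\nu}_{W^*}\,\norm{\omega}_W ,
\end{equation*}
where $\norm{\nu}_{W^*}$ is the total variation of $\nu$. By the corollary to Fernique's theorem, $\norm{\cdot}_W \in L^2(W,\mu)$, so the right-hand side is a fixed $L^2(\mu)$ function independent of $n$. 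Dominated convergence then gives
\begin{equation*}
  q(\nu - \nu_{\mathcal{P}_n},\, \nu - \nu_{\mathcal{P}_n}) = \int_W \abs{(\nu - \nu_{\mathcal{P}_n})(\omega)}^2\,\mu(d\omega) \longrightarrow 0 ,
\end{equation*}
so $\nu_{\mathcal{P}_n} \to \nu$ in $K$. As $\nu \in W^*$ was arbitrary and $W^*$ is dense in $K$, we conclude $E$ is dense in $K$.

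The argument is essentially routine given the lemma above; the only real content is the observation that weak-$*$ convergence of the $\nu_{\mathcal{P}_n}$ is the same as pointwise convergence of the associated random variables on $W$, combined with the Fernique integrability bound $\int_W \norm{\omega}_W^2\,\mu(d\omega) < \infty$, which furnishes the dominating function — so there is really no new obstacle beyond remembering to invoke Fernique. (One cosmetic point: $\omega_{\mathcal{P}_n}$ should be set up to agree with $\omega$ at the left endpoint $t = 0$ — e.g.\ by taking the first subinterval closed — so that the uniform convergence $\omega_{\mathcal{P}_n} \to \omega$ genuinely holds on all of $[0,1]$.)
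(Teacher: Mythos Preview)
Your proof is correct and follows essentially the same route as the paper's: pointwise (weak-$*$) convergence of $\nu_{\mathcal{P}}$ to $\nu$, a uniform bound $|\nu_{\mathcal{P}}(\omega)| \le \norm{\nu}\,\norm{\omega}_W$ coming from the total-variation bound $\norm{\nu_{\mathcal{P}}} \le \norm{\nu}$, and then Fernique plus dominated convergence to upgrade to $L^2(\mu)$ convergence. The paper's version is terser but the argument is the same.
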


\begin{proof}
  $\{\nu_{\mathcal{P}}\}$ is bounded in total variation (in fact
  $\norm{\nu_{\mathcal{P}}} \le \norm{\nu}$).  So by Fernique's
  theorem and dominated convergence, $\nu_{\mathcal{P}} \to \nu$ in
  $L^2(X,\mu)$.  Thus $E$ is $L^2$-dense in $W^*$.  Since $W^*$ is
  $L^2$-dense in $K$, $E$ is dense in $K$.
\end{proof}

Note that in order to get $\mu$ to be non-degenerate, it may be
necessary to replace $W$ by a smaller space.  For example, if $X_t$ is
Brownian motion started at 0, the linear functional $\omega \mapsto
\omega(0)$ is a.s. zero.  So we should take $W = \{ \omega \in
C([0,1]) : \omega(0)=0\}$.  One might write this as $C_0((0,1])$.

Recall that a Gaussian process is determined by its covariance
function $a(s,t) = E[X_s X_t] = q(\delta_s, \delta_t)$.  Some examples:

\begin{enumerate}
  \item Standard Brownian motion $X_t = B_t$ started at 0: $a(s,t) =
  s$ for $s < t$.  Markov, martingale, independent increments,
  stationary increments.

  \item Ornstein--Uhlenbeck process defined by $dX_t = -X_t \,dt +
  \sigma \,dB_t$: $a(s,t) = \frac{\sigma^2}{2} (e^{-(t-s)} -
  e^{-(t+s)})$, $s < t$.  Markov, not a martingale.

  \item Fractional Brownian motion with Hurst parameter $H \in (0,1)$:
  $a(s,t) = \frac{1}{2}(t^{2H} + s^{2H} - (t-s)^{2H})$, $s < t$.  Not
  Markov.

  \item Brownian bridge $X_t = B_t - t B_1$: $a(s,t) = s(1-t)$, $s < t$.  (Here
  $W$ should be taken as $\{\omega \in C([0,1]) : \omega(0) =
  \omega(1) = 0\} = C_0((0,1))$, the so-called pinned loop space.)  
\end{enumerate}

\begin{lemma}
  The covariance form $q$ for a Gaussian process is defined by
  \begin{equation*}
    q(\nu_1, \nu_2) = \int_0^1 \int_0^1 a(s,t) \nu_1(ds) \nu_2(dt)
  \end{equation*}
  for $\nu_1, \nu_2 \in W^*$.
\end{lemma}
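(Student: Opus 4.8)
The plan is to unwind the definition of the covariance form and then interchange the order of integration by Fubini's theorem. Recall that by the Riesz representation theorem each $\nu_i \in W^*$ acts on $\omega \in W = C([0,1])$ by $\nu_i(\omega) = \int_0^1 \omega(u)\,\nu_i(du)$, and that $q$ is the restriction of the $L^2(\mu)$ inner product to $W^*$, so
\begin{equation*}
  q(\nu_1,\nu_2) = \int_W \nu_1(\omega)\,\nu_2(\omega)\,\mu(d\omega)
  = \int_W \left(\int_0^1 \omega(s)\,\nu_1(ds)\right)\left(\int_0^1 \omega(t)\,\nu_2(dt)\right)\mu(d\omega).
\end{equation*}
(That this is finite, i.e.\ that $\nu_i \in L^2(\mu)$, was already established using Fernique's theorem.) The goal is to move the $\mu$-integral inside the two $[0,1]$-integrals.

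First I would check joint measurability of the integrand $(\omega,s,t)\mapsto \omega(s)\,\omega(t)$ on $W\times[0,1]\times[0,1]$. The evaluation map $(\omega,s)\mapsto\omega(s)$ is jointly continuous on $C([0,1])\times[0,1]$ (if $\omega_n\to\omega$ uniformly and $s_n\to s$ then $|\omega_n(s_n)-\omega(s)|\le\norm{\omega_n-\omega}_W+|\omega(s_n)-\omega(s)|\to 0$), hence Borel; composing with coordinate projections and multiplying shows $(\omega,s,t)\mapsto\omega(s)\omega(t)$ is Borel. Since $W$ and $[0,1]$ are separable metric spaces, the Borel $\sigma$-field on the product agrees with the product $\sigma$-field, so this function is measurable with respect to $\mu\times|\nu_1|\times|\nu_2|$, where $|\nu_i|$ denotes the total variation measure of $\nu_i$.

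Next I would verify absolute integrability. Since $|\omega(s)|\le\norm{\omega}_W$ and $|\omega(t)|\le\norm{\omega}_W$, we get $|\omega(s)\omega(t)|\le\norm{\omega}_W^2$, so
\begin{equation*}
  \int_W\int_0^1\int_0^1 |\omega(s)\,\omega(t)|\;|\nu_1|(ds)\,|\nu_2|(dt)\,\mu(d\omega)
  \le |\nu_1|([0,1])\,|\nu_2|([0,1])\int_W\norm{\omega}_W^2\,\mu(d\omega) < \infty,
\end{equation*}
the last integral being finite by the corollary to Fernique's theorem. Hence Fubini applies, and interchanging the order of integration gives
\begin{equation*}
  q(\nu_1,\nu_2) = \int_0^1\int_0^1\left(\int_W \omega(s)\,\omega(t)\,\mu(d\omega)\right)\nu_1(ds)\,\nu_2(dt)
  = \int_0^1\int_0^1 E[X_s X_t]\,\nu_1(ds)\,\nu_2(dt),
\end{equation*}
since under $\mu$ the coordinate random variable $\omega\mapsto\omega(s)$ is precisely $X_s$. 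By definition of the covariance function, $E[X_sX_t]=a(s,t)$, which is the claimed formula.

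The only real obstacle is checking the hypotheses of Fubini's theorem — joint measurability of the integrand and the absolute integrability bound — and both are mild. It is worth noting that the integrability step is exactly where Fernique's theorem (in the form $\norm{\cdot}_W\in L^2(\mu)$) enters; without it one would not even know a priori that $q(\nu_1,\nu_2)$ is finite, let alone that the interchange is legitimate.
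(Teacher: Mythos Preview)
Your proof is correct and is exactly the approach the paper takes: the paper's entire proof is ``Fubini's theorem, justified with the help of Fernique,'' and you have simply spelled out the measurability and integrability checks that make that invocation legitimate.
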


\begin{proof}
  Fubini's theorem, justified with the help of Fernique.
\end{proof}

\begin{lemma}
  $J : K \to H$ is defined by $Jk(t) = q(k, \delta_t)$.  For $k = \nu
  \in W^*$ this gives $J\nu(t) = i^* \nu(t) = \int_0^1 a(s,t)
  \nu(ds)$.  In particular $J \delta_s(t) = a(s,t)$.
\end{lemma}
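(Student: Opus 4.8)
The plan is to use that, on a space of continuous functions, evaluation at a point is a continuous linear functional: for $t \in [0,1]$ we have $\delta_t \in W^*$, with $\delta_t(\omega) = \omega(t)$. Given $k \in K$, the element $Jk$ lies in $H \subset W = C([0,1])$, hence is a genuine function on $[0,1]$, and its value at $t$ is $Jk(t) = \delta_t(Jk)$. Now recall the defining property of $T : H \to K$, namely $q(Th,f) = f(h)$ for all $f \in W^*$ and $h \in H$. Taking $h = Jk$ and $f = \delta_t$ gives $Jk(t) = \delta_t(Jk) = q\bigl(T(Jk),\delta_t\bigr)$, and since $TJ = \mathrm{id}_K$ (shown when we constructed $J$), this equals $q(k,\delta_t)$. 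This proves the first formula.

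For the special case $k = \nu \in W^*$, I would first identify $J\nu$ with $i^*\nu$. We showed earlier that $T i^* = m$, the inclusion $W^* \hookrightarrow K$; thus $T(i^*\nu) = m\nu$, which is exactly $\nu$ viewed in $K$, while also $T(J\nu) = \nu$ in $K$ because $TJ = \mathrm{id}_K$. Since $T$ is unitary, hence injective, we conclude $J\nu = i^*\nu$ in $H$. Combining this with the formula just proved, $J\nu(t) = i^*\nu(t) = q(\nu,\delta_t)$, and substituting the previous lemma's expression $q(\nu_1,\nu_2) = \int_0^1\int_0^1 a(s,t')\,\nu_1(ds)\,\nu_2(dt')$ with $\nu_1 = \nu$ and $\nu_2 = \delta_t$, then integrating out the Dirac mass, gives $J\nu(t) = \int_0^1 a(s,t)\,\nu(ds)$. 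Finally, taking $\nu = \delta_s$ yields $J\delta_s(t) = \int_0^1 a(s',t)\,\delta_s(ds') = a(s,t)$; equivalently this is $q(\delta_s,\delta_t)$, which is $a(s,t)$ straight from the definition of the covariance function.

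There is no genuinely hard step here. The only points needing a little care are the bookkeeping between $\nu$ as a functional in $W^*$ and its image $m\nu$ in $K$ (where $q$ lives), and checking that $\delta_t$ is a bounded functional on whatever subspace of $C([0,1])$ we are actually using --- e.g. $C_0((0,1])$ for Brownian motion, $C_0((0,1))$ for the Brownian bridge --- which it plainly is. As a bonus, the identity $Jk(t) = q(k,\delta_t)$ makes it transparent that $Jk$ is continuous in $t$: since $t \mapsto \delta_t$ is continuous from $[0,1]$ into $(W^*, q)$ in the Gaussian-process setting (this is essentially $L^2$-continuity of $t \mapsto X_t$), the map $t \mapsto q(k,\delta_t)$ is continuous, consistent with $Jk \in H \subset C([0,1])$.
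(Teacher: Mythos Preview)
Your proof is correct and follows essentially the same route as the paper: the paper's entire argument is the single line ``$Jk(t) = \delta_t(Jk) = q(k,\delta_t)$'', which is exactly your first paragraph unpacked (you factor the second equality through $T(Jk)=k$, but that is how the identity $f(Jk)=q(k,f)$ was established earlier anyway). The paper does not spell out the remaining assertions about $i^*\nu$ and the integral formula, leaving them as immediate consequences; your explicit derivation of $J\nu = i^*\nu$ via injectivity of $T$ and your use of the previous lemma for $q(\nu,\delta_t)$ are correct and a welcome elaboration.
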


\begin{proof}
  $Jk(t) = \delta_t(Jk) = q(k, \delta_t)$.
\end{proof}

Observe that $a$ plays the role of a reproducing kernel in $H$: we
have
\begin{align*}
  \inner{h}{a(s, \cdot)}_H = \inner{h}{J \delta_s} = \delta_s(h) = h(s).
\end{align*}
This is why $H$ is sometimes called the ``reproducing kernel Hilbert
space'' or ``RKHS'' for $W$.

\subsection{Classical Wiener space}

Let $\mu$ be Wiener measure on classical Wiener space $W$, so $a(s,t)
= s \wedge t$.

\begin{theorem}\label{classical-cameron-martin}
  The Cameron--Martin space $H \subset W$ is given by the set of all
  $h \in W$ which are absolutely continuous and have $\dot{h} \in
  L^2([0,1], m)$.  The Cameron-Martin inner product is given by
  $\inner{h_1}{h_2}_H = \int_0^1 \dot{h_1}(t) \dot{h_2}(t)\,dt$.
\end{theorem}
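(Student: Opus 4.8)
The plan is to identify the Cameron--Martin space $H$ with $L^2([0,1],m)$ by an explicit unitary map, to identify the candidate space $\mathcal H$ of absolutely continuous $h\in W$ with $\dot h\in L^2([0,1],m)$ with $L^2([0,1],m)$ by another, and then to check that the two identifications fit together through the commuting diagram of the previous subsection. First I would dispose of the elementary half. Since every $h\in W$ vanishes at $0$, the map $\tilde J:L^2([0,1],m)\to\mathcal H$ given by $\tilde J g(t)=\int_0^t g(r)\,dr$ is a linear bijection onto $\mathcal H$, with inverse $h\mapsto\dot h$ (fundamental theorem of calculus for absolutely continuous functions); if $\mathcal H$ carries the inner product $\langle h_1,h_2\rangle:=\int_0^1\dot h_1\dot h_2\,dt$ then $\tilde J$ is an isometry, so $(\mathcal H,\langle\cdot,\cdot\rangle)$ is a Hilbert space. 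The bound $|h(t)|=\bigl|\int_0^t\dot h\bigr|\le\sqrt t\,\norm{\dot h}_{L^2}$ also shows $\mathcal H\hookrightarrow W$ continuously, so convergence in $\mathcal H$ forces uniform convergence on $[0,1]$.

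Next I would construct a unitary $U:K\to L^2([0,1],m)$. On the dense subspace $E\subset W^*$ of finitely supported signed measures, set $U_0\bigl(\sum_i a_i\delta_{t_i}\bigr)=\sum_i a_i 1_{[0,t_i]}$. The one computation that matters is that $U_0$ is $q$-isometric: using the earlier formula $q(\nu_1,\nu_2)=\int_0^1\int_0^1 a(s,t)\,\nu_1(ds)\,\nu_2(dt)$ together with $a(s,t)=s\wedge t=\int_0^1 1_{[0,s]}(r)1_{[0,t]}(r)\,dr$, one gets, for $\nu=\sum_i a_i\delta_{t_i}$,
\[
q(\nu,\nu)=\sum_{i,j}a_i a_j\,(t_i\wedge t_j)=\int_0^1\Bigl(\sum_i a_i 1_{[0,t_i]}(r)\Bigr)^2 dr=\norm{U_0\nu}_{L^2}^2.
\]
Since $E$ is $q$-dense in $K$ (shown earlier) and the functions $1_{[0,s]}$, $0<s\le 1$, have dense linear span in $L^2([0,1],m)$, $U_0$ extends to a unitary $U:K\to L^2([0,1],m)$.

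Now I would check that the square closes, i.e.\ that $J=\tilde J\circ U$ as maps from $K$ into the space of real functions on $[0,1]$. It suffices to verify this on $E$, and there on each $\delta_s$: by the earlier lemma $J\delta_s(t)=q(\delta_s,\delta_t)=a(s,t)=s\wedge t$, while $\tilde J(U\delta_s)(t)=\int_0^t 1_{[0,s]}(r)\,dr=s\wedge t$; linearity handles the rest of $E$. To pass to an arbitrary $k\in K$, take $\nu_n\in E$ with $\nu_n\to k$ in $K$: then $J\nu_n\to Jk$ in $H$, hence in $W$, hence pointwise, while $U\nu_n\to Uk$ in $L^2$, so $\tilde J(U\nu_n)\to\tilde J(Uk)$ in $\mathcal H$, hence in $W$, hence pointwise; since $J\nu_n=\tilde J(U\nu_n)$ for every $n$, the limits agree. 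As $J:K\to H$ is unitary (in particular onto) and $\tilde J,U$ are unitary, this gives $H=J(K)=\tilde J(U(K))=\tilde J(L^2([0,1],m))=\mathcal H$; and for $h=Jk\in H$ the identity $h=\tilde J(Uk)$ says $\dot h=Uk$, so $\norm{h}_{\mathcal H}^2=\norm{Uk}_{L^2}^2=\norm{k}_K^2=\norm{h}_H^2$. Polarization then yields $\langle h_1,h_2\rangle_H=\int_0^1\dot h_1\dot h_2\,dt$.

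The routine parts are the isometry computation for $U_0$ (immediate once $t_i\wedge t_j$ is written as an integral of a product of indicators) and the standard facts about $\tilde J$. The point where I expect the most care is needed is the last step: one is simultaneously juggling the $L^2$/$K$ topology, the $H$ and $\mathcal H$ norms, and uniform convergence in $W$, and the clean way to make ``$J=\tilde J\circ U$'' rigorous is precisely the observation that both $H$ and $\mathcal H$ embed continuously into $W$, so that all the relevant limits can be compared as pointwise limits of continuous functions.
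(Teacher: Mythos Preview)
Your proof is correct and follows essentially the same route as the paper: both hinge on the identity $s\wedge t=\int_0^1 1_{[0,s]}1_{[0,t]}\,dr$ to show that $J$ restricted to $E$ is a $q$-isometry into the candidate space, then extend by density. The only difference is cosmetic---the paper works directly with $J:(E,q)\to\tilde H$ and notes $\dot{J\delta_s}=1_{[0,s]}$, while you factor this as $\tilde J\circ U$ through $L^2([0,1])$; your treatment of the limit step (comparing the abstract extension with the original $J$ via continuity into $W$) is actually more explicit than the paper's.
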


\begin{proof}
  Let $\tilde{H}$ be the candidate space with the candidate norm
  $\nm_{\tilde{H}}$.  It's easy to see that $\tilde{H}$ is a Hilbert
  space.  

  Note that $J \delta_s(t) = s \wedge t \in \tilde{H}$, so by
  linearity $J$ maps $E$ into $\tilde{H}$.  Note $\dot{J \delta_s} =
  1_{[0,s]}$.  Moreover,
  \begin{align*}
    \inner{J \delta_s}{J \delta_r}_{\tilde{H}} = \int_0^1 1_{[0,s]}
    1_{[0,r]} dm = s \wedge r = q(\delta_s, \delta_r)
  \end{align*}
  so $J$ is an isometry from $(E,q)$ to $\tilde{H}$.  Hence it extends
  to an isometry of $K$ to $\tilde{H}$.  Since $J$ is already an
  isometry from $K$ to $H$ we have $H = \tilde{H}$ isometrically.
\end{proof}

Now what can we say about $T$?  It's a map that takes a continuous
function from $H$ and returns a random variable.  Working informally,
we would say that 
\begin{equation}\label{T-informal}
Th(\omega) = \inner{h}{\omega}_H = \int_0^1
\dot{h}(t) \dot{\omega}(t)\,dt.
\end{equation}  This formula is absurd because
$\dot{\omega}$ is nonexistent for $\mu$-a.e. $\omega$ (Brownian motion
sample paths are nowhere differentiable).  However, it is actually the
right answer if interpreted correctly.

Let's suppose that $h$ is piecewise linear: then its derivative is a
step function $\dot{h} = \sum_{i=1}^n b_i 1_{[c_i, d_i]}$.  Note that
the reproducing kernel $a(s, \cdot)$ has as its derivative the step
function $1_{[0,s]}$.  So by integrating, we see that we can write
\begin{equation*}
  h(t) = \sum_{i=1}^n b_i (a(d_i, t) - a(c_i, t)).
\end{equation*}
Now we know that $T[a(s,\cdot)] = \delta_s$, i.e. the random variable
$B_s$.  So we have
\begin{equation*}
  Th = \sum_{i=1}^n b_i (B_{d_i} - B_{c_i}).
\end{equation*}
We can recognize this as the stochastic integral of the step function
$\dot{h} = \sum_{i=1}^n b_i 1_{[c_i, d_i]}$: 
\begin{equation}\label{T-integral}
  Th = \int_0^1
\dot{h}(t)\,dB_t. 
\end{equation}
Moreover, by the It\^o isometry we know that
\begin{equation*}
  \norm{ \int_0^1
\dot{h}(t)\,dB_t}_{L^2(W,\mu)}^2 = \norm{\dot{h}}_{L^2([0,1])}^2 = \norm{h}_H^2.
\end{equation*}
Thus both sides of (\ref{T-integral}) are isometries on $H$, and they
are equal for all piecewise linear $H$.  Since the step functions are
dense in $L^2([0,1])$, the piecewise linear functions are dense in $H$
(take derivatives), so in fact (\ref{T-integral}) holds for all $h \in
H$.  We have rediscovered the stochastic integral, at least for
deterministic integrands.  This is sometimes called the Wiener
integral.  Of course, the It\^o integral also works for stochastic
integrands, as long as they are adapted to the filtration of the
Brownian motion.  Later we shall use our machinery to produce the
Skorohod integral, which will generalize the It\^o integral to
integrands which need not be adapted, giving us an ``anticipating
stochastic calculus.''

\begin{exercise}
  For the Ornstein--Uhlenbeck process, show that $H$ is again the set
  of absolutely continuous functions $h$ with $\dot{h} \in
  L^2([0,1])$, and the Cameron--Martin inner product is given by
  \begin{equation*}
    \inner{h_1}{h_2}_H = \frac{1}{\sigma^2}\int_0^1 \dot{h_1}(t) \dot{h_2}(t) + h_1(t) h_2(t)\,dt.
  \end{equation*}
\end{exercise}

\begin{exercise}
  For the Brownian bridge, show that $H$ is again the set of
  absolutely continuous functions $h$ with $\dot{h} \in L^2([0,1])$,
  and the Cameron--Martin inner product is given by
  $\inner{h_1}{h_2}_H = \int_0^1 \hat{h_1}(t) \hat{h_2}(t)\,dt$, where
  \begin{equation*}
    \hat{h}(t) = \dot{h}(t) + \frac{h(t)}{1-t}.
  \end{equation*}
\end{exercise}

Perhaps later when we look at some stochastic differential equations,
we will see where these formulas come from.

Note that in this case the Cameron--Martin theorem is a special case
of Girsanov's theorem: it says that a Brownian motion with a ``smooth'' drift
becomes a Brownian motion without drift under an equivalent measure.
Indeed, suppose $h \in H$.  If we write $B_t(\omega) = \omega(t)$, so
that $\{B_t\}$ is a Brownian motion on $(W,\mu)$, then $B_t + h(t)$ is
certainly a Brownian motion (without drift!) on $(W, \mu_h)$.  The Cameron-Martin
theorem says that $\mu_h$ is an equivalent measure to $\mu$.  Anything
that $B_t$ can't do, $B_t + h(t)$ can't do either (since the
$\mu$-null and $\mu_h$-null sets are the same).  This fact has many
useful applications.  For example, in mathematical finance, one might
model the price of an asset by a geometric Brownian motion with a drift
indicating its average rate of return
(as in the Black--Scholes model).  The Cameron--Martin/Girsanov
theorem provides an equivalent measure under which this process is a
martingale, which makes it possible to compute the arbitrage-free
price for options involving the asset.  The equivalence of the
measures is important because it guarantees that changing the measure
didn't allow arbitrage opportunities to creep in.

\subsection{Construction of $(W,\mu)$ from $H$}

This section originates in \cite{gross-abstract-wiener} via Bruce
Driver's notes \cite{driver-probability}.

When $W = \R^n$ is finite-dimensional and $\mu$ is non-degenerate, the
Cameron--Martin space $H$ is all of $W$ (since $H$ is known to be
dense in $W$), and one can check that the Cameron--Martin norm is 
\begin{equation}
  \inner{x}{y}_H = x \cdot \Sigma^{-1} y
\end{equation}
where $\Sigma$ is the covariance matrix.  We also know that $\mu$ has
a density with respect to Lebesgue measure $dx$, which we can write as
\begin{equation}
  \mu(dx) = \frac{1}{Z} e^{-\frac{1}{2} \norm{x}_H^2} dx
\end{equation}
where $Z = \int_{\R^n} e^{-\frac{1}{2} \norm{x}_H^2} dx$ is a
normalizing constant chosen to make $\mu$ a probability measure.
Informally, we can think of $\mu$ as being given by a similar formula
in infinite dimensions:
\begin{equation}\label{nonsense-abstract}
  \mu(dx) \mathquote{=} \frac{1}{\mathcal{Z}} e^{-\frac{1}{2} \norm{x}_H^2} \mathcal{D}x
\end{equation}
where $\mathcal{Z}$ is an appropriate normalizing constant, and
$\mathcal{D}x$ is infinite-dimensional Lebesgue measure.  Of course
this is nonsense in at least three different ways, but that doesn't
stop physicists, for instance.

For classical Wiener measure this reads
\begin{equation}\label{nonsense-classical}
  \mu(dx) \mathquote{=} \frac{1}{\mathcal{Z}} e^{-\frac{1}{2} \int_0^1
  |\dot{\omega}(t)|^2 dt} \mathcal{D}\omega.
\end{equation}

Since the only meaningful object appearing on the right side of
(\ref{nonsense-abstract}) is $\norm{\cdot}_H$, it is reasonable to ask
if we can start with a Hilbert space $H$ and produce an abstract
Wiener space $(W,\mu)$ for which $H$ is the Cameron--Martin space.

\subsubsection{Cylinder sets}

Let $(H, \norm{\cdot}_H)$ be a separable Hilbert space.

\begin{definition}
  A \textbf{cylinder set} is a subset $C \subset H$ of the form
  \begin{equation}\label{cylinder-set}
    C = \{h \in H : (\inner{h}{k_1}_H, \dots, \inner{h}{k_n}_H) \in A\}
  \end{equation}
  for some $n \ge 1$, orthonormal $k_1, \dots, k_n$,  and $A \subset
 \R^n$ Borel.
\end{definition}

\begin{exercise}
    Let $\mathcal{R}$ denote the collection of all cylinder sets in
  $H$.  $\mathcal{R}$ is an algebra: we have $\emptyset \in
  \mathcal{R}$ and $\mathcal{R}$ is closed under complements and
  \emph{finite} unions (and intersections).  However, if $H$ is
  infinite dimensional then $\mathcal{R}$ is not a $\sigma$-algebra.
\end{exercise}

Note by Lemma \ref{weak-sigma-field} that $\sigma(\mathcal{R}) =
\mathcal{B}_H$, the Borel $\sigma$-algebra.

We are going to try to construct a Gaussian measure $\tilde{\mu}$ on $H$ with
covariance form given by $\inner{\cdot}{\cdot}_H$.  Obviously we can
only get so far, since we know of several obstructions to completing
the task.  At some point we will have to do something different.  But
by analogy with finite dimensions, we know what value $\tilde{\mu}$ should
give to a cylinder set of the form (\ref{cylinder-set}): since $k_1,
\dots, k_n$ are orthonormal, they should be iid standard normal with
respect to $\tilde{\mu}$, so we should have
\begin{equation}\label{mu-cylinder-def}
  \tilde{\mu}(C) = \mu_n(A)
\end{equation}
where $d \mu_n = \frac{1}{(2 \pi)^{n/2}} e^{-|x|^2/2}dx$ is standard
Gaussian measure on $\R^n$.

\begin{proposition}
  The expression for $\tilde{\mu}(C)$ in (\ref{mu-cylinder-def}) is
  well-defined, and $\tilde{\mu}$ is a finitely additive probability measure on $\mathcal{R}$.
\end{proposition}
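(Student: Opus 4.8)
The plan is to handle well-definedness first and finite additivity second, in both cases reducing everything to the corresponding (true) statement about standard Gaussian measure $\mu_n$ on $\R^n$. For well-definedness, the issue is that a single cylinder set $C$ can be written in the form (\ref{cylinder-set}) using many different choices of orthonormal vectors $k_1, \dots, k_n$ and Borel sets $A \subset \R^n$; I must check that $\mu_n(A)$ does not depend on the representation. The key observation is that two representations can always be refined to a common one: given orthonormal systems $k_1, \dots, k_n$ and $k_1', \dots, k_m'$, let $V$ be the (finite-dimensional) span of all of them, pick an orthonormal basis $\ell_1, \dots, \ell_d$ of $V$, and express each original system in terms of the $\ell_j$'s. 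So it suffices to compare a representation using $k_1, \dots, k_n$ with a representation using a larger orthonormal family $\ell_1, \dots, \ell_d$ spanning a space containing the $k_i$'s. Two sub-cases arise: (i) passing from $\ell_1, \dots, \ell_d$ to an orthogonal rotation of the same family, which changes $A$ to $U A$ for an orthogonal matrix $U$, and (ii) padding with extra orthonormal coordinates, which changes $A$ to $A \times \R^{d-n}$ (after reindexing). In case (i), rotation-invariance of $\mu_d$ gives $\mu_d(UA) = \mu_d(A)$; in case (ii), the product structure $\mu_d = \mu_n \times \mu_{d-n}$ together with $\mu_{d-n}(\R^{d-n}) = 1$ gives $\mu_d(A \times \R^{d-n}) = \mu_n(A)$. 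Combining these, any two representations yield the same value, so $\tilde{\mu}(C)$ is well-defined. (One should also note $\tilde\mu(H) = \mu_1(\R) = 1$ and $\tilde\mu(\emptyset) = 0$.)

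For finite additivity, suppose $C_1, \dots, C_r \in \mathcal{R}$ are pairwise disjoint with $C_1 \cup \dots \cup C_r = C \in \mathcal{R}$. Using the refinement trick from the previous paragraph, I can write \emph{all} of $C, C_1, \dots, C_r$ simultaneously using one common orthonormal family $k_1, \dots, k_N$: that is, $C_i = \{h : (\inner{h}{k_1}_H, \dots, \inner{h}{k_N}_H) \in A_i\}$ and $C = \{h : (\dots) \in A\}$ for Borel sets $A_i, A \subset \R^N$. Let $\Phi : H \to \R^N$ be the map $h \mapsto (\inner{h}{k_1}_H, \dots, \inner{h}{k_N}_H)$; since the $k_i$ are orthonormal, $\Phi$ is surjective (indeed onto, because $H$ is infinite-dimensional, or just because $\sum c_i k_i \in H$ maps to $(c_1, \dots, c_N)$), so $C_i = \Phi^{-1}(A_i)$ and disjointness/union of the $C_i$ transfers to disjointness/union of the $A_i$ in $\R^N$: the $A_i$ are pairwise disjoint and $\bigcup_i A_i = A$. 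Then by (genuine, countable and hence finite) additivity of $\mu_N$,
\begin{equation*}
  \tilde\mu(C) = \mu_N(A) = \mu_N\Bigl(\bigcup_{i=1}^r A_i\Bigr) = \sum_{i=1}^r \mu_N(A_i) = \sum_{i=1}^r \tilde\mu(C_i),
\end{equation*}
which is the desired finite additivity. Together with $\tilde\mu(\emptyset) = 0$ and $\tilde\mu(H) = 1$, this shows $\tilde\mu$ is a finitely additive probability measure on the algebra $\mathcal{R}$.

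The main obstacle is the bookkeeping in the ``common refinement'' step: one must be careful that a cylinder set has a well-defined underlying finite-dimensional subspace of $H$ (the span of the $k_i$'s appearing, which is really $\Phi^*(\R^n)$ and is intrinsic once one mods out by the non-uniqueness), and that enlarging the orthonormal family really does amount to the two elementary operations (orthogonal change of coordinates within a fixed subspace, and orthogonal direct-sum extension to a larger subspace). Once that linear-algebra/geometry is set up cleanly, everything else is an immediate consequence of the standard finite-dimensional facts — rotation-invariance of $\mu_n$, the product identity $\mu_{n+k} = \mu_n \times \mu_k$, and countable additivity of each $\mu_N$ — so no real analysis is needed here. (Note that \emph{countable} additivity of $\tilde\mu$ on $\mathcal{R}$ is a different and much harder matter, which is exactly why the construction cannot simply be completed by Carathéodory's extension theorem on $H$ itself; that difficulty is deferred, and is the reason ``at some point we will have to do something different.'')
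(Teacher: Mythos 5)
Your proposal is correct and follows essentially the same route as the paper: reduce both representations to a common finite\--dimensional subspace by padding with extra orthonormal coordinates (using the product structure $\mu_m = \mu_n \times \mu_{m-n}$), then invoke rotation\--invariance of standard Gaussian measure for the change of orthonormal basis, and obtain finite additivity by expressing all the disjoint cylinder sets over one common orthonormal family and appealing to additivity of $\mu_N$. No gaps; the surjectivity remark justifying that disjointness transfers to the $A_i$ is exactly the point the paper also uses.
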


\begin{proof}
  To check that $\tilde{\mu}(C)$ is well-defined, suppose that
  \begin{equation}\label{Ceq}
    C = \{h \in H : (\inner{h}{k_1}_H, \dots, \inner{h}{k_n}_H) \in
    A \subset \R^n\} = \{h \in H : (\inner{h}{k'_1}_H, \dots,
    \inner{h}{k'_{n'}}_H) \in A' \subset \R^{n'}\}.
  \end{equation}
  Let $E$ be the span in $H$ of $\{k_1, \dots, k_n, k'_1, \dots,
  k'_{n'}\}$, and let $m = \dim E$.  Since $\{k_1, \dots, k_n\}$ is
  orthonormal in $E$, we can extend it to an orthonormal basis $\{k_1,
  \dots, k_m\}$ for $E$, and then we have
  \begin{equation*}
    C = \{h \in H : (\inner{h}{k_1}_H, \dots, \inner{h}{k_m}_H) \in
    A \times \R^{m-n}\}.
  \end{equation*}
  Since $\mu_m$ is a product measure, we have $\mu_m(A \times
  \R^{m-n}) = \mu_n(A)$.  So by playing the same game for $\{k'_1,
  \dots, k'_{n'}\}$, there is no
  loss of generality in assuming that in (\ref{Ceq}) we have $n = n' = m$, and that $\{k_1,
  \dots, k_m\}$ and $\{k'_1, \dots, k'_m\}$ are two orthonormal bases
  for the same $E \subset H$.  We then have to show that $\mu_m(A) = \mu_m(A')$.

  We have two orthonormal bases for $E$, so there is a unitary $T : E
  \to E$ such that $T k_i = k'_i$.  Let $P : H \to E$ be orthogonal
  projection, and define $S : E \to \R^m$ by $Sx = ((x, k_1), \dots,
  (x, k_m))$.  Then $S$ is unitary.  If we define $S'$ analogously,
  then $S' = ST^* = ST$, and we have
  \begin{equation*}
    C = P^{-1} S^{-1} A = P^{-1} S'^{-1} A' = P^{-1} T^{-1} S^{-1} A'.
  \end{equation*}
  Since $P : H \to E$ is surjective, we must have $S^{-1} A = T^{-1}
  S^{-1} A'$; since $S,T$ are bijective this says $A' = STS^{-1} A$,
  so $A'$ is the image of $A$ under a unitary map.  But standard
  Gaussian measure on $\R^m$ is invariant under unitary
  transformations, so indeed $\mu_m(A) = \mu_m(A')$, and the
  expression (\ref{mu-cylinder-def}) is well defined.

  It is obvious that $\tilde{\mu}(\emptyset) = 0$ and $\tilde{\mu}(H) = 1$.  For
  finite additivity, suppose $C_1, \dots, C_n \in \mathcal{R}$ are
  disjoint.  By playing the same game as above, we can write $C_i =
  P^{-1} (A_i)$ for some common $P : H \to \R^m$, where the $A_i
  \subset \R^m$ are necessarily disjoint, and then $\tilde{\mu}(C_i) =
  \mu_m(A_i)$.  Since $\bigcup_i C_i = P^{-1}\left(\bigcup_i
  A_i\right)$, the additivity of $\mu_m$ gives us that
  $\tilde{\mu}\left(\bigcup_i C_i\right) = \sum_i \tilde{\mu}(C_i)$.
\end{proof}

We will call $\tilde{\mu}$ the canonical Gaussian measure on $H$.  As we see
in the next proposition, we're using the term ``measure'' loosely.

\begin{proposition}
  If $H$ is infinite dimensional, $\tilde{\mu}$ is not countably additive on
  $\mathcal{R}$.  In particular, it does not extend to a countably
  additive measure on $\sigma(\mathcal{R}) = \mathcal{B}_H$.
\end{proposition}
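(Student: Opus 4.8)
The plan is to argue by contradiction: if $\tilde{\mu}$ were countably additive on the algebra $\mathcal{R}$, then by the Carath\'eodory extension theorem it would extend to a genuine (countably additive) probability measure $\hat{\mu}$ on $\sigma(\mathcal{R}) = \mathcal{B}_H$, and I would show that no such $\hat{\mu}$ can exist. This also settles the ``in particular'' clause for free, since any countably additive extension restricts to a countably additive set function on $\mathcal{R}$ that agrees with $\tilde{\mu}$; so once ``no extension'' is ruled out, ``not countably additive on $\mathcal{R}$'' follows, and conversely.

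Concretely, I would fix an orthonormal basis $\{e_k\}_{k=1}^\infty$ of $H$ (using separability) and, for $r > 0$ and $n \ge 1$, set
\[
  D_{r,n} = \left\{ h \in H : \sum_{k=1}^n \inner{h}{e_k}_H^2 \le r^2 \right\}, \qquad B_r = \{h \in H : \norm{h}_H \le r\}.
\]
Each $D_{r,n}$ is a cylinder set, cut out by the orthonormal vectors $e_1,\dots,e_n$ and the Borel set $\{x \in \R^n : |x| \le r\}$; and since $\norm{h}_H^2 = \sum_{k=1}^\infty \inner{h}{e_k}_H^2$, the $D_{r,n}$ decrease to $B_r$ as $n \to \infty$. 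By the defining formula (\ref{mu-cylinder-def}) for $\tilde{\mu}$ (hence for $\hat{\mu}$ on cylinder sets), $\hat{\mu}(D_{r,n}) = \tilde{\mu}(D_{r,n}) = \mu_n(\{x \in \R^n : |x| \le r\})$.

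The crux is the elementary fact that a Euclidean ball of fixed radius carries vanishing standard Gaussian mass in high dimension: since $\{x \in \R^n : |x| \le r\} \subset [-r,r]^n$, one has $\mu_n(\{|x| \le r\}) \le \left(\mu_1([-r,r])\right)^n \to 0$ because $\mu_1([-r,r]) < 1$. (Morally, $\mu_n$ concentrates near the sphere of radius $\sqrt{n}$; equivalently one can invoke the law of large numbers for $\sum_{k=1}^n x_k^2$.) Then continuity from above for the \emph{finite} measure $\hat{\mu}$ gives $\hat{\mu}(B_r) = \lim_{n\to\infty}\hat{\mu}(D_{r,n}) = 0$ for every $r>0$. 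But $H = \bigcup_{m=1}^\infty B_m$, so countable additivity of $\hat{\mu}$ forces $1 = \hat{\mu}(H) \le \sum_{m=1}^\infty \hat{\mu}(B_m) = 0$, a contradiction.

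I do not expect a real obstacle: the only substantive input is the fixed-radius-ball estimate, which is a one-liner, and everything else (that the $D_{r,n}$ are cylinder sets, that they decrease to $B_r$, and that a countably additive measure on an algebra extends to the generated $\sigma$-algebra) is routine. If one prefers to avoid the Carath\'eodory machinery in spirit, a variant packages the same idea via Fatou: with $C_n = \{h : \sum_{k=1}^n \inner{h}{e_k}_H^2 \le n\}$, every $h$ lies in $C_n$ for all large $n$, so $\liminf_n C_n = H$, whereas $\hat{\mu}(C_n) = \mu_n(\{|x|^2 \le n\}) \to \frac{1}{2}$ by the central limit theorem, and Fatou's lemma for $\hat{\mu}$ then yields $1 = \hat{\mu}(H) \le \liminf_n \hat{\mu}(C_n) = \frac{1}{2}$.
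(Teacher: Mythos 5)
Your proof is correct and rests on the same key estimate as the paper's: the standard Gaussian measure of a fixed-radius cylinder region in $\R^n$ tends to $0$ as $n \to \infty$ (via the cube bound $\mu_1([-r,r])^n$), combined with writing $H$ as a countable union of balls. The only real difference is packaging: the paper never invokes the Carath\'eodory extension theorem, instead choosing for each $k$ a single cylinder set $A_{n_k,k} \supset B(0,k)$ with $\tilde{\mu}(A_{n_k,k}) < 2^{-k}$, so that $H$ is covered by countably many cylinder sets of total measure less than $1$, which directly violates countable (sub)additivity on the algebra $\mathcal{R}$ itself.
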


\begin{proof}
  Fix an orthonormal sequence $\{e_i\}$ in $H$.  Let
  \begin{equation*}
    A_{n,k} = \{ x \in H : |\inner{x}{e_i}| \le k, i=1,\dots, n\}.
  \end{equation*}
  $A_{n,k}$ is a cylinder set, and we have $B(0,k) \subset A_{n,k}$
  for any $n$.  Also, we have $\tilde{\mu}(A_{n,k}) =
  \mu_n([-k,k]^n) = \mu_1([-k,k])^n$ since $\mu_n$ is a product
  measure.  Since $\mu_1([-k,k]) < 1$, for each $k$ we can choose an
  $n_k$ so large that $\tilde{\mu}(A_{n_k,k}) = \mu_1([-k,k])^{n_k} <
  2^{-k}$.  Thus $\sum_{k=1}^\infty \tilde{\mu}(A_{n_k, k}) < 1$, but since
  $B(0,k) \subset A_{n_k, k}$ we have $\bigcup_{k=1}^\infty A_{n_k,k}
  = H$ and $\tilde{\mu}(H)=1$.  So countable additivity does not hold.
\end{proof}

Of course we already knew that this construction cannot produce a
genuine Gaussian measure on $H$, since any Gaussian measure has to
assign measure 0 to its Cameron--Martin space.  The genuine measure
has to live on some larger space $W$, so we have to find a way to
produce $W$.  We'll produce it by producing a new norm
$\norm{\cdot}_W$ on $H$ which is not complete, and set $W$ to be the
completion of $H$ under $\norm{\cdot}_W$.  Then we will be able to
extend $\tilde{\mu}$, in a certain sense, to an honest Borel measure
$\mu$ on $W$.

It's common to make an analogy here with Lebesgue measure.  Suppose we
were trying to construct Lebesgue measure $m$ on $\Q$.  We could
define the measure of an interval $(a,b) \subset \Q$ to be $b-a$, and
this would give a finitely additive measure on the algebra of sets
generated by such intervals.  But it could not be countably additive.
If we want a countably additive measure, it has to live on $\R$, which
we can obtain as the completion of $\Q$ under the Euclidean metric.

\subsubsection{Measurable norms}

\begin{definition}
  By a \textbf{finite rank projection} we mean a map $P : H \to H$
  which is orthogonal projection onto its image $PH$ with $PH$ finite
  dimensional.  We will sometimes abuse notation and identify $P$ with
  the finite-dimensional subspace $PH$, since they are in 1-1
  correspondence.  We will write things like $P_1 \perp P_2$, $P_1
  \subset P_2$, etc.
\end{definition}

We are going to obtain $W$ as the completion of $H$ under some norm
$\norm{\cdot}_W$.  Here is the condition that this norm has to
satisfy.

\begin{definition}
  A norm $\nm_{W}$ on $H$ is said to be \textbf{measurable} if for
  every $\epsilon > 0$ there exists a finite rank projection $P_0$
  such that
  \begin{equation}\label{measurable-condition}
    \tilde{\mu}(\{h : \norm{Ph}_W > \epsilon\}) < \epsilon \text{ for
    all $P \perp P_0$ of finite rank}
  \end{equation}
  where $\tilde{\mu}$ is the canonical Gaussian ``measure'' on $H$.  (Note
  that $\{x : \norm{Ph}_W > \epsilon\}$ is a cylinder set.)
\end{definition}

A quick remark: if $P_0$ satisfies (\ref{measurable-condition}) for
some $\epsilon$, and $P_0 \subset P_0'$, then $P_0'$ also satisfies
(\ref{measurable-condition}) for the same $\epsilon$.  This is because
any $P \perp P_0'$ also has $P \perp P_0$.

In words, this definition requires that $\tilde{\mu}$ puts most of its mass in
``tubular neighborhoods'' of $P_0 H$.  Saying $\norm{Ph}_W >
\epsilon$ means that $x$ is more than distance $\epsilon$ (in
$W$-norm) from $P_0 H$ along  one of the directions from $PH$.

As usual, doing the simplest possible thing doesn't work.

\begin{lemma}
  $\nm_{H}$ is not a measurable norm on $H$.
\end{lemma}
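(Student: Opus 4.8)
The plan is to show that $\nm_H$ fails the measurability condition by exhibiting, for a suitable fixed $\epsilon$, a sequence of finite rank projections that defeats any candidate $P_0$. The key observation is that for the canonical Gaussian measure $\tilde\mu$, if $P$ is a finite rank projection onto a $k$-dimensional subspace, then $\norm{Ph}_H^2$ is (the canonical version of) a $\chi^2$ random variable with $k$ degrees of freedom: writing $P = \sum_{i=1}^k \inner{\cdot}{u_i}_H u_i$ for an orthonormal set $\{u_i\}$, we have $\norm{Ph}_H^2 = \sum_{i=1}^k \inner{h}{u_i}_H^2$, and the $\inner{\cdot}{u_i}_H$ are iid standard normal under $\tilde\mu$ by definition of $\tilde\mu$ on cylinder sets. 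In particular $\tilde\mu(\{\norm{Ph}_H > \epsilon\})$ depends only on the rank $k$ and tends to $1$ as $k \to \infty$, since the $\chi^2_k$ distribution concentrates near $k$.

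First I would fix $\epsilon = 1/2$ (any value less than $1$ works). Suppose for contradiction that $\nm_H$ is measurable; then there is a finite rank projection $P_0$, say of rank $d$, such that $\tilde\mu(\{\norm{Ph}_H > 1/2\}) < 1/2$ for every finite rank $P \perp P_0$. Now pick an orthonormal sequence $\{e_i\}$ in $(P_0 H)^\perp$ — possible because $H$ is infinite dimensional — and for each $k$ let $P^{(k)}$ be the orthogonal projection onto $\spanop\{e_1,\dots,e_k\}$. Each $P^{(k)}$ is a finite rank projection orthogonal to $P_0$, so the measurability inequality forces $\tilde\mu(\{\norm{P^{(k)}h}_H > 1/2\}) < 1/2$ for all $k$.

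But as computed above, $\tilde\mu(\{\norm{P^{(k)}h}_H > 1/2\}) = \Prob(\chi^2_k > 1/4) \to 1$ as $k \to \infty$, since $\chi^2_k$ has mean $k$; concretely, by Chebyshev or just the law of large numbers applied to the iid squared normals $\inner{h}{e_i}_H^2$, this probability exceeds $1/2$ for all large $k$. (One can make this quantitative: $\Prob(\sum_{i=1}^k Z_i^2 \le 1/4) \le \Prob(\text{at least } k-1 \text{ of the } Z_i \text{ satisfy } |Z_i| \le 1/2) \to 0$ geometrically.) This contradicts the bound from measurability, so no such $P_0$ exists and $\nm_H$ is not a measurable norm. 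I do not expect any real obstacle here; the only thing to be careful about is confirming that $\{h : \norm{P^{(k)}h}_H > 1/2\}$ is genuinely a cylinder set so that $\tilde\mu$ is defined on it — which is immediate since $\norm{P^{(k)}h}_H^2 = \sum_{i=1}^k \inner{h}{e_i}_H^2$ depends on only finitely many coordinates — and verifying the elementary tail estimate for $\chi^2_k$, which is standard.
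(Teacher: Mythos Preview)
Your proposal is correct and follows essentially the same approach as the paper: both exploit that $\tilde{\mu}(\{\norm{Ph}_H > \epsilon\})$ depends only on the rank of $P$ and tends to $1$ as the rank grows, so any candidate $P_0$ can be defeated by a sufficiently high-rank $P \perp P_0$. The only cosmetic difference is that the paper bounds the ball by a cube to get the clean product estimate $\tilde{\mu}(\{\norm{Ph}_H > \epsilon\}) \ge 1 - \mu_1([-\epsilon,\epsilon])^n$, whereas you phrase the same escape-to-infinity via the $\chi^2_k$ distribution and the law of large numbers.
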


\begin{proof}
  For any finite-rank projection $P$ of some rank $n$, we can find an
  orthonormal basis $\{h_1, \dots, h_n\}$ for $PH$.  Then it's clear
  that $Ph = \sum_{i=1}^n \inner{h}{h_i}_H h_i$, so $\{ h :
  \norm{Ph}_H > \epsilon\} = P^{-1}(\closure{B_{PH}(0,\epsilon)}^C)$,
  where $B_{PH}(0,\epsilon)$ is a ball in $PH$.  By definition of
  $\tilde{\mu}$ we can see that
  \begin{align*}
    \tilde{\mu}(\{ h : \norm{Ph}_H >
    \epsilon\}) &= \mu_n(\closure{B_{\R^n}(0,\epsilon)}^c) \\
    &\ge \mu_n(([-\epsilon, \epsilon]^n)^C) \intertext{(since the ball
    is contained in the cube)}
    &= 1 - \mu_1([-\epsilon,\epsilon])^n.
  \end{align*}
  Thus for any $\epsilon > 0$ and any finite-rank projection $P_0$, if
  we choose $n$ so large that $1 - \mu_1([-\epsilon,\epsilon])^n >
  \epsilon$, then for any projection $P$ of rank $n$ which is
  orthogonal to $P_0$ (of which there are lots), we have
  $\tilde{\mu}(\{h : \norm{Ph}_H > \epsilon\}) > \epsilon$.  So
  $\nm_{H}$ is not measurable.
\end{proof}

As a diversion, let's explicitly verify this for the classical example.

\begin{proposition}
  Let $H$ be the classical Cameron--Martin space of Theorem
  \ref{classical-cameron-martin}.  The supremum norm $\norm{h}_W = \sup_{t \in
  [0,1]} h(t)$ is a measurable norm on $H$.
\end{proposition}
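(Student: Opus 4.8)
The plan is to verify the measurability condition directly, using the fact that the canonical Gaussian measure $\tilde\mu$ restricted to any finite rank projection is just a finite-dimensional Gaussian, together with the concrete description of $H$ as absolutely continuous functions with $\dot h \in L^2$. The key observation is that if $P \perp P_0$ is a finite rank projection, then $\norm{Ph}_W = \sup_t |Ph(t)|$ is the supremum of a (centered) Gaussian process indexed by $[0,1]$, namely $t \mapsto (Ph)(t)$, viewed as a random variable under $\tilde\mu$. So I want to estimate $\tilde\mu(\{ \norm{Ph}_W > \epsilon\})$, i.e. a tail bound on the sup of this Gaussian process, and show it can be made small uniformly over all $P \perp P_0$ once $P_0$ is large enough.

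First I would set up coordinates: fix a convenient orthonormal basis $\{e_k\}$ of $H$ — for instance the Schauder-type basis coming from integrating the Haar functions, or just any $e_k$ with $\dot e_k$ an orthonormal basis of $L^2([0,1])$ — and take $P_0$ to be projection onto $\spanop\{e_1, \dots, e_N\}$ for $N$ to be chosen. For $P \perp P_0$ of finite rank with orthonormal basis $\{g_1,\dots,g_m\} \subset (P_0 H)^\perp$, under $\tilde\mu$ we have $Ph = \sum_{j=1}^m Z_j g_j$ with $Z_j$ iid standard normal, so $(Ph)(t) = \sum_j Z_j g_j(t)$ is a centered Gaussian with variance $v(t) := \sum_j g_j(t)^2$. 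The main step is to control two things: (i) the pointwise variance $\sup_t v(t)$, and (ii) the modulus of continuity of the process, so that a chaining / Borel–Cantelli argument over a finite net in $[0,1]$ upgrades pointwise control to a sup bound. The crucial inequality is that for $g \in (P_0 H)^\perp$ with $\norm{g}_H \le 1$ (so $\norm{\dot g}_{L^2} \le 1$ and $\dot g \perp \dot e_1, \dots, \dot e_N$ in $L^2$), the sup norm $\norm{g}_W = \sup_t |g(t)| = \sup_t |\int_0^t \dot g|$ is small — this is where orthogonality to $P_0$ is used: writing $g(t) = \int_0^t \dot g(s)\,ds = \langle \dot g, 1_{[0,t]}\rangle_{L^2}$ and decomposing $1_{[0,t]}$ against the $\dot e_k$, the component of $1_{[0,t]}$ orthogonal to $\spanop\{\dot e_k\}_{k\le N}$ has $L^2$ norm tending to $0$ uniformly in $t$ as $N \to \infty$ (by uniform convergence of the partial Fourier sums of $1_{[0,t]}$ in an appropriate basis, or by equicontinuity). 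Thus $\sup_{\norm{g}_H \le 1,\, g \perp P_0} \norm{g}_W =: \delta_N \to 0$.

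Given that, I would finish as follows. For $P \perp P_0$ we get $\norm{Ph}_W \le \delta_N \bigl(\sum_j Z_j^2\bigr)^{1/2}$? — no, that over-counts; more carefully, $|(Ph)(t)| = |\langle \sum_j Z_j \dot g_j, 1_{[0,t]}\rangle| = |\langle \sum_j Z_j \dot g_j, u_t\rangle|$ where $u_t$ is the orthogonal projection of $1_{[0,t]}$ onto $\spanop\{\dot g_j\}$, and $\norm{u_t}_{L^2} \le \delta_N$-type bound gives variance $v(t) \le \delta_N^2$ for every $t$. So the Gaussian process $(Ph)(t)$ has pointwise variance at most $\delta_N^2$ uniformly, and a uniform modulus of continuity estimate (again from the $L^2$ bound on $u_t - u_s$, which is $\le \norm{1_{[s,t]}}_{L^2} = \sqrt{t-s}$, independent of $P$) lets me invoke a standard Gaussian sup tail bound (Borell–TIS, or an elementary Lévy/reflection argument adapted from the proof of Fernique's one-dimensional estimate \eqref{gaussian-tail-1d}) to conclude $\tilde\mu(\{\norm{Ph}_W > \epsilon\}) \le C e^{-c\epsilon^2/\delta_N^2}$ with $C, c$ absolute. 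Choosing $N$ large enough that $\delta_N$ is small forces this below $\epsilon$ for all finite rank $P \perp P_0$, which is exactly the measurability condition.

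The main obstacle is step (ii)/the sup bound: the hypothesis only directly controls $\norm{Ph}_W$ through a supremum over uncountably many $t$, so I cannot treat it as a single finite-dimensional Gaussian and must genuinely estimate a Gaussian process supremum, uniformly over the infinite family of admissible projections $P$. The saving grace is that the only data about $P$ entering the variance and increment bounds is the $L^2$ geometry of $1_{[0,t]}$ projected onto $(\dot P_0 H)^\perp$, which is controlled by $\delta_N$ and by $\sqrt{t-s}$ uniformly in $P$; so a single chaining estimate does the job for all of them at once. One can avoid heavy machinery entirely by discretizing $[0,1]$ into dyadic points and running the elementary union-bound-plus-telescoping argument, at the cost of a slightly worse (but still $\exp(-c\epsilon^2/\delta_N^2)$) constant, which is all that is needed.
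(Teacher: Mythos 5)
Your proposal is correct, but it takes a genuinely different route from the one in the notes. The proof given here fixes the special Haar/Schauder basis and exploits the fact that within each level $n$ the tent functions $h^n_k$ have disjoint supports, so that $\norm{\sum_k a_k h^n_k}_W = 2^{-(n+1)/2}\max_k|a_k|$; the smallness then comes entirely from the geometric decay of the sup norms $\norm{h^n_k}_W$, while the Gaussian coefficients are only controlled crudely (variance $\le 1$, union bound level by level). This is essentially L\'evy's construction of Brownian motion in disguise, and it never needs to confront a genuine Gaussian-process supremum. You instead work with an arbitrary orthonormal basis, show that the quantity $\delta_N = \sup_t \norm{Q_N 1_{[0,t]}}_{L^2}$ tends to $0$ (your compactness argument is the clean justification: $t\mapsto 1_{[0,t]}$ has compact image in $L^2$, and $Q_N\to 0$ strongly with $\norm{Q_N}\le 1$, so the convergence is uniform on that image), and then control $\sup_t |(Ph)(t)|$ as the supremum of a centered Gaussian process with pointwise variance $\le \delta_N^2$ and increments dominated by $\sqrt{|t-s|}$, uniformly over all admissible $P$. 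Your self-correction about $\delta_N(\sum_j Z_j^2)^{1/2}$ is the right instinct — that bound is true but useless since the rank of $P$ is unbounded — and the chaining (or Borell--TIS plus a Dudley entropy integral, which here is $O(\delta_N\sqrt{\log(1/\delta_N)})$) does close the gap. What your approach buys is generality and conceptual clarity: it isolates exactly what makes the sup norm measurable, namely that the evaluation functionals $\{J\delta_t\}$ form a compact subset of $H$, and so it adapts verbatim to other continuous Gaussian processes; the cost is the chaining machinery, which the paper's basis-specific argument avoids entirely. The only point to be careful about if you write this up in full is the bookkeeping in the elementary dyadic version: the base mesh $j_0$ must be chosen in terms of $\delta_N$ (e.g. $2^{-j_0}\approx\delta_N^2$) so that both the base-level union bound and the telescoping tail are controlled by $\epsilon$ simultaneously.
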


Together with Gross's theorem (Theorem \ref{gross-theorem} below),
this proposition constitutes a construction of Brownian motion: the
completion $W$ of $H$ under $\nm_W$ is precisely $C([0,1])$ (since $H$
is dense in $C([0,1])$), and the measure $\mu$ on $W$ is Wiener
measure (having $H$ as its Cameron--Martin space, we can check that
its covariance function is $a(s,t) = s \wedge t$ as it ought to be).

With the proof we will give, however, it will not be an essentially
new construction.  Indeed, we are going to steal the key ideas from a
construction which is apparently due to L\'evy and can be found in
\cite[Section 2.3]{karatzas-shreve}, which one might benefit from
reading in conjunction with this proof.  In some sense, Gross's
theorem is simply an abstract version of an essential step of that
construction.

\begin{proof}
  Observe up front that by Cauchy--Schwarz
  \begin{equation*}
    \abs{h(t)} = \abs{\int_0^t \dot{h}(t)\,dt} \le t \norm{h}_H
  \end{equation*}
  so taking the supremum over $t \in [0,1]$, we have $\norm{h}_W \le
  \norm{h}_H$.

  We want to choose a good orthonormal basis for $H$.  We use the
  so-called ``Schauder functions'' which correspond to the ``Haar
  functions'' in $L^2([0,1])$.  The Haar functions are given by
  \begin{equation*}
    f^n_k(t) := 
    \begin{cases}
      2^{(n-1)/2}, & \frac{k-1}{2^n} \le t < \frac{k}{2^n} \\
      -2^{(n-1)/2}, & \frac{k}{2^n} \le t \le \frac{k+1}{2^n} \\
      0, & \text{else}
    \end{cases}
  \end{equation*}
  where $f_0^1 = 1$.  Here $k$ should be taken to range over the set
  $I(n)$ consisting of all odd integers between $0$ and $2^n$.  (This
  somewhat peculiar indexing is from Karatzas and Shreve's proof.  It
  may or may not be optimal.)  We note that for $n \ge 1$, we have
  $\int_0^1 f^n_k(t)\,dt = 0$; that for $n > m$, $f^m_j$ is constant
  on the support of $f^n_k$; and that for fixed $n$, $\{f^n_k : k \in
  I(n)\}$ have disjoint supports.  From this it is not hard to check
  that $\{f^n_k : k \in I(n), n \ge 0\}$ are an orthonormal set in
  $L^2(0,1)$.  Indeed, the set forms an orthonormal basis.

  The Schauder functions are defined by $h^n_k(t) := \int_0^t f^n_k(s)\,ds$; since $f \mapsto
  \int_0^\cdot f\,dt$ is an isometric isomorphism from $L^2([0,1])$ to
  $H$, we have that $\{h^n_k : k \in I(n), n \ge 0\}$ is an
  orthonormal basis for $H$.  (We can check easily that it is a basis:
  if $\inner{h}{h^n_k}_H = 0$ then $h(\frac{k-1}{2^n}) =
  h(\frac{k+1}{2^n})$.  If this holds for all $n,k$, then $h(t)=0$ for
  all dyadic rationals $t$, whence by continuity $h=0$.)  Stealing
  Karatzas and Shreve's phrase, $h^n_k$ is a ``little tent'' of height
  $2^{-(n+1)/2}$ supported in $[\frac{k-1}{2^n}, \frac{k+1}{2^n}]$; in
  particular, for each $n$, $\{h^n_k : k \in I(n)\}$ have disjoint
  supports.

  Let $P_m$ be orthogonal projection onto the span of $\{h^n_k : k \in
  I(n), n < m\}$, and suppose $P$ is a projection of finite rank which is orthogonal to $P_m$.  Then for any $h \in H$, we can write
  $Ph$ in terms of the Schauder functions
  \begin{equation*}
    Ph = \sum_{n=m}^\infty \sum_{k \in I(n)} h^n_k \inner{Ph}{h^n_k}_H.
  \end{equation*}
  where the sum converges in $H$ and hence also in $W$-norm,
  i.e. uniformly.  Since for fixed $n$ the $h^n_k$ have disjoint
  supports, we can say
  \begin{equation} \label{W-norm-estimate}
    \begin{split}
    \norm{Ph}_W &\le \sum_{n=m}^\infty  \norm{\sum_{k \in I(n)} h^n_k
    \inner{Ph}{h^n_k}_H}_W \quad \text{(Triangle inequality)} 
\\
      &= \sum_{n=m}^\infty  \max_{k \in I(n)} \norm{h^n_k}_W \abs{
    \inner{Ph}{h^n_k}_H} \quad \text{(since $h^n_k$ have disjoint
    support)}
\\
    &= \sum_{n=m}^\infty  2^{-(n+1)/2} \max_{k \in I(n)} \abs{
    \inner{Ph}{h^n_k}_H}.
    \end{split}
  \end{equation}

  To forestall any nervousness, let us point out that all the following
  appearances of $\tilde{\mu}$ will be to measure sets of the form
  $P^{-1} B$ for our single, fixed $P$, and on such sets $\tilde{\mu}$
  is an honest, countably additive measure (since it is just standard
  Gaussian measure on the finite-dimensional Hilbert space $PH$).  
  Under $\tilde{\mu}$, each $\inner{Ph}{h^n_k}_H$ is a
  centered Gaussian random variable of variance $\norm{P h^n_k}_H^2
  \le 1$ (note that $\inner{Ph}{h^n_k}_H = \inner{Ph}{P h^n_k}_H$, and
  that $P$ is a contraction).  These random variables will be
  correlated in some way, but that will not bother us since we are not
  going to use anything fancier than union bounds.
  
  We recall the standard Gaussian tail estimate: if $N$ is a Gaussian
  random variable with variance $\sigma^2 \le 1$, then $P(\abs{N} \ge
  t) \le C e^{-t^2/2}$ for some universal constant $C$.  (See
  (\ref{gaussian-tail-1d}, or for overkill, Fernique's theorem.)  Thus
  we have for each $n,k$
  \begin{equation*}
    \tilde{\mu}(\{h :  \abs{\inner{Ph}{h^n_k}_H} \ge n\}) \le C e^{-n^2/2}
  \end{equation*}
  and so by union bound
  \begin{equation*}
    \tilde{\mu}\left(\left\{ h : \max_{k \in I(n)} \abs{\inner{Ph}{h^n_k}_H} \ge n\right\}\right) =
    \tilde{\mu}\left(\bigcup_{k \in I(n)} \{h : \abs{\inner{Ph}{h^n_k}_H} \ge n\}\right)
    \le C 2^n e^{-n^2/2}    
  \end{equation*}
  since, being crude, $\abs{I(n)} \le 2^n$.  By another union bound,
  \begin{equation*}
    \tilde{\mu}\left(\bigcup_{n=m}^\infty \left\{ h:  \max_{k \in I(n)}
    \abs{\inner{Ph}{h^n_k}_H} \ge n\right\}\right) \le C \sum_{n=m}^\infty 2^n e^{-n^2/2}.
  \end{equation*}
  On the complement of this event, we have $\max_{k \in I(n)}
    \abs{\inner{Ph}{h^n_k}_H} < n$ for every $n$, and so using
    (\ref{W-norm-estimate}) we have $\norm{Ph}_W < \sum_{n=m}^\infty
    n 2^{-(n+1)/2}$.  Thus we have shown
    \begin{equation}
      \tilde{\mu}\left(\left\{ h: \norm{Ph}_W \ge \sum_{n=m}^\infty n
      2^{-(n+1)/2}\right\}\right) \le C \sum_{n=m}^\infty 2^n e^{-n^2/2}.
    \end{equation}
    Since $\sum n 2^{-(n+1)/2}$ and $\sum
    2^n e^{-n^2/2}$ both converge, for any given $\epsilon > 0$ we may
    choose $m$ so large that $\sum_{n=m}^\infty n 2^{-(n+1)/2} <
    \epsilon$ and $C \sum_{n=m}^\infty
    2^n e^{-n^2/2} < \epsilon$.  Then for any finite-rank projection
    $P$ orthogonal to $P_m$, we have
    \begin{equation}
      \tilde{\mu}(\{ h : \norm{Ph}_W > \epsilon \}) \le \epsilon
    \end{equation}
    which is to say that $\nm_W$ is a measurable norm.
\end{proof}


The name ``measurable'' is perhaps a bit misleading on its face: we
are not talking about whether $h \mapsto \norm{h}_W$ is a measurable function on
$H$.  It just means that $\nm_{W}$ interacts nicely with the
``measure'' $\tilde{\mu}$.  However, $\nm_{W}$ is in fact a measurable
function on $H$, in fact a continuous function, so that it is a weaker
norm than $\nm_{H}$.  

\begin{lemma}\label{cont-embed}
  If $\nm_{W}$ is a measurable norm on $H$, then $\norm{h}_W \le C
  \norm{h}_H$ for some constant $C$.
\end{lemma}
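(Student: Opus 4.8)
The plan is to extract the bound from the measurable-norm condition applied only to rank-one projections, and then patch in the finite-dimensional correction. First I would fix a single value $\epsilon \in (0,1)$, say $\epsilon = 1/2$, and use the hypothesis to produce a finite-rank projection $P_0$ such that $\tilde{\mu}(\{h : \norm{Ph}_W > \epsilon\}) < \epsilon$ for every finite-rank $P \perp P_0$. The key point is that this is allowed to include $P = P_e$, the rank-one orthogonal projection onto an arbitrary unit vector $e \in (P_0 H)^\perp$. For such $P$ we have $P_e h = \inner{h}{e}_H\, e$, so $\norm{P_e h}_W = \abs{\inner{h}{e}_H}\,\norm{e}_W$; and since $h \mapsto \inner{h}{e}_H$ is a standard Gaussian random variable under $\tilde{\mu}$ (that is precisely how $\tilde{\mu}$ was defined on cylinder sets), the set in question has $\tilde{\mu}$-measure $\mu_1(\{x \in \R : \abs{x} > \epsilon/\norm{e}_W\})$.

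Let $t_\epsilon > 0$ be the unique number with $\mu_1(\{\abs{x} > t_\epsilon\}) = \epsilon$; it exists because $s \mapsto \mu_1(\{\abs{x} > s\})$ is continuous, strictly decreasing on $(0,\infty)$, and takes values in $(0,1)$ there while $\epsilon < 1$. The measurable-norm inequality then forces $\epsilon/\norm{e}_W > t_\epsilon$, i.e. $\norm{e}_W < \epsilon/t_\epsilon =: C_1$, for every unit vector $e \perp P_0 H$; by homogeneity, $\norm{h}_W \le C_1 \norm{h}_H$ for all $h$ in the closed subspace $(P_0 H)^\perp$. On the finite-dimensional space $P_0 H$ all norms are equivalent, so $\norm{h}_W \le C_2 \norm{h}_H$ there for some $C_2$ (take $C_2$ to be the maximum of $\norm{\cdot}_W$ over the compact $\norm{\cdot}_H$-unit sphere of $P_0 H$).

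Finally, for general $h \in H$ write $h = P_0 h + (I - P_0)h$ and combine: by the triangle inequality and the two bounds,
\begin{equation*}
  \norm{h}_W \le \norm{P_0 h}_W + \norm{(I-P_0)h}_W \le C_2 \norm{P_0 h}_H + C_1 \norm{(I - P_0)h}_H \le \sqrt{2}\,\max(C_1, C_2)\,\norm{h}_H,
\end{equation*}
where the last inequality uses $\norm{P_0 h}_H + \norm{(I-P_0)h}_H \le \sqrt{2}\,\big(\norm{P_0 h}_H^2 + \norm{(I-P_0)h}_H^2\big)^{1/2} = \sqrt{2}\,\norm{h}_H$. So one may take $C = \sqrt{2}\,\max(C_1, C_2)$. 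I don't anticipate any genuine obstacle: the whole argument is elementary once one sees that the measurable-norm condition should be tested against rank-one projections, and the only step meriting a moment's care is the identification of the cylinder-set probability with a one-dimensional Gaussian tail, which is immediate from the construction of $\tilde{\mu}$.
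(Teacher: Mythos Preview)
Your proof is correct and follows essentially the same approach as the paper's: choose $P_0$ for $\epsilon = 1/2$, test the measurable-norm condition against rank-one projections onto unit vectors in $(P_0 H)^\perp$ to bound $\norm{\cdot}_W$ there via a one-dimensional Gaussian tail, invoke equivalence of norms on the finite-dimensional $P_0 H$, and combine the two pieces using the orthogonal decomposition and the inequality $a+b \le \sqrt{2(a^2+b^2)}$. The only cosmetic difference is that you name the threshold $t_\epsilon$ explicitly, whereas the paper argues monotonicity of the tail directly.
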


\begin{proof}
  Choose a $P_0$ such that (\ref{measurable-condition}) holds with
  $\epsilon = 1/2$.  Pick some vector $k \in (P_0 H)^\perp$ with
  $\norm{k}_H = 1$.
  Then $Ph = \inner{h}{k} k$ is a (rank-one) projection orthogonal to
  $P_0$, so
  \begin{align*}
    \frac{1}{2} &> \tilde{\mu}(\{h : \norm{Ph}_W > \frac{1}{2}\}) \\ 
    &= \tilde{\mu}(\{h : |\inner{h}{k}| > \frac{1}{2 \norm{k}_W}\}) \\
    &= \mu_1\left(\left[-\frac{1}{2 \norm{k}_W}, \frac{1}{2 \norm{k}_W}\right]^C\right).
  \end{align*}
  Since $\mu_1([-t,t]^C) = 1 - \mu_1([-t,t])$ is a decreasing function
  in $t$, the last line is an increasing function in $\norm{k}_W$, so
  it follows that $\norm{k}_W \le M$ for some $M$.  $k \in (P_0
  H)^\perp$ was arbitrary, and so by scaling we have that $\norm{k}_W
  \le M \norm{k}_H$ for all $k \in (P_0 H)^\perp$.  On the other hand,
  $P_0 H$ is finite-dimensional, so by equivalence of norms we also
  have $\norm{k}_W \le M\norm{k}_H$ for all $k \in P_0 H$, taking $M$
  larger if needed.  Then for
  any $k \in H$, we can decompose $k$ orthogonally as $(k - P_0 k) +
  P_0 k$ and obtain
  \begin{align*}
    \norm{k}_W^2 &= \norm{(k - P_0 k) + P_0 k}_W^2  \\
    &\le (\norm{k - P_0 k}_W + \norm{P_0 k}_W)^2 &&
    \text{Triangle inequality} \\
    &\le 2 (\norm{k - P_0 k}_W^2 + \norm{P_0 k}_W^2) &&
    \text{since $(a+b)^2 \le 2(a^2 + b^2)$, follows from AM-GM}\\
    &\le 2 M^2 (\norm{k - P_0 k}_H^2 + \norm{P_0 k}_H^2) \\
    &= 2 M^2 \norm{k}_H^2 && \text{Pythagorean theorem}
  \end{align*}
  and so the desired statement holds with $C = \sqrt{2} M$.
\end{proof}

\begin{theorem}[Gross \cite{gross-abstract-wiener}]\label{gross-theorem}
  Suppose $H$ is a separable Hilbert space and $\nm_W$ is a measurable
  norm on $H$.  Let $W$ be the completion of $H$ under $\nm_{W}$.
  There exists a Gaussian measure $\mu$ on $(W, \nm_{W})$ whose Cameron-Martin
  space is $(H, \nm_{H})$.
\end{theorem}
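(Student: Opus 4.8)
The plan is to realize $\mu$ as the law of a random Gaussian series in $W$. Fix an orthonormal basis $\{e_i\}_{i=1}^\infty$ of $H$ (available since $H$ is separable), let $\{\xi_i\}_{i=1}^\infty$ be i.i.d.\ standard normal random variables on some auxiliary probability space $(\Omega,\mathcal{F},\mathbb{P})$, and set $S_n := \sum_{i=1}^n \xi_i e_i$, regarded as an $H$-valued, hence (via the continuous inclusion $H\hookrightarrow W$ furnished by Lemma \ref{cont-embed}) $W$-valued, random variable. First I would show that $S_n$ converges in $W$ to a limit $S$, and then define $\mu := \mathbb{P}\circ S^{-1}$. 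Two things then remain: that $\mu$ is a Gaussian measure on $W$, and that its Cameron--Martin space is $(H,\nm_H)$. Note that $W$ is a separable Banach space, since $H$ is separable and dense in $W$ by construction.

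The hard part is the convergence of $S_n$, and this is the only place the measurable-norm hypothesis is really used. Let $P_n$ be orthogonal projection onto $\spanop\{e_1,\dots,e_n\}$. Given $\epsilon>0$, measurability of $\nm_W$ produces a finite-rank $P_0$ satisfying \eqref{measurable-condition}; since enlarging $P_0$ is harmless (the remark after the definition), we may assume $P_0 = P_{n_0}$ for some $n_0$. Then for $n>m\ge n_0$ the projection onto $\spanop\{e_{m+1},\dots,e_n\}$ has finite rank and is orthogonal to $P_{n_0}$, so \eqref{measurable-condition} gives $\tilde\mu(\{h:\norm{(P_n-P_m)h}_W>\epsilon\})<\epsilon$. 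But this is a cylinder set depending only on the coordinates $\inner{h}{e_{m+1}}_H,\dots,\inner{h}{e_n}_H$, on which $\tilde\mu$ is genuine standard Gaussian measure; under the identification $\inner{h}{e_i}_H \leftrightarrow \xi_i$ it says exactly $\mathbb{P}(\norm{S_n-S_m}_W>\epsilon)<\epsilon$. Hence $(S_n)$ is Cauchy in probability in $W$-norm, so it converges in probability, and along a subsequence almost surely, to a $W$-valued random variable $S$. Put $\mu := \mathbb{P}\circ S^{-1}$. I expect this Cauchy-in-probability estimate to be the main obstacle; the remaining verifications are soft. (Independence of $\mu$ from the choices of basis and probability space will follow from the formula for its Fourier transform below together with Theorem \ref{fourier-unique}.)

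Next I would check $\mu$ is Gaussian. Take $f\in W^*$. By Lemma \ref{cont-embed}, $f$ restricts to a bounded functional on $(H,\nm_H)$, so Riesz gives $\bar f\in H$ with $f(h)=\inner{h}{\bar f}_H$ for all $h\in H$; in particular $f(e_i)=\inner{e_i}{\bar f}_H$ and $\sum_i f(e_i)^2=\norm{\bar f}_H^2<\infty$. Then $f(S)=\lim_n f(S_n)=\lim_n\sum_{i=1}^n \xi_i f(e_i)=\sum_i \xi_i f(e_i)$, a convergent sum of independent centred Gaussians, hence Gaussian with variance $\norm{\bar f}_H^2$. Thus every $f\in W^*$ is Gaussian under $\mu$, so $\mu$ is a Gaussian measure, with covariance form $q(f,g)=\sum_i f(e_i)g(e_i)=\inner{\bar f}{\bar g}_H$. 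The map $W^*\to H$, $f\mapsto\bar f$, is injective (if $\bar f=0$ then $f$ vanishes on the dense subspace $H$, so $f=0$) with image $V:=\{\bar f: f\in W^*\}$ dense in $H$ (if $h\in H$ is $H$-orthogonal to every $\bar f$ then $f(h)=\inner{h}{\bar f}_H=0$ for all $f\in W^*$, forcing $h=0$ since $W^*$ separates points of $W\supseteq H$).

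Finally I would identify the Cameron--Martin space $H_\mu$ of $(W,\mu)$, using its definition directly. For $h\in H$ and $0\ne f\in W^*$, $|f(h)|=|\inner{h}{\bar f}_H|\le\norm{h}_H\norm{\bar f}_H=\norm{h}_H\sqrt{q(f,f)}$, so $f\mapsto f(h)$ is $q$-continuous on $W^*$; hence $h\in H_\mu$, and
\[
  \norm{h}_{H_\mu}=\sup_{0\ne f\in W^*}\frac{|f(h)|}{\sqrt{q(f,f)}}=\sup_{0\ne f\in W^*}\frac{|\inner{h}{\bar f}_H|}{\norm{\bar f}_H}=\norm{h}_H,
\]
the last equality because $V$ is dense in $H$. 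Thus $H\subseteq H_\mu$ isometrically. Conversely, suppose $h\in H_\mu$, so $|f(h)|\le c\sqrt{q(f,f)}=c\norm{\bar f}_H$ for some constant $c$ and all $f\in W^*$. Then $\bar f\mapsto f(h)$ is a well-defined bounded functional on $V$, extending to a bounded functional on $H$, so there is $h'\in H$ with $f(h)=\inner{\bar f}{h'}_H$ for all $f\in W^*$; but $\inner{\bar f}{h'}_H=f(h')$ since $h'\in H$, so $f(h)=f(h')$ for every $f\in W^*$, whence $h=h'\in H$ by Hahn--Banach. Therefore $H_\mu=H$ as subsets of $W$, with identical inner products, completing the proof. (One could alternatively have defined $\mu$ by extending $\tilde\mu$, pushed forward to $W$, directly to a countably additive Borel measure; the argument above effectively does this by producing the extension as the law of $S$.)
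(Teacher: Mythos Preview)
There is a genuine gap in your convergence step. You fix an orthonormal basis $\{e_i\}$ of $H$ \emph{first}, let $P_n$ be projection onto $\spanop\{e_1,\dots,e_n\}$, and then, given the $P_0$ supplied by measurability for a particular $\epsilon$, you assert that ``enlarging'' lets you take $P_0 = P_{n_0}$. But enlarging only lets you replace $P_0$ by any $P_0'$ with $P_0 H \subset P_0' H$; it does \emph{not} let you replace $P_0$ by a projection whose range is contained in $\spanop\{e_1,\dots,e_{n_0}\}$. The range of $P_0$ is some finite-dimensional subspace which, relative to your pre-chosen basis, may involve infinitely many coordinates (a single vector $v \in P_0 H$ typically has $\inner{v}{e_i}_H \ne 0$ for infinitely many $i$), so there is in general no $n_0$ with $P_0 \subset P_{n_0}$. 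Consequently the projection $P_n - P_m$ onto $\spanop\{e_{m+1},\dots,e_n\}$ need not be orthogonal to $P_0$ for any $m$, and the measurability estimate \eqref{measurable-condition} cannot be invoked.

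The paper repairs exactly this point by reversing the order of choices: it first builds an increasing sequence of finite-rank projections $P_n$ by applying measurability with $\epsilon = 2^{-n}$ (enlarging at each step so that $P_{n-1} \subset P_n$ and a fixed dense sequence is eventually absorbed, guaranteeing $P_n \uparrow I$), and only \emph{then} chooses an orthonormal basis $\{h_j\}$ adapted to this filtration, so that $\{h_1,\dots,h_{k_n}\}$ spans $P_n H$. With that setup the increment $S_n - S_{n-1}$ is standard Gaussian on $(P_n - P_{n-1})H$, which is genuinely orthogonal to $P_{n-1}$, and Borel--Cantelli yields a.s.\ convergence. Your arguments for Gaussianity of $\mu$ and for $H_\mu = H$ are correct and in fact a bit more direct than the paper's diagram chase; only the convergence step needs to be rewritten so that the basis is chosen after, and compatibly with, the projections coming from the measurability hypothesis.
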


\begin{proof}
  We start by constructing a sequence of finite-rank projections $P_n$
  inductively.  First, pick a countable dense sequence $\{v_n\}$ of
  $H$.  Let $P_0 = 0$.  Then suppose that $P_{n-1}$ has been given.
  By the measurability of $\nm_{W}$, for each $n$ we can find a
  finite-rank projection $P_n$ such that for all finite-rank
  projections $P \perp P_n$, we have
  \begin{equation*}
    \tilde{\mu}(\{h \in H : \norm{Ph}_W > 2^{-n}\}) < 2^{-n}.
  \end{equation*}
  As we remarked earlier, we can always choose $P_n$ to be larger, so
  we can also assume that $P_{n-1} \subset P_n$ and also $v_n \in P_n
  H$.  The latter condition ensures that $\bigcup_n P_n H$ is dense
  in $H$, from which it follows that $P_n h \to h$ in
  $H$-norm for all $h \in H$, i.e. $P_n \uparrow I$ strongly.  Let us
  also note that $R_n := P_n - P_{n-1}$ is a finite-rank projection which is
  orthogonal to $P_{n-1}$ (in fact, it is projection onto
  the orthogonal complement of $P_{n-1}$ in $P_n$), and in fact we
  have the orthogonal decomposition $H = \bigoplus_{n=1}^\infty R_n H$.

  Given an orthonormal basis for $P_n H$, we can extend it to an
  orthonormal basis for $P_{n+1} H$.  Repeating this process, we can
  find a sequence $\{h_j\}_{j=1}^\infty$ such that $\{h_1, \dots,
  h_{k_n}\}$ is an orthonormal basis for $P_n H$.  Since $\bigcup P_n
  H$ is dense in $H$, it follows that the entire sequence $\{h_j\}$ is an
  orthonormal basis for $H$.  

  Let $\{X_n\}$ be a sequence of iid standard normal random
  variables defined on some unrelated probability space $(\Omega,
  \mathcal{F}, \mathbb{P})$.  Consider the $W$-valued random variable
  \begin{equation*}
    S_n = \sum_{j=1}^{k_n} X_j h_j.
  \end{equation*}
  Note that $S_n - S_{n-1}$ has a standard normal distribution on the
  finite-dimensional Hilbert space $R_n H$, so by definition of
  $\tilde{\mu}$ we have
  \begin{align*}
    \mathbb{P}(\norm{S_n - S_{n-1}}_W > 2^{-n}) = \tilde{\mu}(\{h \in H
    : \norm{R_n h}_W > 2^{-n}\}) < 2^{-n}.
  \end{align*}
  Thus $\sum_n \mathbb{P}(\norm{S_n - S_{n-1}}_W > 2^{-n}) < \infty$,
  and by Borel--Cantelli, we have that, $\mathbb{P}$-almost surely, $\norm{S_n
  - S_{n-1}}_W \le 2^{-n}$ for all but finitely many $n$.  In
  particular, $\mathbb{P}$-a.s., $S_n$ is Cauchy in $W$-norm, and
  hence convergent to some $W$-valued random variable $S$.

  Let $\mu = \mathbb{P} \circ S^{-1}$ be the law of $S$; $\mu$ is a
  Borel measure on $W$.  If $f \in W^*$, then $f(S) = \lim_{n \to
  \infty} f(S_n) = \lim_{n \to \infty} \sum_{j=1}^{k_n} f(h_j) X_j$ is
  a limit of Gaussian random variables.  Hence by Lemma
  \ref{limit-of-gaussian} $f(S)$ is Gaussian, and moreover we have
  \begin{equation*}
    \infty > \Var(f(S)) = \lim_{n \to \infty} \Var(f(S_n)) = \lim_{n \to
    \infty} \sum_{j=1}^{k_n} |f(h_j)|^2 = \sum_{j=1}^\infty |f(h_j)|^2.
  \end{equation*}
  Pushing forward, we have that $f$ is a Gaussian random variable on
  $(W,\mu)$ with variance $q(f,f) = \sum_{j=1}^\infty
  |f(h_j)|^2 < \infty$.  So $\mu$ is a Gaussian measure and $q$ is its
  covariance form.  Let $H_\mu$ be the Cameron--Martin space
  associated to $(W, \mu)$.  We want to show that $H = H_\mu$
  isometrically.  This is basically just another diagram chase. 

  Let $i$ denote the inclusion map $i : H \hookrightarrow W$.  We know
  by Lemma \ref{cont-embed} that $i$ is 1-1, continuous and has dense
  range, so its adjoint $i^* : W^* \to H$ is also 1-1 and continuous with
  dense range (Exercise \ref{adjoint-exercise}).  Also, we have
  \begin{equation*}
    \norm{i^* f}_H^2 = \sum_{j=1}^\infty |\inner{i^* f}{h_j}_H|^2 =
    \sum_{j=1}^\infty |f(h_j)|^2 = q(f,f) 
  \end{equation*}
  so that $i^* : (W^*,q) \to H$ is an isometry.

  Next, for any $h \in H$ and any $f \in W^*$, Cauchy--Schwarz gives
  \begin{equation*}
    |f(h)|^2 = |\inner{i^* f}{h}_H|^2 \le \norm{i^*f}_H^2 \norm{h}_H^2
     = q(f,f) \norm{h}_H^2
  \end{equation*}
  so that $f \mapsto f(h)$ is a continuous linear functional on
  $(W^*,q)$.  That is, $h \in H_\mu$, and rearranging and taking the
  supremum over $f$ shows $\norm{h}_{H_\mu} \le \norm{h}_H$.  On the
  other hand, if $f_n \in W^*$ with $i^* f_n \to h$ in $H$, we have by
  definition
  \begin{equation*}
    \frac{|f_n(h)|}{\sqrt{q(f_n,f_n)}} \le \norm{h}_{H_\mu}.
  \end{equation*}
  As $n \to \infty$, $f_n(h) = \inner{i^* f_n}{h}_H \to \norm{h}_H^2$,
  and since $i^*$ is an isometry, $q(f_n, f_n) = \norm{i^* f_n}_H^2
  \to \norm{h}_H^2$, so the left side tends to $\norm{h}_H$.  Thus
  $\norm{h}_H = \norm{h}_{h_\mu}$.

  We have shown $H \subset H_\mu$ isometrically; we want equality.
  Note that $H$ is closed in $H_\mu$, since $H$ is complete in
  $H$-norm and hence also in $H_\mu$-norm.  So it
  suffices to show $H$ is dense in $H_\mu$.  Suppose there exists $g
  \in H_\mu$ with $\inner{g}{h}_{H_\mu} = 0$ for all $h \in H$.  If
  $i_\mu : H_\mu \hookrightarrow W$ is the inclusion map, we know that
  $i_\mu^* : (W^*,q) \to H_\mu$ has dense image and is an isometry.
  So choose $f_n \in W^*$ with $i_\mu^* f_n \to g$ in $H_\mu$.   Then
  $f_n$ is $q$-Cauchy, and so $i^* f_n$ converges in $H$-norm to some
  $k \in H$.  But for $h \in H$,
  \begin{equation*}
    \inner{k}{h}_H = \lim \inner{i^* f_n}{h}_H = \lim f_n(h) = \lim
    \inner{i_\mu^* f_n}{h}_{H_\mu} = \inner{g}{h}_{H_\mu} = 0
  \end{equation*}
  so that $k = 0$.  Then
  \begin{equation*}
    \norm{g}_{H_\mu}^2 = \lim \norm{i_\mu^* f_n}_{H_\mu}^2 = \lim
    q(f_n, f_n) = \lim \norm{i^* f_n}_H^2 = 0
  \end{equation*}
  so $g = 0$ and we are done.
\end{proof}

Here is one way to describe what is going on here.  If $h_j$ is an
orthonormal basis for $H$, then $S = \sum X_j h_j$ should be a random
variable with law $\mu$.  However, this sum diverges in $H$ almost
surely (since $\sum |X_j|^2 = \infty$ a.s.).  So if we want it to
converge, we have to choose a weaker norm.

The condition of measurability is not only sufficient but also necessary.

\begin{theorem}
  Let $(W, \mu)$ be an abstract Wiener space with Cameron--Martin
  space $H$.  Then $\nm_{W}$ is a measurable norm on $H$.
\end{theorem}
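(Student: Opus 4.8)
The plan is to transfer the statement to the genuine Gaussian measure $\mu$ on $W$ and use there the elementary fact that $\mu$ genuinely realizes the ``canonical Gaussian measure'' $\tilde{\mu}$ along any finite-dimensional subspace of $H$. For a finite-rank projection $P$ of $H$ with orthonormal basis $e_1,\dots,e_n$ of $PH$, the functionals $Te_1,\dots,Te_n\in K$ are orthonormal in $L^2(\mu)$ and jointly Gaussian, hence iid $N(0,1)$ under $\mu$; so the $\mu$-a.e.\ defined Borel map $\widehat{P}x:=\sum_{i=1}^n (Te_i)(x)\,e_i$ (a map $W\to PH\subset W$, well defined a.e.\ independently of the basis and additive over orthogonal summands) satisfies $\big((Te_1)(x),\dots,(Te_n)(x)\big)\sim N(0,I_n)$ under $\mu$, exactly as $\big(\inner{h}{e_1}_H,\dots,\inner{h}{e_n}_H\big)\sim N(0,I_n)$ under $\tilde{\mu}$. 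Applying the same continuous function $(t_1,\dots,t_n)\mapsto\norm{\sum_i t_i e_i}_W$ to these two $N(0,I_n)$ vectors gives $\tilde{\mu}(\norm{Ph}_W>\epsilon)=\mu(\norm{\widehat{P}x}_W>\epsilon)$. So it suffices to find, for every $\epsilon>0$, a finite-rank projection $P_0$ of $H$ with $\mu(\norm{\widehat{P}x}_W>\epsilon)<\epsilon$ for all finite-rank $P\perp P_0$.

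Suppose no such $P_0$ exists for some $\epsilon>0$. Building inductively and interleaving with a fixed countable dense subset of $H$ (so as to exhaust $H$), we obtain finite-rank projections $P_1,P_2,\dots$ with pairwise orthogonal ranges, $\bigoplus_n P_nH=H$, and $\mu(\norm{\widehat{P_n}x}_W>\epsilon)\ge\epsilon$ for every odd $n$ (in particular such $P_n\ne0$). Concatenating orthonormal bases of the $P_nH$ yields an orthonormal basis $\{h_j\}$ of $H$; put $Q_N=\sum_{n\le N}P_n$, $k_N=\dim Q_NH\to\infty$, $S_N=\widehat{Q_N}=\sum_{n\le N}\widehat{P_n}$, and $R_N=I-\widehat{Q_N}$, so $x=S_N(x)+R_N(x)$ for $\mu$-a.e.\ $x$. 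Now $S_N(x)=\sum_{j\le k_N}(Th_j)(x)h_j$ is $\sigma\big((Th_1),\dots,(Th_{k_N})\big)$-measurable, while for $f\in W^*$ we have $f(R_N(x))=f(x)-\sum_{j\le k_N}f(h_j)(Th_j)(x)=\sum_{j>k_N}f(h_j)(Th_j)(x)$ in $L^2(\mu)$ (since $\{Th_j\}$ is an orthonormal basis of $K\supset W^*$ and $f(h_j)=q(Th_j,f)$), so $R_N$ is measurable with respect to the tail field $\sigma\big((Th_j):j>k_N\big)$ up to null sets --- here one uses $\mathcal{B}_W=\sigma(W^*)$ (Lemma \ref{weak-sigma-field}) --- and this tail field is independent of $\sigma\big((Th_1),\dots,(Th_{k_N})\big)$ because the $Th_j$ are iid. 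Hence $S_N$ and $R_N$ are independent and $\mu=\nu_N*\rho_N$, where $\nu_N,\rho_N$ denote their laws. Moreover $\rho_N$ is a centered Gaussian measure on $W$ (each $g(R_N(x))$, $g\in W^*$, being an $L^2(\mu)$-limit of Gaussian random variables), and a direct computation gives $\widehat{\rho_N}(f)=e^{-\frac{1}{2}\sum_{j>k_N}f(h_j)^2}$, which tends to $1$ for each $f\in W^*$ because $\sum_j f(h_j)^2=q(f,f)<\infty$.

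It remains to show that the family $\{\rho_N\}$ is tight. Since $W$ is Polish, $\mu$ is Radon; given $\delta>0$, choose a compact $C\subset W$ with $\mu(C)>1-\delta$. From $\mu=\nu_N*\rho_N$, i.e.\ $\mu(C)=\int\rho_N(C-y)\,\nu_N(dy)>1-\delta$, there is some $y_N$ with $\rho_N(C-y_N)>1-\delta$; as $\rho_N$ is symmetric (centered Gaussian), also $\rho_N(y_N-C)>1-\delta$, so $\rho_N\big((C-y_N)\cap(y_N-C)\big)>1-2\delta$. But $(C-y_N)\cap(y_N-C)\subset\frac{1}{2}(C-C)$ (a point of the intersection is $c_1-y_N=y_N-c_2$, hence equals $\frac{1}{2}(c_1-c_2)$), and $\frac{1}{2}(C-C)$ is compact and independent of $N$. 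So $\{\rho_N\}$ is tight; by Prokhorov it is relatively weakly compact, and every weak limit point has Fourier transform $\equiv1$, so it is $\delta_0$ by Theorem \ref{fourier-unique}. Thus $\rho_N\to\delta_0$ weakly, i.e.\ $R_N(x)\to0$ in $\mu$-probability, whence $\widehat{P_n}x=S_n(x)-S_{n-1}(x)=R_{n-1}(x)-R_n(x)\to0$ in $\mu$-probability and $\mu(\norm{\widehat{P_n}x}_W>\epsilon)\to0$. This contradicts the construction, so $\nm_W$ is a measurable norm.

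I expect the two delicate points to be: first, the identification of $x-S_N(x)$ as measurable with respect to the tail $\sigma$-field generated by $(Th_j)_{j>k_N}$ --- this is where Lemma \ref{weak-sigma-field} and the $L^2(\mu)$-approximation of functionals by the $Th_j$ are essential, and without the resulting factorization $\mu=\nu_N*\rho_N$ there is nothing to run; second, the symmetrization step replacing the wandering translate $C-y_N$ by the fixed compact set $\frac{1}{2}(C-C)$, which is exactly what upgrades Radonness of the single measure $\mu$ to uniform tightness of $\{\rho_N\}$. Everything else --- the dictionary of the first paragraph, the interleaved construction, and the final deduction --- is bookkeeping. One could instead compress the second and third paragraphs into a single invocation of the It\^o--Nisio theorem applied to the partial sums $S_N$ of the independent symmetric $W$-valued summands $\widehat{P_n}$, noting that $f(S_N)\to f$ in $L^2(\mu)$ --- hence in probability --- for every $f\in W^*$.
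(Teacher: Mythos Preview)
Your argument is correct. The paper itself does not supply a proof of this theorem; it only points to the original source \cite{dudley-feldman-lecam} and to Stroock's proof in Driver's notes \cite{driver-probability}. So there is no in-paper proof to compare against, and what you have written would serve as a complete, self-contained substitute.

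A few remarks on the content. The dictionary in your first paragraph, identifying $\tilde{\mu}(\norm{Ph}_W>\epsilon)$ with $\mu(\norm{\widehat{P}x}_W>\epsilon)$, is exactly the right starting point and is implicit in the classical proofs as well. Your main engine---the factorization $\mu=\nu_N*\rho_N$ via independence of $S_N$ and $R_N$, followed by tightness of $\{\rho_N\}$ and identification of the weak limit as $\delta_0$---is essentially the content of the It\^o--Nisio theorem specialized to this setting, as you note in your closing sentence. The symmetrization step $(C-y_N)\cap(y_N-C)\subset\frac12(C-C)$ is the standard device for converting Radonness of a single symmetric measure into uniform tightness of its symmetric convolution factors, and you have handled it cleanly. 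The measurability of $R_N$ with respect to the tail field is indeed the one place requiring care; your use of $\mathcal{B}_W=\sigma(W^*)$ together with the $L^2(\mu)$ expansion of each $f\in W^*$ along the basis $\{Th_j\}$ is the correct way through, and countability (via separability of $W$) lets you pass from ``each $f(R_N)$ is tail-measurable up to a null set'' to ``$R_N$ is tail-measurable up to a null set''.

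In short: the paper outsources this proof, and yours is a valid replacement that is close in spirit to the standard tightness-based arguments it cites.
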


The first proof of this statement, in this generality, seems to have
appeared in \cite{dudley-feldman-lecam}.  For a nice proof due to Daniel Stroock, see Bruce Driver's notes
\cite{driver-probability}.  

\begin{remark}
  The mere existence of a measurable norm on a given Hilbert space $H$
  is trivial.  Indeed, since all infinite-dimensional separable
  Hilbert spaces are isomorphic, as soon as we have found a measurable
  norm for one Hilbert space, we have found one for any Hilbert
  space.  
\end{remark}

One might wonder if the completion $W$ has any restrictions on its
structure.  Equivalently, which separable Banach spaces $W$ admit
Gaussian measures?  This is a reasonable question, since Banach spaces
can have strange ``geometry.''\footnote{A particularly bizarre example
was given recently in \cite{argyros-haydon}: a separable Banach space
$X$ such that every bounded operator $T$ on $X$ is of the form $T =
\lambda I + K$, where $K$ is a compact operator.  In some sense, $X$
has almost the minimum possible number of bounded operators.}  However, the
answer is that there are no restrictions.

\begin{theorem}[Gross {\cite[Remark 2]{gross-abstract-wiener}}]
  If $W$ is any separable Banach space, there exists a separable
  Hilbert space densely embedded in $W$, on which the $W$-norm is
  measurable.  Equivalently, there exists a non-degenerate Gaussian
  measure on $W$.
\end{theorem}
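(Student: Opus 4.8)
The plan is to prove the second, equivalent formulation directly, by an explicit Gaussian random series, and then obtain the first statement from results already established. First I would choose a countable dense subset $\{y_n\}_{n\ge1}$ of $W$ with every $y_n\neq 0$, and set $x_n:=2^{-n}y_n/\norm{y_n}_W$, so that $\norm{x_n}_W=2^{-n}$ while $\spanop\{x_n\}$ still contains every $y_n$ and is therefore dense in $W$. Taking a sequence $\{X_n\}$ of iid $N(0,1)$ random variables on an auxiliary probability space $(\Omega,\mathcal{F},\mathbb{P})$, the estimate $\mathbb{E}\sum_n\norm{X_n x_n}_W=\mathbb{E}|X_1|\sum_n 2^{-n}<\infty$ shows that $S:=\sum_n X_n x_n$ converges absolutely in the Banach space $W$ almost surely; the resulting limit $S$ is a $W$-valued random variable whose law $\mu:=\mathbb{P}\circ S^{-1}$ is a Borel probability measure on $W$.

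Next I would check that $\mu$ is a non-degenerate Gaussian measure. For $f\in W^*$ we have $f(S)=\sum_n X_n f(x_n)$, an almost sure (hence in-distribution) limit of the partial sums $\sum_{n\le N}X_n f(x_n)$, each of which is Gaussian; by Lemma \ref{limit-of-gaussian} the limit $f(S)$ is Gaussian, with variance $\lim_N\sum_{n\le N}|f(x_n)|^2=\sum_n|f(x_n)|^2$ (finite, being at most $\norm{f}_{W^*}^2\sum_n\norm{x_n}_W^2$). Hence $\mu$ is Gaussian with covariance form $q(f,f)=\sum_n|f(x_n)|^2$; and if $q(f,f)=0$ then $f$ vanishes on the dense subspace $\spanop\{x_n\}$, forcing $f=0$. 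Thus $q$ is positive definite and $(W,\mu)$ is a non-degenerate abstract Wiener space.

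Finally, let $H$ be the Cameron--Martin space of $(W,\mu)$. It is a separable Hilbert space---via the unitary $T$ it is isometric to the closed subspace $K$ of the separable space $L^2(W,\mu)$---it is continuously embedded in $W$ with dense image, as established earlier for an arbitrary abstract Wiener space, and by the theorem asserting that the $W$-norm is always a measurable norm on the Cameron--Martin space of an abstract Wiener space, $\nm_W$ is a measurable norm on $H$. This is the first assertion; the second was obtained above, and the equivalence of the two is precisely Theorem \ref{gross-theorem} combined with that same measurability theorem.

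I do not expect a serious obstacle. The one point needing a little care is to arrange simultaneously that $\sum_n\norm{x_n}_W<\infty$, so that the series converges in $W$ almost surely, and that $\spanop\{x_n\}$ is dense in $W$, so that $q$ is positive definite; rescaling a countable dense set accomplishes both at once. A more hands-on route would instead build $H$ directly as an $\ell^2$-type completion of $\spanop\{x_n\}$ with the $x_n$ declared orthonormal, but then one must verify that the natural map $\ell^2\to W$ is injective so that this inner product is well defined---a nuisance the random-series argument sidesteps.
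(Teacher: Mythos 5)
Your proof is correct, but it takes a genuinely different route from the one in these notes. The notes first handle Hilbert spaces (via Proposition \ref{hs-implies-measurable}: $\norm{h}_{W}=\norm{Ah}_H$ is measurable when $A$ is Hilbert--Schmidt), then use a separate inductive construction (Gross's lemma, with its careful choice of the scalars $a_i$ and the subtlety that the completion of the constructed inner product space need not inject into $W$) to embed a Hilbert space $H_1$ densely in an arbitrary separable Banach space, and finally nest a second Hilbert space with a measurable norm inside $H_1$, transferring measurability to the weaker $W$-norm. You instead manufacture the Gaussian measure directly as the law of the absolutely convergent random series $\sum_n X_n x_n$, where the $x_n$ are a rescaled countable dense set with $\norm{x_n}_W=2^{-n}$; non-degeneracy of $q(f,f)=\sum_n|f(x_n)|^2$ is immediate from density of the span, and Lemma \ref{limit-of-gaussian} does the rest. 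This is shorter and sidesteps both delicate points of Gross's lemma --- indeed the injectivity ``nuisance'' you mention at the end is exactly what the long remark following that lemma (Sengupta's observation) is about, so avoiding it is a real gain. The trade-off comes at the last step: to recover the ``measurable norm'' formulation you pass through the Cameron--Martin space and invoke the converse of Gross's theorem (that $\nm_W$ is always a measurable norm on $H$), which these notes state, attributing it to Dudley, Feldman and Le Cam, but do not prove; the notes' own argument is self-contained modulo results actually proved here, and it exhibits the Hilbert space concretely rather than only implicitly as the Cameron--Martin space of the constructed measure. It is also worth noticing that your random series is the same mechanism that drives the proof of Theorem \ref{gross-theorem} itself (the sums $S_n=\sum_j X_j h_j$ converging in $W$-norm), only with convergence secured by the cruder device of forcing $\sum_n\norm{x_n}_W<\infty$ rather than by Borel--Cantelli.
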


\begin{proof}
  The finite-dimensional case is trivial, so we suppose $W$ is
  infinite dimensional.    We start with the case of Hilbert spaces.

  First, there exists a separable (infinite-dimensional) Hilbert space
  $W$ with a densely embedded separable Hilbert space $H$ on which the
  $W$-norm is measurable.  Proposition \ref{hs-implies-measurable}
  tells us that, given any separable Hilbert space $H$, we can
  construct a measurable norm $\nm_W$ on $H$ by letting $\norm{h}_W =
  \norm{Ah}_H$, where $A$ is a Hilbert--Schmidt operator on $H$.  Note
  that $\nm_W$ is induced by the inner product $\inner{h}{k}_W =
  \inner{Ah}{Ak}_H$, so if we let $W$ be the completion of $H$ under
  $\nm_W$, then $W$ is a separable Hilbert space with $H$ densely
  embedded.  (We should take $A$ to be injective.  An example of such
  an operator is given by taking an orthonormal basis $\{e_n\}$ and
  letting $A e_n = \frac{1}{n} e_n$.)

  Now, since all infinite-dimensional separable Hilbert spaces are
  isomorphic, this shows that the theorem holds for any separable
  Hilbert space $W$.

  Suppose now that $W$ is a separable Banach space.  By the following
  lemma, there exists a separable Hilbert space $H_1$ densely embedded
  in $W$.  In turn, there is a separable Hilbert space $H$ densely
  embedded in $H_1$, on which the $H_1$-norm is measurable.  The
  $W$-norm on $H$ is weaker than the $H_1$-norm, so it is measurable
  as well.  (Exercise: check the details.)

  Alternatively, there exists a non-degenerate Gaussian measure
  $\mu_1$ on $H_1$.  Push it forward under the inclusion map.  As an
  exercise, verify that this gives a non-degenerate Gaussian measure
  on $W$.
\end{proof}

\begin{lemma}[Gross]
  If $W$ is a separable Banach space, there exists a separable Hilbert
  space $H$ densely embedded in $W$.
\end{lemma}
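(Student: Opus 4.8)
The plan is to realize $H$ concretely as the image of $\ell^2$ under a suitable bounded, injective linear map into $W$, equipped with the transported Hilbert norm; the whole point is to ``smuggle'' a copy of $\ell^2$ into $W$ with enough decay that convergence is guaranteed.

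First I would fix a countable dense sequence $\{w_n\}_{n=1}^\infty$ in $W$ and choose positive scalars $c_n$ with $\sum_n c_n^2 \norm{w_n}_W^2 < \infty$, for instance $c_n = 2^{-n}/(1 + \norm{w_n}_W)$. Define $A : \ell^2 \to W$ by $Aa = \sum_{n=1}^\infty a_n c_n w_n$. For $a = (a_n) \in \ell^2$, Cauchy--Schwarz gives $\sum_n \abs{a_n} c_n \norm{w_n}_W \le \norm{a}_{\ell^2}\left(\sum_n c_n^2 \norm{w_n}_W^2\right)^{1/2} < \infty$, so the series converges absolutely, hence converges in the Banach space $W$; the same estimate shows $A$ is a bounded linear operator with $\norm{Aa}_W \le C\norm{a}_{\ell^2}$, where $C^2 = \sum_n c_n^2\norm{w_n}_W^2$. (In fact $A$ is compact, being the operator-norm limit of the finite-rank truncations $a \mapsto \sum_{n \le N} a_n c_n w_n$; we will not need this, but it is consistent with the earlier observation that such an inclusion is forced to be compact.)

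Next, the range of $A$ is dense in $W$: taking $a$ to be the $n$-th coordinate vector shows $c_n w_n \in \operatorname{ran} A$, hence $w_n \in \operatorname{ran} A$ for every $n$, and $\{w_n\}$ is dense. To get injectivity, set $N = \ker A$ (closed since $A$ is continuous) and restrict $A$ to $N^\perp$; then $A|_{N^\perp}$ is injective, and since $\ell^2 = N \oplus N^\perp$ and $A$ annihilates $N$ we have $\operatorname{ran}(A|_{N^\perp}) = \operatorname{ran} A$, still dense. Now let $H := \operatorname{ran} A \subseteq W$ with inner product $\inner{Ah}{Ak}_H := \inner{h}{k}_{\ell^2}$ for $h,k \in N^\perp$; this is well defined because $A|_{N^\perp}$ is injective, and it makes $A|_{N^\perp} : N^\perp \to H$ an isometric bijection, so $H$ is a separable Hilbert space. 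By construction the inclusion $H \hookrightarrow W$ satisfies $\norm{Ah}_W \le C\norm{Ah}_H$, so it is continuous, and its image $H$ is dense in $W$ as just shown. This completes the construction.

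Since this argument is essentially bookkeeping, there is no serious obstacle; the only point requiring a little care is that passing to $N^\perp$ (equivalently, to the quotient $\ell^2/N$) genuinely repairs injectivity without shrinking the range, and that the transported norm on $H \subseteq W$ is a complete Hilbert norm for which the inclusion into $W$ is bounded --- both of which follow at once from the orthogonal decomposition $\ell^2 = N \oplus N^\perp$ and the boundedness estimate for $A$.
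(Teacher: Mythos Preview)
Your proof is correct and follows essentially the same strategy as the paper's: rescale a countable dense sequence so that the $\ell^2$ norm on coefficients dominates the $W$-norm, obtain a bounded map from a Hilbert space into $W$ with dense range, and then pass to the orthogonal complement of the kernel to repair injectivity. Your explicit choice of scalars via the summability condition $\sum_n c_n^2 \norm{w_n}_W^2 < \infty$ is a bit more direct than the paper's inductive compactness argument for selecting the $a_i$, but the underlying construction (and in particular the final $(\ker)^{\perp}$ step, which the paper's remark emphasizes is genuinely needed) is the same.
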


\begin{proof}
  We repeat a construction of Gross \cite{gross-abstract-wiener}.
  Since $W$ is separable, we may find a countable set $\{z_i\} \subset
  W$ whose linear span is dense in $W$; without loss of generality, we
  can take $\{z_n\}$ to be linearly independent.  We will construct an
  inner product $\inner{\cdot}{\cdot}_K$ on $K = \spanop\{z_i\}$ such
  that $\norm{x}_W \le \norm{x}_K$ for all $x \in K$; thus $K$ will be an
  inner product space densely embedded in $W$.

  We inductively construct a sequence $\{a_i\}$ such that $a_i \ne 0$
  for any real numbers $b_1, \dots, b_n$ with $\sum_{i=1}^n b_i^2 \le
  1$, we have $\norm{\sum_{i=1}^n a_i b_i z_i}_W < 1$.  To begin,
  choose $a_1$ with $0 < |a_1| < \norm{z_1}_W^{-1}$.  Suppose now that $a_1, \dots,
  a_{n-1}$ have been appropriately chosen.  Let $D^n = \{(b_1, \dots,
  b_n) : \sum_{i=1}^n b_n^2 \le 1$ be the closed Euclidean
  unit disk of $\R^n$ and consider the map $f : D^n \times \R \to W$
  defined by
  \begin{equation*}
    f(b_1, \dots, b_n, a) = \sum_{i=1}^{n-1} a_i b_i z_i + a b_n z_n.
  \end{equation*}
  Now $f$ is obviously continuous, and by the induction hypothesis we
  have $f(D^n \times \{0\}) \subset S$, where $S$ is the open unit
  ball of $W$.  So by continuity, $f^{-1}(S)$ is an open set
  containing $D^n \times \{0\}$; hence $f^{-1}(S)$ contains some set
  of the form $D^n \times (-\epsilon, \epsilon)$.  Thus if we choose
  any $a_n$ with $0 \le |a_n| < \epsilon$, we have the desired
  property for $a_1, \dots, a_n$.

  Set $y_i = a_i z_i$; since the $a_i$ are nonzero, the $y_i$ span
  $K$ and are linearly independent.  Let $\inner{\cdot}{\cdot}_K$ be
  the inner product on $K$ which makes the $y_i$ orthonormal; then we
  have $\norm{\sum_{i=1}^n b_i y_i}_K^2 = \sum_{i=1}^n b_i^2$.  By our
  construction, we have that any $x \in K$ with $\norm{x}_K^2 \le 1$
  has $\norm{x}_W < 1$ as well, so $K$ is continuously and densely
  embedded in $W$.  That is to say, the inclusion map $i : (K, \nm_K)
  \to (W, \nm_W)$ is continuous.

  Let $\bar{K}$ be the abstract completion of $K$, so $\bar{K}$ is a
  Hilbert space.  Since $W$ is Banach, the continuous map $i : K \to
  W$ extends to a continuous map $\bar{i} : \bar{K} \to W$ whose image
  is dense (as it contains $K$).  It is possible that $\bar{i}$ is not
  injective, so let $H = (\ker \bar{i})^\perp$ be the orthogonal
  complement in $\bar{K}$ of its kernel.  $H$ is a closed subspace of
  $\bar{K}$, hence a Hilbert space in its own right, and the
  restriction $\bar{i}|_H : H \to W$ is continuous and injective, and
  its range is the same as that of $\bar{i}$, hence still dense in
  $W$.
\end{proof}

\begin{remark}
  The final step of the previous proof (passing to $(\ker
  \bar{i})^\perp$) is missing from Gross's original proof, as was
  noticed by Ambar Sengupta, who asked if it is actually necessary.
  Here is an example to show that it is.

  Let $W$ be a separable Hilbert space with orthonormal basis
  $\{e_n\}_{n=1}^\infty$, and let $S$ be the left shift operator defined by $S e_1
  = 0$, $S e_n = e_{n-1}$ for $n \ge 2$.  Note that the kernel of $S$
  is one-dimensional and spanned by $e_1$.  Let $E$ be the subspace of
  $W$ spanned by the vectors $h_n = e_n - e_{n+1}$, $n = 1, 2, \dots$.
  It is easy to check that $e_1 \notin E$, so the restriction of $S$
  to $E$ is injective.  On the other hand, $E$ is dense in $W$: for
  suppose $x \in E^\perp$.  Since $\inner{x}{h_n}_W = 0$, we have
  $\inner{x}{e_n}_W = \inner{x}{e_{n+1}}_W$, so in fact there is a
  constant $c$ with $\inner{x}{e_n}_W = c$ for all $n$.  But
  Parseval's identity says $\sum_{n=1}^\infty
  |\inner{x}{e_n}_W|^2 = \norm{x}_W^2 < \infty$ so we must have $c=0$
  and thus $x=0$.

  We also remark that $SE$ is also dense in $W$: since $S h_n =
  h_{n-1}$, we actually have $E \subset SE$.

  So we have a separable inner product space $E$, a separable Hilbert
  space $W$, and a continuous injective map $S|_E : E \to W$ with
  dense image, such that the continuous extension of $S|_E$ to the
  completion of $E$ (namely $W$) is not injective (since the extension
  is just $S$ again).

  To make this look more like Gross's construction, we just rename
  things.  Set $K = SE$ and define an inner
  product on $K$ by $\inner{Sx}{Sy}_K = \inner{x}{y}_W$ (this is well
  defined because $S$ is injective on $E$).  Now $K$ is an inner
  product space, continuously and densely embedded in $W$, but the
  completion of $K$ does not embed in $W$ (the continuous extension of
  the inclusion map is not injective, since it is really $S$ in
  disguise).

  The inner product space $(K, \inner{\cdot}{\cdot}_K)$ could actually
  be produced by Gross's construction.  By applying the Gram--Schmidt
  algorithm to $\{h_n\}$, we get an orthonormal set $\{g_n\}$ (with
  respect to $\inner{\cdot}{\cdot}_W$) which
  still spans $E$.  (In fact, $\{g_n\}$ is also an orthonormal basis
  for $W$.)  Take $z_n = S g_n$; the $z_n$s are linearly
  independent and span $K$, which is dense in $W$.  If $\sum_{i=1}^n
  b_i^2 \le 1$, then $\norm{\sum_{i=1}^n b_i z_i}_W = \norm{S
  \sum_{i=1}^n b_i g_i}_W \le 1$ because $S$ is a contraction and the
  $g_i$ are orthonormal.  So we can take $a_i = 1$ in the
  induction.\footnote{Technically, since we were supposed to have
  $\norm{\sum a_i b_i z_i}_W < 1$ with a strict inequality, we should
  take $a_i = c < 1$, and this argument will actually produce $c
  \nm_K$ instead of $\nm_K$, which of course makes no difference.}
  Then of course the inner product which makes the $z_n$ orthonormal
  is just $\inner{\cdot}{\cdot}_K$.

  We can make the issue even more explicit: consider the series
  $\sum_{n=1}^\infty \inner{g_n}{e_1}_W z_n$.  Under $\nm_K$, this
  series is Cauchy, since $z_n$ is orthonormal and $\sum_n
  |\inner{g_n}{e_1}_W|^2 = \norm{e_1}_W^2 = 1$; and its limit is not
  zero, since there must be some $g_k$ with $\inner{g_k}{e_1}_W \ne
  0$, and then we have $\inner{z_k}{\sum_{n=1}^m \inner{g_n}{e_1}_W
  z_n} = \inner{g_k}{e_1}_W$ for all $m \ge k$.  So the series
  corresponds to some nonzero element of the completion $\bar{K}$.
  However, under $\nm_H$, the series converges to zero, since
  $\sum_{n=1}^m \inner{g_n}{e_1}_W z_n = S \sum_{n=1}^m
  \inner{g_n}{e_1}_W g_n \to S e_1 = 0$, using the continuity of $S$
  and the fact that $g_n$ is an orthonormal basis for $W$.
\end{remark}

The following theorem points out that measurable norms are far from
unique.

\begin{theorem}
  Suppose $\nm_W$ is a measurable norm on a Hilbert space $(H,
  \nm_H)$.  Then there exists another measurable norm $\nm_{W'}$ which
  is stronger than $\nm_W$, and if we write $W, W'$ for the
  corresponding completions, the inclusions $H \hookrightarrow W'
  \hookrightarrow W$ are compact.
\end{theorem}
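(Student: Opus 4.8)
The plan is to enlarge $\norm{\cdot}_W$ into a new norm built from the $W$-norms of the ``tails'' of $h$ relative to a sequence of finite-rank projections rigged to do two jobs at once: witness the measurability of $\norm{\cdot}_W$, and make the inclusion $H\hookrightarrow W$ ``uniformly small'' on high coordinates. Concretely, I would first fix an orthonormal basis $\{e_j\}$ of $H$ and write $\Pi_k$ for the orthogonal projection onto $\operatorname{span}(e_1,\dots,e_k)$; since $i\colon H\hookrightarrow W$ is compact we have $\beta_k:=\norm{i(I-\Pi_k)}_{H\to W}\downarrow 0$. Using the measurability of $\norm{\cdot}_W$, together with the fact that enlarging a witnessing projection preserves the witnessing property, I would build finite-rank projections $0=P_0\subset P_1\subset P_2\subset\cdots$ with $\bigcup_n P_nH$ dense in $H$ such that for every $n$: (i) $P_n\supseteq\Pi_{k_n}$ for some $k_n$ with $\beta_{k_n}\le 4^{-n}$ (so $\norm{(I-P_n)h}_W\le 4^{-n}\norm{h}_H$ for all $h\in H$), and (ii) $\tilde{\mu}(\{h:\norm{Ph}_W>4^{-n}\})<4^{-n}$ for every finite-rank $P\perp P_n$. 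Then set
\[
  \norm{h}_{W'}:=\sum_{n=0}^\infty 2^n\,\norm{(I-P_n)h}_W .
\]

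The soft properties come first. From (i), $\norm{h}_{W'}\le(\norm{i}_{H\to W}+\sum_{n\ge1}2^{-n})\norm{h}_H<\infty$, so $\norm{\cdot}_{W'}$ is a genuine norm on $H$; the $n=0$ term gives $\norm{\cdot}_{W'}\ge\norm{\cdot}_W$, and it is strictly stronger because (as shown next) $W'\hookrightarrow W$ is compact while an infinite-dimensional space admits no compact isomorphism. For that compactness it suffices to check that any sequence $h^{(m)}\in H$ with $\norm{h^{(m)}}_{W'}\le1$ has a $\norm{\cdot}_W$-Cauchy subsequence: the hypothesis forces $\norm{(I-P_n)h^{(m)}}_W\le2^{-n}$ for all $n,m$, and since each $P_nH$ is finite-dimensional one extracts by diagonalization a subsequence along which $P_nh^{(m)}$ converges in $P_nH$ for every $n$, then uses the uniform tail bound to pass to the limit in $W$. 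Finally, once $\norm{\cdot}_{W'}$ is known to be a measurable norm, Gross's theorem makes $(W',\mu')$ an abstract Wiener space with Cameron--Martin space $H$, and the compactness of $H\hookrightarrow W'$ is then automatic from the general theory (via Schauder's theorem, exactly as in the text).

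The heart of the matter is showing $\norm{\cdot}_{W'}$ is measurable. Given $\epsilon>0$, I claim $P_{n_0}$ witnesses measurability for $\epsilon$ as soon as $n_0$ is large. Fix a finite-rank $P\perp P_{n_0}$ and split $\norm{Ph}_{W'}=\sum_{n\le n_0}+\sum_{n>n_0}$. For $n\le n_0$ we have $P\perp P_{n_0}\supseteq P_n$, hence $(I-P_n)P=P$, so the low part is $(2^{n_0+1}-1)\norm{Ph}_W$, which by (ii) at level $n_0$ is $\le 2^{n_0+1}4^{-n_0}=2^{1-n_0}$ off a set of $\tilde{\mu}$-measure $<4^{-n_0}$. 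For $n>n_0$, the vector $(I-P_n)Ph$ lives in the finite-dimensional subspace $G_n:=(I-P_n)(PH)$, which is orthogonal to $P_nH$; let $P^{(n)}$ be the orthogonal projection onto $G_n$, so that $P^{(n)}\perp P_n$. Under $\tilde{\mu}$, $(I-P_n)Ph$ is a Gaussian vector whose covariance operator, $(I-P_n)P(I-P_n)$, is dominated in the operator order by $P^{(n)}$ (the covariance of $P^{(n)}h$), because $(I-P_n)P$ is a contraction with range in $G_n$. A standard Gaussian comparison (Anderson's inequality) then gives $\tilde{\mu}(\{\norm{(I-P_n)Ph}_W>t\})\le\tilde{\mu}(\{\norm{P^{(n)}h}_W>t\})$ for every $t$, and since $P^{(n)}\perp P_n$, property (ii) at level $n$ yields $\tilde{\mu}(\{\norm{(I-P_n)Ph}_W>4^{-n}\})<4^{-n}$. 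Off a set of measure $\le\sum_{n>n_0}4^{-n}<4^{-n_0}$ we therefore get $\sum_{n>n_0}2^n\norm{(I-P_n)Ph}_W\le\sum_{n>n_0}2^n4^{-n}=2^{-n_0}$. Combining, $\tilde{\mu}(\{\norm{Ph}_{W'}>2^{2-n_0}\})\le 2\cdot4^{-n_0}$ uniformly over finite-rank $P\perp P_{n_0}$, and choosing $n_0$ with $2^{2-n_0}<\epsilon$ and $2\cdot4^{-n_0}<\epsilon$ completes the verification.

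The step I expect to be the main obstacle is precisely this last one: the ``tail'' operators $(I-P_n)P$ are not orthogonal projections, so the definition of measurable norm does not apply to them directly, and the whole measurability estimate hinges on replacing $(I-P_n)Ph$ distributionally by $P^{(n)}h$ for a genuine orthogonal projection $P^{(n)}\perp P_n$ — which is exactly what the covariance-domination plus Anderson's inequality step accomplishes. The remaining ingredients (finiteness on $H$, the geometric bookkeeping with the weights $2^n$ and thresholds $4^{-n}$, and the comparison between $\norm{\cdot}_W$ and $\norm{\cdot}_H$ furnished by compactness of $i$) are routine.
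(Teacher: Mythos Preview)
The paper does not prove this theorem; it simply refers the reader to \cite[Lemma 4.5]{kuo-gaussian-book}. So there is no in-paper argument to compare against, and your proposal has to be judged on its own merits.

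Your argument is correct. The construction $\norm{h}_{W'}=\sum_n 2^n\norm{(I-P_n)h}_W$ works, and the verifications of finiteness on $H$, of $\norm{\cdot}_{W'}\ge\norm{\cdot}_W$, and of the compactness of $W'\hookrightarrow W$ via diagonalization are all routine as you say (note that checking precompactness on the $W'$-unit ball of $H$ suffices, since $H$ is $\nm_{W'}$-dense in $W'$). The delicate point is exactly the one you isolate: for $n>n_0$, the map $(I-P_n)P$ is not an orthogonal projection, so the definition of measurability does not apply to it directly. Your covariance-domination argument is valid: $(I-P_n)P(I-P_n)$ is self-adjoint with range contained in $G_n$ and operator norm at most $1$, hence is dominated in the positive-semidefinite order by the orthogonal projection $P^{(n)}$ onto $G_n$; the Gaussian comparison you cite (larger covariance gives no more mass to symmetric convex sets, which follows from Anderson's inequality by writing the larger Gaussian as the smaller one convolved with an independent Gaussian) then delivers the required tail estimate for $\norm{(I-P_n)Ph}_W$.

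One bookkeeping remark: you invoke the compactness of $i\colon H\hookrightarrow W$ at the outset to obtain $\beta_k\downarrow 0$. This is legitimate in the paper's logical order, since Gross's theorem (Theorem~\ref{gross-theorem}) has already been established, making $(W,\mu)$ an abstract Wiener space with Cameron--Martin space $H$, and the compactness of $i$ then follows from the general theory of Section~\ref{sec-cameron-martin}. The same remark applies to your deduction of the compactness of $H\hookrightarrow W'$ at the end.
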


\begin{proof}
  See \cite[Lemma 4.5]{kuo-gaussian-book}.
\end{proof}

\subsection{Gaussian measures on Hilbert spaces}

We have been discussing Gaussian measures on separable Banach spaces
$W$.  This includes the possibility that $W$ is a separable Hilbert
space.  In this case, there is more that can be said about the
relationship between $W$ and its Cameron--Martin space $H$.

Let $H,K$ be separable Hilbert spaces.

\begin{exercise}
  Let $A : H \to K$ be a bounded operator, $A^*$ its adjoint.  Let
  $\{h_n\}$, $\{k_m\}$ be orthonormal bases for $H,K$ respectively.
  Then
  \begin{equation*}
    \sum_{n=1}^\infty \norm{A h_n}_K^2 = \sum_{m=1}^\infty \norm{A^* k_m}_H^2.
  \end{equation*}
\end{exercise}

\begin{definition}\label{hs-def}
  A bounded operator $A : H \to K$ is said to be
  \textbf{Hilbert--Schmidt} if
  \begin{equation*}
    \norm{A}_{HS}^2 = \sum_{i=1}^\infty \norm{A e_n}_K^2 < \infty
  \end{equation*}
  for some orthonormal basis $\{e_n\}$ of $H$.  By the previous
  exercise, this does not depend on the choice of basis, and
  $\norm{A}_{HS} = \norm{A^*}_{HS}$.
\end{definition}

\begin{exercise}
  If $\norm{A}_{L(H,K)}$ denotes the operator norm of $A$, then
  $\norm{A}_{L(H)} \le \norm{A}_{HS}$.
\end{exercise}

\begin{exercise}
  $\nm_{HS}$ is induced by the inner product $\inner{A}{B}_{HS} =
  \sum_{n=1}^\infty \inner{A e_n}{B e_n}_K$ and makes the set of all
  Hilbert--Schmidt operators from  $H$ to $K$ into a Hilbert space.
\end{exercise}

\begin{exercise}
  Every Hilbert--Schmidt operator is compact.  In particular,
  Hilbert--Schmidt operators do not have bounded inverses if $H$ is
  infinite-dimensional.  The identity operator is not
  Hilbert--Schmidt.
\end{exercise}

\begin{exercise}
  If $A$ is Hilbert--Schmidt and $B$ is bounded, then $BA$ and $AB$
  are Hilbert--Schmidt.  So the Hilbert--Schmidt operators form a
  two-sided ideal inside the ring of bounded operators.
\end{exercise}

\begin{exercise}
  If $A$ is a bounded operator on $H$, $H_0$ is a closed subspace of
  $H$, and $A|_{H_0}$ is the restriction of $A$ to $H_0$, then
  $\norm{A|_{H_0}}_{HS} \le \norm{A}_{HS}$. 
\end{exercise}

\begin{lemma}
  If $H$ is a finite-dimensional Hilbert space, $A : H \to K$ is
  linear, and $\mathbf{X}$ has a standard normal distribution on $H$,
  then $\mathbb{E} \norm{A\mathbf{X}}_K^2 = \norm{A}_{HS}^2$.
\end{lemma}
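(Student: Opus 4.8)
The plan is to expand everything in a fixed orthonormal basis of $H$ and exploit the fact that, for a standard normal vector, the coordinates in \emph{any} orthonormal basis are i.i.d.\ $N(0,1)$ — which is precisely the unitary-invariance property of standard Gaussian measure on $\R^n$ that was used above to define the canonical Gaussian measure on a finite-dimensional Hilbert space.

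First I would fix an orthonormal basis $\{e_1, \dots, e_n\}$ of $H$ and set $X_i = \inner{\mathbf{X}}{e_i}_H$, so that $\mathbf{X} = \sum_{i=1}^n X_i e_i$. By the definition of the standard normal distribution on $H$, the random variables $X_1, \dots, X_n$ are i.i.d.\ $N(0,1)$; in particular $\mathbb{E}[X_i X_j] = \delta_{ij}$. Next, by linearity of $A$ we have $A\mathbf{X} = \sum_{i=1}^n X_i A e_i$, hence
\begin{equation*}
  \norm{A\mathbf{X}}_K^2 = \sum_{i=1}^n \sum_{j=1}^n X_i X_j \inner{A e_i}{A e_j}_K .
\end{equation*}
Since this is a \emph{finite} sum, there is no difficulty interchanging expectation and summation. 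Taking expectations and using $\mathbb{E}[X_i X_j] = \delta_{ij}$ collapses the double sum onto its diagonal, giving
\begin{equation*}
  \mathbb{E}\norm{A\mathbf{X}}_K^2 = \sum_{i=1}^n \norm{A e_i}_K^2 = \norm{A}_{HS}^2 ,
\end{equation*}
where the last equality is just Definition \ref{hs-def} (and the preceding exercise on basis-independence, though here independence of the basis is automatic as $H$ is finite dimensional).

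There is essentially no obstacle here: the only point requiring any care is confirming that ``standard normal on $H$'' genuinely means that the coordinates with respect to an arbitrary orthonormal basis are i.i.d.\ standard normals, which is exactly the unitary-invariance observation recorded when the canonical Gaussian measure was introduced. Everything else is a one-line computation, and all questions of integrability or interchange of limits are vacuous because every sum in sight is finite.
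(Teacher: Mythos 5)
Your proof is correct and is essentially the same computation as the paper's: the paper phrases it as $\mathbb{E}\norm{A\mathbf{X}}_K^2 = \tr(A^*A) = \norm{A}_{HS}^2$ via the covariance matrix of $A\mathbf{X}$, while you expand directly in an orthonormal basis, but both reduce to the identity $\mathbb{E}\norm{A\mathbf{X}}_K^2 = \sum_i \norm{Ae_i}_K^2$. No gaps.
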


\begin{proof}
  If $\mathbf{Z}$ has a normal distribution on $\R^n$ with covariance
  matrix $\Sigma$, then clearly $E |\mathbf{Z}|^2 = \tr \Sigma$.  The
  covariance matrix of $A \mathbf{X}$ is $A^* A$, and $\tr(A^* A) =
  \norm{A}_{HS}^2$.
\end{proof}

\begin{proposition}\label{hs-implies-measurable}
  Let $A$ be a Hilbert--Schmidt operator on $H$.  Then $\norm{h}_{W} =
  \norm{Ah}_H$ is a measurable norm on $H$.
\end{proposition}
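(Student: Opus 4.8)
The plan is to check the definition of measurable norm directly, with Chebyshev's inequality doing the real work and the finite-dimensional identity $\mathbb{E}\norm{A\mathbf{X}}_H^2 = \norm{A}_{HS}^2$ (the lemma just above) supplying the numerics. I would first dispose of the essentially trivial point that $\norm{h}_W := \norm{Ah}_H$ is actually a norm: this needs $A$ injective, which we assume (and which, as remarked, is harmless --- e.g.\ $Ae_n = n^{-1}e_n$), after which the norm axioms follow from those of $\nm_H$.

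For the substance, fix an orthonormal basis $\{e_n\}_{n=1}^\infty$ of $H$. Hilbert--Schmidtness says $\sum_n \norm{Ae_n}_H^2 = \norm{A}_{HS}^2 < \infty$, so the tails $\delta_N := \sum_{n>N}\norm{Ae_n}_H^2$ tend to $0$. Given $\epsilon > 0$, I would choose $N$ with $\delta_N < \epsilon^3$ and take $P_0$ to be the orthogonal projection onto $\operatorname{span}\{e_1,\dots,e_N\}$.

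Now for any finite-rank $P \perp P_0$, pick an orthonormal basis $f_1,\dots,f_r$ of $PH$; then $\norm{Ph}_W = \norm{APh}_H$ depends only on $\inner{h}{f_1}_H,\dots,\inner{h}{f_r}_H$, so $\{h : \norm{Ph}_W > \epsilon\}$ is a cylinder set, and --- as in the proof of Gross's theorem --- on cylinder sets based on the fixed finite-dimensional subspace $PH$ the canonical ``measure'' $\tilde{\mu}$ is just honest standard Gaussian measure on $PH$. The finite-dimensional lemma applied to $A|_{PH}$ therefore gives
\[
  \int_H \norm{Ph}_W^2\, \tilde{\mu}(dh) = \norm{A|_{PH}}_{HS}^2,
\]
and since $PH$ is a closed subspace of $(P_0H)^\perp$ (which has orthonormal basis $e_{N+1}, e_{N+2}, \dots$), the restriction exercise for Hilbert--Schmidt norms bounds this by $\norm{A|_{(P_0H)^\perp}}_{HS}^2 = \delta_N$. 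Chebyshev then gives $\tilde{\mu}(\{h : \norm{Ph}_W > \epsilon\}) \le \delta_N/\epsilon^2 < \epsilon$, which is exactly the measurability condition.

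The calculations here are all routine; the only subtleties worth being careful about are (i) justifying that $\tilde{\mu}$ restricted to cylinder sets over the fixed subspace $PH$ is genuinely a Gaussian measure, so that the integral above has meaning --- but this is precisely the observation already used in the proof of Gross's theorem --- and (ii) orienting the Hilbert--Schmidt restriction inequality so that the controlling quantity is a convergent \emph{tail} $\delta_N \to 0$ rather than the full norm $\norm{A}_{HS}$. Neither constitutes a genuine obstacle.
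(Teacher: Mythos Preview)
Your proof is correct and follows essentially the same route as the paper: choose $P_0$ to project onto the span of the first $N$ basis vectors with $N$ large enough that the Hilbert--Schmidt tail is below $\epsilon^3$, then for $P\perp P_0$ use the finite-dimensional lemma $\mathbb{E}\norm{A\mathbf{X}}_H^2 = \norm{A|_{PH}}_{HS}^2 \le \norm{A|_{(P_0H)^\perp}}_{HS}^2 < \epsilon^3$ together with Chebyshev. Your explicit attention to the injectivity hypothesis and to why $\tilde{\mu}$ is an honest measure on the relevant cylinder sets is a nice addition, but the argument is otherwise the same.
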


\begin{proof}
  Fix an orthonormal basis $\{e_n\}$ for $H$, and suppose $\epsilon > 0$.  Since
  $\sum_{n=1}^\infty \norm{A e_n}_H^2 < \infty$, we can choose $N$ so
  large that $\sum_{n=N}^\infty \norm{A e_n}_H^2 < \epsilon^3$.  Let
  $P_0$ be orthogonal projection onto the span of $\{e_1, \dots,
  e_{N-1}\}$.  Note in particular that $\norm{A|_{(P_0
  H)^\perp}}_{HS}^2 < \epsilon^3$.  Now suppose $P \perp P_0$ is a
  finite rank projection.  Then
  \begin{equation*}
    \tilde{\mu}(\{h : \norm{Ph}_{W} > \epsilon\}) = 
    \tilde{\mu}(\{h : \norm{APh}_{H} > \epsilon\}) =
    \mathbb{P}(\norm{A \mathbf{X}}_H > \epsilon)
  \end{equation*}
  where $\mathbf{X}$ has a standard normal distribution on $PH$.  By
  the previous lemma,
  \begin{equation*}
    \mathbb{E} \norm{A \mathbf{X}}_H^2 = \norm{A|_{PH}}_{HS}^2 \le
    \norm{A|_{(P_0 H)^\perp}}_{HS}^2 < \epsilon^3
  \end{equation*}
  so Chebyshev's inequality gives $\mathbb{P}(\norm{A \mathbf{X}}_H >
  \epsilon) < \epsilon$ as desired.
\end{proof}

Since the norm $\norm{h}_W = \norm{Ah}_H$ is induced by an inner
product (namely $\inner{h}{k}_W = \inner{Ah}{Ak}_H$), the completion
$W$ is a Hilbert space.  Actually, this is the only way to get $W$ to be
a Hilbert space.  Here is a more general result, due to Kuo.

\begin{theorem}[{\cite[Corollary 4.4]{kuo-gaussian-book}}]
  Let $W$ be a separable Banach space with Gaussian measure $\mu$ and
  Cameron--Martin space $H$, $i : H \to W$ the inclusion map, and let
  $Y$ be some other separable Hilbert space.  Suppose $A : W \to Y$ is a
  bounded operator.  Then $Ai : H \to Y$ (i.e. the restriction of $A$
  to $H \subset W$) is Hilbert--Schmidt, and
  $\norm{Ai}_{HS} \le C \norm{A}_{L(W,Y)}$ for some constant $C$
  depending only on $(W,\mu)$.
\end{theorem}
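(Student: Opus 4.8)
The plan is to establish the exact identity
\begin{equation*}
  \norm{Ai}_{HS}^2 = \int_W \norm{Ax}_Y^2\,\mu(dx),
\end{equation*}
after which the theorem is immediate. Indeed, boundedness of $A$ gives $\norm{Ax}_Y \le \norm{A}_{L(W,Y)}\norm{x}_W$, and the corollary to Fernique's theorem tells us that $\int_W \norm{x}_W^2\,\mu(dx) < \infty$; so the identity yields $\norm{Ai}_{HS} \le C\,\norm{A}_{L(W,Y)}$ with $C = \big(\int_W \norm{x}_W^2\,\mu(dx)\big)^{1/2}$, a constant depending only on $(W,\mu)$. In particular the right-hand side is finite, so $Ai$ is Hilbert--Schmidt.

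To prove the identity I would fix an orthonormal basis $\{y_m\}$ of $Y$ and let $A^* : Y \to W^*$ be the adjoint of $A$, characterized by $(A^*y)(x) = \inner{Ax}{y}_Y$; since $A$ is continuous, each $A^*y$ genuinely lies in $W^*$, with $\norm{A^*y}_{W^*} \le \norm{A}_{L(W,Y)}\norm{y}_Y$. Parseval's identity in $Y$ gives $\norm{Ax}_Y^2 = \sum_m \abs{(A^*y_m)(x)}^2$ for every $x$, so Tonelli's theorem (all terms nonnegative) lets me write
\begin{equation*}
  \int_W \norm{Ax}_Y^2\,\mu(dx) = \sum_m \int_W \abs{(A^*y_m)(x)}^2\,\mu(dx) = \sum_m q(A^*y_m, A^*y_m),
\end{equation*}
using that $q(f,f) = \int_W f^2\,d\mu$ for $f \in W^*$ (the measure is centered).

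Next I would invoke two facts already proved in the excerpt. First, $i^* : (W^*, q) \to H$ is an isometry onto a dense subspace: we showed $Ti^* = m$, where $m$ is the inclusion $W^* \hookrightarrow K$ (isometric for $q$) and $T$ is unitary. Hence $q(A^*y_m, A^*y_m) = \norm{i^*A^*y_m}_H^2$, and the display above becomes $\int_W \norm{Ax}_Y^2\,\mu(dx) = \sum_m \norm{i^*A^*y_m}_H^2$. Second, I would check $i^*A^* = (Ai)^*$ as operators $Y \to H$: for $h \in H$ and $y \in Y$, $\inner{(Ai)h}{y}_Y = (A^*y)(ih) = \inner{i^*(A^*y)}{h}_H$, which is precisely the defining relation for the Hilbert-space adjoint of $Ai$. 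Therefore
\begin{equation*}
  \int_W \norm{Ax}_Y^2\,\mu(dx) = \sum_m \norm{(Ai)^* y_m}_H^2 = \norm{(Ai)^*}_{HS}^2 = \norm{Ai}_{HS}^2,
\end{equation*}
the last equality being the basis-independence of the Hilbert--Schmidt norm (Definition \ref{hs-def}), which also identifies $\norm{Ai}_{HS}$ with $\norm{(Ai)^*}_{HS}$. This proves the identity and hence the theorem.

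There is no deep obstacle here; the only thing to be careful about is the bookkeeping of adjoints, distinguishing the Banach adjoint $A^* : Y \to W^*$ from the Hilbert adjoint $(Ai)^* : Y \to H$, and remembering that $\norm{Ai}_{HS}$ is defined via an orthonormal basis of $H$ while the computation naturally runs over a basis of $Y$ --- the two sums agree as elements of $[0,\infty]$, so finiteness of the latter is what delivers the Hilbert--Schmidt property together with the bound.
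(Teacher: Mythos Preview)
Your argument is correct and is essentially the same as the paper's: both compute $\norm{(Ai)^*}_{HS}^2 = \norm{i^*A^*}_{HS}^2$ over an orthonormal basis of $Y$, use the isometry $\norm{i^*f}_H^2 = q(f,f)$ together with Tonelli and Parseval to identify this with $\int_W \norm{Ax}_Y^2\,\mu(dx)$, and finish with Fernique. The only cosmetic difference is that you start from the integral and work toward the Hilbert--Schmidt norm, whereas the paper runs the chain in the opposite direction.
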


\begin{proof}
  We consider instead the adjoint $(Ai)^* = i^* A^* : Y \to H$.  Note
  $A^* : Y \to W^*$ is bounded, and $\norm{i^* A^* y}_H^2 = q(A^* y,
  A^* y)$.  So if we fix an orthonormal basis $\{e_n\}$ for $Y$, we have
  \begin{align*}
    \norm{i^* A^*}_{HS}^2 &= \sum_{n=1}^\infty q(A^* e_n, A^* e_n) \\
    &= \sum_{n=1}^\infty \int_W |(A^* e_n)(x)|^2 \mu(dx) \\
    &= \int_W \sum_{n=1}^\infty |(A^* e_n)(x)|^2 \mu(dx) &&
    \text{(Tonelli)} \\
    &= \int_W \sum_{n=1}^\infty |\inner{A x}{e_n}_Y|^2
    \mu(dx) \\
    &= \int_W \norm{Ax}_Y^2 \mu(dx) \\
    &\le \norm{A}_{L(W,Y)}^2 \int_W \norm{x}_W^2 \mu(dx).
  \end{align*}
  By Fernique's theorem we are done.
\end{proof}

\begin{corollary}\label{inclusion-HS}
  If $W$ is a separable Hilbert space with a Gaussian measure $\mu$
  and Cameron--Martin space $H$, then the inclusion $i : H \to W$ is
  Hilbert--Schmidt, as is the inclusion $m : W^* \to K$.
\end{corollary}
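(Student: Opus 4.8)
The plan is to deduce both statements from the preceding theorem (Kuo's theorem) essentially for free, by making judicious choices of the bounded operator $A$ and the target Hilbert space $Y$.

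For the inclusion $i : H \to W$: since $W$ is itself a separable Hilbert space, I would apply the theorem with $Y = W$ and $A = \mathrm{id}_W : W \to W$, which is bounded with $\norm{A}_{L(W,W)} = 1$. The conclusion that $Ai = i : H \to W$ is Hilbert--Schmidt is then immediate; in fact the theorem's bound reads $\norm{i}_{HS}^2 \le C$, and unwinding the proof shows $C = \int_W \norm{x}_W^2\,\mu(dx)$, which is finite by Fernique's theorem.

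For the inclusion $m : W^* \to K$: I would invoke the commuting diagram from the structure theorem, which records $m = T i^*$ with $T : H \to K$ unitary and $i^* : W^* \to H$ the adjoint of $i$. Under the Riesz identification $W^* \cong W$, this $i^*$ is exactly the Hilbert-space adjoint of $i : H \to W$, so by Definition \ref{hs-def} (which includes $\norm{A}_{HS} = \norm{A^*}_{HS}$) the operator $i^*$ is Hilbert--Schmidt once $i$ is. Finally, composing with the unitary $T$ leaves the Hilbert--Schmidt norm unchanged — evaluate $\norm{T i^*}_{HS}^2$ against an orthonormal basis of $W^*$ and use that $T$ is an isometry — so $m = T i^*$ is Hilbert--Schmidt as well.

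There is no genuine obstacle; the only care required is bookkeeping — confirming that the map denoted $i^*$ in the abstract-Wiener-space diagram coincides with the Hilbert-space adjoint of the inclusion once $W^*$ is identified with $W$, and that post-composition with $T$ preserves the Hilbert--Schmidt class. If one prefers to sidestep $T$, one can instead simply rerun the computation in the proof of the preceding theorem with $Y = W$ and $A = \mathrm{id}_W$: it yields directly $\norm{i^*}_{HS}^2 = \int_W \norm{x}_W^2\,\mu(dx) < \infty$, and then $\norm{m}_{HS} = \norm{T i^*}_{HS} = \norm{i^*}_{HS}$ finishes the argument.
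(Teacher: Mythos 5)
Your proposal is correct and follows the paper's own route exactly: the paper also takes $Y = W$ and $A = I$ in Kuo's theorem to get that $i$ is Hilbert--Schmidt, and then handles $m = T i^*$ by chasing the diagram, using $\norm{i^*}_{HS} = \norm{i}_{HS}$ and the fact that the unitary $T$ preserves the Hilbert--Schmidt norm. Your write-up just makes the diagram chase explicit.
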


\begin{proof}
  Take $Y = W$ and $A = I$ in the above lemma to see that $i$ is
  Hilbert--Schmidt.  To see $m$ is, chase the diagram.
\end{proof}

\begin{corollary}\label{hilbert-in-hilbert}
  Let $\nm_W$ be a norm on a separable
  Hilbert space $H$.  Then the following are equivalent:
  \begin{enumerate}
    \item \label{cor-measurable} $\nm_W$ is measurable and induced by an inner product
    $\inner{\cdot}{\cdot}_W$;
    \item \label{cor-hs} $\norm{h}_W = \norm{Ah}_H$ for some
  Hermitian, positive definite, Hilbert--Schmidt operator $A$ on $H$.
  \end{enumerate}
\end{corollary}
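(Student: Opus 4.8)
Both implications are short given what we have developed. For $(\ref{cor-hs})\Rightarrow(\ref{cor-measurable})$: if $A$ is Hilbert--Schmidt, Proposition \ref{hs-implies-measurable} shows $\norm{h}_W = \norm{Ah}_H$ satisfies the measurability condition, and since $A$ is positive definite --- hence injective --- this is an honest norm and not merely a seminorm; it is obviously induced by the inner product $\inner{h}{k}_W := \inner{Ah}{Ak}_H$. So I would dispose of this direction in a line or two.

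For the converse $(\ref{cor-measurable})\Rightarrow(\ref{cor-hs})$, the plan is to pass through the Gaussian measure and then pull back to an operator on $H$. First, since $\norm{\cdot}_W$ is a measurable norm, Gross's theorem (Theorem \ref{gross-theorem}) furnishes a Gaussian measure $\mu$ on the completion $W$ of $H$ under $\norm{\cdot}_W$, with Cameron--Martin space precisely $(H, \norm{\cdot}_H)$; and because $\norm{\cdot}_W$ is induced by an inner product, $W$ is itself a Hilbert space, separable since $H$ is dense in it. Hence Corollary \ref{inclusion-HS} applies and the inclusion $i : H \hookrightarrow W$ is Hilbert--Schmidt. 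Next I would convert the $W$-inner product into an operator on $H$: by Lemma \ref{cont-embed} the bilinear form $(h,k) \mapsto \inner{h}{k}_W$ is bounded on $H \times H$ (it is dominated by $C^2 \norm{h}_H \norm{k}_H$), so there is a bounded self-adjoint $B : H \to H$ with $\inner{Bh}{k}_H = \inner{h}{k}_W$ for all $h, k$; since $\inner{Bh}{h}_H = \norm{h}_W^2 > 0$ for $h \ne 0$, $B$ is positive definite and in particular injective. Put $A := B^{1/2}$, the positive square root given by the functional calculus. Then $A$ is Hermitian, $\ker A = \ker B = \{0\}$ so $A$ is positive definite, and $\norm{Ah}_H^2 = \inner{A^2 h}{h}_H = \inner{Bh}{h}_H = \norm{h}_W^2$, i.e. $\norm{Ah}_H = \norm{h}_W$. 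Finally, for an orthonormal basis $\{e_n\}$ of $H$ we get $\norm{A}_{HS}^2 = \sum_n \norm{Ae_n}_H^2 = \sum_n \norm{e_n}_W^2 = \sum_n \norm{ie_n}_W^2 = \norm{i}_{HS}^2 < \infty$, so $A$ is Hilbert--Schmidt, which finishes $(\ref{cor-measurable})\Rightarrow(\ref{cor-hs})$.

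The only substantive input is Corollary \ref{inclusion-HS} --- which itself rests on Fernique's theorem and Gross's construction --- so I do not expect the difficulty to lie in any single estimate, but rather in keeping the chain of identifications straight: checking that the $W$ produced by Gross really is a separable Hilbert space with Cameron--Martin space exactly $(H, \norm{\cdot}_H)$, so that Corollary \ref{inclusion-HS} is legitimately applicable, and noting that the Hilbert--Schmidt norm of $i$ computed with an $H$-orthonormal basis is literally the Hilbert--Schmidt norm of $A$. The remaining pieces --- the Riesz-type representation of the form $\inner{\cdot}{\cdot}_W$ by $B$, and the operator square root --- are routine functional analysis.
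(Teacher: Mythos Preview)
Your proof is correct and follows essentially the same route as the paper. The only cosmetic difference is that where you obtain $B$ by Riesz representation of the bounded form $(h,k)\mapsto\inner{h}{k}_W$, the paper writes $B=i^*i$ directly; these are the same operator since $\inner{i^*ih}{k}_H=\inner{ih}{ik}_W=\inner{h}{k}_W$, and from there both arguments take $A=B^{1/2}$ and compute $\norm{A}_{HS}=\norm{i}_{HS}$ in the same way.
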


\begin{proof}
  Suppose \ref{cor-measurable} holds.  Then by Gross's theorem
  (Theorem \ref{gross-theorem}) the completion $W$, which is a Hilbert space,
  admits a Gaussian measure with Cameron--Martin space $H$.  Let $i :
  H \to W$ be the inclusion; by Corollary \ref{inclusion-HS} $i$ is
  Hilbert--Schmidt, and so is its adjoint $i^* : W \to H$.  Then $i^*
  i : H \to H$ is continuous, Hermitian, and positive semidefinite.
  It is also positive definite because $i$ and $i^*$ are both
  injective.  Take $A = (i^* i)^{1/2}$.  $A$ is also continuous,
  Hermitian, and positive definite, and we have $\norm{Ah}_H^2 =
  \inner{i^* i h}{h}_H = \inner{ih}{ih}_W = \norm{h}_W^2$.  $A$ is
  also Hilbert--Schmidt since $\sum \norm{A e_n}_H^2 = \sum \norm{i
  e_n}_W^2$ and $i$ is Hilbert--Schmidt.

  The converse is Lemma \ref{hs-implies-measurable}.
\end{proof}

\section{Brownian motion on abstract Wiener space}

Let $(W,H,\mu)$ be an abstract Wiener space.

\begin{notation}
  For $t \ge 0$, let $\mu_t$ be the rescaled measure $\mu_t(A) =
  \mu(t^{-1/2} A)$ (with $\mu_0 = \delta_0$).  It is easy to check
  that $\mu_t$ is a Gaussian measure on $W$ with covariance form
  $q_t(f,g) = t q(f,g)$.  For short, we could call $\mu_t$
  \textbf{Gaussian measure with variance $t$}.
\end{notation}

\begin{exercise}
  If $W$ is finite dimensional, then $\mu_s \sim \mu_t$ for all
  $s,t$.  If $W$ is infinite dimensional, then $\mu_s \perp \mu_t$ for
  $s \ne t$.
\end{exercise}

\begin{lemma}
  $\mu_s * \mu_t = \mu_{s+t}$, where $*$ denotes convolution: $\mu *
  \nu(E) = \iint_{W^2} 1_E(x+y) \mu(dx) \nu(dy)$.  In other words,
  $\{\mu_t : t \ge 0\}$ is a convolution semigroup.
\end{lemma}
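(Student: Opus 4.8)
The plan is to identify the two measures through their Fourier transforms, invoking Theorem~\ref{fourier-unique}. First I would dispose of the degenerate cases: if $s=0$ then $\mu_0 = \delta_0$, and convolution against a point mass at the origin is the identity operation, so $\mu_0 * \mu_t = \mu_t = \mu_{0+t}$, and symmetrically for $t=0$; so assume $s,t>0$. I would also note at the outset that $\mu_s * \mu_t$ is a genuine Borel probability measure on $W$: it is the pushforward of the product measure $\mu_s\times\mu_t$ on $W^2$ under the continuous (hence Borel measurable) addition map $(x,y)\mapsto x+y$. (If one likes, $\mu_s\times\mu_t$ is itself Gaussian on $W^2$ by the rescaled version of the lemma proved above for $\mu\times\mu$, but this is not needed.)

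Next comes the computation. Fix $f\in W^*$. Since the integrand has modulus $1$, Fubini's theorem applies and
\begin{equation*}
  \int_W e^{if(z)}\,(\mu_s*\mu_t)(dz) = \int_{W^2} e^{if(x+y)}\,\mu_s(dx)\,\mu_t(dy) = \left(\int_W e^{if(x)}\,\mu_s(dx)\right)\left(\int_W e^{if(y)}\,\mu_t(dy)\right).
\end{equation*}
As noted above, each $\mu_r$ is a Gaussian measure with covariance form $q_r = r\,q$, so by the characterization of Gaussian measures through their characteristic functions, $\int_W e^{if(x)}\,\mu_r(dx) = e^{-\frac12 q_r(f,f)} = e^{-\frac{r}{2}q(f,f)}$. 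Hence the product above equals
\begin{equation*}
  e^{-\frac{s}{2}q(f,f)}\, e^{-\frac{t}{2}q(f,f)} = e^{-\frac{s+t}{2}q(f,f)} = e^{-\frac12 q_{s+t}(f,f)} = \int_W e^{if(z)}\,\mu_{s+t}(dz).
\end{equation*}

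Since $f\in W^*$ was arbitrary, $\mu_s*\mu_t$ and $\mu_{s+t}$ have the same Fourier transform, and Theorem~\ref{fourier-unique} gives $\mu_s*\mu_t = \mu_{s+t}$. I do not expect a genuine obstacle here: the heart of the matter is the one-line identity $e^{-sq/2}e^{-tq/2} = e^{-(s+t)q/2}$, and the only points needing a moment's care are that the convolution is well-defined as a Borel probability measure on $W$ and that Fubini applies to a bounded integrand, both of which are handled above.
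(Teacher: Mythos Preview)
Your proof is correct and follows essentially the same approach as the paper: compute the Fourier transform of $\mu_s*\mu_t$ by factoring the double integral, use that each $\mu_r$ has covariance form $rq$, and invoke uniqueness of the Fourier transform. Your version is slightly more detailed (the degenerate cases, well-definedness of the convolution, justification of Fubini), but the argument is the same.
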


\begin{proof}
  Compute Fourier transforms: if $f \in W^*$, then
  \begin{align*}
    \widehat{\mu_s * \mu_t}(f) &= \int_W \int_W e^{i f(x+y)} \mu_s(dx)
    \mu_t(dy) \\
    &= \int_W e^{i f(x)} \mu_s(dx) \int_W e^{i f(y)} \mu_t(dy) \\
    &= e^{- \frac{1}{2} sq(f,f)} e^{- \frac{1}{2} tq(f,f)} \\
    &= e^{- \frac{1}{2} (s+t) q(f,f)} \\
    &= \widehat{\mu_{s+t}}(f).
  \end{align*}
\end{proof}

\begin{theorem}
  There exists a stochastic process $\{B_t, t \ge 0\}$ with values in
  $W$ which is a.s. continuous in $t$ (with respect to the norm
  topology on $W$), has independent increments, and for $t > s$ has
  $B_t - B_s \sim \mu_{t-s}$, with $B_0 = 0$ a.s.  $B_t$ is called
  \textbf{standard Brownian motion on $(W,\mu)$}.
\end{theorem}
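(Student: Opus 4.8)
The plan is to produce a process with the correct finite-dimensional distributions by Kolmogorov's extension theorem, and then pass to a continuous modification using the Kolmogorov--Chentsov continuity criterion, with Fernique's theorem supplying the moment bound that makes the criterion applicable.

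First, for $0 = t_0 < t_1 < \dots < t_n$, take the proposed law of $(B_{t_1}, \dots, B_{t_n})$ to be that of the partial sums $(Y_1,\ Y_1 + Y_2,\ \dots,\ Y_1 + \dots + Y_n)$, where $Y_1, \dots, Y_n$ are independent with $Y_i \sim \mu_{t_i - t_{i-1}}$. These distributions satisfy the Kolmogorov consistency conditions: invariance under permutation of the times is automatic, and marginalizing out an intermediate time $t_j$ uses exactly the convolution-semigroup identity $\mu_{t_{j+1} - t_j} * \mu_{t_j - t_{j-1}} = \mu_{t_{j+1} - t_{j-1}}$ from the preceding lemma. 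Since $W$ is a separable Banach space it is Polish, so Kolmogorov's extension theorem yields a probability space carrying a process $\{B_t : t \ge 0\}$ with these finite-dimensional distributions; in particular $B_0 = 0$ a.s., the increments are independent, and $B_t - B_s \sim \mu_{t-s}$ for $t > s$.

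Next, observe that $\mu_{t-s}$ is the pushforward of $\mu$ under the dilation $x \mapsto (t-s)^{1/2} x$, so for every $p > 0$,
\begin{equation*}
  \mathbb{E}[\norm{B_t - B_s}_W^p] = \int_W \norm{(t-s)^{1/2} x}_W^p\, \mu(dx) = |t-s|^{p/2} \int_W \norm{x}_W^p\, \mu(dx),
\end{equation*}
and the last integral is finite by the corollary to Fernique's theorem. (The integrand makes sense because $\norm{\cdot}_W$ is continuous, hence Borel, so the increments are genuine real-valued random variables.) Taking $p = 4$ gives $\mathbb{E}[\norm{B_t - B_s}_W^4] \le C\,|t-s|^2$.

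Finally, invoke the Kolmogorov--Chentsov continuity theorem, which holds for processes valued in any complete metric space by the usual dyadic-approximation argument (see e.g.\ \cite[Section 2.3]{karatzas-shreve}): the estimate just obtained, with exponents $\alpha = 4$ and $\beta = 1$, produces a modification $\{\widetilde{B}_t\}$ whose sample paths are a.s.\ locally H\"older continuous on $[0,\infty)$, hence continuous with respect to the norm topology of $W$. A modification shares all finite-dimensional distributions, so $\widetilde{B}$ still has $\widetilde{B}_0 = 0$ a.s., independent increments, and $\widetilde{B}_t - \widetilde{B}_s \sim \mu_{t-s}$ for $t > s$; relabelling $\widetilde{B}$ as $B$ finishes the proof. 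The only step requiring genuine input is the moment bound feeding into Kolmogorov--Chentsov, and this is exactly what Fernique's theorem provides (via the corollary that $\norm{\cdot}_W \in L^p(\mu)$ for all $p$); the rest is routine.
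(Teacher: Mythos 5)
Your proposal is correct and follows essentially the same route as the paper: Kolmogorov extension (with consistency supplied by the convolution semigroup identity $\mu_s * \mu_t = \mu_{s+t}$), followed by the Kolmogorov continuity criterion, with Fernique's theorem providing the moment bound $\mathbb{E}\norm{B_t - B_s}_W^p = |t-s|^{p/2}\int_W \norm{x}_W^p\,\mu(dx) < \infty$. The only cosmetic difference is that the paper builds the process on the dyadic rationals and extends by uniform continuity, whereas you build it on all of $[0,\infty)$ and pass to a continuous modification; both are standard variants of the same argument.
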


\begin{proof}
  Your favorite proof of the existence of one-dimensional Brownian
  motion should work.  For instance, one can use the Kolmogorov
  extension theorem to construct a countable set of $W$-valued random
  variables $\{B_t : t \in E \}$, indexed by the dyadic rationals $E$,
  with independent increments and $B_t - B_s \sim \mu_{t-s}$.  (The
  consistency of the relevant family of measures comes from the
  property $\mu_t * \mu_s = \mu_{t+s}$, just as in the one-dimensional
  case.)  If you are worried that you only know the Kolmogorov
  extension theorem for $\R$-valued random variables, you can use the
  fact that any Polish space can be measurably embedded into $[0,1]$.
  Then the Kolmogorov continuity theorem (replacing absolute values
  with $\nm_W$) can be used to show that, almost surely, $B_t$ is
  H\"older continuous as a function between the metric spaces $E$ and
  $W$.  Use the fact that
  \begin{equation*}
    \mathbb{E} \norm{B_t - B_s}_W^\beta = \int_W \norm{x}_W^\beta
    \mu_{t-s}(dx) = (t-s)^{\beta/2} \int_W \norm{x}_W^\beta \mu(dx) \le
    C (t-s)^{\beta/2}
  \end{equation*}
  by Fernique.  In particular $B_t$ is, almost surely, uniformly
  continuous and so extends to a continuous function on $[0,\infty)$.
\end{proof}

\begin{exercise}
  For any $f \in W^*$, $f(B_t)$ is a one-dimensional Brownian motion
  with variance $q(f,f)$.  If $f_1, f_2, \dots$ are $q$-orthogonal,
  then the Brownian motions $f_1(B_t), f_2(B_t), \dots$ are independent.
\end{exercise}

\begin{question}
  If $h_j$ is an orthonormal basis for $H$, and $B_t^j$ is an iid
  sequence of one-dimensional standard Brownian motions, does
  $\sum_{j=1}^\infty B_t^j h_j$ converge uniformly in $W$, almost
  surely, to a Brownian motion on $W$?  That would be an even easier construction.
\end{question}

\begin{exercise}
  Let $W' = C([0,1], W)$ (which is a separable Banach space) and
  consider the measure $\mu'$ on $W'$ induced by $\{B_t, 0 \le t \le
  1\}$.  Show that $\mu'$ is a Gaussian measure.  For extra credit,
  find a nice way to write its covariance form.
\end{exercise}

\begin{exercise}
  Suppose $W = C([0,1])$ and $\mu$ is the law of a one-dimensional
  continuous Gaussian process $X_s$ with covariance function
  $a(s_1,s_2)$.  Let $Y_{s,t} = B_t(s)$ be the corresponding
  two-parameter process (note $B_t$ is a random element of $C([0,1])$
  so $B_t(s)$ is a random variable).  Show $Y_{s,t}$ is a continuous
  Gaussian process whose covariance function is
  \begin{equation*}
    \mathbb{E}[Y_{s_1,t_1} Y_{s_2, t_2}] = (t_1 \wedge t_2) a(s_1, s_2).
  \end{equation*}
  If $X_s$ is one-dimensional Brownian motion, then $Y_{s,t}$ is
  called the \textbf{Brownian sheet}.
\end{exercise}

$B_t$ has essentially all the properties you would expect a Brownian
motion to have.  You can open your favorite textbook on Brownian
motion and pick most any theorem that applies to $d$-dimensional
Brownian motion, and the proof should go through with minimal
changes.  We note a few important properties here.

\begin{proposition}
  $B_t$ is a Markov process, with transition probabilities $\mathbb{P}^x (B_t \in
  A) = \mu_t^x(A) := \mu(t^{-1/2}(A - x))$.
\end{proposition}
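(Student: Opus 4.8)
The plan is to read off the Markov property from the independent-increments property established above, using the standard ``freezing of the known part'' argument for conditional expectations. Work with the natural filtration $\mathcal{F}_s = \sigma(B_u : 0 \le u \le s)$, fix $0 \le s \le t$, and write $B_t = B_s + (B_t - B_s)$: the first term is $\mathcal{F}_s$-measurable and the increment $B_t - B_s$ has law $\mu_{t-s}$. The crux is the claim that $B_t - B_s$ is independent of $\mathcal{F}_s$.

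To prove that independence, observe that $\mathcal{F}_s$ is generated by the $\pi$-system of cylinders $\{B_{u_1} \in A_1, \dots, B_{u_k} \in A_k\}$ with $0 \le u_1 < \dots < u_k \le s$ and $A_i \in \mathcal{B}$; these really do generate, since $\mathcal{B} = \sigma(W^*)$ by Lemma~\ref{weak-sigma-field}. Since $B_0 = 0$, each such cylinder is an (affine, continuous, hence Borel) function of the tuple of increments $(B_{u_1}, B_{u_2} - B_{u_1}, \dots, B_{u_k} - B_{u_{k-1}})$, taken over intervals lying in $[0,s]$. Refining the partition by $s$ if necessary, the independent-increments property makes this tuple independent of $B_t - B_s$. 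As independence from $B_t - B_s$ holds on a generating $\pi$-system, the $\pi$--$\lambda$ theorem gives $B_t - B_s \perp \mathcal{F}_s$.

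Now the substitution lemma for conditional expectations applies. The map $(x,z) \mapsto x+z$ on $W \times W$ is continuous, hence Borel; $B_s$ is $\mathcal{F}_s$-measurable; and $B_t - B_s$ is independent of $\mathcal{F}_s$ with law $\mu_{t-s}$. Hence, for every bounded Borel $F : W \to \R$,
\begin{equation*}
  \mathbb{E}[ F(B_t) \mid \mathcal{F}_s ] = \int_W F(B_s + z)\, \mu_{t-s}(dz) = \int_W F(z)\, \mu_{t-s}^{B_s}(dz),
\end{equation*}
where $\mu_r^x(A) := \mu_r(A - x) = \mu(r^{-1/2}(A-x))$ matches the notation in the statement (combining the rescaling $\mu_r$ with the translation notation $(\mu_r)_x$). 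The right-hand side is a function of $B_s$ alone, which is exactly the Markov property, with transition kernel $p_r(x, A) = \mu_r^x(A)$; taking $s = 0$ and using $B_0 = 0$, the process started at $x$ (i.e.\ $x + B_\cdot$, with law $\mathbb{P}^x$) satisfies $\mathbb{P}^x(B_t \in A) = \mu_t^x(A)$. Finally, the Chapman--Kolmogorov identity $\int_W p_r(x, dy)\, p_{r'}(y,A) = p_{r+r'}(x,A)$ is immediate from the convolution-semigroup identity $\mu_r * \mu_{r'} = \mu_{r+r'}$ proved above, so the $p_r$ form a consistent family of transition probabilities.

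The argument is essentially bookkeeping, and nothing genuinely infinite-dimensional intervenes: the Banach structure enters only through Lemma~\ref{weak-sigma-field} (so the cylinders generate $\mathcal{B}$) and through the already-established existence of $B_t$; Fernique, used to build $B_t$, does not reappear. The one point that deserves care is the two monotone-class passages — first reducing the independence claim to disjoint increments, and then extending both the substitution formula and the Borel measurability of $x \mapsto \mu_r^x(A)$ from indicator/product functions to arbitrary bounded Borel $F$ and Borel $A$ — which is routine but should be stated explicitly so that ``$p_r$ is a genuine transition kernel'' is fully justified.
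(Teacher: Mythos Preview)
Your proof is correct and follows exactly the approach the paper indicates: the Markov property comes from independent increments, and the transition probabilities are read off from the law of the increment. The paper's own proof is a two-sentence sketch (``immediate, because $B_t$ has independent increments''), and you have simply filled in the standard details—the $\pi$-system argument for $B_t - B_s \perp \mathcal{F}_s$, the freezing lemma, and the Chapman--Kolmogorov check—that the paper leaves to the reader.
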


\begin{proof}
  The Markov property is immediate, because $B_t$ has independent
  increments.  Computing the transition probabilities is also very simple.
\end{proof}

\begin{proposition}
  $B_t$ is a martingale.
\end{proposition}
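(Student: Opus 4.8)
The plan is to check the three defining properties of a Bochner-integrable, $W$-valued martingale with respect to the natural filtration $\mathcal{F}_s = \sigma(B_r : 0 \le r \le s)$: adaptedness, integrability, and the conditioning identity $\mathbb{E}[B_t \mid \mathcal{F}_s] = B_s$ for $s \le t$. Adaptedness is immediate from the definition of $\mathcal{F}_s$. For integrability, $\mathbb{E}\norm{B_t}_W = \int_W \norm{x}_W\,\mu_t(dx) = t^{1/2}\int_W \norm{x}_W\,\mu(dx) < \infty$ by the corollary to Fernique's theorem, so each $B_t$ is Bochner integrable and the conditional expectation $\mathbb{E}[B_t \mid \mathcal{F}_s]$ is well-defined as a $W$-valued random variable.

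The structural input for the conditioning identity is that the increment $B_t - B_s$ is independent of $\mathcal{F}_s$ (by the independent-increments property) and has law $\mu_{t-s}$. First I would note that $\mu_{t-s}$ has mean zero in the Bochner sense: for every $f \in W^*$, $f\bigl(\int_W x\,\mu_{t-s}(dx)\bigr) = \int_W f(x)\,\mu_{t-s}(dx) = 0$ because $f$ is a \emph{centered} Gaussian random variable under $\mu_{t-s}$ (all our Gaussian measures are centered by convention); since $W^*$ separates points of $W$, this forces $\int_W x\,\mu_{t-s}(dx) = 0$, i.e. $\mathbb{E}[B_t - B_s] = 0$.

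Now decompose $B_t = B_s + (B_t - B_s)$ and apply an arbitrary $f \in W^*$: the random variable $f(B_s)$ is $\mathcal{F}_s$-measurable, while $f(B_t - B_s) = f(B_t) - f(B_s)$ is independent of $\mathcal{F}_s$ with mean zero, so $\mathbb{E}[f(B_t) \mid \mathcal{F}_s] = f(B_s)$ a.s. Since bounded linear functionals commute with Bochner conditional expectation, $f(\mathbb{E}[B_t \mid \mathcal{F}_s]) = \mathbb{E}[f(B_t)\mid \mathcal{F}_s] = f(B_s)$ a.s., for each fixed $f \in W^*$.

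The only delicate point is upgrading ``$f(\mathbb{E}[B_t \mid \mathcal{F}_s]) = f(B_s)$ a.s.\ for each $f$'' to the vector identity $\mathbb{E}[B_t \mid \mathcal{F}_s] = B_s$ a.s., since $W^*$ need not be separable and one cannot intersect uncountably many null sets. I would handle this exactly as in the proof of Lemma \ref{weak-sigma-field}: by Hahn--Banach and separability of $W$, there is a countable family $\{f_n\} \subset W^*$ separating the points of $W$. For each $n$ the set where $f_n(\mathbb{E}[B_t \mid \mathcal{F}_s]) \ne f_n(B_s)$ is null; off the union of these countably many null sets, $f_n(\mathbb{E}[B_t \mid \mathcal{F}_s] - B_s) = 0$ for all $n$, hence $\mathbb{E}[B_t \mid \mathcal{F}_s] = B_s$ there. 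This gives the martingale property. (The same argument shows more generally that any $W$-valued process with independent, centered increments is a martingale.)
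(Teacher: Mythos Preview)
Your proof is correct and follows exactly the approach the paper takes: the paper's entire argument is the single line ``Obvious, because it has independent mean-zero increments,'' and you have simply spelled out what that sentence means in the Banach-valued setting (integrability via Fernique, Bochner mean zero for the increment, and the separability trick to pass from scalar to vector identity). There is nothing to change.
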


\begin{proof}
  Obvious, because it has independent mean-zero increments.
\end{proof}

\begin{proposition}
  $B_t$ obeys the Blumenthal $0$-$1$ law: let $\mathcal{F}_t =
  \sigma(B_s : 0 \le s \le t)$ and $\mathcal{F}_t^+ = \bigcap_{s > t}
  \mathcal{F}_s$.  Then $\mathcal{F}_0^+$ is $\mathbb{P}^x$-almost
  trivial, i.e. for any $A \in \mathcal{F}_0^+$ and any $x \in W$,
  $\mathbb{P}^x(A) = 0$ or $1$.
\end{proposition}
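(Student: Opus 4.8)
The plan is to mimic the classical proof of the Blumenthal $0$--$1$ law, using only two properties of $B_t$: that it has independent increments, and that $t \mapsto B_t$ is $W$-norm continuous at $t=0$ (so the Markov property is not actually needed). Fix $x \in W$ and work throughout under $\mathbb{P}^x$, so that $B_0 = x$ almost surely. Writing $\mathcal{F}_\infty = \sigma(B_t : t \ge 0)$, the goal is to prove that $\mathcal{F}_0^+$ is independent of $\mathcal{F}_\infty$. Granting this, since $\mathcal{F}_0^+ \subseteq \mathcal{F}_\infty$, every $A \in \mathcal{F}_0^+$ is independent of itself, so $\mathbb{P}^x(A) = \mathbb{P}^x(A \cap A) = \mathbb{P}^x(A)^2$, and hence $\mathbb{P}^x(A) \in \{0,1\}$.

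First I would fix $n \ge 1$, times $0 < t_1 < \dots < t_n$, and functionals $f_1, \dots, f_n \in W^*$, and for $0 < s < t_1$ consider the bounded random variable $\Phi_s = \exp(i \sum_{j=1}^n f_j(B_{t_j} - B_s))$. The vector $(B_{t_1} - B_s, \dots, B_{t_n} - B_s)$ is built from increments of $B$ over subintervals of $[s,\infty)$, which by the independent-increments property are jointly independent of the increments over $[0,s]$, i.e. of $\mathcal{F}_s$. Since $\mathcal{F}_0^+ \subseteq \mathcal{F}_s$, this gives $\mathbb{E}^x[\Phi_s \mid \mathcal{F}_0^+] = \mathbb{E}^x[\Phi_s]$, a deterministic constant.

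Next I would let $s \downarrow 0$ along a sequence. By the $W$-norm continuity of $B$ at $0$ we have $B_s \to B_0 = x$ a.s., hence $\Phi_s \to \Phi := \exp(i \sum_j (f_j(B_{t_j}) - f_j(x)))$ a.s.; since $|\Phi_s| \le 1$, both ordinary and conditional dominated convergence apply and yield $\mathbb{E}^x[\Phi \mid \mathcal{F}_0^+] = \mathbb{E}^x[\Phi]$. Absorbing the deterministic scalar $e^{-i\sum_j f_j(x)}$, this says that $\exp(i\sum_j f_j(B_{t_j}))$ is independent of $\mathcal{F}_0^+$ for every choice of $n$, $t_j$, and $f_j$. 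Such variables form a multiplicative system (closed under products and complex conjugation, bounded, containing the constants), and the $\sigma$-algebra they generate is exactly $\mathcal{F}_\infty$: indeed $\sigma(B_t) = B_t^{-1}(\mathcal{B}_W) = B_t^{-1}(\sigma(W^*))$ by Lemma \ref{weak-sigma-field}, so $\mathcal{F}_\infty$ is generated by $\{f(B_t) : f \in W^*,\ t \ge 0\}$, and the exponentials generate the same $\sigma$-algebra. Dynkin's multiplicative system theorem therefore upgrades the independence to: $\mathcal{F}_\infty$ is independent of $\mathcal{F}_0^+$, which is what we needed.

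The main obstacle is really just the measure-theoretic bookkeeping at the end: verifying that the exponential family genuinely is a multiplicative system generating $\mathcal{F}_\infty$ and that the hypotheses of the multiplicative system theorem are met, together with the (routine) justification of interchanging $\lim_{s\downarrow 0}$ with the conditional expectation. One could instead run the whole argument with bounded continuous functions $g : W^n \to \mathbb{R}$ in place of the exponentials, using a monotone-class argument to pass from finite-dimensional marginals to $\mathcal{F}_\infty$; I have chosen the exponential version to stay close to the Fourier-analytic machinery (Theorem \ref{fourier-unique}, etc.) already developed in these notes.
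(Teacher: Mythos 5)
Your proof is correct. Note, though, that the paper's ``proof'' is a single sentence --- it simply asserts that the Blumenthal $0$--$1$ law holds for any continuous Markov process and defers to the general theory (where one runs essentially the same limiting argument but through the Markov property: $\mathbb{E}^x[Z\circ\theta_s\mid\mathcal{F}_s]=\mathbb{E}^{B_s}[Z]$, followed by $s\downarrow 0$ and Feller continuity of the semigroup). You instead exploit the independent-increments structure directly, which sidesteps the Markov property and the Feller machinery entirely: conditioning $\exp(i\sum_j f_j(B_{t_j}-B_s))$ on $\mathcal{F}_0^+\subseteq\mathcal{F}_s$ kills the conditioning by independence, and right-continuity at $0$ plus bounded (conditional) convergence passes to $s=0$; the reduction from the exponential multiplicative system to all of $\mathcal{F}_\infty$ via Dynkin's theorem, using $\sigma(W^*)=\mathcal{B}_W$ (Lemma \ref{weak-sigma-field}) to see that the functionals $f(B_t)$ really do generate $\mathcal{F}_\infty$, is handled correctly. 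What your route buys is a fully self-contained argument at the cost of generality (it uses that $B_t$ is a L\'evy-type process, not merely a continuous Markov process); what the paper's route buys is applicability to any continuous Markov process at the cost of invoking unproved general theory. The only bookkeeping worth making explicit is that under $\mathbb{P}^x$ one has $B_0=x$ a.s., so $\mathcal{F}_s$ agrees with the $\sigma$-algebra generated by the increments over $[0,s]$ up to null sets, which is all that independence requires.
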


\begin{proof}
  This holds for any continuous Markov process.
\end{proof}

The transition semigroup of $B_t$ is
\begin{equation*}
  P_t F(x) = E_x F(B_t) = \int F \mu^x_t = \int F(x + t^{1/2}y) \mu(dy)
\end{equation*}
which makes sense for any bounded measurable function.  Clearly $P_t$
is Markovian (positivity-preserving and a contraction with respect to
the uniform norm).

\begin{notation}
  Let $C_b(W)$ denote the space of bounded continuous functions $F : W
  \to \R$.  Let $C_u(W)$ denote the subspace of bounded \emph{uniformly continuous}
  functions.
\end{notation}

\begin{exercise}
  $C_b(W)$ and $C_u(W)$ are Banach spaces.
\end{exercise}

\begin{proposition}
  $P_t$ is a Feller semigroup: if $F$ is continuous, so is $P_t F$.
\end{proposition}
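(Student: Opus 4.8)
The plan is to use the explicit formula for $P_t$ together with dominated convergence, exploiting that $W$ is metrizable (so sequential continuity suffices) and that $\mu$ is a probability measure (so bounded functions are $\mu$-integrable without any further hypothesis).

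First I would recall that $P_t F(x) = \int_W F(x + t^{1/2} y)\,\mu(dy)$. Boundedness of $P_t F$ is immediate, since $\norm{P_t F}_\infty \le \norm{F}_\infty \cdot \mu(W) = \norm{F}_\infty$. For continuity, fix $x \in W$ and let $x_n \to x$ in $W$-norm; because $W$ is a Banach space, it is enough to show $P_t F(x_n) \to P_t F(x)$ for every such sequence. For each fixed $y \in W$ we have $x_n + t^{1/2} y \to x + t^{1/2} y$ in $W$, so continuity of $F$ gives $F(x_n + t^{1/2} y) \to F(x + t^{1/2} y)$ pointwise in $y$. Since $|F(x_n + t^{1/2} y)| \le \norm{F}_\infty$ for all $n$ and all $y$, and the constant $\norm{F}_\infty$ is $\mu$-integrable, the dominated convergence theorem yields $P_t F(x_n) = \int_W F(x_n + t^{1/2} y)\,\mu(dy) \to \int_W F(x + t^{1/2} y)\,\mu(dy) = P_t F(x)$. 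The case $t = 0$ is trivial since $P_0 F = F$.

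There is essentially no obstacle here; the only point one might pause over is continuity versus uniform continuity. Since the statement only asks for continuity, the argument above suffices. If one wanted the stronger fact that $P_t$ preserves $C_u(W)$, one would instead observe that for each $y$ the increment $\norm{(x + t^{1/2}y) - (x' + t^{1/2}y)}_W = \norm{x - x'}_W$ is independent of $y$, so $|P_t F(x) - P_t F(x')| \le \int_W |F(x + t^{1/2}y) - F(x' + t^{1/2}y)|\,\mu(dy) \le \omega_F(\norm{x - x'}_W)$, where $\omega_F$ is the modulus of continuity of $F$; thus $P_t F$ inherits the modulus of continuity of $F$. That is a separate remark, not needed for the Feller property as stated.
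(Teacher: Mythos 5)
Your proof is correct, but it takes a genuinely different route from the paper's. You prove sequential continuity at $x$ by fixing $y$, noting $F(x_n + t^{1/2}y) \to F(x + t^{1/2}y)$ pointwise in $y$ by continuity of $F$, and then invoking dominated convergence with the constant dominating function $\norm{F}_\infty$ (integrable because $\mu$ is a probability measure). This is legitimate: $W$ is a metric space, so sequential continuity is continuity, and the integrand is Borel in $y$. The paper instead runs a quantitative $\epsilon$--$\delta$ argument: it uses the fact that $\mu_t$ is Radon to find a compact $K$ with $\mu_t(K^C) < \epsilon$, then covers $K$ by finitely many balls on which the translates $F(\cdot + z)$ oscillate by at most $\epsilon$ near $x$, and bounds the contribution of $K^C$ by $2\epsilon \norm{F}_\infty$. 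Your dominated-convergence argument is shorter and avoids any appeal to inner regularity of the measure; the paper's tightness-plus-finite-subcover argument is what one would reach for in a setting where sequences do not characterize the topology, and it produces an explicit neighborhood of $x$ on which $P_tF$ oscillates by at most $\epsilon(1 + 2\norm{F}_\infty)$. Your closing remark that $P_tF$ inherits the modulus of continuity of $F$ is also correct and is essentially how the paper later shows that $P_t$ preserves $C_u(W)$.
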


\begin{proof}
  Fix $x \in W$, and $\epsilon > 0$.  Since $\mu_t$ is Radon (see
  Section \ref{radon}), there exists a compact
  $K$ with $\mu_t(K^C) < \epsilon$.

  For any $z \in K$, the function $F(\cdot + z)$ is continuous at $x$,
  so there exists $\delta_z$ such that for any $u$ with $\norm{u}_W <
  \delta_z$, we have $|F(x+z) -F(x+u+z)| < \epsilon$.  The balls $B(z,
  \delta_z/2)$ cover $K$ so we can take a finite subcover $B(z_i,
  \delta_i/2)$.  Now suppose $\norm{x-y}_W < \min \delta_i/2$.  For
  any $z
  \in K$, we may choose $z_i$ with $z \in B(z_i, \delta_i/2)$.  We then have
  \begin{align*}
    |F(x+z) - F(y+z)| &\le |F(x+z) - F(x+z_i)| + |F(x+z_i) - F(y+z)| \\
    &= |F((x + (z - z_i)) + z_i) - F(x + z_i)| \\
    &\quad + |F(x + z_i) - F((x +
     (y-x) + (z-z_i)) + z_i)|
  \end{align*}
  Each term is of the form $|F(x+z_i) - F(x+u+z_i)|$ for some $u$ with
  $\norm{u}_W < \delta_i$, and hence is bounded by $\epsilon$.

  Now we have
  \begin{align*}
    |P_t F(x) - P_t F(y)| &\le \int_W |F(x+z)-F(y+z)| \mu_t(dz) \\
    &= \int_K |F(x+z)-F(y+z)| \mu_t(dz) + \int_{K^C} |F(x+z)-F(y+z)|
    \mu_t(dz) \\
    &\le \epsilon + 2 \epsilon \norm{F}_\infty.
  \end{align*}
\end{proof}

\begin{remark}
  This shows that $P_t$ is a contraction semigroup on $C_b(W)$.  We
  would really like to have a strongly continuous contraction
  semigroup.  However, $P_t$ is not in general strongly continuous on
  $C_b(W)$.  Indeed, take the one-dimensional case $W = \R$, and let
  $f(x) = \cos(x^2)$, so that $f$ is continuous and bounded but not uniformly
  continuous.  One can check that $P_t f$ vanishes at infinity, for
  any $t$.  (For instance, take Fourier transforms, so the convolution
  in $P_t f$ becomes multiplication.  The Fourier transform $\hat{f}$
  is just a scaled and shifted version of $f$; in particular it is
  bounded, so $\widehat{P_t f}$ is integrable.  Then the
  Riemann--Lebesgue lemma implies that $P_t f \in C_0(\R)$.  One can
  also compute directly, perhaps by writing $f$ as the real part of
  $e^{i x^2}$.)  Thus if $P_t f$ were to converge uniformly as $t \to
  0$, the limit would also vanish at infinity, and so could not be
  $f$. (In fact, $P_t f \to f$ pointwise as $t \to 0$, so $P_t f$ does
  not converge uniformly.)
\end{remark}

\begin{remark}
  One should note that the term ``Feller semigroup'' has several
  different and incompatible definitions in the literature, so caution
  is required when invoking results from other sources.  One other common
  definition assumes the state space $X$ is locally compact, and
  requires that $P_t$ be a strongly continuous contraction
  semigroup on $C_0(X)$, the space of continuous functions ``vanishing
  at infinity'', i.e. the uniform closure of the continuous functions
  with compact support.  In our non-locally-compact setting this
  condition is meaningless, since $C_0(W) = 0$.
\end{remark}

\begin{proposition}
  $B_t$ has the strong Markov property.
\end{proposition}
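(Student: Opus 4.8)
The plan is to run the classical dyadic-approximation argument for the strong Markov property, with the Feller property proved above as the one essential analytic input. Write $\mathcal{F}_t = \sigma(B_s : 0 \le s \le t)$ and $\mathcal{F}_t^+ = \bigcap_{s>t}\mathcal{F}_s$, and for an $\mathcal{F}_t^+$-stopping time $\tau$ let $\mathcal{F}_\tau^+ = \{A : A\cap\{\tau\le t\}\in\mathcal{F}_t^+ \text{ for all } t\}$; splitting off the event $\{\tau=\infty\}$, we may as well assume $\tau<\infty$ $\mathbb{P}^x$-a.s. It suffices to show that for all $F_1,\dots,F_k\in C_b(W)$ and all $0\le s_1<\dots<s_k$,
$$
  \mathbb{E}^x\!\left[\prod_{i=1}^k F_i(B_{\tau+s_i})\;\Big|\;\mathcal{F}_\tau^+\right] = \Phi(B_\tau),
  \qquad \Phi(w):=\mathbb{E}^w\!\left[\prod_{i=1}^k F_i(B_{s_i})\right],
$$
since such cylinder products form a multiplicative class generating $\sigma(B_{\tau+s}:s\ge0)$, and path-continuity identifies this $\sigma$-field with the Borel $\sigma$-field of $C([0,\infty),W)$; a functional monotone class argument then upgrades the identity to $\mathbb{E}^x[G((B_{\tau+s})_{s\ge0})\mid\mathcal{F}_\tau^+]=\mathbb{E}^{B_\tau}[G((B_s)_{s\ge0})]$ for every bounded measurable $G$, which is the strong Markov property.

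First I would promote the ordinary Markov property (recorded above, with transition semigroup $P_t$) so that it holds at a deterministic time $d$ relative to the right-continuous $\mathcal{F}_d^+$. For $0<s_1<\dots<s_k$ and $0<\varepsilon<s_1$, the vector $(B_{d+s_i}-B_{d+\varepsilon})_{i}$ is independent of $\mathcal{F}_{d+\varepsilon}\supseteq\mathcal{F}_d^+$ and has the correct increment law; letting $\varepsilon\downarrow0$, path-continuity gives $B_{d+s_i}-B_{d+\varepsilon}\to B_{d+s_i}-B_d$, so $(B_{d+s}-B_d)_{s\ge0}$ is a copy of $B$ independent of $\mathcal{F}_d^+$. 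Granting this, the strong Markov identity holds whenever $\tau$ takes values in a countable set $D$: for $A\in\mathcal{F}_\tau^+$ one has $A\cap\{\tau=d\}\in\mathcal{F}_d^+$, and decomposing over the values of $\tau$,
$$
  \mathbb{E}^x\!\left[1_A\prod_i F_i(B_{\tau+s_i})\right] = \sum_{d\in D}\mathbb{E}^x\!\left[1_{A\cap\{\tau=d\}}\,\Phi(B_d)\right] = \mathbb{E}^x\!\left[1_A\,\Phi(B_\tau)\right].
$$

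For a general $\tau$ I would put $\tau_n = 2^{-n}\lceil 2^n\tau\rceil$, a stopping time valued in the dyadic grid with $\tau_n\downarrow\tau$ and $\mathcal{F}_\tau^+\subseteq\mathcal{F}_{\tau_n}^+$. Applying the countable-valued case to $\tau_n$ gives $\mathbb{E}^x[1_A\prod_i F_i(B_{\tau_n+s_i})]=\mathbb{E}^x[1_A\,\Phi(B_{\tau_n})]$ for every $A\in\mathcal{F}_\tau^+$, and I then send $n\to\infty$. On the left, $\tau_n+s_i\downarrow\tau+s_i$ and path-continuity give $B_{\tau_n+s_i}\to B_{\tau+s_i}$ a.s., so bounded convergence (each $F_i$ is bounded) passes the limit inside to $\mathbb{E}^x[1_A\prod_i F_i(B_{\tau+s_i})]$. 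On the right, the key point is that $\Phi\in C_b(W)$: by the ordinary Markov property $\Phi = P_{s_1}\bigl(F_1\cdot P_{s_2-s_1}(F_2\cdots P_{s_k-s_{k-1}}F_k)\bigr)$, a finite composition in which each $P_t$ carries $C_b(W)$ into $C_b(W)$ by the Feller property, and multiplication by a bounded continuous $F_i$ keeps us inside $C_b(W)$; hence $\Phi(B_{\tau_n})\to\Phi(B_\tau)$ a.s., and bounded convergence gives $\mathbb{E}^x[1_A\,\Phi(B_\tau)]$. Equating limits for all $A\in\mathcal{F}_\tau^+$ yields the displayed identity, and the monotone class step of the first paragraph completes the proof.

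The one genuinely non-routine step is the last one --- continuity of $w\mapsto\Phi(w)$ --- which is exactly what the Feller property $P_tC_b(W)\subseteq C_b(W)$ provides, together with continuity of the sample paths of $B$. The only other point deserving a word is the deterministic-time Markov property with respect to $\mathcal{F}_d^+$ rather than $\mathcal{F}_d$ (the short limiting argument above, in the spirit of the Blumenthal $0$-$1$ law); stopping-time measurability of $\tau_n$, the inclusion $A\cap\{\tau=d\}\in\mathcal{F}_d^+$, and the functional monotone class extension are all routine.
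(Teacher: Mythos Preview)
Your argument is correct and is precisely the standard dyadic-approximation proof for Feller processes that the paper is alluding to; the paper does not actually write out a proof here but simply observes that ``this should hold for any Feller process'' and defers to Durrett, so you have filled in exactly the details the author had in mind.
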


\begin{proof}
  This should hold for any Feller process.  The proof in Durrett looks
  like it would work.
\end{proof}

\begin{theorem}
  $P_t$ is a strongly continuous contraction semigroup on $C_u(W)$.
\end{theorem}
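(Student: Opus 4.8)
The plan is to verify the three defining properties of a strongly continuous contraction semigroup in turn: that $P_t$ carries $C_u(W)$ into itself and is a contraction there; that $\{P_t : t \ge 0\}$ satisfies the semigroup law; and that $t \mapsto P_t F$ is continuous at $t = 0$ for every $F \in C_u(W)$, which (together with the semigroup law and contractivity) upgrades to continuity on all of $[0,\infty)$.

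First I would check invariance and contractivity. For $F \in C_u(W)$, boundedness of $P_t F$ and the bound $\norm{P_t F}_\infty \le \norm{F}_\infty$ are immediate from $P_t 1 = 1$ and positivity. For uniform continuity, given $\epsilon > 0$ pick $\delta > 0$ with $|F(u) - F(v)| < \epsilon$ whenever $\norm{u-v}_W < \delta$; then for $\norm{x-y}_W < \delta$ we have $\norm{(x + t^{1/2}z) - (y + t^{1/2}z)}_W < \delta$ for every $z$, so
\[
|P_t F(x) - P_t F(y)| \le \int_W |F(x + t^{1/2}z) - F(y + t^{1/2}z)|\,\mu(dz) \le \epsilon.
\]
This is the step where working in $C_u(W)$ rather than $C_b(W)$ is essential: the modulus of continuity is uniform in the translation $z$, and without that the integrand could not be controlled uniformly in $x$. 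The semigroup identity $P_s P_t = P_{s+t}$ is then a routine application of Fubini that reduces to $\mu_s * \mu_t = \mu_{s+t}$, already proved above, and $P_0 = \mathrm{id}$ because $\mu_0 = \delta_0$.

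For strong continuity at $0$, fix $F \in C_u(W)$ and $\epsilon > 0$, and choose $\delta$ so that $|F(x+u) - F(x)| < \epsilon$ for \emph{all} $x$ whenever $\norm{u}_W < \delta$. Splitting the integral defining $P_t F(x) - F(x)$ over $\{z : \norm{z}_W < \delta t^{-1/2}\}$ and its complement gives
\[
\norm{P_t F - F}_\infty \le \epsilon + 2\norm{F}_\infty\,\mu\bigl(\{z : \norm{z}_W \ge \delta t^{-1/2}\}\bigr),
\]
and the tail probability tends to $0$ as $t \downarrow 0$ since $\delta t^{-1/2} \to \infty$ and $\mu$ has finite second moment — indeed Gaussian tails, by Fernique's theorem. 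Hence $\limsup_{t \downarrow 0} \norm{P_t F - F}_\infty \le \epsilon$, and letting $\epsilon \to 0$ yields $P_t F \to F$ in $C_u(W)$. Finally, for $0 \le s \le t$, contractivity and the semigroup law give $\norm{P_t F - P_s F}_\infty = \norm{P_s(P_{t-s}F - F)}_\infty \le \norm{P_{t-s}F - F}_\infty$, so continuity at $0$ propagates to strong continuity on $[0,\infty)$.

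All the computations are elementary; the only genuinely structural input is the tail bound $\mu(\{\norm{z}_W \ge R\}) \to 0$, which we already have (in strong form) from Fernique, so I do not expect a real obstacle. The one conceptual point worth stating is why the theorem concerns $C_u(W)$ and not $C_b(W)$: uniform continuity is exactly what makes both the invariance estimate and the $t \downarrow 0$ estimate uniform in $x$, and (as the preceding remark illustrates) strong continuity genuinely fails on $C_b(W)$.
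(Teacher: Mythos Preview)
Your proof is correct and follows essentially the same approach as the paper: verify $P_t F \in C_u(W)$ using that the modulus of continuity is uniform in the translation variable, then establish strong continuity at $0$ by splitting the integral according to whether $\norm{t^{1/2}z}_W < \delta$ and bounding the bad piece by a tail probability that vanishes as $t \downarrow 0$. The only minor remark is that you invoke Fernique for the tail bound, whereas all that is needed is that $\mu$ is a finite measure (so $\mu(\{\norm{z}_W \ge R\}) \to 0$ as $R \to \infty$ by countable additivity); your additional discussion of the semigroup law and the propagation of continuity from $0$ to all of $[0,\infty)$ makes the argument slightly more complete than the paper's version.
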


\begin{proof}
  Let $F \in C_u(W)$.  We first check that $P_t F \in C_u(W)$.  It is
  clear that $P_t F$ is bounded; indeed, $\norm{P_t F}_\infty \le \norm{F}_\infty$.  Fix
  $\epsilon > 0$.  There exists $\delta > 0$ such that $|F(x) - F(y)|
  < \epsilon$ whenever $\norm{x-y}_W < \delta$.  For such $x,y$ we
  have
  \begin{equation*}
    |P_t F(x) - P_t F(y)| \le \int |F(x+z)-F(y+z)| \mu_t(dz) \le
     \epsilon.
  \end{equation*}
  Thus $P_t F$ is uniformly continuous.

  Next, we have
  \begin{align*}
    |P_t F(x) - F(x)| &\le \int |F(x + t^{1/2}y) - F(x)| \mu(dy) \\
    &= \int_{\norm{t^{1/2} y}_W < \delta} |F(x + t^{1/2}y) - F(x)|
    \mu(dy) + \int_{\norm{t^{1/2} y}_W \ge \delta} |F(x + t^{1/2}y) -
    F(x)| \mu(dy) \\
    &\le \epsilon + 2 \norm{F}_\infty \mu(\{y : \norm{t^{1/2}y}_W \ge \delta\}).
  \end{align*}
  But $\mu(\{y : \norm{t^{1/2}y}_W \ge \delta\}) =
  \mu(B(0,t^{-1/2})^C) \to 0$ as $t \to 0$.  So for small enough $t$
  we have $|P_t F(x) - F(x)| \le 2 \epsilon$ independent of $x$.
\end{proof}

\begin{remark}
  $C_u(X)$ is not the nicest Banach space to work with here; in
  particular it is not separable, and its dual space is hard to
  describe.  However, the usual nice choices that work in finite
  dimensions don't help us here.  In $\R^n$, $P_t$ is a strongly
  continuous semigroup on $C_0(\R^n)$; but in infinite dimensions
  $C_0(W) = 0$.  

  In $\R^n$, $P_t$ is also a strongly continuous symmetric semigroup
  on $L^2(\R^n, m)$, where $m$ is Lebesgue measure.  In infinite
  dimensions we don't have Lebesgue measure, but we might wonder
  whether $\mu$ could stand in: is $P_t$ a reasonable semigroup on
  $L^2(W,\mu)$?  The answer is emphatically no; it is not even a
  well-defined operator.  First note that $P_t 1 = 1$ for all
  $t$.  Let $e_i \in W^*$ be $q$-orthonormal, so
  that under $\mu$ the $e_i$ are iid $N(0,1)$.  Under $\mu_t$ they are
  iid $N(0,t)$.  Set $s_n(x) = \frac{1}{n} \sum_{i=1}^n |e_i(x)|^2$;
  by the strong law of large numbers, $s_n \to t$, $\mu_t$-a.e.  Let
  $A = \{x : s_n(x) \to 1\}$, so $1_A = 1$ $\mu$-a.e.  On the other
  hand, for any $t > 0$,
  \begin{equation*}
    \int_W P_t 1_A(x)\,\mu(dx) = \int_W \int_W 1_A(x+y) \mu_t(dy) \mu(dx)
    =  (\mu_t * \mu)(A) = \mu_{1+t}(A) = 0.
    \end{equation*}
  Since $P_t 1_A \ge 0$, it must be that $P_t 1_A = 0$, $\mu$-a.e.
  Thus $1$ and $1_A$ are the same element of $L^2(W,\mu)$, but $P_t 1$
  and $P_t 1_A$ are not.
  \end{remark}

We knew that $H$ was ``thin'' in $W$ in the sense that $\mu(H) = 0$.  In
particular, this means that for any $t$, $\mathbb{P}(B_t \in H) = 0$.
Actually more is true.

\begin{proposition}
  Let $\sigma_H = \inf\{t > 0 : B_t \in H\}$.  Then for any $x \in W$,
  $\mathbb{P}^x(\sigma_H = \infty) = 1$.  That is, from any starting
  point, with probability one, $B_t$ \emph{never} hits $H$.  In other
  words, $H$ is polar for $B_t$.
\end{proposition}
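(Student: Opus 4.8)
The plan is to work with the orthonormal basis $\{e_k\}_{k=1}^\infty \subset W^*$ of $K$ from Proposition \ref{ek-basis}, for which $w \in W$ lies in $H$ if and only if $\sum_k |e_k(w)|^2 < \infty$. Fix a starting point $x \in W$ and work under $\mathbb{P}^x$. Since each $e_k$ is continuous on $W$ and $t \mapsto B_t$ is a.s.\ continuous in the norm topology, $\beta^k_t := e_k(B_t)$ is a.s.\ continuous in $t$; moreover $\beta^k$ is a one-dimensional Brownian motion started at $e_k(x)$ with variance parameter $q(e_k,e_k) = 1$, and the processes $\{\beta^k\}_k$ are independent because the $e_k$ are $q$-orthonormal (see the exercise following the construction of $B_t$). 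Thus $\{B_t \in H\}$ is exactly the event $\{\sum_k |\beta^k_t|^2 < \infty\}$.

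First I would reduce to a fixed compact time interval: it suffices to show that for every $0 < a < b$, a.s.\ $B_t \notin H$ for all $t \in [a,b]$, since intersecting over the intervals $[1/n,n]$ then gives that a.s.\ $B_t \notin H$ for every $t > 0$, i.e.\ $\sigma_H = \infty$. Fix $0 < a < b$ and a constant $\delta > 0$, and for each $k$ let
\[
  G_k = \Big\{\, |\beta^k_t| \ge \delta \ \text{for all } t \in [a,b] \,\Big\},
\]
which is measurable, being determined by the values of the continuous path $\beta^k$ at rational times; the $G_k$ are independent. On the event $\{G_k \text{ i.o.}\}$ there is an infinite set $S$ of indices with $|\beta^k_t| \ge \delta$ for all $t \in [a,b]$ and all $k \in S$, so $\sum_k |\beta^k_t|^2 \ge |S|\,\delta^2 = \infty$ for every $t \in [a,b]$, whence $B_t \notin H$ for all such $t$ by Proposition \ref{ek-basis}. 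So by the second (divergence) Borel--Cantelli lemma it is enough to prove $\sum_k \mathbb{P}^x(G_k) = \infty$, for which it suffices to produce a constant $m = m(a,b,\delta) > 0$, independent of $k$ and of $x$, with $\mathbb{P}^x(G_k) \ge m$ for all $k$.

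The heart of the matter is this uniform lower bound. Writing $c = e_k(x)$, $\mathbb{P}^x(G_k)$ is the probability that a one-dimensional Brownian motion started at $c$ stays outside $(-\delta,\delta)$ throughout $[a,b]$. Since $a > 0$, the position at time $a$ is Gaussian with a spread-out density, and a one-sided estimate suffices: bounding $G_k$ below by the event $\{\beta^k_a \ge 2\delta\} \cap \{\beta^k_t \ge \delta \ \forall t \in [a,b]\}$ (and its mirror image with $\le -2\delta$, $\le -\delta$), and using the Markov property at time $a$ together with monotonicity of $y \mapsto \mathbb{P}_y(\beta_s \ge \delta\ \forall s \in [0,b-a])$, one gets
\[
  \mathbb{P}^x(G_k) \ \ge\ \rho \cdot \max\big(\mathbb{P}(c+Z \ge 2\delta),\ \mathbb{P}(c+Z \le -2\delta)\big) \ \ge\ \tfrac{\rho}{2}\,\mathbb{P}(|c+Z| \ge 2\delta),
\]
where $Z \sim N(0,a)$ and $\rho := \mathbb{P}(\text{BM from } 2\delta \text{ stays} \ge \delta \text{ for time } b-a) > 0$. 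Finally $\mathbb{P}(|c+Z| \ge 2\delta) \ge \mathbb{P}(|Z| \ge 2\delta) > 0$, since among intervals of a given length the symmetric one carries the most mass of a centered Gaussian. This yields $m = \tfrac{\rho}{2}\mathbb{P}(|Z| \ge 2\delta) > 0$, uniformly in $k$ and $x$, and the proof is complete.

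The only genuinely delicate point is that last paragraph: obtaining a lower bound on $\mathbb{P}^x(G_k)$ that is uniform over $k$, including the possibility that the shift $e_k(x)$ is arbitrarily large. The device of passing to a one-sided event and exploiting that Brownian motion at the positive time $a$ has a spread-out Gaussian law handles this, and also explains why the hypothesis $a > 0$ is used. Everything else is bookkeeping: the reduction to $[a,b]$, measurability of the $G_k$ via path continuity, and the identification of $\{B_t \in H\}$ with a condition on $\sum_k |e_k(B_t)|^2$.
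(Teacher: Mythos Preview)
Your proof is correct and follows essentially the same strategy as the paper: project onto a $q$-orthonormal basis $\{e_k\}\subset W^*$, use that $\{e_k(B_t)\}_k$ are independent one-dimensional Brownian motions, define events $G_k$ (the paper's $A_k$) that each $|e_k(B_\cdot)|$ stays bounded away from zero on a fixed compact interval, and apply the second Borel--Cantelli lemma after establishing a uniform lower bound $\mathbb{P}^x(G_k)\ge m>0$. The only difference is in how that uniform bound is obtained: the paper argues directly by coupling that for any starting point $x_0$, a one-dimensional Brownian motion has probability at least $c:=\mathbb{P}(\inf_{[t_1,t_2]} b_t \ge 1)$ of staying outside $(-1,1)$ throughout $[t_1,t_2]$ (if $x_0\ge 0$ use $b_t+x_0\ge b_t$, if $x_0<0$ use symmetry), whereas you condition at time $a$, use the Markov property, and then invoke Anderson's inequality for the Gaussian at time $a$. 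Both routes are valid; the paper's coupling argument is a bit more direct and avoids the need to split off the initial segment $[0,a]$.
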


\begin{proof}
  Fix $0 < t_1 < t_2 < \infty$.  If $b_t$ is a standard
  one-dimensional Brownian motion started at $0$, let $c = P(\inf\{b_t
  : t_1 \le t \le t_2\} \ge 1)$, i.e. the probability that $b_t$ is
  above $1$ for all times between $t_1$ and $t_2$.  Clearly $c > 0$.
  By the strong Markov property it is clear that for $x_0 > 0$,
  $P(\inf\{b_t + x_0 : t_1 \le t \le t_2\} \ge 1) > c$, and by
  symmetry $P(\sup\{b_t - x_0 : t_1 \le t \le t_2\} \le -1) > c$
  also.  So for any $x_0 \in \R$, we have
  \begin{equation*}
    P(\inf\{|b_t + x_0|^2 : t_1 \le t \le t_2\} \ge 1) > c.
  \end{equation*}

  Fix a $q$-orthonormal basis $\{e_k\} \subset W^*$
  as in Lemma \ref{ek-basis}, so that $B_t \in H$ iff $\norm{B_t}_H^2
  = \sum_k |e_k(B_t)|^2 < \infty$.  Under $\mathbb{P}^x$, $e_k(B_t)$
  are independent one-dimensional Brownian motions with variance 1 and
  starting points $e_k(x)$.  So if we let $A_k$ be the event $A_k =
  \{\inf\{|e_k(B_t)|^2 : t_1 \le t \le t_2\} \ge 1\}$, by the above
  computation we have $\mathbb{P}^x(A_k) > c$.  Since the $A_k$ are independent
  we have $\mathbb{P}^x(A_k \text{ i.o.}) = 1$.  But on the event
  $\{A_k \text{ i.o}\}$ we have $\norm{B_t}_H^2 = \sum_{k=1}^\infty
  |e_k(B_t)|^2 = \infty$ for all $t \in [t_1, t_2]$.  Thus
  $\mathbb{P}^x$-a.s. $B_t$ does not hit $H$ between times $t_1$ and
  $t_2$.  Now let $t_1 \downarrow 0$ and $t_2 \uparrow \infty$ along
  sequences to get the conclusion.
\end{proof}

\begin{remark}
  A priori it is not obvious that $\sigma_H : \Omega \to [0,\infty]$
  is even measurable (its measurability it is defined by an
  uncountable infimum, and there is no apparent way to reduce it to a
  countable infimum), or that $\{\sigma_H = \infty\}$ is a measurable
  subset of $\Omega$.  What we really showed is that there is a
  (measurable) event $A = \{A_k \text{ i.o.}\}$ with $\mathbb{P}^x(A)
  = 1$ and $\sigma_H = \infty$ on $A$.  If we complete the measurable
  space $(\Omega, \mathcal{F})$ by throwing in all the sets which are
  $\mathbb{P}^x$-null for every $x$, then $\{\sigma_h = \infty\}$ will
  be measurable and so will $\sigma_H$.

  In the general theory of Markov processes one shows that under some
  mild assumptions, including the above completion technique, $\sigma_B$ is
  indeed measurable for any Borel (or even analytic) set $B$.
\end{remark}

\section{Calculus on abstract Wiener space}

The strongly continuous semigroup $P_t$ on $C_u(W)$ has a generator
$L$, defined by
\begin{equation*}
  Lf =  \lim_{t \downarrow 0} \frac{1}{t} (P_t f - f).
\end{equation*}
This is an unbounded operator on $C_u(W)$ whose domain $D(L)$ is the set of
all $f$ for which the limit converges in $C_u(W)$.  It is a general
fact that $L$ is densely defined and closed.

In the classical setting where $W = \R^n$ and $\mu$ is standard
Gaussian measure (i.e. $q = \inner{\cdot}{\cdot}_{\R^n}$), so that
$B_t$ is standard Brownian motion, we know that $L = - \frac{1}{2} \Delta$ is the
Laplace operator, which sums the second partial derivatives in all
orthogonal directions.  Note that ``orthogonal'' is with respect to
the Euclidean inner product, which is also the Cameron--Martin inner
product in this case.

We should expect that in the setting of
abstract Wiener space, $L$ should again be a second-order differential
operator that should play the same role as the Laplacian.  So we need
to investigate differentiation on $W$.

\begin{definition}
  Let $W$ be a Banach space, and $F : W \to \R$ a function.  We say
  $F$ is \textbf{Fr\'echet differentiable} at $x \in W$ if there
  exists $g_x \in W^*$ such that, for any sequence $W \ni y_n \to 0$ in
  $W$-norm,
  \begin{equation*}
    \frac{F(x+y_n) - F(x) - g_x(y_n)}{\norm{y_n}_W} \to 0.
  \end{equation*}
  $g_x$ is the Fr\'echet derivative of $F$ at $x$.  One could write
  $F'(x) = g_x$.  It may be helpful to think in terms of directional
  derivatives and write $\partial_y F(x) = F'(x)y = g_x(y)$.
\end{definition}

\begin{example}
  Suppose $F(x) = \phi(f_1(x), \dots, f_n(x))$ is a cylinder
  function.  Then
  \begin{equation*}
    F'(x) = \sum_{i=1}^n \partial_i \phi(f_1(x), \dots, f_n(x)) f_i.
  \end{equation*}
\end{example}

As it turns out, we will be most interested in differentiating in directions $h
\in H$, since in some sense that is really what the usual Laplacian
does.  Also, Fr\'echet differentiability seems to be too much to ask
for; according to references in Kuo \cite{kuo-gaussian-book}, the set of continuously Fr\'echet
differentiable functions is not dense in $C_u(W)$.

\begin{definition}
  $F : W \to \R$ is \textbf{$H$-differentiable} at $x \in W$ if there
  exists $g_x \in H$ such that for any sequence $H \ni h_n \to 0$ in
  $H$-norm,
  \begin{equation*}
    \frac{F(x+h_n) - F(x) - \inner{g_x, h_n}_H}{\norm{h_n}_H} \to 0.
  \end{equation*}
  We will denote the element $g_x$ by $DF(x)$, and we have $\partial_h
  F(x) = \inner{DF(x)}{h}_H$.  $DF : W \to H$ is sometimes called the
  \textbf{Malliavin derivative} or \textbf{gradient} of $F$.
\end{definition}

\begin{example}
  For a cylinder function $F(x) = \phi(f_1(x), \dots, f_n(x))$, we
  have
  \begin{equation*}
    \inner{DF(x)}{h}_H = \sum_{i=1}^n  \partial_i \phi(f_1(x), \dots, f_n(x)) f_i(h)
  \end{equation*}
  or alternatively
  \begin{equation}\label{DF-cylinder}
    DF(x) = \sum_{i=1}^n   \partial_i \phi(f_1(x), \dots, f_n(x)) J f_i
  \end{equation}
\end{example}

We know that for $F \in C_u(W)$, $P_t F$ should belong to the domain
of the generator $L$, for any $t > 0$.  The next proposition shows
that we are on the right track with $H$-differentiability.

\begin{proposition}
  For $F \in C_u(W)$ and $t > 0$, $P_t F$ is $H$-differentiable, and
  \begin{equation*}
    \inner{D P_t F(x)}{h}_H = \frac{1}{t} \int_W F(x+y) \inner{h}{y}_H\, \mu_t(dy).
  \end{equation*}
\end{proposition}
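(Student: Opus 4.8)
The plan is to use the explicit Gaussian integration-by-parts (Cameron--Martin) formula to compute the directional derivative $\partial_h P_t F(x)$ and show that it is given by the claimed linear functional of $h$, with the right continuity properties to conclude $H$-differentiability. First I would write out, for fixed $h \in H$ and a real parameter $s$,
\begin{equation*}
  P_t F(x + sh) = \int_W F(x + sh + y)\, \mu_t(dy) = \int_W F(x+y)\, (\mu_t)_{sh}(dy),
\end{equation*}
and then apply the Cameron--Martin theorem to $\mu_t$ (which is Gaussian measure with Cameron--Martin space $H$ and $H$-inner product scaled by $1/t$, so that $\|h\|_{H_{\mu_t}}^2 = \frac1t\|h\|_H^2$ and the relevant Paley--Wiener integral is $\frac1t\inner{h}{y}_H$). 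This gives the exact formula
\begin{equation*}
  P_t F(x+sh) = \int_W F(x+y)\, e^{-\frac{s^2}{2t}\|h\|_H^2 + \frac{s}{t}\inner{h}{y}_H}\, \mu_t(dy).
\end{equation*}

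Next I would differentiate this under the integral sign with respect to $s$ at $s=0$. Differentiating the exponential in $s$ brings down a factor $-\frac{s}{t}\|h\|_H^2 + \frac1t\inner{h}{y}_H$, which at $s=0$ is $\frac1t\inner{h}{y}_H$; so formally
\begin{equation*}
  \left.\frac{d}{ds}\right|_{s=0} P_t F(x+sh) = \frac1t \int_W F(x+y)\, \inner{h}{y}_H\, \mu_t(dy),
\end{equation*}
which is exactly the claimed expression for $\inner{DP_tF(x)}{h}_H$. Justifying the differentiation under the integral requires a dominating function: $F$ is bounded (it lies in $C_u(W)$), the random variable $\inner{h}{\cdot}_H$ is Gaussian on $(W,\mu_t)$ hence in every $L^p$, and the exponential weight $e^{-\frac{s^2}{2t}\|h\|_H^2 + \frac{s}{t}\inner{h}{y}_H}$, together with its $s$-derivative (a polynomial in $s$ and in $\inner{h}{y}_H$ times the same exponential), is dominated uniformly for $s$ in a bounded interval by an integrable function of $y$ — this is a standard Gaussian moment/dominated-convergence estimate, using $\int_W e^{a\inner{h}{y}_H}\mu_t(dy) < \infty$ for all $a$. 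So the directional derivative $\partial_h P_tF(x)$ exists and equals $\frac1t\int_W F(x+y)\inner{h}{y}_H\,\mu_t(dy)$ for every $h \in H$.

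Finally, to upgrade "all directional derivatives exist and depend linearly on $h$" to genuine $H$-differentiability, I would check two things: that $h \mapsto \partial_h P_tF(x)$ is a bounded linear functional on $H$, so that it is represented by some $g_x = DP_tF(x) \in H$; and that the remainder in the definition is $o(\|h\|_H)$ as $h \to 0$ in $H$. For boundedness, by Cauchy--Schwarz $|\partial_h P_tF(x)| \le \frac1t \|F\|_\infty \,\mathbb{E}_{\mu_t}|\inner{h}{\cdot}_H| \le \frac{C}{t}\|F\|_\infty\|h\|_H$. For the $o(\|h\|_H)$ estimate, I would use the exact formula for $P_tF(x+h)$ above and Taylor-expand the exponential: writing $e^u = 1 + u + O(u^2 e^{|u|})$ with $u = -\frac{1}{2t}\|h\|_H^2 + \frac1t\inner{h}{y}_H$, the constant term integrates to $P_tF(x)$, the linear term integrates to $\inner{g_x}{h}_H$, and the quadratic remainder is controlled by $\frac1t\|F\|_\infty$ times $\mathbb{E}_{\mu_t}[u^2 e^{|u|}]$, which is $O(\|h\|_H^2)$ uniformly for $\|h\|_H$ bounded by another Gaussian moment bound. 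I expect the main obstacle to be precisely this last uniformity: one must get the dominating/remainder bounds to hold uniformly as $h$ ranges over a small $H$-ball (not just for a single fixed $h$), so that the limit in the definition of $H$-differentiability is genuine; this is where one must be a little careful with the Gaussian exponential moments, but it is otherwise routine.
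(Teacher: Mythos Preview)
Your approach is correct and is essentially the same as the paper's: both use Cameron--Martin to write $P_tF(x+h)$ as an integral of $F(x+y)$ against the exponential Radon--Nikodym density $J_t(h,y)=\exp\bigl(-\tfrac{1}{2t}\|h\|_H^2+\tfrac{1}{t}\inner{h}{y}_H\bigr)$, extract the term linear in $h$, and control the remainder by Gaussian moment bounds (the paper organizes the remainder via the integral identity $J_t(h,y)-1=\int_0^1 \tfrac{d}{ds}J_t(sh,y)\,ds$ rather than your Lagrange-type bound $e^u-1-u=O(u^2 e^{|u|})$, but the estimates amount to the same thing). One small slip: your ``linear term'' $u$ contains the piece $-\tfrac{1}{2t}\|h\|_H^2$, which when integrated against $F(x+y)$ contributes $-\tfrac{1}{2t}\|h\|_H^2\, P_tF(x)$ and is not linear in $h$; this is harmless, since it is $O(\|h\|_H^2)$ and simply gets absorbed into the remainder.
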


\begin{proof}
  It is sufficient to show that
  \begin{equation*}
    P_t F(x+h) - P_t F(x) = \frac{1}{t} \int_W F(x+y) \inner{h}{y}_H\,
    \mu_t(dy) + o(\norm{h}_H)
  \end{equation*}
  since the first term on the right side is a bounded linear
  functional of $h$ (by Fernique's theorem).  The Cameron--Martin
  theorem gives us
  \begin{equation*}
    P_t F(x+h) = \int_W F(x+y)\, \mu_t^h(dy) = \int_W F(x+y) J_t(h,y)\, \mu_t(dy)
  \end{equation*}
  where
  \begin{equation*}
    J_t(h,y) = \exp\left(-\frac{1}{2t} \norm{h}_H^2 + \frac{1}{t} \inner{h}{y}_H\right)
  \end{equation*}
  is the Radon--Nikodym derivative, or in other words the ``Jacobian
  determinant.''  Then we have
  \begin{align*}
    P_t F(x+h) - P_t F(x) = \int_W F(x+y) (J_t(h,y) - 1) \,\mu_t(dy).
  \end{align*}
  Since $J_t(0,y) = 1$, we can write $J_t(h,y) - 1 = \int_0^1
  \frac{d}{ds} J_t(sh,y)\,ds$ by the fundamental theorem of calculus.
  Now we can easily compute that $\frac{d}{ds} J_t(sh,y) = \frac{1}{t}
  (\inner{h}{y}_H - s \norm{h}_H^2) J_t(sh,y)$, so we have
  \begin{align*}
    P_t F(x+h) - P_t F(x) &= \frac{1}{t} \int_W F(x+y) \int_0^1 
    (\inner{h}{y}_H - s \norm{h}_H^2) J_t(sh,y)\,ds \,\mu_t(dy) \\
    &= \frac{1}{t} \int_W F(x+y) \inner{h}{y}_H \mu_t(dy) \\ 
    &\quad +
    \frac{1}{t} \int_W
    F(x+y) \int_0^1 
    \inner{h}{y}_H (J_t(sh,y)-1)\,ds \,\mu_t(dy) && (\alpha) \\
    &\quad - \frac{1}{t} \int_W F(x+y) \int_0^1 s \norm{h}_H^2
    J_t(sh,y) \,ds\,\mu_t(dy) && (\beta).
  \end{align*}
  So it remains to show that the remainder terms $\alpha, \beta$ are
  $o(\norm{h}_H)$.

  To estimate $\alpha$, we crash through with absolute values and use
  Tonelli's theorem and Cauchy--Schwarz to obtain
  \begin{align*}
    \abs{\alpha} &\le \frac{\norm{F}_\infty}{t} \int_0^1 \int_W
    \abs{\inner{h}{y}_H} \abs{J_t(sh,y) - 1}\,\mu_t(dy)\,ds \\
    &\le \frac{\norm{F}_\infty}{t} \int_0^1 \sqrt{\int_W
    \abs{\inner{h}{y}_H}^2\,\mu_t(dy) \int_W \abs{J_t(sh,y) - 1}^2\,\mu_t(dy)}\,ds.
  \end{align*}
  But $\int_W
    \abs{\inner{h}{y}_H}^2\,mu_t(dy) = t \norm{h}_H^2$ (since under
    $\mu_t$, $\inner{h}{\cdot}_H \sim N(0, t \norm{h}_H^2)$).  Thus
    \begin{equation*}
      \abs{\alpha} \le \frac{\norm{F}_\infty}{\sqrt{t}} \norm{h}_H
      \int_0^1 \sqrt{\int_W \abs{J_t(sh,y) - 1}^2\,\mu_t(dy)}\,ds.
    \end{equation*}

    Now, a quick computation shows
    \begin{align*}
      \abs{J_t(sh,y)-1}^2 = e^{s^2 \norm{h}_H^2/t} J_t(2sh, y) - 2
      J_t(sh,y) + 1.
    \end{align*}
    But $J_t(g,y) \mu_t(dy) = \mu_t^g(dy)$ is a probability measure
    for any $g \in H$, so integrating with respect to $\mu_t(dy)$ gives
    \begin{align*}
      \int_W \abs{J_t(sh,y) - 1}^2\,\mu_t(dy) = e^{s^2 \norm{h}_H^2/t}
      - 1.
    \end{align*}
    So we have
    \begin{equation*}
      \int_0^1 \sqrt{\int_W \abs{J_t(sh,y) - 1}^2\,\mu_t(dy)}\,ds =
      \int_0^1 \sqrt{e^{s^2 \norm{h}_H^2/t}
      - 1}\,ds = o(1)
    \end{equation*}
    as $\norm{h}_H \to 0$, by dominated convergence.  Thus we have
    shown $\alpha = o(\norm{h}_H)$.

    The $\beta$ term is easier: crashing through with absolute values
    and using Tonelli's theorem (and the fact that $J_t \ge 0$), we
    have
    \begin{align*}
      \abs{\beta} &\le \frac{1}{t} \norm{F}_\infty \norm{h}_H^2
      \int_0^1 s \int_W J_t(sh,y) \,\mu_t(dy)\,ds \\
      &= \frac{1}{t} \norm{F}_\infty \norm{h}_H^2
      \int_0^1 s \cancelto{1}{\int_W \,\mu_t^{sh}(dy)}\,ds \\
      &= \frac{1}{2t} \norm{F}_\infty \norm{h}_H^2 = o(\norm{h}_H).
    \end{align*}
\end{proof}

With more work it can be shown that $P_t F$ is in fact infinitely $H$-differentiable.

\begin{question}
  Kuo claims that the second derivative of $P_t F$ is given by
  \begin{equation*}
    \inner{D^2 P_t F h}{k}_H = \frac{1}{t} \int_W F(x+y)
    \left(\frac{\inner{h}{y}_H \inner{k}{y}_H}{t} -
    \inner{h}{k}\right) \mu_t(dy).
  \end{equation*}
  If the generator $L$ is really the Laplacian $\Delta$ defined below,
  then in particular $D^2 P_t F$ should be trace class.  But this
  doesn't seem to be obvious from this formula.  In particular, if we
  let $h = k = h_n$ and sum over an orthonormal basis $h_n$, the
  obvious approach of interchanging the integral and sum doesn't work,
  because we get an integrand of the form $\sum_n (\xi_n^2 - 1)$ where
  $\xi_n$ are iid $N(0,1)$, which diverges almost surely.
\end{question}

\begin{corollary}
  The (infinitely) $H$-differentiable functions are dense in $C_u(W)$.
\end{corollary}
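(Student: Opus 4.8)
The plan is to read off the density statement directly from the preceding proposition together with the strong continuity of $P_t$ on $C_u(W)$. The proposition tells us that for every $F \in C_u(W)$ and every $t > 0$ the function $P_t F$ is $H$-differentiable; moreover, as remarked immediately afterward, $P_t F$ is in fact infinitely $H$-differentiable. On the other hand, the proof that $P_t$ preserves $C_u(W)$ shows $P_t F \in C_u(W)$, and the theorem that $P_t$ is a strongly continuous semigroup on $C_u(W)$ gives $\norm{P_t F - F}_\infty \to 0$ as $t \downarrow 0$.

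So the argument is just: given $F \in C_u(W)$ and $\epsilon > 0$, use strong continuity to pick $t > 0$ with $\norm{P_t F - F}_\infty < \epsilon$. Then $P_t F$ is an infinitely $H$-differentiable element of $C_u(W)$ lying within $\epsilon$ of $F$ in the uniform norm. Since $\epsilon$ was arbitrary, the (infinitely) $H$-differentiable functions are dense.

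The only point that is not completely immediate is the parenthetical ``infinitely'': the proposition as stated only establishes first-order $H$-differentiability of $P_t F$. One may simply invoke the remark that follows it. If a self-contained argument is wanted, one can instead bootstrap using the semigroup property: write $P_t F = P_{t/2} G$ with $G := P_{t/2} F \in C_u(W)$, so that $\inner{D P_t F(x)}{h}_H = \frac{2}{t}\int_W G(x+y)\inner{h}{y}_H\,\mu_{t/2}(dy)$ by the proposition, and then differentiate this expression in $x$ along $H$ by the very same Cameron--Martin maneuver used in the proof of the proposition --- the Radon--Nikodym factor $J_{t/2}(\cdot,y)$ being smooth in the $H$-direction supplies the extra derivatives. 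Iterating this yields $H$-derivatives of all orders. Either way, the corollary is really just a repackaging of work already done, with no genuine obstacle.
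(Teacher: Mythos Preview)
Your proof is correct and essentially identical to the paper's: both use the strong continuity of $P_t$ on $C_u(W)$ to approximate $F$ by $P_t F$, invoking the preceding proposition (and the remark following it) for the $H$-differentiability of $P_t F$. Your additional paragraph on the ``infinitely'' parenthetical goes a bit beyond what the paper writes out, but is in the same spirit as its remark that ``with more work it can be shown that $P_t F$ is in fact infinitely $H$-differentiable.''
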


\begin{proof}
  $P_t$ is a strongly continuous semigroup on $C_u(W)$, so for any $F
  \in C_u(W)$ and any sequence $t_n \downarrow 0$, we have $P_{t_n} F \to F$
  uniformly, and we just showed that $P_{t_n} F$ is $H$-differentiable. 
\end{proof}

Now, on the premise that the $H$ inner product should play the same
role as the Euclidean inner product on $\R^n$, we define the Laplacian
as follows.

\begin{definition}
  The \textbf{Laplacian} of a function $F : W \to \R$ is
  \begin{equation*}
    \Delta F(x) = \sum_{k=1}^\infty \partial_{h_k} \partial_{h_k} F(x)
  \end{equation*}
  if it exists, where $\{h_k\}$ is an orthonormal basis for $H$.
  (This assumes that $F$ is $H$-differentiable, as well as each
  $\partial_h F$.)
\end{definition}

\begin{example}
  If $F : W \to \R$ is a cylinder function as above, then
  \begin{equation*}
    \Delta F(x) = \sum_{i,j=1}^n \partial_i \partial_j \phi(f_1(x),
    \dots, f_n(x)) q(f_i, f_j).
  \end{equation*}
\end{example}

\begin{theorem}
  If $F$ is a cylinder function, then $F \in D(L)$, and $LF = -
  \frac{1}{2} \Delta F$.
\end{theorem}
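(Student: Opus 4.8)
The plan is to reduce everything to a finite-dimensional heat-semigroup computation. Write $F(x) = \varphi(f_1(x),\dots,f_n(x))$ with $\varphi \in C_c^\infty(\R^n)$, and set $\xi(x) = (f_1(x),\dots,f_n(x)) \in \R^n$. By the exercise on joint Gaussianity, the pushforward of $\mu$ under $\xi$ is the centered Gaussian measure $\nu$ on $\R^n$ with covariance matrix $Q_{ij} = q(f_i,f_j)$. Using $P_t F(x) = \int_W F(x + t^{1/2}y)\,\mu(dy)$ together with $F(x + t^{1/2}y) = \varphi(\xi(x) + t^{1/2}\xi(y))$, the change of variables by $\xi$ gives
\begin{equation*}
  P_t F(x) = \int_{\R^n} \varphi\bigl(\xi(x) + t^{1/2}u\bigr)\,\nu(du) = \bigl(T_t^\nu \varphi\bigr)(\xi(x)),
\end{equation*}
where $T_t^\nu\psi(z) := \int_{\R^n}\psi(z + t^{1/2}u)\,\nu(du)$ is the Gaussian convolution semigroup on $\R^n$ with variance-$t$ Gaussian of covariance $Q$. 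So it suffices to identify the generator of $T_t^\nu$ on smooth compactly supported functions and pull the answer back through $\xi$.

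For the finite-dimensional step I would Taylor-expand $\varphi$ to second order with Lagrange remainder, $\varphi(z + t^{1/2}u) = \varphi(z) + t^{1/2}\sum_i \partial_i\varphi(z)u_i + \tfrac{t}{2}\sum_{i,j}\partial_i\partial_j\varphi(z)u_iu_j + R(z,u,t)$, with $|R(z,u,t)| \le \tfrac16 t^{3/2}\,\norm{D^3\varphi}_\infty\,|u|^3$. Integrating against $\nu$, the linear term drops out since $\nu$ is centered; the quadratic term becomes $\tfrac{t}{2}\sum_{i,j}\partial_i\partial_j\varphi(z)\,q(f_i,f_j)$ because $\int u_iu_j\,\nu(du) = Q_{ij} = q(f_i,f_j)$; and $\tfrac1t\int|R|\,\nu(du) \le \tfrac16 t^{1/2}\,\norm{D^3\varphi}_\infty\int_{\R^n}|u|^3\,\nu(du) \to 0$, the third moment of $\nu$ being finite. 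Hence
\begin{equation*}
  \frac{1}{t}\bigl(T_t^\nu\varphi - \varphi\bigr) \longrightarrow \frac{1}{2}\sum_{i,j} q(f_i,f_j)\,\partial_i\partial_j\varphi \qquad \text{uniformly on }\R^n .
\end{equation*}
Pulling back through $\xi$, the right-hand side is $-\tfrac12\Delta F$ (in the sign convention for $\Delta$ fixed above, comparing with the Example preceding the theorem). Because this convergence is uniform on $\R^n$ and $\xi : W \to \R^n$ is a fixed map, $\tfrac1t(P_tF - F)$ converges to $-\tfrac12\Delta F$ in the sup norm on $W$; since $F \in C_u(W)$ (each $f_i$ is Lipschitz and $\varphi$ is bounded and uniformly continuous) and $P_tF \in C_u(W)$ (shown earlier), this is convergence in $C_u(W)$, which is precisely the assertion $F \in D(L)$ with $LF = -\tfrac12\Delta F$.

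The argument has no genuinely hard step; the one point that needs care is the \emph{uniformity in the base point} of the Taylor remainder, since that is what promotes a pointwise, finite-dimensional heat-semigroup limit to convergence in the $C_u(W)$ norm and hence to an honest statement about the generator $L$. That uniformity comes cheaply from the boundedness of $D^3\varphi$ (automatic for $\varphi \in C_c^\infty$) and the finiteness of $\int_{\R^n}|u|^3\,\nu(du)$; the same estimate works verbatim for cylinder functions built from any bounded smooth $\varphi$ with bounded derivatives up to order three. It is also worth noting that the reduction step uses only the joint Gaussianity of $(f_1,\dots,f_n)$ under $\mu$ (plus Fernique, implicitly, to know $F$ and $P_tF$ really land in $C_u(W)$), so no further infinite-dimensional input is required.
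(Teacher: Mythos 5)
Your proof is correct and rests on the same core idea as the paper's: expand to second order, kill the linear term by centeredness, read off $q(f_i,f_j)$ from the second moments, and control the remainder by a third-moment bound that is uniform in the base point. The one genuine difference is in the execution: you first push $\mu$ forward through $\xi=(f_1,\dots,f_n)$ to the finite-dimensional Gaussian $\nu$ with covariance $Q_{ij}=q(f_i,f_j)$ and then run a Lagrange-remainder estimate using $\norm{D^3\varphi}_\infty$ and the (elementary) finiteness of $\int_{\R^n}|u|^3\,\nu(du)$, whereas the paper Taylor-expands directly on $W$ with the integral form of the remainder and bounds the error via the Lipschitz constant of $\partial_i\partial_j\varphi$ together with Fernique's theorem for $\int_W\norm{y}_W^3\,\mu(dy)$. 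Your factorization is arguably cleaner for this particular theorem, since it removes the infinite-dimensional moment estimate from the generator computation entirely; the paper's in-place argument, on the other hand, is the template that survives when one wants to push beyond cylinder functions (e.g.\ to Fr\'echet-$C^2$ functions as in Kuo), where no finite-dimensional factorization is available. Note also that the sign tension between the displayed limit $\tfrac12\Delta F$ and the asserted $LF=-\tfrac12\Delta F$ is present in the paper's own statement and proof (it stems from the two competing conventions $L=\lim_{t\downarrow 0}t^{-1}(P_t-I)$ versus $P_t=e^{-tL}$), so your parenthetical hedge there mirrors the source rather than introducing an error.
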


\begin{proof}
  We have to show that
  \begin{equation*}
    \frac{P_t F(x) - F(x)}{t} \to \frac{1}{2}\Delta F(x)
  \end{equation*}
  uniformly in $x \in W$.  As shorthand, write \begin{align*}
    G_i(x) &= \partial_i
  \phi(f_1(x), \dots, f_n(x)) \\
  G_{ij}(x) &= \partial_i
  \partial_j \phi(f_1(x), \dots, f_n(x))
  \end{align*} so that $\Delta F(x) =
  \sum_{i,j=1}^n G_{ij}(x) q(f_i, f_j)$.  Note that $G_{ij}$ is Lipschitz.

  First note the following identity for $\alpha \in C^2([0,1])$, which
  is easily checked by integration by parts:
  \begin{equation}
    \alpha(1) - \alpha(0) = \alpha'(0) + \int_0^1 (1-s) \alpha''(s)\,ds
  \end{equation}
  Using $\alpha(s) = F(x+sy)$, we have
  \begin{align*}
    F(x+y) - F(x) = \sum_{i=1}^n G_i(x) f_i(y) + \int_0^1 (1-s)
    \sum_{i,j=1}^n G_{ij}(x+sy) f_i(y) f_j(y)\,ds.
  \end{align*}
  Integrating with respect to $\mu_t(dy)$, we have
  \begin{align*}
    P_t F(x) - F(x) &= \sum_{i=1}^n G_i(x) \cancelto{0}{\int_W f_i(y)
    \,\mu_t(dy)} + \sum_{i,j=1}^n \int_W f_i(y) f_j(y) \int_0^1 (1-s)
    G_{ij}(x+sy)\,ds \,\mu_t(dy) \\
    &= t \sum_{i,j=1}^n \int_W f_i(y) f_j(y) \int_0^1 (1-s)
    G_{ij}(x+st^{1/2}y)\,ds \,\mu(dy)
  \end{align*}
  rewriting the $\mu_t$ integral in terms of $\mu$ and using the
  linearity of $f_i, f_j$.  Now if we add and subtract $G_{ij}(x)$
  from $G_{ij}(x+st^{1/2}y)$, we have
  \begin{align*}
    \int_W f_i(y) f_j(y) 
    G_{ij}(x)  \,\mu(dy) \int_0^1 (1-s) \,ds = \frac{1}{2} G_{ij}(x)
    \int_W f_i(y) f_j(y)\,\mu(dy) = \frac{1}{2} G_{ij}(x) q(f_i, f_j)
  \end{align*}
  and, if $C$ is the Lipschitz constant of $G_{ij}$,
  \begin{align*}
    & \abs{\int_W f_i(y) f_j(y) \int_0^1 (1-s)
    (G_{ij}(x+st^{1/2}y) - G_{ij}(x))\,ds \,\mu(dy)} \\
 &\le
    \norm{f_i}_{W^*} \norm{f_j}_{W^*} \int_W \norm{y}_W^2 \int_0^1
    (1-s) (Cst^{1/2}\norm{y}_W)\,ds\,\mu(dy) \\
    &\le
    C t^{1/2} \norm{f_i}_{W^*} \norm{f_j}_{W^*} \int_0^1 (s-s^2)\,ds
    \int_0^1 \norm{y}_W^3\,\mu(dy).
  \end{align*}
  The $\mu$ integral is finite by Fernique's theorem, so this goes to
  $0$ as $t \to 0$, independent of $x$.  Thus we have shown
  \begin{equation*}
    P_t F(x) - F(x) = \frac{t}{2}\left(\sum_{i,j=1}^n G_{ij}(x) q(f_i,
    f_j) + o(1)\right) 
  \end{equation*}
  uniformly in $x$, which is what we wanted.
\end{proof}

\begin{question}
  Kuo \cite{kuo-gaussian-book} proves the stronger statement that this holds for $F$ which are
  (more or less) Fr\'echet-$C^2$.
  Even this is not quite satisfactory, because, as claimed by Kuo's
  references, these functions are not dense in $C_u(W)$.  In
  particular, they are not a core for $L$.  Can we produce a
  Laplacian-like formula for $L$ which makes sense and holds on a core
  of $L$?
\end{question}

\subsection{Some $L^p$ theory}

Here we will follow Nualart \cite{nualart} for a while.

It is worth mentioning that Nualart's approach (and notation) are a
bit different from ours.  His setting is a probability space $(\Omega,
\mathcal{F}, P)$ and a ``process'' $W$, i.e. a family $\{W(h) : h \in
H\}$ of jointly Gaussian random variables indexed by a Hilbert space
$H$, with the property that $E[W(h) W(k)] = \inner{h}{k}_H$.  This
includes our setting: take an abstract Wiener space $(B, H, \mu)$, set
$\Omega = B$, $P = \mu$, and $W(h) = \inner{h}{\cdot}_H$.  We will
stick to our notation.

We want to study the properties of the Malliavin derivative $D$ as an
unbounded operator on $L^p(W,\mu)$, $p \ge 1$.  If we take the domain
of $D$ to be the smooth cylinder functions $\mathcal{F}
C_c^\infty(W)$, we have a densely defined unbounded operator from
$L^p(W,\mu)$ into the vector-valued space $L^p(W,\mu; H)$.  (Note that
$\norm{DF(x)}_H$ is bounded as a function of $x$, so there is no
question that $DF \in L^p(W; H)$.)

\begin{lemma}[Integration by parts]
  Let $F \in \mathcal{F} C_c^\infty(W)$ be a cylinder function, and $h
  \in H$.  Then
  \begin{equation}\label{byparts-eq}
    \int_W \inner{DF(x)}{h}_H \mu(dx) = \int_W F(x) \inner{h}{x}_H \mu(dx).
  \end{equation}
\end{lemma}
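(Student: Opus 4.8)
The plan is to reduce (\ref{byparts-eq}) to the classical Gaussian integration-by-parts formula on $\R^n$, using the cylinder structure of $F$. First I would normalize the representation of $F$. Writing $F(x) = \varphi(f_1(x),\dots,f_n(x))$ with $f_i \in W^*$ and $\varphi$ smooth with compact support, I apply Gram--Schmidt to $\{f_1,\dots,f_n\}$ inside the inner product space $(W^*,q)$: each $f_i$ is a linear combination of finitely many $q$-orthonormal functionals $g_1,\dots,g_m \in W^*$, so $F(x) = \psi(g_1(x),\dots,g_m(x))$ where $\psi(z) = \varphi(Az)$ and $A$ is the (injective, hence norm-expanding) change-of-basis matrix; thus $\psi$ is again smooth with compact support. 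Hence there is no loss in assuming from the start that $f_1,\dots,f_n$ are $q$-orthonormal, which by the earlier exercises means they are i.i.d.\ $N(0,1)$ under $\mu$, i.e.\ the map $(f_1,\dots,f_n)$ pushes $\mu$ forward to the standard Gaussian $\gamma_n$ on $\R^n$.

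Next I would rewrite each side of (\ref{byparts-eq}) as an integral against $\gamma_n$. For the left side, formula (\ref{DF-cylinder}) gives $DF(x) = \sum_i \partial_i\varphi(f_1(x),\dots,f_n(x))\,Jf_i$, and since $J$ restricted to $W^*$ agrees with $i^*$ we have $\inner{Jf_i}{h}_H = \inner{i^*f_i}{h}_H = f_i(h)$; so $\inner{DF(x)}{h}_H = \sum_i f_i(h)\,\partial_i\varphi(f_1(x),\dots,f_n(x))$, and the left side of (\ref{byparts-eq}) equals $\sum_i f_i(h)\int_{\R^n}\partial_i\varphi\,d\gamma_n$. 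For the right side I decompose the Gaussian random variable $\inner{h}{\cdot}_H = Th \in K$ as $Th = \sum_i f_i(h)\,f_i + R$ with $R \in K$ lying $q$-orthogonal to $\spanop\{f_1,\dots,f_n\}$; the coefficients are forced because $q(Th,f_i) = f_i(h)$ by the defining property of $T$ and orthonormality of the $f_i$. Since $(f_1,\dots,f_n,R)$ is jointly Gaussian and $R$ is uncorrelated with every $f_i$, $R$ is independent of $\sigma(f_1,\dots,f_n)$, hence of $F$; as $F$ is bounded and $R$ is centered and square-integrable, $\int_W F(x)R(x)\,\mu(dx) = 0$. Therefore the right side of (\ref{byparts-eq}) equals $\sum_i f_i(h)\int_W F(x)f_i(x)\,\mu(dx) = \sum_i f_i(h)\int_{\R^n}\varphi(y)\,y_i\,\gamma_n(dy)$.

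Finally I would invoke the one-dimensional identity $\int_{\R^n}\partial_i\varphi\,d\gamma_n = \int_{\R^n}\varphi(y)\,y_i\,\gamma_n(dy)$, which is immediate by integrating by parts in the $y_i$ variable using $\partial_{y_i}e^{-|y|^2/2} = -y_i e^{-|y|^2/2}$, the boundary terms vanishing since $\varphi$ has compact support. Comparing the two expressions term by term yields (\ref{byparts-eq}). The only step beyond bookkeeping is the orthogonal decomposition of $Th$: one must note that $R$ genuinely lies in $K$ (so it \emph{is} a Gaussian random variable on $(W,\mu)$) and that the independence-of-$F$ argument is valid, but both follow from facts already established. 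As an aside, there is a slicker route that avoids the reduction: differentiate $\Phi(t) := \int_W F(x+th)\,\mu(dx)$ at $t=0$, rewriting $\Phi(t) = \int_W F(x)\,e^{-\frac{1}{2}t^2\norm{h}_H^2 + t\inner{h}{x}_H}\,\mu(dx)$ by the Cameron--Martin theorem; differentiating under the integral on both sides (justified by boundedness of the $\partial_i\varphi$ and Gaussian moment bounds on $\inner{h}{\cdot}_H$) gives (\ref{byparts-eq}) in one line.
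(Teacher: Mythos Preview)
Your argument is correct and is essentially the same reduction as the paper's: orthonormalize the defining functionals with respect to $q$, push forward to standard Gaussian measure on $\R^n$, and invoke the one-variable identity $\int \partial_i\varphi\,d\gamma_n = \int y_i\varphi\,d\gamma_n$. The only organizational difference is in how $h$ is handled. The paper first reduces to $h = i^*f$ for $f \in W^*$ by continuity and density, then absorbs $f$ into the orthonormal system as $f_1$, so that $\inner{DF}{h}_H$ collapses to a single partial derivative and $\inner{h}{x}_H$ becomes literally $f_1(x)$. You instead keep $h$ general and decompose $Th$ orthogonally in $K$ as $\sum_i f_i(h)f_i + R$, killing the remainder via independence. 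Your route trades the density step for the orthogonal-decomposition-plus-independence step; both are equally short, and yours has the mild advantage of never leaving $h\in H$ arbitrary. The Cameron--Martin aside you mention is a genuinely different and slicker proof, and is worth keeping in mind.
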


\begin{proof}
  It is easy to see that both sides of
  (\ref{byparts-eq}) are bounded linear functionals with respect to
  $h$, so it suffices to show that (\ref{byparts-eq}) holds for all
  $h$ in a dense subset: hence suppose $h = i^* f$ for some $f \in
  W^*$.

  Now basically the proof is to reduce to the finite dimensional case.
  By adjusting $\phi$ as needed, there is no loss of generality in
  writing $F(x) = \phi(f_1(x), \dots, f_n(x))$ where $f_1 = f$.  We
  can also apply Gram--Schmidt and assume that all the $f_i$ are
  $q$-orthonormal.  Then $\inner{DF(x)}{h}_H = \partial_1 \phi(f_1(x),
  \dots, f_n(x))$.  The $f_i$ are iid $N(0,1)$ random variables under
  $\mu$, so we have
  \begin{align*}
    \int_W \inner{DF(x)}{h}_H \,\mu(dx) &= \int_{\R^n} \partial_1
    \phi(x_1, \dots, x_n) \frac{1}{(2\pi)^{n/2}} e^{-|x|^2/2} \,dx \\
    &= \int_{\R^n} \phi(x_1, \dots, x_n) x_1 \frac{1}{(2\pi)^{n/2}}
    e^{-|x|^2/2}\,dx \\
    &= \int_{W} \phi(f_1(x), \dots, f_n(x)) f_1(x) \,\mu(dx) \\
    &= \int_W F(x) \inner{h}{x}_H \,\mu(dx)
  \end{align*}
  so we are done.  
\end{proof}

We also observe that $D$ obeys the product rule: $D(F \cdot G) = F
\cdot DG + DF \cdot G$.  (This is easy to check by assuming, without
loss of generality, that we have written $F,G$ in terms of the same
functionals $f_1, \dots, f_n \in W^*$.)  Applying the previous lemma
to this identity gives:

\begin{equation}\label{product-byparts}
  \int_W G(x) \inner{DF(x)}{h}_H \,\mu(dx) = \int_W (-F(x)
  \inner{DG(x)}{h}_H + F(x) G(x) \inner{h}{x}_H) \mu(dx).
\end{equation}

We can use this to prove:

\begin{proposition}
  The operator $D : L^p(W,\mu) \to L^p(W,\mu; H)$ is closable.
\end{proposition}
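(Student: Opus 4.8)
The plan is to verify closability directly from the definition: I must show that if $F_n \in \mathcal{F}C_c^\infty(W)$ with $F_n \to 0$ in $L^p(W,\mu)$ and $DF_n \to \eta$ in $L^p(W,\mu;H)$, then $\eta = 0$. The natural tool is the integration-by-parts formula \eqref{product-byparts}, which relates $\int_W G \inner{DF}{h}_H\,d\mu$ to integrals in which the derivative has been moved off of $F$. Pairing $\eta$ against sufficiently many test objects should force it to vanish; the key is choosing the test objects so that all the limiting terms actually converge.

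First I would fix $h \in H$ and a bounded smooth cylinder function $G$ with bounded derivative (for instance $G \in \mathcal{F}C_c^\infty(W)$). Apply \eqref{product-byparts} with $F = F_n$:
\begin{equation*}
  \int_W G(x) \inner{DF_n(x)}{h}_H \,\mu(dx) = \int_W \bigl(-F_n(x)
  \inner{DG(x)}{h}_H + F_n(x) G(x) \inner{h}{x}_H\bigr) \mu(dx).
\end{equation*}
Now I let $n \to \infty$. On the left, since $G$ is bounded and $DF_n \to \eta$ in $L^p(W,\mu;H)$, the left side converges to $\int_W G(x) \inner{\eta(x)}{h}_H\,\mu(dx)$ (Hölder, using that $\inner{\cdot}{h}_H$ is a bounded functional on $H$). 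On the right, the first term tends to $0$ because $F_n \to 0$ in $L^p$ and $\inner{DG}{h}_H$ is bounded; the second term tends to $0$ because $F_n \to 0$ in $L^p$ while $G(x)\inner{h}{x}_H$ lies in $L^{p'}(W,\mu)$ — here $G$ is bounded and $\inner{h}{\cdot}_H$ is a Gaussian random variable, hence in every $L^q$ by Fernique's theorem. Therefore $\int_W G(x) \inner{\eta(x)}{h}_H\,\mu(dx) = 0$ for every such $G$ and every $h \in H$.

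Since the smooth cylinder functions are dense in $L^{p'}(W,\mu)$ (by the density theorem for $\mathcal{F}C^\infty_c$ stated earlier), and $x \mapsto \inner{\eta(x)}{h}_H$ lies in $L^p(W,\mu)$, it follows that $\inner{\eta(x)}{h}_H = 0$ for $\mu$-a.e.\ $x$, for each fixed $h \in H$. Taking $h$ ranging over a countable dense subset of $H$ and intersecting the corresponding measure-one sets, we get $\eta(x) = 0$ in $H$ for $\mu$-a.e.\ $x$, i.e.\ $\eta = 0$ in $L^p(W,\mu;H)$. This proves closability.

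The only delicate point — the ``main obstacle'' — is the integrability bookkeeping on the right-hand side: one must be sure that $\inner{h}{\cdot}_H \in L^{p'}(W,\mu)$ so that the product $F_n \cdot G \cdot \inner{h}{\cdot}_H$ can be controlled by $\|F_n\|_{L^p} \to 0$. This is exactly where Fernique's theorem (or more precisely the fact that $\inner{h}{\cdot}_H$ is Gaussian, hence in all $L^q$) is essential; without it the argument would not close. Everything else is routine density and a Borel--Cantelli-style countable intersection to upgrade ``a.e.\ for each $h$'' to ``a.e.\ as an $H$-valued statement.''
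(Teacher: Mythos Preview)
Your proof is correct and follows essentially the same route as the paper's: apply the product-rule integration-by-parts \eqref{product-byparts} with a bounded cylinder test function $G$ and fixed $h\in H$, pass to the limit using $F_n\to 0$ in $L^p$ and the fact that $\inner{h}{\cdot}_H$ is Gaussian (hence in all $L^q$), and conclude $\int_W G\inner{\eta}{h}_H\,d\mu=0$. You also spell out the final step (density of $\mathcal{F}C_c^\infty$ in $L^{p'}$ plus a countable dense set in $H$) that the paper leaves as a parenthetical ``(Why?)''.
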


\begin{proof}
  Suppose that $F_n \in \mathcal{F} C_c^\infty(W)$ are converging to
  0 in $L^p(W,\mu)$, and that $D F_n \to \eta$ in $L^p(W;H)$.  We have
  to show $\eta = 0$.  It is sufficient to show that $\int_W
  \inner{\eta(x)}{h} G(x)\,mu(dx) = 0$ for all $h \in H$ and $G \in
  \mathcal{F} C_c^\infty(W)$.  (Why?)  Now applying
  (\ref{product-byparts}) we have
  \begin{equation*}
    \int_W G(x) \inner{DF_n(x)}{h}_H \mu(dx) = -\int_W F_n(x)
    \inner{DG(x)}{h}_H + \int_W F_n(x) G(x) \inner{h}{x}_H \mu(dx).
  \end{equation*}
  As $n \to \infty$, the left side goes to $\int_W G(x)
  \inner{\eta(x)}{h}_H \mu(dx)$.  The first term on the right side
  goes to 0 (since $DG \in L^q(W;H)$ and hence $\inner{DG}{h} \in
  L^q(W)$) as does the second term (since $G$ is bounded and
  $\inner{h}{\cdot} \in L^q(W)$ because it is a Gaussian random variable).
\end{proof}

Now we can define the Sobolev space $\mathbb{D}^{1,p}$ as the
completion of $\mathcal{F} C_c^\infty(W)$ under the norm
\begin{equation*}
  \norm{F}_{\mathbb{D}^{1,p}}^p = \int_W |F(x)|^p + \norm{DF}_H^p \mu(dx).
\end{equation*}
Since $D$ was closable, $\mathbb{D}^{1,p} \subset L^p(W,\mu)$.  We can
also iterate this process to define higher derivatives $D^k$ and
higher order Sobolev spaces.

\begin{lemma}
  If $\phi \in C^\infty(\R^n)$ and $\phi$ and its first partials have
  polynomial growth, then $F(x) = \phi(f_1(x), \dots, f_n(x)) \in \mathbb{D}^{1,p}$.
\end{lemma}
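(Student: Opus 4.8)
The plan is to obtain $F$ as a $\mathbb{D}^{1,p}$-limit of compactly supported smooth cylinder functions by truncation. Fix cutoffs $\psi_N \in C_c^\infty(\R^n)$ with $\psi_N \equiv 1$ on $\{|y| \le N\}$, $\psi_N \equiv 0$ off $\{|y| \le 2N\}$, and $\norm{\nabla \psi_N}_\infty \le C/N$. Put $F_N(x) = (\phi\psi_N)(f_1(x), \dots, f_n(x))$; since $\phi\psi_N \in C_c^\infty(\R^n)$ we have $F_N \in \mathcal{F}C_c^\infty(W)$, and by \eqref{DF-cylinder},
\begin{equation*}
  DF_N(x) = \sum_{i=1}^n \bigl(\partial_i\phi\cdot\psi_N + \phi\cdot\partial_i\psi_N\bigr)(f_1(x), \dots, f_n(x))\, Jf_i.
\end{equation*}
I will show $F_N \to F$ in $L^p(W,\mu)$ and $DF_N \to G$ in $L^p(W,\mu;H)$, where $G(x) = \sum_i \partial_i\phi(f_1(x), \dots, f_n(x))\,Jf_i$; since $D$ is closable, this yields $F \in \mathbb{D}^{1,p}$ with $DF = G$.

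Write $f(x) = (f_1(x), \dots, f_n(x))$ and let $\nu = \mu\circ f^{-1}$, which is a (possibly degenerate) Gaussian measure on $\R^n$ because $f_1, \dots, f_n$ are jointly Gaussian on $(W,\mu)$. The two facts I need from this are: any function of polynomial growth lies in every $L^p(\R^n,\nu)$, and $\nu(\{|y| \ge N\})$ decays faster than any power of $N$ (by the one-dimensional tail bound \eqref{gaussian-tail-1d}, or Fernique's theorem \ref{fernique-theorem}). Since each $\norm{Jf_i}_H$ is a finite constant, the first fact immediately gives $\int_W |F|^p\,d\mu = \int_{\R^n}|\phi|^p\,d\nu < \infty$ and, likewise using polynomial growth of the $\partial_i\phi$, that $G \in L^p(W,\mu;H)$. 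It also gives the dominating functions used below.

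Now $F_N(x) \to F(x)$ for each $x$ (as $\psi_N \to 1$ pointwise) and $|F_N| \le |\phi(f(\cdot))| \in L^p$, so $F_N \to F$ in $L^p$ by dominated convergence. For the derivatives, split $DF_N = A_N + B_N$ with $A_N(x) = \sum_i \partial_i\phi(f(x))\psi_N(f(x))\,Jf_i$ and $B_N(x) = \sum_i \phi(f(x))\partial_i\psi_N(f(x))\,Jf_i$. For $A_N$ we have $\norm{A_N(x) - G(x)}_H \le \sum_i |\partial_i\phi(f(x))|\,|1-\psi_N(f(x))|\,\norm{Jf_i}_H \to 0$ pointwise, dominated by $\sum_i|\partial_i\phi(f(x))|\norm{Jf_i}_H \in L^p$, so $A_N \to G$ in $L^p(W;H)$. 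For $B_N$, note $\partial_i\psi_N$ is supported in $\{N \le |y| \le 2N\}$ and bounded by $C/N$, while $|\phi(y)| \le C'(1+|y|)^k$ for some $k$, whence
\begin{equation*}
  \norm{B_N}_{L^p(W;H)}^p \le \Bigl(\tfrac{C}{N}\Bigr)^p\Bigl(\sum_i\norm{Jf_i}_H\Bigr)^p\bigl(C'(1+2N)^k\bigr)^p\,\nu(\{|y|\ge N\}),
\end{equation*}
which tends to $0$ because $\nu(\{|y|\ge N\})$ beats the polynomial factor $N^{(k-1)p}$.

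The only genuine obstacle is this last estimate on the ``boundary term'' $B_N$: one must use the Gaussian tail decay of $\nu$ to overcome both the factor $N^{kp}$ produced by the polynomial growth of $\phi$ on the annulus and the fact that $\partial_i\psi_N$ itself provides no decay in $N$. Everything else reduces to dominated convergence once the a priori $L^p$ bounds from the Gaussianity of $\nu$ are recorded.
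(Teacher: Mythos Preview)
Your argument is correct and is exactly the cutoff approach the paper indicates (its proof is the single word ``Cutoff functions''); you have simply supplied the details. One small remark: you implicitly use $0 \le \psi_N \le 1$ for the domination $|F_N| \le |\phi(f(\cdot))|$, and in the $B_N$ estimate the factor $C/N$ from $\nabla\psi_N$ is not actually needed---a uniform bound on $\nabla\psi_N$ would already suffice, since the Gaussian tail $\nu(\{|y|\ge N\})$ decays faster than any polynomial.
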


\begin{proof}
  Cutoff functions.
\end{proof}

\begin{lemma}\label{polynomials-dense}
  The set of functions $F(x) = p(f_1(x), \dots, f_n(x))$ where $p$ is
  a polynomial in $n$ variables and $f_i \in W^*$, is dense in $L^p(W,\mu)$.
\end{lemma}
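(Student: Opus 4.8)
The plan is to bootstrap from the density of the trigonometric polynomials $\mathcal{T}$ in $L^p(W,\mu)$, which was recorded in the density theorem above: since every element of $\mathcal{T}$ is a finite linear combination of functions $e^{if(x)}$ with $f \in W^*$, it suffices to show that each such $e^{if(x)}$ is an $L^p(\mu)$-limit of polynomials in the single functional $f(x) \in W^*$. Taking real and imaginary parts then settles $\cos f(x)$ and $\sin f(x)$, and a triangle-inequality argument assembles general elements of $\mathcal{T}$, hence all of $L^p(W,\mu)$. The obvious candidates are the Taylor partial sums
\begin{equation*}
  T_N(x) := \sum_{k=0}^N \frac{(if(x))^k}{k!},
\end{equation*}
each of which is a polynomial in $f(x)$.

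To estimate the error, fix $f \in W^*$ and observe that, under $\mu$, the random variable $f(x)$ is centered Gaussian with some variance $\sigma^2$. A routine bound on Gaussian moments together with Stirling's formula yields constants $A$ and $C$ (depending only on $p$ and $\sigma$) with
\begin{equation*}
  \norm{f(x)^k}_{L^p(\mu)} = \left( \int_W \abs{f(x)}^{pk}\, \mu(dx) \right)^{1/p} \le A\, C^k\, (k!)^{1/2}, \qquad k \ge 0,
\end{equation*}
so that $\sum_{k=0}^\infty \norm{f(x)^k}_{L^p(\mu)}/k! \le A \sum_{k=0}^\infty C^k/(k!)^{1/2} < \infty$. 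Since $\abs{e^{i\theta} - \sum_{k=0}^N (i\theta)^k/k!} \le \sum_{k>N} \abs{\theta}^k/k!$ for every real $\theta$, applying this pointwise with $\theta = f(x)$ and taking $L^p(\mu)$-norms gives
\begin{equation*}
  \norm{e^{if(\cdot)} - T_N}_{L^p(\mu)} \le \sum_{k>N} \frac{\norm{f(x)^k}_{L^p(\mu)}}{k!},
\end{equation*}
which is the tail of a convergent series and hence tends to $0$ as $N \to \infty$. That is exactly what is needed.

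The one step I expect to be more than bookkeeping is the assertion that this Taylor approximation works in $L^p$ for every $p < \infty$, not just $p = 2$. For $p \le 2$ one could shortcut the whole argument: $L^2(\mu)$ is dense in $L^p(\mu)$ (finite measure), polynomials are dense in $L^2(\mu)$ by the orthonormality of the Hermite polynomials, and the inclusion $L^2(\mu) \hookrightarrow L^p(\mu)$ is continuous. For $p > 2$ no such reduction is available, and the content of the proof is precisely that the factorial-type growth of the Gaussian moments $\norm{f(x)^k}_{L^p(\mu)}$, of order $(k!)^{1/2}$, is dominated by the $1/k!$ coming from the exponential series. The remaining ingredients --- the Gram--Schmidt reduction that lets one assume the $f_i$ are $q$-orthonormal (hence iid $N(0,1)$) if one prefers to argue in $L^p(\R^n,\gamma_n)$ instead, the passage between $e^{if}$ and its real and imaginary parts, and the final recombination --- are all routine. (Incidentally, routing the argument through $\mathcal{F}C_c^\infty(W)$ and the classical density of polynomials in $L^p(\R^n,\gamma_n)$ buys nothing: the proof of that classical fact runs through the same exponential-series estimate.)
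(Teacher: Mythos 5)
Your argument is correct. The moment estimate is the right one: for $f\in W^*$ centered Gaussian with variance $\sigma^2$ one has $\norm{f^k}_{L^p(\mu)} = \sigma^k\bigl(E\abs{Z}^{pk}\bigr)^{1/p}$ with $Z\sim N(0,1)$, and Stirling gives growth of order $C^k (k!)^{1/2}$, so $\sum_k \norm{f^k}_{L^p}/k!$ converges and the countable Minkowski inequality turns the pointwise tail bound into $L^p$ convergence of the Taylor polynomials to $e^{if}$; combined with the quoted density of $\mathcal{T}$ this finishes the lemma. The paper itself gives no proof here --- it defers to Driver's notes, where the argument is the dual one: if $g\in L^q$ annihilates all polynomials in elements of $W^*$, then $\lambda\mapsto\int_W e^{\lambda f(x)}g(x)\,\mu(dx)$ is entire (by the same Gaussian moment bounds) with all derivatives vanishing at $0$, hence $\widehat{g\,d\mu}=0$ and $g=0$ by uniqueness of the Fourier transform. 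The two routes are close cousins --- both live or die on the fact that Gaussian moments grow like $(k!)^{1/2}$ rather than $k!$ --- but yours is constructive (it exhibits the approximating polynomials and even gives a rate), while Driver's is shorter if one is willing to invoke Hahn--Banach and Theorem \ref{fourier-unique}. One caveat: your parenthetical shortcut for $p\le 2$ via ``polynomials are dense in $L^2$ by the orthonormality of the Hermite polynomials'' is circular in the logical order of these notes, since the completeness of the Hermite system in $L^2(W,\mu)$ is deduced \emph{from} this lemma a few lines later; orthonormality alone does not give completeness. Since your main argument never uses that aside, nothing is affected.
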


\begin{proof}
  See \cite[Theorem 39.8]{driver-probability}.
\end{proof}

Let $H_n(s)$ be the $n$'th \textbf{Hermite polynomial} defined by
\begin{equation*}
  H_n(s) = \frac{(-1)^n}{n!} e^{s^2/2} \frac{d^n}{ds^n} e^{-s^2/2}.
\end{equation*}
$H_n$ is a polynomial of degree $n$.  Fact: $H_n' = H_{n-1}$, and
$(n+1) H_{n+1}(s) = s H_n(s) - H_{n-1}(s)$.  In particular, $H_n$ is
an eigenfunction of the one-dimensional Ornstein--Uhlenbeck operator
$Af = f'' - xf'$ with eigenvalue $n$.

Also, we have the property that if $X,Y$ are jointly Gaussian with
variance $1$, then
\begin{equation*}
  E[H_n(X) H_m(Y)] = 
  \begin{cases}
    0, & n \ne m \\
    \frac{1}{n!} E[XY]^n, & n=m.
  \end{cases}
\end{equation*}
(See Nualart for the proof, it's simple.)

This implies:

\begin{proposition}
  If $\{e_i\} \subset W^*$ is a $q$-orthonormal basis, then the functions
  \begin{equation*}
    F_{n_1, \dots, n_k}(x) = \prod_i \sqrt{n_i!} H_{n_i}(e_i(x))
  \end{equation*}
  are an orthonormal basis for $L^2(W,\mu)$.
\end{proposition}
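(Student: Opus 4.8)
The plan is to establish orthonormality first and then completeness. For orthonormality, I would start from the quoted formula for mixed expectations of Hermite polynomials evaluated at jointly Gaussian variables of variance $1$. Since $\{e_i\} \subset W^*$ is a $q$-orthonormal basis, the $e_i$ are iid $N(0,1)$ under $\mu$ (as observed earlier for $q$-orthonormal sets: uncorrelated jointly Gaussian variables are independent). So given two multi-indices $(n_1, \dots, n_k)$ and $(m_1, \dots, m_\ell)$, pad with zeros to a common length and use independence to factor $\int_W \prod_i H_{n_i}(e_i(x)) \prod_j H_{m_j}(e_j(x))\,\mu(dx)$ as a product over coordinates of $E[H_{n_i}(\xi)H_{m_i}(\xi)]$ with $\xi \sim N(0,1)$ (and $H_0 \equiv 1$ for the padded entries). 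Each factor is $\frac{1}{n_i!}\delta_{n_i m_i}$, so the whole integral vanishes unless the multi-indices agree, in which case it equals $\prod_i \frac{1}{n_i!}$. Multiplying by the normalizing constants $\prod_i \sqrt{n_i!}$ on each side makes this $1$, so the $F_{n_1,\dots,n_k}$ are orthonormal in $L^2(W,\mu)$.

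For completeness, I would use Lemma \ref{polynomials-dense}: functions of the form $p(f_1(x), \dots, f_n(x))$ with $p$ a polynomial and $f_i \in W^*$ are dense in $L^2(W,\mu)$. So it suffices to show each such function lies in the closed span of the $F_{n_1,\dots,n_k}$. Given $f_1, \dots, f_n \in W^*$, apply Gram--Schmidt in the $q$ inner product (equivalently, in $K$) to express each $f_j$ as a finite linear combination of finitely many basis elements $e_i$; thus $p(f_1(x), \dots, f_n(x))$ is a polynomial in finitely many of the $e_i(x)$. It therefore suffices to show that for any fixed finite set $e_1, \dots, e_m$, every polynomial in $e_1(x), \dots, e_m(x)$ lies in the span of the $F_{n_1,\dots,n_k}$ involving only those indices. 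This reduces to the classical one-variable fact that $\{H_n\}_{n\ge 0}$ spans the polynomials (immediate, since $\deg H_n = n$ and the leading coefficients are nonzero), tensored over the $m$ independent coordinates; concretely, monomials $\prod_i e_i(x)^{a_i}$ are finite linear combinations of $\prod_i H_{n_i}(e_i(x))$ by induction on total degree.

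The main obstacle I anticipate is not any single computation but being careful with the reduction in the completeness step: one must check that Gram--Schmidt on $f_1, \dots, f_n$ in the $q$ inner product really does express them as combinations of basis vectors $e_i$ (this uses that $\{e_i\}$ is an orthonormal basis of $K$ and that each $f_j \in W^* \subset K$), and that composing a polynomial with a linear change of variables keeps it a polynomial in the $e_i$. Once the problem is localized to finitely many independent standard Gaussians, everything is the standard finite-dimensional Hermite expansion, and I would simply cite or briefly recall that $\{\prod_i \sqrt{n_i!}\,H_{n_i}(x_i)\}$ is an orthonormal basis of $L^2(\R^m, \mu_m)$, which follows from the one-dimensional case (density of polynomials plus the three-term / derivative recursions already quoted) together with the product structure of $\mu_m$.
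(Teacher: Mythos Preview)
Your orthonormality computation is correct and is essentially what the paper has in mind (the paper just writes ``This implies'' after stating the Hermite orthogonality relation, without spelling out a proof). The gap is in the completeness step. You assert that Gram--Schmidt will ``express each $f_j$ as a finite linear combination of finitely many basis elements $e_i$,'' but this is simply false: the $\{e_i\}$ form a $q$-orthonormal basis for $K$, so an arbitrary $f_j \in W^* \subset K$ has the expansion $f_j = \sum_i q(f_j, e_i)\, e_i$ with convergence in $L^2(\mu)$, and there is no reason this sum should terminate. (Gram--Schmidt applied to $f_1, \dots, f_n$ produces an orthonormal set \emph{from them}, not a representation in terms of the fixed basis $\{e_i\}$.) You flag this step as the ``main obstacle,'' and indeed it is---but it is not a detail to be checked; as written the claim is false and needs a different argument.

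The fix is an approximation. Set $f_j^{(N)} = \sum_{i=1}^N q(f_j, e_i)\, e_i$, so that $f_j^{(N)} \to f_j$ in $L^2(\mu)$. Since all of these are jointly centered Gaussian, $L^2$ convergence upgrades to $L^p$ convergence for every finite $p$ (for a centered Gaussian $X$ one has $\|X\|_{L^p} = c_p \|X\|_{L^2}$). H\"older's inequality together with a telescoping argument then gives $p(f_1^{(N)}, \dots, f_n^{(N)}) \to p(f_1, \dots, f_n)$ in $L^2$ for any polynomial $p$. The left side is a polynomial in finitely many of the $e_i$ and hence lies in the algebraic span of the $F_{n_1, \dots, n_k}$ by your degree argument; therefore the right side lies in the closed span, and Lemma~\ref{polynomials-dense} finishes the proof.
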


If $\mathcal{H}_n$ is the closed span of all $F_{n_1, \dots, n_k}$ with $n_1
+ \dots + n_k = n$ (i.e. multivariable Hermite polynomials of degree
$n$) then we have an orthogonal decomposition of $L^2$.  Let $J_n$ be
orthogonal projection onto $\mathcal{H}_n$.  This decomposition is
called \textbf{Wiener chaos}.  Note $\mathcal{H}_0$ is the constants,
and $\mathcal{H}_1 = K$.  For $n \ge 2$, the random variables in
$\mathcal{H}_n$ are not normally distributed.

We can decompose $L^2(W;H)$ in a similar way: if $h_j$ is an
orthonormal basis for $H$, then $\{ F_{n_1, \dots, n_k} h_j \}$ is an
orthonormal basis for $L^2(W;H)$, and if $\mathcal{H}_n(H)$ is the
closed span of functions of the form $F h$ where $F \in
\mathcal{H}_n$, $h \in H$, then $L^2(W;H) = \bigoplus
\mathcal{H}_n(H)$ is an orthogonal decomposition, and we again use
$J_n$ to denote the orthogonal projections.  We could do the same for
$L^2(W; H^{\otimes m})$.

Note that
\begin{align*}
  D F_{n_1, \dots, n_k}(x) &= \sum_{j=1}^k \sqrt{n_j!}
  H_{n_j-1}(e_j(x)) \prod_{i \ne j} \sqrt{n_i!} H_{n_i}(e_i(x)) J e_j
  \\
  &= \sum_{j=1}^k \sqrt{n_j} F_{n_1, \dots, n_j-1, \dots, n_k}(x) J e_j.
\end{align*}

We can see from this that $\norm{D F_{n_1, \dots, n_k}}_{L^2(W; H)}^2
= n_1 + \dots + n_k$, or for short, 
$\norm{D F_\alpha}_{L^2(W;H)}^2 =
|\alpha|$.  Also, if $\alpha \ne \beta$, $\inner{D F_\alpha}{D
F_\beta}_{L^2(W;H)} = 0$.  

\begin{lemma}
  For each $n$, $\mathcal{H}_n \subset \mathbb{D}^{1,2}$.  Moreover,
  $\{F_\alpha : |\alpha| = n\}$ are an orthogonal basis for
  $\mathcal{H}_n$ with respect to the $\mathbb{D}^{1,2}$ inner
  product, and $\norm{F_\alpha}^2 = 1+n$.
\end{lemma}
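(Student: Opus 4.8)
\subsection*{Proof proposal}

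The plan is to build everything on the two facts already in hand about the Malliavin derivative of the Hermite basis: that on $\mathcal{H}_n$ the maps $DF_\alpha$ are pairwise orthogonal in $L^2(W;H)$, and that $\norm{DF_\alpha}_{L^2(W;H)}^2 = |\alpha|$. First I would record that each $F_\alpha$ genuinely lies in $\mathbb{D}^{1,2}$: $F_\alpha(x) = \prod_i \sqrt{n_i!}\,H_{n_i}(e_i(x))$ is of the form $\phi(e_1(x),\dots,e_k(x))$ with $\phi$ an honest polynomial, so $\phi$ and its first partials have polynomial growth and the earlier cutoff‑function lemma applies; moreover its $\mathbb{D}^{1,2}$‑derivative is exactly the function given by the displayed formula for $DF_{n_1,\dots,n_k}$. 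Hence every finite linear combination of the $F_\alpha$ lies in $\mathbb{D}^{1,2}$ and $D$ acts on such combinations by linearity.

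Next I would compute the $\mathbb{D}^{1,2}$ inner products on the set $\{F_\alpha : |\alpha| = n\}$. Since $\inner{F}{G}_{\mathbb{D}^{1,2}} = \inner{F}{G}_{L^2} + \inner{DF}{DG}_{L^2(W;H)}$, and since $\{F_\alpha\}$ is orthonormal in $L^2(W,\mu)$ while $\inner{DF_\alpha}{DF_\beta}_{L^2(W;H)} = 0$ for $\alpha \ne \beta$, we get $\inner{F_\alpha}{F_\beta}_{\mathbb{D}^{1,2}} = 0$ when $\alpha \ne \beta$, and for $|\alpha| = n$,
\[
  \norm{F_\alpha}_{\mathbb{D}^{1,2}}^2 = 1 + \norm{DF_\alpha}_{L^2(W;H)}^2 = 1 + |\alpha| = 1 + n .
\]
This already gives the asserted orthogonality and the value of $\norm{F_\alpha}^2$.

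The substantive point is then that $\mathcal{H}_n \subset \mathbb{D}^{1,2}$ and that the $F_\alpha$ with $|\alpha| = n$ span $\mathcal{H}_n$ \emph{densely in the $\mathbb{D}^{1,2}$ norm}, not merely the $L^2$ norm. Given $g \in \mathcal{H}_n$, expand $g = \sum_{|\alpha| = n} c_\alpha F_\alpha$ with $\sum |c_\alpha|^2 = \norm{g}_{L^2}^2 < \infty$, using that $\{F_\alpha\}$ is an orthonormal basis of $L^2(W,\mu)$ and that $\mathcal{H}_n$ is by definition the $L^2$‑closure of the span of $\{F_\alpha : |\alpha| = n\}$. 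Let $g_N$ be the partial sums over an increasing exhaustion of the index set by finite sets $S_N$. Each $g_N \in \mathbb{D}^{1,2}$ by the first paragraph, and by the orthogonality from the second paragraph,
\[
  \norm{g_N - g_M}_{\mathbb{D}^{1,2}}^2 = (1+n)\sum_{\alpha \in S_N \triangle S_M} |c_\alpha|^2 \xrightarrow[N,M\to\infty]{} 0 ,
\]
so $(g_N)$ is $\mathbb{D}^{1,2}$‑Cauchy and converges in $\mathbb{D}^{1,2}$ to some $\tilde g$. Because $\mathbb{D}^{1,2}$ embeds continuously into $L^2(W,\mu)$ (which is why $D$ being closable matters), $\tilde g = g$; hence $g \in \mathbb{D}^{1,2}$ and $g_N \to g$ in $\mathbb{D}^{1,2}$. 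Thus the span of $\{F_\alpha : |\alpha| = n\}$ is $\mathbb{D}^{1,2}$‑dense in $\mathcal{H}_n$, and $\mathcal{H}_n$ is itself $\mathbb{D}^{1,2}$‑closed (it is $L^2$‑closed, and a $\mathbb{D}^{1,2}$‑Cauchy sequence in it converges in $\mathbb{D}^{1,2}$, hence in $L^2$, to a point of $\mathcal{H}_n$), so $\mathcal{H}_n$ is a Hilbert space with orthogonal basis $\{F_\alpha : |\alpha| = n\}$.

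I expect the only real subtlety to be this last step: ensuring the $L^2$‑closure defining $\mathcal{H}_n$ coincides with the $\mathbb{D}^{1,2}$‑closure of the same span. What makes it go through — and is worth isolating — is precisely that $D$ respects the Wiener chaos grading so strongly that, restricted to $\mathcal{H}_n$, it is a constant multiple of an isometry into $L^2(W;H)$ (the $DF_\alpha$ being pairwise orthogonal with common norm‑square $n$); this is exactly what upgrades $\ell^2$‑summability of the coefficients $c_\alpha$ to $\mathbb{D}^{1,2}$‑summability. Everything else is routine, modulo the already‑quoted polynomial‑growth lemma that lets us treat the non‑compactly‑supported $F_\alpha$ as bona fide elements of $\mathbb{D}^{1,2}$ with the expected derivative.
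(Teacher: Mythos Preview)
Your proof is correct and follows essentially the same route as the paper's: expand an arbitrary $F \in \mathcal{H}_n$ in the $L^2$-basis $\{F_\alpha : |\alpha| = n\}$, use the orthogonality relations $\inner{DF_\alpha}{DF_\beta}_{L^2(W;H)} = n\,\delta_{\alpha\beta}$ to show the partial sums are Cauchy in $\mathbb{D}^{1,2}$, and then invoke closedness of $D$ (equivalently, completeness of $\mathbb{D}^{1,2}$ together with its continuous embedding into $L^2$) to conclude $F \in \mathbb{D}^{1,2}$. The only cosmetic difference is that the paper phrases the key step as ``$F_m \to F$ in $L^2$ and $DF_m$ Cauchy in $L^2(W;H)$, so by closedness $F \in \mathbb{D}^{1,2}$,'' whereas you phrase it as ``$F_m$ is $\mathbb{D}^{1,2}$-Cauchy and the $\mathbb{D}^{1,2}$-limit must agree with the $L^2$-limit''; these are equivalent.
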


\begin{proof}
  Since $\{F_{\alpha} : |\alpha| = n\}$ are an orthonormal basis for
  $\mathcal{H}_n$, we can write $F = \sum_{i=1}^\infty a_i
  F_{\alpha_i}$ where $\sum a_i^2 < \infty$ and the $\alpha_i$ are
  distinct.  Let $F_m = \sum_{i=1}^m a_i F_{\alpha_i}$, so that $F_m
  \to F$ in $L^2(W)$.  Clearly $F_m \in \mathbb{D}^{1,2}$ and $D F_m =
  \sum_{i=1}^m a_i D F_{\alpha_i}$.  Now we have $\inner{D
  F_{\alpha_i}}{D F_{\alpha_j}}_{L^2(W;H)} = n \delta_{ij}$, so for $k
  \le m$ we have
  \begin{equation*}
    \norm{D F_m - D F_k}_{L^2(W;H)}^2 = \norm{\sum_{i=k}^m a_i D
    F_{\alpha_i}}_{L^2(W;H)} =  n \sum_{i=k}^m a_i^2
  \end{equation*}
  which goes to $0$ as $m,k \to \infty$.  So we have $F_m \to F$ in
  $L^2(W)$ and $D F_m$ Cauchy in $L^2(W;H)$.  Since $D$ is closed, we
  have $F \in \mathbb{D}^{1,2}$.

  In fact, we have shown that $F$ is a $\mathbb{D}^{1,2}$-limit of
  elements of the span of $\{F_\alpha\}$, and we know the $F_\alpha$
  are $\mathbb{D}^{1,2}$-orthogonal.
\end{proof}

Note that $\mathcal{H}_n$ are thus pairwise orthogonal closed subsets
of $\mathbb{D}^{1,2}$.  Furthermore, $D$ maps $\mathcal{H}_n$ into
$\mathcal{H}_{n-1}(H)$.

\begin{proposition}
  The span of all $\mathcal{H}_n$ is dense in $\mathbb{D}^{1,2}$, so
  we can write $\mathbb{D}^{1,2} = \bigoplus \mathcal{H}_n$.
\end{proposition}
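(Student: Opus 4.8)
The plan is to exploit the fact that $\mathcal{F} C_c^\infty(W)$ is, by construction, dense in $\mathbb{D}^{1,2}$, so it is enough to show that every smooth cylinder function belongs to $V$, the $\mathbb{D}^{1,2}$-closure of the algebraic span of the subspaces $\mathcal{H}_n$. Since we have already observed that the $\mathcal{H}_n$ are closed and pairwise orthogonal inside $\mathbb{D}^{1,2}$, the space $V$ is precisely the Hilbert-space direct sum $\bigoplus_n \mathcal{H}_n$, so proving $V = \mathbb{D}^{1,2}$ is exactly the desired decomposition. Thus the whole argument reduces to approximating an arbitrary smooth cylinder function, in $\mathbb{D}^{1,2}$-norm, by finite linear combinations of Wiener chaos elements.

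First I would fix $F = \phi(f_1,\dots,f_k) \in \mathcal{F} C_c^\infty(W)$. After a Gram--Schmidt step I may assume the $f_i$ are $q$-orthonormal --- hence iid $N(0,1)$ under $\mu$ --- and I extend $\{f_i\}$ to a $q$-orthonormal basis $\{e_i\}$ of $K$. Expanding $F$ in the orthonormal basis $\{F_\alpha\}$ of the previous proposition gives $F = \sum_\alpha c_\alpha F_\alpha$ in $L^2(W,\mu)$, with $\sum_\alpha c_\alpha^2 = \norm{F}_{L^2}^2 < \infty$ and each multi-index $\alpha$ supported in $\{1,\dots,k\}$. The partial sums $S_N = \sum_{|\alpha| \le N} c_\alpha F_\alpha$ are polynomial cylinder functions lying in $\bigoplus_{n \le N} \mathcal{H}_n \subset V$, and $S_N \to F$ in $L^2$. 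Since $D S_N = \sum_{|\alpha| \le N} c_\alpha D F_\alpha$ with the $D F_\alpha$ mutually orthogonal in $L^2(W;H)$ and $\norm{D F_\alpha}^2 = |\alpha|$, the sequence $\{D S_N\}$ is Cauchy in $L^2(W;H)$ --- whence, by closedness of $D$, $S_N \to F$ in $\mathbb{D}^{1,2}$ and $F \in V$ --- as soon as I know that $\sum_\alpha |\alpha|\, c_\alpha^2 < \infty$.

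Establishing that finiteness is the crux, and I would do it by descending to finite dimensions. Write $F_\alpha(x) = h_\alpha(e_1(x),\dots,e_k(x))$, where $h_\alpha$ is the normalized tensor Hermite polynomial on $\R^k$, so that $\phi = \sum_\alpha c_\alpha h_\alpha$ in $L^2(\gamma_k)$, with $\gamma_k$ the standard Gaussian on $\R^k$. The formula $D F_{n_1,\dots,n_k} = \sum_j \sqrt{n_j}\, F_{n_1,\dots,n_j-1,\dots,n_k}\, J e_j$, together with the unitarity of $J$ --- so that $\{J e_j\}$ is orthonormal in $H$ and $\int_W \norm{DG}_H^2\, d\mu = \int_{\R^k} |\nabla\psi|^2\, d\gamma_k$ whenever $G = \psi(f_1,\dots,f_k)$ --- shows that on cylinder functions built from $f_1,\dots,f_k$ the $\mathbb{D}^{1,2}$-norm coincides with the Gaussian Sobolev norm $\norm{\psi}_{L^2(\gamma_k)}^2 + \norm{\nabla\psi}_{L^2(\gamma_k)}^2$, and in particular that $\sum_\alpha |\alpha|\, c_\alpha^2 = \norm{\nabla\phi}_{L^2(\gamma_k)}^2$. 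But for $\phi \in C_c^\infty(\R^k)$, Gaussian integration by parts gives $\int_{\R^k} |\nabla\phi|^2\, d\gamma_k = -\int_{\R^k} \phi\, L_k\phi\, d\gamma_k$, where $L_k = \Delta - x \cdot \nabla$ is the Ornstein--Uhlenbeck operator whose eigenfunctions are the $h_\alpha$ with eigenvalue $-|\alpha|$; expanding, $-\int \phi\, L_k\phi\, d\gamma_k = \sum_\alpha |\alpha|\, c_\alpha^2$, which is therefore finite. This is the classical statement that the Hermite expansion of a smooth function converges in the Gaussian Sobolev space; lifting the conclusion back to $W$ yields $DF = \sum_\alpha c_\alpha D F_\alpha$ and hence $S_N \to F$ in $\mathbb{D}^{1,2}$, completing the argument.

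The main obstacle is exactly this reduction to the finite-dimensional Gaussian space together with the Sobolev-density fact: one must check carefully that differentiating a function of the $q$-orthonormal functionals $f_1,\dots,f_k$ reproduces the finite-dimensional Gaussian gradient (this is where $\inner{Jf}{Jg}_H = q(f,g)$ enters), and one must supply --- or cite --- the convergence of Hermite expansions in $W^{1,2}(\gamma_k)$. Everything else is already available: density of smooth cylinder functions in $\mathbb{D}^{1,2}$, closedness of $D$, the pairwise orthogonality of the $\mathcal{H}_n$ in $\mathbb{D}^{1,2}$, and the orthogonality and norms of the $D F_\alpha$. An equivalent packaging, which I might use instead, is to record the commutation relation $D J_n = J_{n-1} D$ on smooth cylinder functions and the resulting identity $\norm{F}_{\mathbb{D}^{1,2}}^2 = \sum_n \norm{J_n F}_{\mathbb{D}^{1,2}}^2$; but verifying that identity for $F \in \mathcal{F} C_c^\infty(W)$ requires the very same finite-dimensional input, so it is not a shortcut.
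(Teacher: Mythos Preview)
Your argument is correct and takes a genuinely different route from the paper's.  Both proofs reduce the problem to showing that a smooth compactly supported $\phi$ on $\R^k$ can be approximated by polynomials in the Gaussian Sobolev norm $\norm{\cdot}_{L^2(\gamma_k)} + \norm{\nabla\cdot}_{L^2(\gamma_k)}$, and then lift back to $W$ via $q$-orthonormal functionals.  The difference is in how that finite-dimensional step is handled.  The paper, following Schmuland, does not use the Hermite expansion of $\phi$ at all: instead it approximates $\partial_1\cdots\partial_k\phi$ by polynomials $q_n$ in $L^2(\gamma_k)$ and then sets $p_n = I_1\cdots I_k q_n$, where $I_i$ is the antiderivative in the $i$th variable; a short integration-by-parts computation shows each $I_i$ is an $L^2(\gamma_k)$-contraction, so $p_n \to \phi$ and $\partial_i p_n \to \partial_i\phi$ simultaneously.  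Your approach is more spectral: you take the Hermite partial sums $S_N$ themselves and show they converge in the Sobolev norm by identifying $\sum_\alpha |\alpha|\,c_\alpha^2$ with $-\inner{\phi}{L_k\phi}_{L^2(\gamma_k)} = \norm{\nabla\phi}_{L^2(\gamma_k)}^2$, which is finite because $\phi \in C_c^\infty$.

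What each buys: your argument is shorter and more conceptual, and it makes transparent the link between the $\mathbb{D}^{1,2}$-norm and the number operator that the paper only exploits later.  The paper's antiderivative trick is more elementary in that it avoids any appeal to the spectral decomposition of $L_k$ or to self-adjointness, relying only on the density of polynomials in $L^2(\gamma_k)$ and a one-line contraction estimate; it also gives an explicit recipe for the approximating polynomials rather than the abstract partial sums.
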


\begin{proof}
  This proof is taken from \cite{schmuland92}.

  We begin with the finite dimensional case.  Let $\mu_k$ be standard
  Gaussian measure on $\R^k$, and let $\phi \in C_c^\infty(\R^k)$.  We
  will show that there is a sequence of polynomials $p_m$ such that
  $p_m \to \phi$ and $\partial_i p_m \to \partial_i \phi$ in
  $L^2(\R^k, \mu_k)$ for all $k$.

  For continuous $\psi : \R^k \to \R$, let
  \begin{equation*}
    I_i \psi(x_1, \dots, x_k) = \int_0^{x_i} \psi(x_1, \dots, x_{i-1},
    y, x_{i+1}, \dots, x_k)\,dy.
  \end{equation*}
  By Fubini's theorem, all operators $I_i, 1 \le i \le k$ commute.  If
  $\psi \in L^2(\mu_k)$ is continuous, then $I_i \psi$ is also
  continuous, and $\partial_i I_i \psi = \psi$.  Moreover,
  \begin{align*}
    \int_{0}^\infty |I_i \psi (x_1, \dots, x_k)|^2 e^{-x_i^2/2} dx_i &=
    \int_{0}^\infty \abs{\int_0^{x_i} \psi(\dots, y,\dots)\,dy}^2
    e^{-x_i^2/2} dx_i \\
    &\le \int_0^\infty \int_0^{x_i} |\psi(\dots, y, \dots)|^2 \,dy x_i
    e^{-x_i^2/2}\,dx_i && \text{Cauchy--Schwarz} \\
    &= \int_0^\infty |\psi(\dots, x_i, \dots)|^2 e^{-x_i^2/2} dx_i
  \end{align*}
  where in the last line we integrated by parts.  We can make the same
  argument for the integral from $-\infty$ to $0$, adjusting signs as
  needed, so we have
  \begin{equation*}
    \int_\R |I_i \psi(x)|^2 e^{-x_i^2/2} dx_i \le \int_\R |\psi_i(x)|^2
    e^{-x_i^2/2} dx_i.
  \end{equation*}
  Integrating out the remaining $x_j$ with respect to $e^{-x_j^2/2}$
  shows
  \begin{equation*}
    \norm{I_i \psi}_{L^2(\mu_k)}^2 \le \norm{\psi}_{L^2(\mu_k)}^2,
  \end{equation*}
  i.e. $I_i$ is a contraction on $L^2(\mu_k)$.

  Now for $\phi \in C_c^\infty(\R^k)$, we can approximate $\partial_1
  \dots \partial_k \phi$ in $L^2(\mu_k)$ norm by polynomials $q_n$ (by
  the finite-dimensional case of Proposition \ref{polynomials-dense}).
  If we let $p_n = I_1 \dots I_k q_n$, then $p_n$ is again a
  polynomial, and $p_n \to I_1 \dots I_k \partial_1 \dots \partial_k
  \phi = \phi$ in $L^2(\mu_k)$.  Moreover,
  $\partial_i p_n = I_1 \dots I_{i-1} I_{i+1} \dots I_k q_n \to I_1
  \dots I_{i-1} I_{i+1} \dots I_k \partial_1 \dots \partial_k \phi =
  \partial_i \phi$ in $L^2(\mu_k)$ also.

  Now back to the infinite-dimensional case.  Let $F \in \mathcal{F}
  C^\infty_c(W)$, so we can write $F(x) = \phi(e_1(x), \dots, e_k(x))$
  where $e_i$ are $q$-orthonormal.  Choose polynomials $p_n \to \phi$
  in $L^2(\R^k, \mu_k)$ with $\partial_i p_n \to \partial_i \phi$ in
  $L^2(\mu_k)$ also, and set $P_n(x) = p_n(e_1(x), \dots, e_k(x))$.
  Note $P_n \in \mathcal{H}_m$ for some $m = m_n$.
  Then
  \begin{equation*}
    \int_W |F(x) - P_n(x)|^2 \mu(dx) = \int_{\R^k} |\phi(y) -
    p_n(y)|^2 \mu_k(dy) \to 0.
  \end{equation*}
  Exercise: write out $\norm{DF - DP_n}_{L^2(W;H)}$ and show that it
  goes to 0 also.
\end{proof}

\begin{lemma}\label{DJ-commute}
  For $F \in \mathbb{D}^{1,2}$, $D J_n F = J_{n-1} DF$.
\end{lemma}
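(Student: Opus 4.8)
$D J_n F = J_{n-1} DF$ for $F \in \mathbb{D}^{1,2}$.

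The plan is to verify the identity first on the orthonormal basis elements $F_\alpha$ of Wiener chaos, then extend by continuity using the closedness of $D$ and the orthogonality relations already established. First I would recall the formula derived just before the statement:
\begin{equation*}
  D F_\alpha = \sum_{j} \sqrt{\alpha_j}\, F_{\alpha - \epsilon_j}\, J e_j,
\end{equation*}
where $\epsilon_j$ is the multi-index with a $1$ in slot $j$. If $|\alpha| = n$, then each $F_{\alpha - \epsilon_j}$ lies in $\mathcal{H}_{n-1}$, so $DF_\alpha \in \mathcal{H}_{n-1}(H)$; hence $J_{n-1} DF_\alpha = DF_\alpha$ and $J_m DF_\alpha = 0$ for $m \ne n-1$. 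On the other hand $J_n F_\alpha = F_\alpha$ when $|\alpha| = n$ and $J_n F_\alpha = 0$ otherwise, so $D J_n F_\alpha = DF_\alpha$ in the first case and $0$ in the second. Thus $D J_n F_\alpha = J_{n-1} D F_\alpha$ for every basis element $F_\alpha$, and by linearity the identity holds on the (finite) span of all the $F_\alpha$.

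To pass to general $F \in \mathbb{D}^{1,2}$, I would use the decomposition $\mathbb{D}^{1,2} = \bigoplus_n \mathcal{H}_n$ (the proposition preceding this lemma) together with the fact that the $\mathcal{H}_n$ are mutually $\mathbb{D}^{1,2}$-orthogonal and that $D$ maps $\mathcal{H}_n$ into $\mathcal{H}_{n-1}(H)$. Write $F = \sum_n J_n F$ with the series converging in $\mathbb{D}^{1,2}$; this means $\sum_n J_n F \to F$ in $L^2(W,\mu)$ and $\sum_n D(J_n F)$ converges in $L^2(W;H)$ (indeed to $DF$, since $D$ is closed). Because the summands $D(J_n F) \in \mathcal{H}_{n-1}(H)$ are $L^2(W;H)$-orthogonal, $D(J_n F)$ is precisely the $\mathcal{H}_{n-1}(H)$-component of $DF$, i.e. $D(J_n F) = J_{n-1}(DF)$. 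Finally, each $J_n F$ is itself an $L^2$-limit of finite linear combinations of $F_\alpha$ with $|\alpha|=n$, which are simultaneously $\mathbb{D}^{1,2}$-convergent (by the orthogonality/norm computation $\norm{F_\alpha}_{\mathbb{D}^{1,2}}^2 = 1+n$ in the earlier lemma), so the basis-level identity and closedness of $D$ give $D J_n F = J_{n-1} D F$ for all $F$.

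The main obstacle is purely bookkeeping: making sure that all the convergences line up — that $\mathbb{D}^{1,2}$-convergence of the chaos expansion really does let us identify $D(J_n F)$ with the degree-$(n-1)$ projection of $DF$, rather than just with some element of $\mathcal{H}_{n-1}(H)$. This is handled by invoking closedness of $D$ (so the limit of $D(\text{partial sums})$ is $DF$) together with the orthogonality of the chaos pieces in $L^2(W;H)$ (so the decomposition of $DF$ into its $\mathcal{H}_m(H)$-components is unique and matches the termwise derivatives). No genuinely hard estimate is needed; everything reduces to the explicit action of $D$ on Hermite products and the orthogonality already in hand.
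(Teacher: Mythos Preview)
Your proposal is correct and follows essentially the same route as the paper: verify $D J_n F_\alpha = J_{n-1} D F_\alpha$ on the Hermite basis, then extend to all of $\mathbb{D}^{1,2}$ by density and the closedness of $D$. The only organizational difference is that the paper approximates a general $F$ by finite linear combinations $F_m$ of the $F_\alpha$ (using $J_n F_m \to J_n F$ in $L^2$ and $J_{n-1} D F_m \to J_{n-1} DF$, then closedness), whereas you invoke the full $\mathbb{D}^{1,2}$-chaos expansion $F = \sum_n J_n F$ and read off the identity from the orthogonality of the pieces $D(J_n F) \in \mathcal{H}_{n-1}(H)$; note that in the paper's ordering the statement ``$\sum J_n F$ converges in $\mathbb{D}^{1,2}$'' is recorded as a corollary \emph{after} this lemma, so if you take your route you should remark explicitly that the $\mathbb{D}^{1,2}$-orthogonal projection onto $\mathcal{H}_n$ coincides with the $L^2$-projection $J_n$ (immediate from uniqueness of the $L^2$ chaos decomposition), to avoid any appearance of circularity.
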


\begin{proof}
  If $F = F_\alpha$ where $|\alpha| = n$, then $J_n F_\alpha =
  F_\alpha$ and $DF_\alpha \in \mathcal{H}_{n-1}(H)$ so $J_{n-1} D
  F_\alpha = D F_\alpha$, so this is trivial.  If $|\alpha| \ne n$
  then both sides are zero.

  Now for general $F \in \mathbb{D}^{1,2}$, by the previous
  proposition we can approximate $F$ in $\mathbb{D}^{1,2}$-norm by
  functions $F_m$ which are finite linear combinations of $F_\alpha$.
  In particular, $F_m \to F$ in $L^2(W)$.  Since $J_n$ is continuous
  on $L^2(W)$, $J_n F_m \to J_n F$ in $L^2(W)$.  Also, $D F_m \to DF$
  in $L^2(W;H)$, so $J_{n-1} D F_m \to J_{n-1} DF$.  But $J_{n-1} D
  F_m = D J_n F_m$.

  We have shown $J_n F_m \to J_n F$ and $D J_n F_m \to J_{n-1} DF$.
  By closedness of $D$, we have $D J_n F = J_{n-1} DF$.
\end{proof}

\begin{corollary}
  $J_n$ is a continuous operator on $\mathbb{D}^{1,2}$.
\end{corollary}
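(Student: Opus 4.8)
The plan is to reduce everything to the commutation relation just established in Lemma~\ref{DJ-commute} together with the fact that both $J_n$ (on $L^2(W,\mu)$) and $J_{n-1}$ (on $L^2(W;H)$) are \emph{orthogonal} projections, hence norm-one operators on their respective Hilbert spaces. First I would recall that the earlier lemma showing $\mathcal{H}_n \subset \mathbb{D}^{1,2}$ guarantees that $J_n F \in \mathcal{H}_n \subset \mathbb{D}^{1,2}$ whenever $F \in \mathbb{D}^{1,2}$, so the operator $J_n : \mathbb{D}^{1,2} \to \mathbb{D}^{1,2}$ is at least well-defined; this is also implicit in the statement of Lemma~\ref{DJ-commute}, which I may simply cite.

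The core computation is then to expand the $\mathbb{D}^{1,2}$-norm of $J_n F$ using its definition and substitute $D J_n F = J_{n-1} DF$:
\begin{equation*}
  \norm{J_n F}_{\mathbb{D}^{1,2}}^2 = \norm{J_n F}_{L^2(W)}^2 + \norm{D J_n F}_{L^2(W;H)}^2 = \norm{J_n F}_{L^2(W)}^2 + \norm{J_{n-1} DF}_{L^2(W;H)}^2.
\end{equation*}
Since $J_n$ is an orthogonal projection on $L^2(W,\mu)$ we have $\norm{J_n F}_{L^2(W)} \le \norm{F}_{L^2(W)}$, and since $J_{n-1}$ is an orthogonal projection on $L^2(W;H)$ we have $\norm{J_{n-1} DF}_{L^2(W;H)} \le \norm{DF}_{L^2(W;H)}$. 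Adding these two inequalities gives $\norm{J_n F}_{\mathbb{D}^{1,2}}^2 \le \norm{F}_{L^2(W)}^2 + \norm{DF}_{L^2(W;H)}^2 = \norm{F}_{\mathbb{D}^{1,2}}^2$, so $J_n$ is in fact a contraction on $\mathbb{D}^{1,2}$, hence continuous.

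I do not anticipate any real obstacle here: the only thing one has to be slightly careful about is that $\norm{DF}_{L^2(W;H)}$ is genuinely part of the $\mathbb{D}^{1,2}$-norm (so that the commutation relation is the right tool) and that $J_{n-1}$ acting on $L^2(W;H)$ is the projection onto $\mathcal{H}_{n-1}(H)$, which was set up when the chaos decomposition of $L^2(W;H)$ was described. Given those structures, the argument is a two-line estimate, and the contraction bound is slightly stronger than the asserted continuity.
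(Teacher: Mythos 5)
Your proof is correct and uses exactly the same two ingredients as the paper's (the commutation $D J_n = J_{n-1}D$ from Lemma~\ref{DJ-commute} and the $L^2$-contractivity of the orthogonal projections); the only difference is that you package them as an explicit operator-norm bound $\norm{J_n F}_{\mathbb{D}^{1,2}} \le \norm{F}_{\mathbb{D}^{1,2}}$ rather than as a sequential-continuity argument, which gives the marginally stronger conclusion that $J_n$ is a contraction.
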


\begin{proof}
  $F_m \to F$ in $\mathbb{D}^{1,2}$ means $F_m \to F$ in $L^2(W)$ and
  $D F_m \to DF$ in $L^2(W;H)$.  When this happens, we have $J_n F_m
  \to J_n F$ since $J_n$ is continuous on $L^2(W)$, and $D J_n F_m =
  J_{n-1} D F_m \to J_{n-1} D F = D J_n F$.
\end{proof}

\begin{corollary}
  $J_n$ is orthogonal projection onto $\mathcal{H}_n$ with respect to
  the $\mathbb{D}^{1,2}$ inner product.
\end{corollary}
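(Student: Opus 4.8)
The plan is to identify $J_n$ with the orthogonal projection $P_n$ of $\mathbb{D}^{1,2}$ onto the closed subspace $\mathcal{H}_n$. This projection is well-defined: we have just noted that each $\mathcal{H}_n$ is closed in $\mathbb{D}^{1,2}$, and the preceding proposition gives the orthogonal decomposition $\mathbb{D}^{1,2} = \bigoplus_m \mathcal{H}_m$, so $P_n$ is exactly the map sending $F = \sum_m F_m$ (with $F_m \in \mathcal{H}_m$) to $F_n$. The goal is therefore to show $J_n = P_n$ as operators on the Hilbert space $\mathbb{D}^{1,2}$.

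First I would check that $J_n$ and $P_n$ agree on each summand $\mathcal{H}_m$. For $F \in \mathcal{H}_n \subset \mathbb{D}^{1,2}$ we have $J_n F = F = P_n F$, since $J_n$ restricted to $L^2(W,\mu)$ fixes $\mathcal{H}_n$ pointwise. For $F \in \mathcal{H}_m$ with $m \ne n$, the subspaces $\mathcal{H}_m$ and $\mathcal{H}_n$ are orthogonal in $L^2(W,\mu)$, so $J_n F = 0 = P_n F$ (here one uses once more that $\mathcal{H}_m \subset \mathbb{D}^{1,2}$, so the $L^2$-projection $J_n$ is genuinely defined on $F$ and vanishes there). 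By linearity, $J_n$ and $P_n$ agree on the (algebraic) span of all the $\mathcal{H}_m$.

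To finish, recall that the span of all the $\mathcal{H}_m$ is dense in $\mathbb{D}^{1,2}$ by the preceding proposition, that $J_n$ is continuous on $\mathbb{D}^{1,2}$ by the corollary just proved, and that $P_n$, being an orthogonal projection, is continuous as well. Two continuous operators that agree on a dense set are equal, so $J_n = P_n$ on all of $\mathbb{D}^{1,2}$, which is the assertion. There is no real obstacle here beyond assembling the earlier results; the only point to keep straight is the distinction between the Hilbert-space direct sum and its algebraic counterpart, density of the latter being precisely what the earlier proposition supplies.

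Alternatively, one could establish the statement by verifying directly that $J_n$ is symmetric for the $\mathbb{D}^{1,2}$ inner product: writing $\inner{J_n F}{G}_{\mathbb{D}^{1,2}} = \inner{J_n F}{G}_{L^2(W)} + \inner{D J_n F}{DG}_{L^2(W;H)}$, using self-adjointness of $J_n$ on $L^2(W,\mu)$, the identity $D J_n F = J_{n-1} D F$ from Lemma \ref{DJ-commute}, and self-adjointness of $J_{n-1}$ on $L^2(W;H)$, one gets $\inner{J_n F}{G}_{\mathbb{D}^{1,2}} = \inner{F}{J_n G}_{\mathbb{D}^{1,2}}$; combined with $J_n^2 = J_n$ and the fact that $J_n$ maps $\mathbb{D}^{1,2}$ onto $\mathcal{H}_n$, this again forces $J_n$ to be the orthogonal projection onto $\mathcal{H}_n$.
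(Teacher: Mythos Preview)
Your main argument is correct and is essentially the paper's own proof: the paper observes that $J_n$ is the identity on $\mathcal{H}_n$, vanishes on each $\mathcal{H}_m$ with $m \ne n$, and then invokes continuity (from the previous corollary) together with the $\mathbb{D}^{1,2}$-orthogonal decomposition to conclude. Your alternative route via self-adjointness and idempotence is a valid repackaging but not needed here.
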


\begin{proof}
  $J_n$ is the identity on $\mathcal{H}_n$, and vanishes on any
  $\mathcal{H}_m$ for $m \ne n$.  Thus by continuity it vanishes on
  $\bigoplus_{m \ne n} \mathcal{H}_m$ which is the
  $\mathbb{D}^{1,2}$-orthogonal complement of $\mathcal{H}_n$.
\end{proof}

\begin{corollary}
  For $F \in \mathbb{D}^{1,2}$, $F = \sum_{n=0}^\infty J_n F$ where
  the sum converges in $\mathbb{D}^{1,2}$.
\end{corollary}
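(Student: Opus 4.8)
The plan is to recognize this as the standard Hilbert-space fact that a space which is the closed linear span of a countable family of pairwise orthogonal closed subspaces is their orthogonal direct sum, so that every vector is the norm-convergent sum of its projections onto the summands. All the ingredients have already been assembled: $\mathbb{D}^{1,2}$ is a Hilbert space, being by definition the completion of $\mathcal{F}C_c^\infty(W)$ under a norm coming from the inner product $\inner{F}{G}_{\mathbb{D}^{1,2}} = \int_W FG\,d\mu + \int_W \inner{DF}{DG}_H\,d\mu$; the subspaces $\mathcal{H}_n$ are pairwise orthogonal and closed in $\mathbb{D}^{1,2}$ (noted just after the lemma computing $\norm{F_\alpha}_{\mathbb{D}^{1,2}}^2 = 1+n$); their algebraic span is dense in $\mathbb{D}^{1,2}$ (the proposition preceding Lemma \ref{DJ-commute}); and $J_n$ is the orthogonal projection onto $\mathcal{H}_n$ with respect to the $\mathbb{D}^{1,2}$ inner product (the previous corollary). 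So the task reduces to quoting this abstract fact with these inputs, and I would give the short proof of it for completeness.

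Concretely, I would fix $F \in \mathbb{D}^{1,2}$ and set $S_N = \sum_{n=0}^N J_n F$. Since the $\mathcal{H}_n$ are mutually $\mathbb{D}^{1,2}$-orthogonal, $S_N$ is precisely the orthogonal projection of $F$ onto the closed subspace $\bigoplus_{n=0}^N \mathcal{H}_n$, so by Pythagoras $\sum_{n=0}^N \norm{J_n F}_{\mathbb{D}^{1,2}}^2 = \norm{S_N}_{\mathbb{D}^{1,2}}^2 \le \norm{F}_{\mathbb{D}^{1,2}}^2$. Hence $\sum_{n=0}^\infty \norm{J_n F}_{\mathbb{D}^{1,2}}^2 < \infty$, and again by orthogonality $\norm{S_M - S_N}_{\mathbb{D}^{1,2}}^2 = \sum_{n=N+1}^M \norm{J_n F}_{\mathbb{D}^{1,2}}^2$ for $M > N$, so $(S_N)$ is Cauchy in $\mathbb{D}^{1,2}$ and converges to some $S \in \mathbb{D}^{1,2}$.

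It then remains to identify $S$ with $F$. For each fixed $m$ and each $G \in \mathcal{H}_m$, orthogonality of the summands gives $\inner{F - S_N}{G}_{\mathbb{D}^{1,2}} = \inner{J_m F - J_m F}{G}_{\mathbb{D}^{1,2}} = 0$ for all $N \ge m$; letting $N \to \infty$ and using continuity of the inner product, $\inner{F - S}{G}_{\mathbb{D}^{1,2}} = 0$. Thus $F - S$ is $\mathbb{D}^{1,2}$-orthogonal to every $\mathcal{H}_m$, hence to their algebraic span, which is dense in $\mathbb{D}^{1,2}$; therefore $F - S = 0$, i.e. $F = \sum_{n=0}^\infty J_n F$ with convergence in $\mathbb{D}^{1,2}$.

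There is essentially no hard step here: the substance lies in the earlier results establishing the orthogonal decomposition and the identification of $J_n$ as the $\mathbb{D}^{1,2}$-orthogonal projections. The only point to keep straight is that orthogonality, Bessel's inequality, and the projection property are all being used with respect to the $\mathbb{D}^{1,2}$ inner product rather than merely the $L^2$ one — which is exactly what the preceding corollaries furnish — after which the argument is just the textbook proof that an orthogonal family of closed subspaces with dense span yields an orthogonal direct sum decomposition.
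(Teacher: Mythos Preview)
Your proposal is correct and follows exactly the route the paper intends: the paper states this corollary without proof, relying on the preceding proposition ($\mathbb{D}^{1,2} = \bigoplus \mathcal{H}_n$) and corollary ($J_n$ is the $\mathbb{D}^{1,2}$-orthogonal projection onto $\mathcal{H}_n$), and you have simply written out the standard Hilbert-space argument that these two facts imply the convergent expansion $F = \sum_n J_n F$.
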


\begin{corollary}
  For $F \in \mathbb{D}^{1,2}$, $DF = \sum_{n=0}^\infty D J_n F =
  \sum_{n=1}^\infty J_{n-1} DF$ where the sums converge in $L^2(W;H)$.
\end{corollary}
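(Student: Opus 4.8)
The plan is to apply the Malliavin derivative $D$ to the chaos decomposition $F = \sum_{n=0}^\infty J_n F$ furnished by the previous corollary, and then to commute $D$ past each projection $J_n$ using Lemma \ref{DJ-commute}.

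First I would recall that the previous corollary gives $F = \sum_{n=0}^\infty J_n F$ with convergence in the $\mathbb{D}^{1,2}$ norm. By the very definition of $\norm{\cdot}_{\mathbb{D}^{1,2}}$, the Malliavin derivative $D : \mathbb{D}^{1,2} \to L^2(W;H)$ is a bounded (indeed norm-nonexpansive) linear operator, since $\norm{DG}_{L^2(W;H)} \le \norm{G}_{\mathbb{D}^{1,2}}$ for every $G \in \mathbb{D}^{1,2}$; in particular $D$ preserves convergence of series. Applying $D$ to the convergent series for $F$ therefore yields $DF = \sum_{n=0}^\infty D J_n F$, with convergence in $L^2(W;H)$. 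This already establishes the first equality.

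Next I would invoke Lemma \ref{DJ-commute}, which gives $D J_n F = J_{n-1} DF$ for each $n \ge 1$; for $n = 0$ the term $D J_0 F$ vanishes, because $J_0 F$ is the constant $\int_W F\, d\mu$ and so has zero derivative (equivalently, one reads $J_{-1}$ as projection onto the zero subspace $\mathcal{H}_{-1} = \{0\}$). Substituting and discarding the vanishing $n = 0$ term gives $DF = \sum_{n=1}^\infty J_{n-1} DF$, with convergence in $L^2(W;H)$, which is the remaining assertion.

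There is no real obstacle here; the only points needing a moment's care are (i) that convergence in $\mathbb{D}^{1,2}$ entails convergence of the derivatives in $L^2(W;H)$, which is immediate from the definition of the Sobolev norm, and (ii) the bookkeeping at $n = 0$. One may also observe that the resulting identity is exactly consistent with the chaos decomposition $L^2(W;H) = \bigoplus_n \mathcal{H}_n(H)$: it records that the degree-$(n-1)$ component of $DF$ is the image under $D$ of the degree-$n$ component of $F$.
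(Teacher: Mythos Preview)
Your proof is correct and takes essentially the same approach as the paper: apply the continuity of $D : \mathbb{D}^{1,2} \to L^2(W;H)$ to the $\mathbb{D}^{1,2}$-convergent chaos expansion of $F$ from the previous corollary, then invoke Lemma \ref{DJ-commute} termwise. Your treatment is more explicit about the $n=0$ term and the boundedness of $D$, but the argument is the same.
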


\begin{proof}
  The first equality follows from the previous corollary, since $D :
  \mathbb{D}^{1,2} \to L^2(W;H)$ is continuous.  The second equality
  is Lemma \ref{DJ-commute}.
\end{proof}

\begin{proposition}\label{D-chaos}
  $F \in \mathbb{D}^{1,2}$ if and only if $\sum_n n \norm{J_n
  F}_{L^2(W)}^2 < \infty$, in which case $\norm{DF}_{L^2(W;H)}^2 =
  \sum_n n \norm{J_n F}_{L^2(W)}^2$.
\end{proposition}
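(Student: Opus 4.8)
The plan is to run everything through the Wiener chaos decomposition, using the orthogonality relations $\inner{D F_\alpha}{D F_\beta}_{L^2(W;H)} = 0$ for $\alpha \ne \beta$ and $\norm{D F_\alpha}_{L^2(W;H)}^2 = |\alpha|$ established above, together with the closedness of $D$ and the fact, already proved, that $F = \sum_n J_n F$ converges in $\mathbb{D}^{1,2}$ whenever $F \in \mathbb{D}^{1,2}$.

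First I would record a one-chaos computation. For $F \in \mathcal{H}_n$, write $F = \sum_{|\alpha| = n} c_\alpha F_\alpha$ with $\sum_\alpha c_\alpha^2 = \norm{F}_{L^2(W)}^2 < \infty$; exactly as in the proof that $\mathcal{H}_n \subset \mathbb{D}^{1,2}$, closedness of $D$ shows $DF = \sum_{|\alpha|=n} c_\alpha\, D F_\alpha$ with the series converging in $L^2(W;H)$, so by orthogonality of the $D F_\alpha$ we get $\norm{DF}_{L^2(W;H)}^2 = n\sum_\alpha c_\alpha^2 = n\,\norm{F}_{L^2(W)}^2$; equivalently $\norm{F}_{\mathbb{D}^{1,2}}^2 = (1+n)\,\norm{F}_{L^2(W)}^2$ for $F \in \mathcal{H}_n$. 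Since moreover $D(\mathcal{H}_n) \subset \mathcal{H}_{n-1}(H)$ and the $\mathcal{H}_m(H)$ are pairwise orthogonal in $L^2(W;H)$, the derivatives of elements of distinct chaoses are orthogonal in $L^2(W;H)$.

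Next I would prove the implication ``$\sum_n n\,\norm{J_n F}_{L^2(W)}^2 < \infty \Rightarrow F \in \mathbb{D}^{1,2}$'' by truncation. Put $G_N = \sum_{n=0}^N J_n F$; each $G_N$ lies in $\mathbb{D}^{1,2}$, and $G_N \to F$ in $L^2(W)$ since $F = \sum_n J_n F$ there. The derivatives satisfy $D G_N = \sum_{n=1}^N D(J_n F)$ with orthogonal summands, so
\begin{equation*}
  \norm{D G_N - D G_M}_{L^2(W;H)}^2 = \sum_{n=M+1}^N n\,\norm{J_n F}_{L^2(W)}^2 \longrightarrow 0 \quad (M,N \to \infty),
\end{equation*}
hence $\{D G_N\}$ is Cauchy, so convergent, in $L^2(W;H)$. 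Since $D$ is closed, $F \in \mathbb{D}^{1,2}$, and passing to the limit gives $\norm{DF}_{L^2(W;H)}^2 = \sum_{n=1}^\infty n\,\norm{J_n F}_{L^2(W)}^2$.

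Conversely, for $F \in \mathbb{D}^{1,2}$ I would use that $F = \sum_n J_n F$ converges in $\mathbb{D}^{1,2}$ with the $J_n F$ mutually $\mathbb{D}^{1,2}$-orthogonal, so $\norm{F}_{\mathbb{D}^{1,2}}^2 = \sum_n \norm{J_n F}_{\mathbb{D}^{1,2}}^2 = \sum_n (1+n)\,\norm{J_n F}_{L^2(W)}^2 < \infty$; subtracting $\norm{F}_{L^2(W)}^2 = \sum_n \norm{J_n F}_{L^2(W)}^2$ yields both the integrability condition $\sum_n n\,\norm{J_n F}_{L^2(W)}^2 < \infty$ and the identity $\norm{DF}_{L^2(W;H)}^2 = \sum_n n\,\norm{J_n F}_{L^2(W)}^2$. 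There is no genuine analytic obstacle here; the only thing requiring care is bookkeeping — keeping straight which space each convergence takes place in ($L^2(W)$, $L^2(W;H)$, or $\mathbb{D}^{1,2}$), and making sure that closedness of $D$ is invoked for a sequence whose derivatives actually converge in $L^2(W;H)$.
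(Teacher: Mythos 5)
Your proof is correct and follows essentially the same route as the paper: the converse direction by applying closedness of $D$ to the truncated chaos sums (whose derivatives are Cauchy by orthogonality), and the forward direction by exploiting the $\mathbb{D}^{1,2}$-orthogonal decomposition $F=\sum_n J_nF$ together with $\norm{D J_nF}_{L^2(W;H)}^2 = n\norm{J_nF}_{L^2(W)}^2$. The only cosmetic difference is that you compute $\norm{F}_{\mathbb{D}^{1,2}}^2$ and subtract $\norm{F}_{L^2}^2$, whereas the paper expands $\norm{DF}_{L^2(W;H)}^2$ directly.
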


\begin{proof}
  If $f \in \mathbb{D}^{1,2}$, we have $DF = \sum_{n=0}^\infty D J_n
  F$.  Since the terms of this sum are orthogonal in $L^2(W;H)$, we
  have
  \begin{equation*}
    \infty > \norm{DF}^2_{L^2(W;H)} = \sum_{n=0}^\infty \norm{D J_n
    F}^2_{L^2(W;H)} = \sum_{n=0}^\infty n \norm{J_n F}^2_{L^2(W;H)}.
  \end{equation*}
  Conversely, if $\sum_n n \norm{J_n F}^2 = \sum_n \norm{D J_n F}^2 < \infty$ then
  $\sum J_n F$ converges to $F$ and $\sum D J_n F$ converges,
  therefore by closedness of $D$, $F \in \mathbb{D}^{1,2}$.
\end{proof}

\begin{corollary}
  If $F \in \mathbb{D}^{1,2}$ and $DF = 0$ then $F$ is constant.
\end{corollary}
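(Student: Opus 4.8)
The plan is to read this off immediately from the chaos expansion and the variance formula for $D$ just established. By the corollaries preceding Proposition \ref{D-chaos}, any $F \in \mathbb{D}^{1,2}$ decomposes as $F = \sum_{n=0}^\infty J_n F$, with the series converging in $\mathbb{D}^{1,2}$ (and in particular in $L^2(W,\mu)$). The only extra input I need is Proposition \ref{D-chaos} itself, which gives the identity $\norm{DF}_{L^2(W;H)}^2 = \sum_{n=0}^\infty n \norm{J_n F}_{L^2(W)}^2$.

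So first I would invoke the hypothesis $DF = 0$ to conclude $\sum_{n=0}^\infty n \norm{J_n F}_{L^2(W)}^2 = 0$. Since every term is nonnegative, each term vanishes; for $n \ge 1$ this forces $\norm{J_n F}_{L^2(W)} = 0$, i.e. $J_n F = 0$ for all $n \ge 1$. Plugging back into the chaos expansion leaves $F = J_0 F$, so $F \in \mathcal{H}_0$.

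Finally I would recall that $\mathcal{H}_0$ is precisely the space of constant functions (this was noted right after the definition of the Wiener chaos decomposition: $\mathcal{H}_0$ is the span of $F_{0,\dots,0} = 1$), so $F$ is ($\mu$-a.e.) equal to a constant, as claimed. There is essentially no obstacle here — the work has all been done in Proposition \ref{D-chaos} and the orthogonality of the spaces $\mathcal{H}_n(H)$ under $D$; the only point worth stating explicitly is that nonnegativity of the summands upgrades ``the sum is zero'' to ``every summand is zero.''
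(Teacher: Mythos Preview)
Your proof is correct and is exactly the intended argument: the paper states this as an immediate corollary of Proposition \ref{D-chaos} without further proof, and your reasoning (using $\norm{DF}_{L^2(W;H)}^2 = \sum_n n \norm{J_n F}^2 = 0$ to kill all $J_n F$ for $n \ge 1$, leaving $F = J_0 F \in \mathcal{H}_0$) is precisely how one reads it off.
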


\begin{lemma}[Chain rule]\label{chain-rule}
  If $\psi \in C^\infty_c(\R)$, $F \in \mathbb{D}^{1,2}$, then $\psi(F)
  \in \mathbb{D}^{1,2}$ and
  \begin{equation*}
    D \psi(F) = \psi'(F) DF.
  \end{equation*}
\end{lemma}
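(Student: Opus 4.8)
The plan is to prove the identity first for smooth cylinder functions, where it reduces immediately to the ordinary chain rule in finitely many variables, and then to obtain the general case by approximation, using the closedness of $D$ to pass to the limit.

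First I would treat $F \in \mathcal{F} C_c^\infty(W)$: write $F(x) = \phi(f_1(x), \dots, f_m(x))$ with $\phi \in C_c^\infty(\R^m)$ and $f_i \in W^*$. Since $\phi$ has compact support, so does $\psi \circ \phi$, hence $\psi \circ \phi \in C_c^\infty(\R^m)$ and $\psi(F) = (\psi\circ\phi)(f_1(x),\dots,f_m(x)) \in \mathcal{F} C_c^\infty(W) \subset \mathbb{D}^{1,2}$. Applying the cylinder-function formula (\ref{DF-cylinder}) together with the chain rule $\partial_i(\psi\circ\phi) = (\psi'\circ\phi)\,\partial_i\phi$ on $\R^m$,
\begin{align*}
  D\psi(F)(x) &= \sum_{i=1}^m \partial_i(\psi\circ\phi)(f_1(x),\dots,f_m(x))\, J f_i \\
  &= \psi'(F(x)) \sum_{i=1}^m \partial_i\phi(f_1(x),\dots,f_m(x))\, J f_i = \psi'(F(x))\, DF(x),
\end{align*}
which is the asserted identity in this case.

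For general $F \in \mathbb{D}^{1,2}$, I would choose $F_n \in \mathcal{F} C_c^\infty(W)$ with $F_n \to F$ in $\mathbb{D}^{1,2}$, so that $F_n \to F$ in $L^2(W,\mu)$ and $DF_n \to DF$ in $L^2(W,\mu;H)$; passing to a subsequence, one may also assume $F_n \to F$ $\mu$-a.e. Because $\psi$ is bounded with bounded derivative, $|\psi(F_n) - \psi(F)| \le \norm{\psi'}_\infty |F_n - F|$, so $\psi(F_n) \to \psi(F)$ in $L^2(W,\mu)$. By the cylinder case $D\psi(F_n) = \psi'(F_n)\, DF_n$, and I would decompose
\begin{equation*}
  \psi'(F_n)\, DF_n - \psi'(F)\, DF = \psi'(F_n)(DF_n - DF) + (\psi'(F_n) - \psi'(F))\, DF.
\end{equation*}
The first summand has $L^2(W;H)$-norm at most $\norm{\psi'}_\infty \norm{DF_n - DF}_{L^2(W;H)} \to 0$. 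For the second, continuity of $\psi'$ and $F_n \to F$ a.e. give $(\psi'(F_n(x)) - \psi'(F(x)))\, DF(x) \to 0$ in $H$ for a.e. $x$, while $\norm{(\psi'(F_n(x)) - \psi'(F(x)))\, DF(x)}_H \le 2\norm{\psi'}_\infty \norm{DF(x)}_H$, which lies in $L^2(W,\mu)$; so dominated convergence in the Bochner space $L^2(W;H)$ forces the second summand to $0$. Hence $D\psi(F_n) \to \psi'(F)\, DF$ in $L^2(W;H)$.

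Finally, $\psi(F_n) \in \mathcal{F} C_c^\infty(W)$, $\psi(F_n) \to \psi(F)$ in $L^2(W,\mu)$, and $D\psi(F_n) \to \psi'(F)\, DF$ in $L^2(W;H)$, so since $D$ is closed (equivalently, $\mathbb{D}^{1,2}$ is the domain of its closure) we conclude $\psi(F) \in \mathbb{D}^{1,2}$ and $D\psi(F) = \psi'(F)\, DF$. The one step that is not a bare uniform estimate is the convergence of $(\psi'(F_n) - \psi'(F))\, DF$, which is where the passage to an a.e.-convergent subsequence and the dominated convergence argument are needed; everything else is routine bookkeeping.
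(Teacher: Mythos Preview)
Your proof is correct and follows essentially the same approach as the paper's: reduce to the finite-dimensional chain rule for cylinder functions, then approximate and use dominated convergence together with the closedness of $D$. You have simply written out in full the details that the paper's two-line proof leaves implicit.
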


\begin{proof}
  For $F \in \mathcal{F} C_c^\infty(W)$ this is just the regular chain
  rule.  For general $F \in \mathbb{D}^{1,2}$, choose $F_n \in
  C_c^\infty(W)$ with $F_n \to F$, $D F_n \to DF$.  Then use dominated
  convergence.
\end{proof}

Actually the chain rule also holds for any $\psi \in C^1$ with bounded
first derivative.  Exercise: prove.

\begin{proposition}\label{zero-one-law}
  If $A \subset W$ is Borel and $1_A \in \mathbb{D}^{1,2}$ then
  $\mu(A)$ is 0 or 1.
\end{proposition}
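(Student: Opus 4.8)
The plan is to use the fact that $1_A$ takes only the values $0$ and $1$, combined with the chain rule for the Malliavin derivative (Lemma~\ref{chain-rule}) and the characterization of the kernel of $D$ (the corollary to Proposition~\ref{D-chaos}: an element of $\mathbb{D}^{1,2}$ with vanishing derivative is constant). The idea is that composing $1_A$ with a suitable smooth function leaves $1_A$ unchanged but, via the chain rule, multiplies its derivative by something that vanishes everywhere.

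First I would fix a function $\psi \in C_c^\infty(\R)$ with $\psi(0) = 0$, $\psi(1) = 1$, and $\psi'(0) = \psi'(1) = 0$; such a $\psi$ clearly exists (for instance, take $\psi(t) = 3t^2 - 2t^3$ on $[-\frac{1}{4}, \frac{5}{4}]$, a function with the prescribed values and vanishing derivative at $0$ and $1$, and cut it off smoothly to have compact support). Since $1_A$ is $\{0,1\}$-valued on all of $W$, we have $\psi \circ 1_A = 1_A$ pointwise; the behavior of $\psi$ away from $\{0,1\}$ is irrelevant precisely because $1_A$ never takes those values.

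Now, by hypothesis $1_A \in \mathbb{D}^{1,2}$, so the chain rule (Lemma~\ref{chain-rule}) applies and yields
\begin{equation*}
  D 1_A = D\bigl(\psi(1_A)\bigr) = \psi'(1_A)\, D 1_A
\end{equation*}
as elements of $L^2(W;H)$. But the scalar function $\psi'(1_A)$ equals $\psi'(0) = 0$ on $A^c$ and $\psi'(1) = 0$ on $A$, hence $\psi'(1_A) = 0$ everywhere; therefore the identity above forces $D 1_A = 0$ in $L^2(W;H)$. By the corollary to Proposition~\ref{D-chaos}, this means $1_A$ equals some constant $c$ $\mu$-almost everywhere. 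Integrating, $\mu(A) = \int_W 1_A\,d\mu = c$, and since $1_A$ is genuinely $\{0,1\}$-valued we must have $c \in \{0,1\}$, so $\mu(A) \in \{0,1\}$.

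There is essentially no real obstacle here; the only point to keep in mind is that Lemma~\ref{chain-rule} is stated for $\psi \in C_c^\infty(\R)$, so one must actually truncate $\psi$ to compact support rather than, say, using $\psi(t) = t^2$ globally. (If one prefers, the same conclusion follows from taking $\psi$ equal to $t^2$ near $\{0,1\}$: the chain rule then gives $D 1_A = 2\,1_A\, D 1_A$, i.e. $(1 - 2\cdot 1_A)\,D 1_A = 0$, and since $1 - 2\cdot 1_A = \pm 1$ is nowhere zero, again $D 1_A = 0$.)
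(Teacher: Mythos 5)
Your proof is correct and follows essentially the same route as the paper: apply the chain rule to a compactly supported smooth $\psi$ fixing $\{0,1\}$ to conclude $D1_A = \psi'(1_A)\,D1_A = 0$, then invoke the fact that elements of $\mathbb{D}^{1,2}$ with vanishing derivative are constant. The paper uses $\psi(s)=s^2$ near $[0,1]$ and the resulting identity $D1_A = 2\cdot 1_A\, D1_A$ with a case split on $A$ versus $A^c$ — exactly the variant you note in your final parenthetical — while your choice of $\psi$ with $\psi'(0)=\psi'(1)=0$ makes the derivative vanish without the case analysis; the difference is cosmetic.
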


\begin{proof}
  Let $\psi \in C^\infty_c(\R)$ with $\psi(s) = s^2$ on $[0,1]$.  Then
  \begin{equation*}
    D 1_A = D \psi(1_A) = 2 1_A D 1_A
  \end{equation*}
  so by considering whether $x \in A$ or $x \in A^c$ we have $D 1_A =
  0$ a.e.  Then by an above lemma, $1_A$ is (a.e.) equal to a constant.
\end{proof}

As a closed densely defined operator between Hilbert spaces $L^2(W)$
and $L^2(W;H)$, $D$ has an adjoint operator $\delta$, which is a
closed densely defined operator from $L^2(W;H)$ to $L^2(W)$.

To get an idea what $\delta$ does, let's start by evaluating it on
some simple functions.

\begin{proposition}\label{delta-simple}
  If $u(x) = G(x) h$ where $G \in \mathbb{D}^{1,2}$ and $h \in H$,
  then $u \in \dom(\delta)$ and 
  \begin{equation*}
    \delta u(x) = G(x) \inner{h}{x}_H - \inner{DG(x)}{h}_H.
  \end{equation*}
\end{proposition}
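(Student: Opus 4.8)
The plan is to unwind the definition of $\delta$ as the Hilbert-space adjoint of $D$. Write $v := G(x)\inner{h}{x}_H - \inner{DG(x)}{h}_H$; by definition of the adjoint, the assertion ``$u = Gh \in \dom(\delta)$ with $\delta u = v$'' is equivalent to the conjunction of (i) $v \in L^2(W,\mu)$, and (ii) $\inner{DF}{u}_{L^2(W;H)} = \inner{F}{v}_{L^2(W)}$ for every $F \in \dom(D) = \mathbb{D}^{1,2}$. Since the smooth cylinder functions $\mathcal{F} C_c^\infty(W)$ are, by construction, a core for $D$, and since $F_n \to F$ in $\mathbb{D}^{1,2}$ forces $DF_n \to DF$ in $L^2(W;H)$ while $F_n \to F$ in $L^2(W)$, it suffices to verify (ii) for $F \in \mathcal{F} C_c^\infty(W)$ (using $u \in L^2(W;H)$, which is clear, and $v \in L^2(W)$, which we must arrange).

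For $F$ \emph{and} $G$ both in $\mathcal{F} C_c^\infty(W)$, assertion (ii) is nothing but equation (\ref{product-byparts}) rewritten:
\begin{equation*}
  \inner{DF}{Gh}_{L^2(W;H)} = \int_W G(x)\inner{DF(x)}{h}_H\,\mu(dx)
  = \int_W F(x)\bigl(G(x)\inner{h}{x}_H - \inner{DG(x)}{h}_H\bigr)\,\mu(dx)
  = \inner{F}{v}_{L^2(W)},
\end{equation*}
and in this case $v$ is manifestly in $L^2$ (a bounded function times a Gaussian random variable, plus a bounded function). So the proposition holds whenever $G$ is a smooth cylinder function, and the substance is passing to a general $G \in \mathbb{D}^{1,2}$. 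The one non-routine point there is that the candidate $\delta u$ need not obviously be square integrable: $\inner{DG(x)}{h}_H \in L^2(W)$ automatically, but $G\inner{h}{\cdot}_H$ is a priori only in $L^1(W)$.

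To overcome this I would first establish, for $G \in \mathcal{F} C_c^\infty(W)$, the a priori bound $\norm{G\inner{h}{\cdot}_H}_{L^2(W)} \le \norm{h}_H\bigl(\norm{G}_{L^2(W)} + 2\norm{DG}_{L^2(W;H)}\bigr)$. It is enough to treat $h = i^*f$ with $f \in W^*$, since these are dense in $H$; then $\inner{h}{\cdot}_H = f$ in $L^2(W,\mu)$, $Df = h$ is constant, and $G^2\inner{h}{\cdot}_H = G^2 f$ is again a smooth cylinder function. Applying the integration-by-parts lemma (\ref{byparts-eq}) to $G^2 f$, together with the product rule $D(G^2 f) = 2fG\,DG + G^2 h$, gives
\begin{equation*}
  \int_W G^2\inner{h}{x}_H^2\,\mu(dx) = \norm{h}_H^2\norm{G}_{L^2(W)}^2 + 2\int_W \inner{h}{x}_H\,G\,\inner{DG(x)}{h}_H\,\mu(dx);
\end{equation*}
bounding the last integral by Cauchy--Schwarz (with $\norm{\inner{DG(\cdot)}{h}_H}_{L^2(W)} \le \norm{h}_H\norm{DG}_{L^2(W;H)}$) and solving the resulting quadratic inequality for $a := \norm{G\inner{h}{\cdot}_H}_{L^2(W)}$ yields the displayed bound. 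Consequently, for general $G \in \mathbb{D}^{1,2}$, choosing $G_n \in \mathcal{F} C_c^\infty(W)$ with $G_n \to G$ in $\mathbb{D}^{1,2}$, the bound applied to $G_n - G_m$ shows $(G_n\inner{h}{\cdot}_H)_n$ is Cauchy in $L^2(W)$; passing to an a.e.\ convergent subsequence identifies its limit with $G\inner{h}{\cdot}_H$, so $G\inner{h}{\cdot}_H \in L^2(W)$ with the same bound, and hence $v \in L^2(W)$.

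Finally I would close by density. With $G_n$ as above, set $v_n := G_n\inner{h}{\cdot}_H - \inner{DG_n(\cdot)}{h}_H$; then $v_n \to v$ in $L^2(W)$ (first term by the bound just proved, applied to $G_n - G$; second since $DG_n \to DG$ in $L^2(W;H)$). For fixed $F \in \mathcal{F} C_c^\infty(W)$, the function $\inner{DF(\cdot)}{h}_H$ is bounded, so $\inner{DF}{G_n h}_{L^2(W;H)} = \int_W G_n\inner{DF(\cdot)}{h}_H\,\mu \to \int_W G\inner{DF(\cdot)}{h}_H\,\mu = \inner{DF}{Gh}_{L^2(W;H)}$, while $\inner{F}{v_n}_{L^2(W)} \to \inner{F}{v}_{L^2(W)}$. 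Letting $n \to \infty$ in the cylinder identity of the second paragraph gives $\inner{DF}{u}_{L^2(W;H)} = \inner{F}{v}_{L^2(W)}$ for all $F \in \mathcal{F} C_c^\infty(W)$, hence for all $F \in \mathbb{D}^{1,2}$ by the core property. By definition of $\delta = D^*$ this says precisely that $u \in \dom(\delta)$ and $\delta u = v$. I expect the $L^2$-bound on $G\inner{h}{\cdot}_H$ to be the only genuinely non-routine ingredient; the rest is the standard ``reduce to cylinder functions, then use density and closedness'' pattern.
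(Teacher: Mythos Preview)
Your approach is the same as the paper's in outline: establish the cylinder case from (\ref{product-byparts}), then approximate a general $G\in\mathbb{D}^{1,2}$ by cylinder functions. The paper's proof is a two-line sketch that stops precisely at the point you identified as non-routine, even adding parenthetically ``Hmm, maybe we actually need $G \in L^{2+\epsilon}(W)$ for this to work completely.'' Your argument shows no such extra integrability is needed: the a priori bound $\norm{G\inner{h}{\cdot}_H}_{L^2}\le\norm{h}_H\bigl(\norm{G}_{L^2}+2\norm{DG}_{L^2(W;H)}\bigr)$, obtained by applying (\ref{byparts-eq}) to $G^2 f$ and solving the resulting quadratic inequality, closes the gap the author left open. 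So your proof is genuinely more complete than the paper's.

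One small point to tidy: when you invoke (\ref{byparts-eq}) with $F=G^2 f$, that function is a polynomial-growth cylinder function rather than an element of $\mathcal{F}C_c^\infty(W)$, so strictly speaking you are using the extension of (\ref{byparts-eq}) to $\mathbb{D}^{1,2}$. This is immediate (both sides are continuous in the $\mathbb{D}^{1,2}$-norm, since $\inner{h}{\cdot}_H\in L^2$), and the paper's lemma on polynomial-growth cylinder functions puts $G^2 f$ in $\mathbb{D}^{1,2}$, but it is worth saying explicitly. Likewise, the density reduction from general $h\in H$ to $h=i^*f$ works because at that stage $G$ is bounded, so $G\inner{h_n}{\cdot}_H\to G\inner{h}{\cdot}_H$ in $L^2$ when $h_n\to h$ in $H$; you might note this.
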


\begin{proof}
  If $G \in \mathcal{F} C_c^\infty$, use (\ref{product-byparts}).
  Otherwise, approximate.  (Hmm, maybe we actually need $G \in
  L^{2+\epsilon}(W)$ for this to work completely.)
\end{proof}

Recall in the special case of Brownian motion, where $W = C([0,1])$
and $H = H^1_0([0,1])$, we had found that $\inner{h}{\omega}_H =
\int_0^1 \dot{h}(s) dB_s(\omega)$, i.e. $\inner{h}{\cdot}_H$
produces the Wiener integral, a special case of the It\^o integral
with a deterministic integrand.  So it appears that $\delta$ is also
some sort of integral.  We call it the Skorohod integral.  In fact,
the It\^o integral is a special case of it!
    
\begin{theorem}
  Let $A(t, \omega)$ be an adapted process in $L^2([0,1] \times
  W)$.  Set $u(t, \omega) = \int_0^t A(\tau,\omega)\,d\tau$, so $u(\cdot,
  \omega) \in H$ for each $\omega$.   Then $u \in \dom(\delta)$ and
  $\delta u = \int_0^t A_\tau dB_\tau$.
\end{theorem}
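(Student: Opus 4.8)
The plan is to prove the identity first for elementary adapted integrands, where it reduces to Proposition \ref{delta-simple} together with the identification $\inner{h}{\cdot}_H = \int_0^1 \dot h(s)\,dB_s$ of the Wiener integral, and then to pass to general $A$ by density, using that $\delta$ is a closed operator.

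First I would work with step processes $A(\tau,\omega) = \sum_{i=0}^{n-1} F_i(\omega)\, 1_{(t_i,t_{i+1}]}(\tau)$, where $0 = t_0 < \dots < t_n = 1$ and each $F_i$ is a bounded smooth cylinder function depending only on values $B_{s_1},\dots,B_{s_m}$ with all $s_j \le t_i$ (so $F_i \in \mathbb{D}^{1,2}$ and $F_i$ is measurable with respect to $\sigma(B_s : s \le t_i)$). Such processes are dense in the adapted subspace of $L^2([0,1]\times W)$, by the standard approximation used to build the It\^o integral. For such $A$ the primitive is $u(\cdot,\omega) = \sum_i F_i(\omega)\, h_i$, where $h_i \in H$ is the function with $\dot h_i = 1_{(t_i,t_{i+1}]}$ (and $h_i(0)=0$); thus $u$ is a finite sum of functions of the form $G h$ covered by Proposition \ref{delta-simple}.

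Next I would compute. By Proposition \ref{delta-simple} and linearity, $\delta u = \sum_i \left( F_i \inner{h_i}{\cdot}_H - \inner{DF_i}{h_i}_H \right)$. Using $\inner{h_i}{\cdot}_H = \int_0^1 \dot h_i(s)\,dB_s = B_{t_{i+1}} - B_{t_i}$, the first term is $F_i(B_{t_{i+1}} - B_{t_i})$. For the correction term, transport everything through the isometry $H \to L^2([0,1])$, $h \mapsto \dot h$, so that $DF_i$ is represented by the $L^2$-function $s \mapsto \dot{(DF_i)}(s)$; since $J\delta_{s_j}$ has derivative $1_{[0,s_j]}$ and $F_i$ depends only on $B_{s_j}$ with $s_j \le t_i$, this representative vanishes for $s > t_i$, whence $\inner{DF_i}{h_i}_H = \int_0^1 \dot{(DF_i)}(s)\, 1_{(t_i,t_{i+1}]}(s)\,ds = 0$. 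Therefore $\delta(F_i h_i) = F_i(B_{t_{i+1}} - B_{t_i})$, and summing gives $\delta u = \sum_i F_i(B_{t_{i+1}} - B_{t_i}) = \int_0^1 A_\tau\,dB_\tau$, the It\^o integral of the step process. Finally, the map $A \mapsto u = \int_0^\cdot A(\tau,\cdot)\,d\tau$ is an isometry of $L^2([0,1]\times W)$ into $L^2(W;H)$ because $\norm{u(\cdot,\omega)}_H^2 = \int_0^1 |A(\tau,\omega)|^2\,d\tau$; so for a general adapted $A$ I would pick elementary $A_n \to A$ in $L^2([0,1]\times W)$, note $u_n \to u$ in $L^2(W;H)$ and $\delta u_n = \int_0^1 A_n\,dB \to \int_0^1 A\,dB$ in $L^2(W)$ by the It\^o isometry, and conclude $u \in \dom(\delta)$ with $\delta u = \int_0^1 A_\tau\,dB_\tau$ from closedness of $\delta$.

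The hard part will be the vanishing of the Malliavin derivative of $F_i$ past time $t_i$ — that is, pinning down the relationship between adaptedness and the support of $DF_i$. Restricting to cylinder functionals built from increments before $t_i$ makes this immediate, but it then falls to me to check that such elementary processes are genuinely dense in the adapted subspace of $L^2([0,1]\times W)$, which is the usual (but not quite free) approximation step. I would also want to confirm that Proposition \ref{delta-simple} applies cleanly here, without the integrability caveat flagged in its proof; this is fine since the $F_i$ are bounded smooth cylinder functions.
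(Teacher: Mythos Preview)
Your proposal is correct and follows essentially the same route as the paper: verify the identity on elementary adapted integrands of the form $F\,1_{(r,s]}$ with $F$ a smooth cylinder function of $B$ at times $\le r$, use Proposition~\ref{delta-simple} together with the vanishing of $\inner{DF}{h}_H$ (since the relevant $h$ is zero at those earlier times), and then pass to general adapted $A$ via the isometry $A\mapsto\int_0^\cdot A$ into $L^2(W;H)$, the It\^o isometry, and closedness of $\delta$. The only cosmetic difference is that you bundle several time intervals at once and phrase the vanishing of the correction term via the support of $\dot{(DF_i)}$ in $L^2([0,1])$, while the paper treats a single interval and observes $h(t_j)=0$ directly; these are the same computation.
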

    
\begin{proof}
  First suppose $A$ is of the form $A(t, \omega) = 1_{(r,s]}(t)
  F(\omega)$ where $F(\omega) = \phi(B_{t_1}(\omega), \dots,
  B_{t_n}(\omega))$ for some $0 \le t_1,
  \dots, t_n \le r$ and $\phi \in C_c^\infty(\R^n)$.  (Recall
  $B_t(\omega) = \omega(t)$ is just the evaluation map, a continuous
  linear functional of $\omega$.)   We have $F \in \mathcal{F}_r$ so
  $A$ is adapted.  Then $u(t, \omega) = h(t)
  F(\omega)$ where $h(t) = \int_0^t 1_{(r,s)}(\tau) d\tau$.
  In particular $h(t) = 0$ for $t \le r$, so
  \begin{equation*}
    \inner{DF(\omega)}{h}_H = \sum_i \partial_i \phi(B_{t_1}(\omega),
    \dots, B_{t_n}(\omega)) h(t_i) = 0.
  \end{equation*}
  Thus
  \begin{align*}
    \delta u(\omega) &= F(\omega) \inner{h}{\omega}_H -
    \cancel{\inner{DF(\omega)}{h}_H} \\
    &= F(\omega) (B_r(\omega)-B_s(\omega)) \\
    &= \int_0^1 A_\tau \,dB_\tau (\omega).
  \end{align*}
  Now we just need to do some approximation.  If $F \in L^2(W,
  \mathcal{F}_r)$, we can approximate $F$ in $L^2$ by cylinder
  functions $F_n$ of the above form.  Then defining $u_n$, $u$
  accordingly we have $u_n \to u$ in $L^2(W;H)$ and $\delta u_n \to F
  \cdot (B_r - B_s) = \int_0^1 A_\tau \,dB_\tau$ in $L^2(W)$, so the
  conclusion holds for $A = F(\omega) 1_{(r,s]}(t)$.  By linearity it
  also holds for any linear combination of such processes.  The set of
  such linear combinations is dense in the adapted processes in
  $L^2([0,1] \times W)$, so choose such $A^{(n)} \to A$.  Note that
  $A(t,\omega) \mapsto \int_0^t A(\tau, \omega)\,d\tau$ is an isometry
  of $L^2([0,1] \times W)$ into $L^2(W; H)$ so the corresponding $u_n$
  converge to $u$ in $L^2(W;H)$, and by the It\^o isometry, $\delta
  u_n = \int_0^1 A^{(n)}_\tau dB_\tau \to \int_0^1 A_\tau dB_\tau$.
  Since $\delta$ is closed we are done.
\end{proof}

This is neat because defining an integral in terms of $\delta$ lets us
integrate a lot more processes.  

\begin{example}
  Let's compute $\int_0^1 B_1 dB_s$ (Skorohod).  We have $u(t,\omega)
  = t B_1 = h(t) G(\omega)$ where $h(t)=t$, and $G(\omega) =
  \phi(f(\omega))$ where $\phi(x) = x$, and
  $f(\omega) = \omega(1)$.  So $\delta u (\omega) = G(\omega)
  \inner{h}{\omega}_H - \inner{DG(\omega)}{h}_H$.  But
  $\inner{h}{\omega} = \int_0^1 \dot{h}(t) dB_t = B_1$.  And
  $DG(\omega) = Jf$ so $\inner{DG(\omega)}{h}_H = f(h) = h(1) = 1$.
  So $\int_0^1 B_1 dB_s = B_1^2 - 1$.

  Note that $B_1$, although it doesn't depend on $t$, is not adapted.
  Indeed, a random variable is an adapted process iff it is in
  $\mathcal{F}_0$, which means it has to be constant.
\end{example}

All the time derivatives and integrals here are sort of a red herring;
they just come from the fact that the Cameron--Martin inner product
has a time derivative in it.

Another cool fact is that we can use this machinery to construct
integration with respect to other continuous Gaussian processes.  Again let $W =
C([0,1])$ (or an appropriate subspace), $\mu$ the law of a continuous
centered 
Gaussian process $X_t$ with covariance function $a(s,t) = E[X_s X_t]$.
Since $\delta$ applies to elements of $L^2(W; H)$, and we want to
integrate honest processes (such as elements of $L^2([0,1] \times
W)$), we need some way to map processes to elements of $L^2(W;H)$.  For Brownian
motion it was $A_t \mapsto \int_0^t A_s ds$, an isometry of $L^2([0,1]
\times W) = L^2(W; L^2([0,1])) = L^2(W) \otimes L^2([0,1])$ into $L^2(W; H)$.  To construct such an map $\Phi$ in this
case, we start with the idea that we want $\int_0^t dX_s = X_t$, so we
should have $\Phi 1_{[0,T]} = J \delta_s \in H$.  We can extend this map
linearly to $\mathcal{E}$, the set of all step functions on $[0,1]$.
To make it an isometry, equip $\mathcal{E}$ with the inner product
defined by
\begin{equation*}
\inner{1_{[0,s]}}{1_{[0,t]}}_{\mathcal{E}} = \inner{J \delta_s}{J
  \delta_t}_H = a(s,t)
\end{equation*}
again extended by bilinearity.  It extends isometrically to the completion of
$\mathcal{E}$ under this inner product, whatever that may be.  So the
processes we can integrate are 
we can integrate processes from $L^2(W; \bar{\mathcal{E}}) = L^2(W)
\otimes \bar{\mathcal{E}}$ just by taking $\int_0^1 A\,dX = \delta
u$ where $u(\omega) = \Phi(A(\omega))$.

Exactly what are the elements of $L^2(W; \bar{\mathcal{E}})$ is a
little hard to say.  The elements of $\bar{\mathcal{E}}$ can't
necessarily be identified as functions on $[0,1]$; they might be
distributions, for instance.  But we for sure know it contains step
functions, so $L^2(W; \bar{\mathcal{E}})$ at least contains
``simple processes'' of the form $\sum Y_i 1_{[a_i, b_i]}(t)$.  In the
case of fractional Brownian motion, one can show that
$\bar{\mathcal{E}}$ contains $L^2([0,1])$, so in particular $\Phi$
makes sense for any process in $L^2(W \times [0,1])$.
Of course, there is still the question of whether $\Phi(A) \in \dom
\delta$.  

There's a chapter in Nualart which works out a lot of this in the
context of fractional Brownian motion.  Being able to integrate with
respect to fBM is a big deal, because fBM is not a semimartingale and
so it is not covered by any version of It\^o integration.

A couple of properties of $\delta$ in terms of the Wiener chaos:

\begin{enumerate}
  \item $\mathcal{H}_n(H) \subset \dom \delta$ for each $n$.  (Follows
  from Lemma \ref{delta-simple}.)

  \item For $u \in \dom \delta$, $J_n \delta u = \delta J_{n-1} u$.

    Using Lemma \ref{DJ-commute} and the fact that the $J_n$, being
    orthogonal projections, are self-adjoint, we have for any $F \in
    \mathbb{D}^{1,2}$,
    \begin{align*}
      \inner{J_n \delta u}{F}_{L^2(W)} &= \inner{\delta u}{J_n
      F}_{L^2(W)} \\ &= \inner{u}{D J_n F}_{L^2(W;H)} \\ &= \inner{u}{J_{n-1}
      D F}_{L^2(W;H)} \\ &= \inner{\delta J_{n-1} u}{F}_{L^2(W)}.
    \end{align*}
    $\mathbb{D}^{1,2}$ is dense in $L^2(W)$ so we are done.

  \item $J_0 \delta u = 0$.  For if $F \in L^2(W)$, then
  $\inner{J_0 \delta u}{F} = \inner{\delta u}{J_0 F} = \inner{u}{D J_0
  F}$.  But $J_0 F$ is a constant so $D J_0 F = 0$.
\end{enumerate}

\subsection{The Clark--Ocone formula}

Until further notice, we are working on classical Wiener space, $W =
C_0([0,1])$, with $\mu$ being Wiener measure.

A standard result in stochastic calculus is the \textbf{It\^o
  representation theorem}, which in its classical form says:

\begin{theorem}
  Let $\{B_t\}$ be a Brownian motion on $\R^d$, let
  $\{\mathcal{F}_t\}$ be the filtration it generates, and let $Z$ be an
  $L^2$ random variable which is $\mathcal{F}_1$-measurable (sometimes called a \textbf{Brownian functional}).
  Then there exists an adapted $L^2$ process $Y_t$ such that
  \begin{equation*}
    Z = \mathbb{E}[Z] + \int_0^1 Y_t \,dB_t, \quad \text{a.s.}
  \end{equation*}
\end{theorem}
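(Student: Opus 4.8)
The plan is to prove the representation first on a class of especially tractable random variables whose linear span is dense in $L^2(W,\mu)$, and then extend it to all of $L^2$ by a limiting argument built on the It\^o isometry. The convenient dense class is the family of stochastic exponentials
\begin{equation*}
  \mathcal{E}(h) := \exp\left(\inner{h}{\omega}_H - \frac{1}{2}\norm{h}_H^2\right), \qquad h \in H,
\end{equation*}
where we recall that on classical Wiener space $\inner{h}{\cdot}_H$ is realized as the Wiener integral $\int_0^1 \dot{h}(t)\,dB_t$. That the linear span of $\{\mathcal{E}(h) : h \in H\}$ is dense in $L^2(W,\mu)$ is standard; it can be obtained from the density of the trigonometric polynomials $\mathcal{T}$ (proved earlier) by analytic continuation of $\lambda \mapsto \mathcal{E}(\lambda h)$ from the imaginary axis to the real axis, or alternatively from the completeness of the Wiener chaos decomposition. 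I regard this density statement as \emph{the only genuinely nontrivial ingredient}: it is exactly where the ``smallness'' of the Brownian filtration is used, and everything else below is soft.

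For $Z = \mathcal{E}(h)$ the representation is explicit. Writing $X_t = \int_0^t \dot{h}(s)\,dB_s$ and $M_t = \exp\left(X_t - \frac{1}{2}\int_0^t |\dot{h}(s)|^2\,ds\right)$, It\^o's formula gives $dM_t = M_t\,\dot{h}(t)\,dB_t$, so that
\begin{equation*}
  \mathcal{E}(h) = M_1 = 1 + \int_0^1 M_t\,\dot{h}(t)\,dB_t = \mathbb{E}[\mathcal{E}(h)] + \int_0^1 Y_t\,dB_t
\end{equation*}
with $Y_t := M_t\,\dot{h}(t)$, using that $\mathbb{E}[\mathcal{E}(h)] = 1$ (the same computation that appeared in the proof of the Cameron--Martin theorem). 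Here $M_t$ is adapted, and an elementary Gaussian moment computation shows $\int_0^1 \mathbb{E}[M_t^2]\,dt < \infty$, so $Y$ is an adapted process in $L^2([0,1]\times W)$; in fact $M_t = \mathbb{E}[\mathcal{E}(h)\mid\mathcal{F}_t]$. By linearity, every finite linear combination $Z'$ of stochastic exponentials satisfies $Z' = \mathbb{E}[Z'] + \int_0^1 Y'_t\,dB_t$ for some adapted $Y' \in L^2([0,1]\times W)$.

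Finally, take an arbitrary $\mathcal{F}_1$-measurable $Z \in L^2$ and choose $Z^{(n)} \to Z$ in $L^2(W,\mu)$, each $Z^{(n)}$ a linear combination of stochastic exponentials, with associated adapted integrands $Y^{(n)}$, so $Z^{(n)} - \mathbb{E}[Z^{(n)}] = \int_0^1 Y^{(n)}_t\,dB_t$. Since $Z^{(n)} - \mathbb{E}[Z^{(n)}] \to Z - \mathbb{E}[Z]$ in $L^2$, the It\^o isometry
\begin{equation*}
  \norm{Y^{(n)} - Y^{(m)}}_{L^2([0,1]\times W)}^2 = \norm{(Z^{(n)} - \mathbb{E}[Z^{(n)}]) - (Z^{(m)} - \mathbb{E}[Z^{(m)}])}_{L^2(W)}^2
\end{equation*}
shows $\{Y^{(n)}\}$ is Cauchy in $L^2([0,1]\times W)$; let $Y$ be its limit. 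The adapted processes form a closed subspace of $L^2([0,1]\times W)$ — passing to an a.e.-convergent subsequence, $\mathcal{F}_t$-measurability of the $Y^{(n)}_t$ is inherited (up to null sets) by $Y_t$ for a.e. $t$ — so $Y$ has an adapted version. Applying the It\^o isometry once more, $\int_0^1 Y^{(n)}_t\,dB_t \to \int_0^1 Y_t\,dB_t$ in $L^2$, while the left-hand sides converge to $Z - \mathbb{E}[Z]$; hence $Z = \mathbb{E}[Z] + \int_0^1 Y_t\,dB_t$. The same argument run coordinatewise yields the $\R^d$-valued statement.
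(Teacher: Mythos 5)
Your proof is correct and follows exactly the strategy the paper only sketches: the range of the It\^o integral is closed by the isometry, so it suffices to exhibit a dense set of representable functionals, and you supply the dense set (the stochastic exponentials $\mathcal{E}(h)$, handled explicitly via It\^o's formula for the exponential martingale) that the paper leaves as ``one can explicitly produce a dense set.'' The one ingredient you defer --- density of the span of $\{\mathcal{E}(h)\}$ in $L^2(W,\mu)$ --- is correctly identified as the crux and both routes you name (analytic continuation from the trigonometric polynomials $\mathcal{T}$, or completeness of the Wiener chaos) are valid, so this is a faithful completion of the paper's argument rather than a different one.
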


\begin{proof}[Sketch]
  This is claiming that the range of the It\^o integral contains all
  the $L^2$ random variables with mean zero (which we'll denote
  $\mathcal{H}_0^\perp$).  Since the It\^o integral is an isometry,
  its range is automatically closed, so it suffices to show it is
  dense in $\mathcal{H}_0^\perp$.  One can explicitly produce a dense set.
\end{proof}

\begin{exercise}
  Look up a proof.
\end{exercise}

An important application of this theorem is in finance.  Suppose we
have a stochastic process $\{X_t\}$ which gives the price of a stock
(call it Acme) at time $t$.  (Temporarily you can think $X_t = B_t$ is
Brownian motion, though this is not a good model and we might improve
it later.)  We may want to study an \textbf{option} or
\textbf{contingent claim}, some contract whose ultimate value $Z$ is
determined by the behavior of the stock.  For example:

\begin{itemize}
  \item A \textbf{European call option} is a contract which gives you
  the right, but not the obligation, to buy one share of Acme at time
  1 for a pre-agreed \textbf{strike price} $K$.  So if the price $X_1$
  at time 1 is greater than $K$, you will \textbf{exercise} your
  option, buy a share for $K$ dollars, and then you can immediately
  sell it for $X_1$ dollars, turning a quick profit of $Z = X_1 - K$
  dollars.  If $X_1 < K$, then you should not exercise the option; it
  is  worthless, and $Z=0$.  Thus we can write $Z = (X_1 - K)^+$.

  \item A \textbf{European put option} gives the right to sell one
  share of Acme at a price $K$.  Similarly we have $Z = (K - X_1)^+$.

  \item A \textbf{floating lookback put option} gives one the right,
  at time 1, to sell one share of Acme at the highest price it ever
  attained between times 0 and 1.  So $Z = \sup_{t \in [0,1]} X_t - X_1$. 

  \item There are many more.
\end{itemize}

You can't lose money with these contracts, because you can always just
not exercise it, and you could gain a profit.  Conversely, your
counterparty can only lose money.  So you are going to have to pay
your counterparty some money up front to get them to enter into such a
contract.  How much should you pay?  A ``fair'' price would be
$\mathbb{E}[Z]$.  But it may be that the contract would be worth more or less to you
than that, depending on your appetite for risk.  (Say more about
this.)

Here  the It\^o representation theorem comes to the rescue.  If $X_t =
B_t$ is a Brownian motion, it says that $Z = E[Z] + \int_0^1
Y_t\,dB_t$.  This represents a \textbf{hedging strategy}.  Consider a
trading strategy where at time $t$ we want to own $Y_t$ shares of Acme
(where we can hold or borrow cash as needed to achieve this; negative
shares are also okay because we can sell short).  $Y_t$ is adapted,
meaning the number of shares to own can be determined by what the
stock has already done.  A moment's
thought shows that the net value of your portfolio at time $1$ is
$\int_0^1 Y_t \,dB_t$.  Thus, if we start with $\mathbb{E}[Z]$ dollars in the
bank and then follow the strategy $Y_t$, at the end we will have
exactly $Z$ dollars, almost surely.  We can \textbf{replicate} the
option $Z$ for $\mathbb{E}[Z]$ dollars (not counting transaction costs, which
we assume to be negligible).  So anybody that wants more than
$\mathbb{E}[Z]$ dollars is ripping us off, and we shouldn't pay it even if
we would be willing to.  

So a key question is whether we can explicitly find $Y_t$.  

In Wiener space notation, $Z$ is an element of $L^2(W,\mu)$, which we
had usually called $F$.  Also, now $\mathcal{F}_t$ is the
$\sigma$-algebra generated by the linear functionals $\{\delta_s : s
\le t\}$; since these span a weak-* dense subset of $W^*$ we have
$\mathcal{F}_1 = \sigma(W^*) = \mathcal{B}_W$, the Borel
$\sigma$-algebra of $W$.  

Let $L^2_a([0,1] \times W)$ be the space of adapted processes.

\begin{exercise}
  $L^2_a([0,1] \times W)$ is a closed subspace of $L^2([0,1] \times W)$.
\end{exercise}

\begin{exercise}
  $Y_t \mapsto \mathbb{E}[Y_t | \mathcal{F}_t]$ is orthogonal projection from
  $L^2([0,1] \times W)$ onto $L^2_a([0,1] \times W)$.
\end{exercise}

This section is tangled up a bit by some derivatives coming and
going.  Remember that $H$ is naturally isomorphic to $L^2([0,1])$
via the map $\Phi : L^2([0,1]) \to H$ given by $\Phi f(t) = \int_0^t
f(s) ds$ (its inverse is simply $\frac{d}{dt}$).  Thus $L^2(W;H)$ is
naturally isomorphic to $L^2([0,1] \times W)$.  Under this
identification, we can identify $D : \mathbb{D}^{1,2} \to L^2(W;H)$ with a map
that takes an element $F \in \mathbb{D}^{1,2}$ to a process $D_t F \in
L^2([0,1] \times W)$; namely, $D_t F(\omega) = \frac{d}{dt}
DF(\omega)(t) = \frac{d}{dt} \inner{DF(\omega)}{J \delta_t}_H =
\frac{d}{dt} \inner{DF(\omega)}{\cdot \wedge t}_H$.  So $D_t F =
\Phi^{-1} D F$.

The Clark--Ocone theorem states:

\begin{theorem}
  For $F \in \mathbb{D}^{1,2}$,
  \begin{equation}
    F = \int F\,d\mu + \int_0^1 \mathbb{E}[D_t F | \mathcal{F}_t]\,dB_t.
  \end{equation}
\end{theorem}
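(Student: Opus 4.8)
The plan is to deduce the formula from the It\^o representation theorem together with the duality between $D$ and $\delta$. By the It\^o representation theorem there is an adapted process $Y \in L^2_a([0,1] \times W)$ with
\begin{equation*}
  F = \mathbb{E}[F] + \int_0^1 Y_t\,dB_t \qquad \text{a.s.},
\end{equation*}
so, since $\mathbb{E}[F] = \int F\,d\mu$, it suffices to show that $Y_t = \mathbb{E}[D_t F \mid \mathcal{F}_t]$ as elements of $L^2([0,1]\times W)$. Note first that $D_\cdot F \in L^2([0,1]\times W)$ because $F \in \mathbb{D}^{1,2}$, so the conditional expectation makes sense; recall also (from the exercise earlier in this section) that $u \mapsto \mathbb{E}[u_t\mid\mathcal{F}_t]$ is the orthogonal projection of $L^2([0,1]\times W)$ onto $L^2_a([0,1]\times W)$. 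Hence it is enough to prove the orthogonality relation
\begin{equation*}
  \mathbb{E}\left[\int_0^1 Y_t u_t\,dt\right] = \mathbb{E}\left[\int_0^1 (D_t F)\, u_t\,dt\right] \qquad \text{for every } u \in L^2_a([0,1]\times W):
\end{equation*}
the difference $Y_\cdot - D_\cdot F$ is then orthogonal to every adapted process, so its projection onto $L^2_a$, which equals $Y_t - \mathbb{E}[D_t F\mid\mathcal{F}_t]$ because $Y$ is already adapted, must vanish.

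To prove the orthogonality relation, fix $u \in L^2_a([0,1]\times W)$ and evaluate $\mathbb{E}\big[F \int_0^1 u_t\,dB_t\big]$ in two ways. Substituting the It\^o representation of $F$ and using the It\^o isometry --- the cross term involving the constant $\mathbb{E}[F]$ drops out because $\int_0^1 u_t\,dB_t$ has mean zero --- gives $\mathbb{E}\big[F \int_0^1 u_t\,dB_t\big] = \mathbb{E}\big[\int_0^1 Y_t u_t\,dt\big]$. On the other hand, under the isometric identification $\Phi$ of $L^2([0,1]\times W)$ with $L^2(W;H)$, the theorem proved just above for adapted integrands says that $\Phi u \in \dom\delta$ with $\delta(\Phi u) = \int_0^1 u_t\,dB_t$; since $\delta$ is the adjoint of $D$ and $F \in \mathbb{D}^{1,2} = \dom D$, this yields $\mathbb{E}[F\,\delta(\Phi u)] = \inner{\Phi u}{DF}_{L^2(W;H)} = \mathbb{E}\big[\int_0^1 u_t (D_t F)\,dt\big]$, the last step being the unwinding of $\Phi$ (which sends $DF$ to $D_\cdot F$). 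Comparing the two computations gives the claim, and hence the theorem.

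The main difficulty here is not conceptual but a matter of keeping the identifications straight: one must check that \emph{every} adapted $L^2$ process $u$ has $\Phi u \in \dom\delta$ (this is exactly the content of the adapted-integrand theorem above, which equates $\delta(\Phi u)$ with the It\^o integral $\int_0^1 u_t\,dB_t$), and that the chain of isometries between $L^2([0,1]\times W)$, $L^2(W;L^2([0,1]))$ and $L^2(W;H)$, together with the relation $D_t F = \Phi^{-1}DF$, is used consistently throughout. The remaining ingredients --- the orthogonal-projection description of $L^2_a([0,1]\times W)$ and the It\^o isometry --- are the exercises and standard facts recorded earlier in this section. (One could instead verify the formula first on Wiener chaos, where everything reduces to iterated It\^o integrals, and then extend by density in $\mathbb{D}^{1,2}$, but the duality argument above is shorter given the machinery already in hand.)
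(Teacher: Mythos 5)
Your proposal is correct and is essentially the paper's own argument: both reduce to the chain of identities $\inner{F}{\delta(\Phi u)} = \inner{DF}{\Phi u} = \inner{PDF}{\Phi u}$ for adapted $u$, combined with the It\^o representation theorem, the It\^o isometry on adapted integrands, and the identification of conditional expectation with orthogonal projection onto $L^2_a$. The paper phrases this as showing $F = \delta PDF$ directly by testing against all of $\mathcal{H}_0^\perp$, while you first invoke the representation $F = \mathbb{E}[F] + \int Y\,dB$ and then identify $Y$; this is the same diagram chase read in the opposite order.
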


To prove this, we want to reduce everything to Skorohod integrals.
Let $E \subset L^2(W;H)$ be the image of $L^2_a([0,1] \times W)$ under
the isomorphism $\Phi$.  Then, since the Skorohod integral extends the
It\^o integral, we know that $E \subset \dom \delta$, and $\delta : E
\to L^2(W)$ is an isometry.  Moreover, by the It\^o representation
theorem, the image $\delta(E)$ is exactly $\mathcal{H}_0^\perp$,
i.e. the orthogonal complement of the constants, i.e. functions with
zero mean.

Let $P$ denote orthogonal projection onto
$E$, so that $\mathbb{E}[\cdot | \mathcal{F}_t] = \Phi^{-1} P \Phi$.

We summarize this discussion by saying that the following diagram
commutes.
\begin{equation}
\xymatrix{
  & L^2([0,1] \times W) \ar@{->}[rr]^{\mathbb{E}[\,\cdot\, |
  \mathcal{F}_t]} 
  \ar@{<=>}[dd]_{\Phi}
  && L^2_a([0,1] \times W) \ar@{<=>}[dd]_{\Phi} \ar@{->}[dr]^{\int_0^1
  \cdot \,dB_t}
\\
  \mathbb{D}^{1,2} \ar@{->}[ur]^{D_t} \ar@{->}[dr]_{D} & & & & L^2(W) \\
  & L^2(W;H) \ar@{->}[rr]^{P} 
  & & E \ar@{->}[ur]_{\delta}
}
\end{equation}


From this diagram, we see that the Clark--Ocone theorem reads:
\begin{equation}
  F = \int F d\mu + \delta P D F.
\end{equation}

Now the proof is basically just a diagram chase.

\begin{proof}
  Suppose without loss of generality that $\int F \,d\mu = 0$, so that
  $F \in \mathcal{H}_0^\perp$.  Let $u \in E$.  Then
  \begin{align*}
    \inner{F}{\delta u}_{L^2(W)} &= \inner{DF}{u}_{L^2(W;H)} \\
    &= \inner{DF}{Pu}_{L^2(W;H)} && \text{since $u \in E$} \\
    &= \inner{PDF}{u}_{L^2(W;H)} 
    \intertext{(since orthogonal projections are self-adjoint)} 
    &= \inner{\delta P D F}{\delta u}_{L^2(W)}
  \end{align*}
  since $PDF \in E$, $u \in E$, and $\delta$ is an isometry on $E$.
  As $u$ ranges over $E$, $\delta u$ ranges over
  $\mathcal{H}_0^\perp$, so we must have $F = \delta P D F$.
\end{proof}

\begin{exercise}
  If the stock price is Brownian motion ($X_t = B_t$), compute the hedging strategy
  $Y_t$ for a European call option $Z = (X_1 - K)^+$.
\end{exercise}

\begin{exercise}
  Again take $X_t = B_t$.  Compute the hedging strategy for a floating lookback call option $Z
  = M - X_1$, where $M = \sup_{t \in [0,1]} X_t$.  (Show that $D_t M =
  1_{\{t \le T\}}$ where $T = \arg\max X_t$, which is a.s. unique, by
  approximating $M$ by the maximum over a finite set.)
\end{exercise}

\begin{exercise}
  Let $X_t$ be a \textbf{geometric Brownian motion} $X_t =
  \exp\left(B_t - \frac{t}{2}\right)$.  Compute the hedging strategy
  for a European call option $Z = (X_1 - K)^+$.  (Note by It\^o's
  formula that $dX_t = X_t dB_t$.)
\end{exercise}

\section{Ornstein--Uhlenbeck process}

We've constructed one canonical process on $W$, namely Brownian motion
$B_t$, defined by having independent increments distributed according
to $\mu$ (appropriately scaled).  In finite dimensions, another
canonical process related to Gaussian measure is the
\textbf{Ornstein--Uhlenbeck process}.  This is a Gaussian process
$X_t$ which can be defined by the SDE $dX_t = \sqrt{2} dB_t - X_t dt$.
Intuitively, $X_t$ tries to move like a Brownian motion, but it
experiences a ``restoring force'' that always pulls it back toward the
origin.  Imagine a Brownian particle on a spring.  A key relationship
between $X_t$ and standard Gaussian measure $\mu$ is that $X_t$ has
$\mu$ as its stationary distribution: if we start $X_t$ in a random
position chosen according to $\mu$, then $X_t$ itself is also
distributed according to $\mu$ at all later times.  This also means
that, from any starting distribution, the distribution of $X_t$
converges to $\mu$ as $t \to \infty$.

One way to get a handle on the Ornstein--Uhlenbeck process, in finite
or infinite dimensions, is via its \textbf{Dirichlet form}.  Here are
some basics on the subject.

\subsection{Crash course on Dirichlet forms}

Suppose $X_t$ is a symmetric Markov process on some topological space
$X$ equipped with a Borel measure $m$.  This means that its transition
semigroup $T_t f(x) = E_x[f(X_t)]$ is a Hermitian operator on
$L^2(X,m)$.  If we add a few extra mild conditions (e.g. c\'adl\'ag,
strong Markov) and make $X_t$ a \textbf{Hunt process}, the semigroup
$T_t$ will be strongly continuous.  It is also \textbf{Markovian},
i.e. if $0 \le f \le 1$, then $0 \le T_t f \le 1$.  For example, if
$X = \R^n$, $m$ is Lebesgue measure, and $X_t$ is Brownian motion,
then $T_t f(x) = \frac{1}{(2 \pi t)^{n/2}} \int_{\R^n} f(y)
e^{-|x-y|^2/2t}\,m(dy)$ is the usual heat semigroup.

A strongly continuous contraction semigroup has an associated
\textbf{generator}, a nonnegative-definite self-adjoint operator $(L, D(L))$
which in general is unbounded, such that $T_t = e^{-tL}$.  For
Brownian motion it is $L = -\Delta/2$ with $D(L) = H^2(\R^n)$.

Associated to a nonnegative self-adjoint operator is an unbounded
bilinear symmetric form $\mathcal{E}$ with domain $\mathbb{D}$, such
that $\mathcal{E}(f,g) = (f,Lg)$ for every $f \in \mathbb{D}$ and $g
\in D(L)$.  We can take $(\mathcal{E},\mathbb{D})$ to be a closed
form, which essentially says that $\mathcal{E}_1(f,g) =
\mathcal{E}(f,g) + (f,g)$ is a Hilbert inner product on $\mathbb{D}$.
Note that $\mathbb{D}$ is generally larger  than $D(L)$.
For Brownian motion, $\mathcal{E}(f,g) = \int_{\R^n} \grad f \cdot
\grad g\,dm$ and $\mathbb{D} = H^1(\R^n)$.  Note $\mathcal{E}_1$ is
the usual Sobolev inner product on $H^1(\R^n)$.  $\mathcal{E}(f,f)$
can be interpreted as the amount of ``energy'' contained in the
distribution $f dm$.  Letting this distribution evolve under the
process will tend to reduce the amount of energy as quickly as possible.

When $T_t$ is Markovian, $(\mathcal{E},\mathbb{D})$ has a
corresponding property, also called Markovian.  Namely, if $f \in
\mathbb{D}$, let $\bar{f} = f \wedge 1 \vee 0$ be a ``truncated''
version of $f$.  The Markovian property asserts that $\bar{f} \in
\mathbb{D}$ and $\mathcal{E}(\bar{f}, \bar{f}) \le \mathcal{E}(f,f)$.
A bilinear, symmetric, closed, Markovian form on $L^2(X,m)$ is called
a \textbf{Dirichlet form}.

So far this is nice but not terribly interesting.  What's neat is that
this game can be played backwards.  Under certain conditions, one can
start with a Dirichlet form and recover a Hunt process with which it is
associated.  This is great, because constructing a process is usually
a lot of work, but one can often just write down a Dirichlet form.
Moreover, one finds that properties of the process often have
corresponding properties for the Dirichlet form.

For example, if the process $X_t$ has continuous sample paths, the
form $(\mathcal{E},\mathbb{D})$ will be \textbf{local}: namely, if
$f=0$ on the support of $g$, then
$\mathcal{E}(f,g) = 0$.  Conversely, if the form is local, then the
associated process will have continuous sample paths.  If additionally
the process is not killed inside $X$, the form is \textbf{strongly
  local}: if $f$ is constant on the support of $g$, then
$\mathcal{E}(f,g) = 0$; and the converse is also true.

So you might ask: under what conditions must a Dirichlet form be
associated with a process?  One sufficient condition is that
$(\mathcal{E},\mathbb{D})$ be \textbf{regular}: that $\mathbb{D} \cap
C_c(X)$ is $\mathcal{E}_1$-dense in $\mathbb{D}$ and uniformly dense
in $C_c(X)$.  We also have to assume that $X$, as a topological space,
is locally compact.  The main purpose of this condition is to exclude the
possibility that $X$ contains ``holes'' that the process would have to
pass through.  Unfortunately, this condition is useless in infinite
dimensions, since if $X = W$ is, say, an infinite-dimensional Banach
space, then $C_c(W) = 0$.

There is a more general condition called \textbf{quasi-regular}, which
is actually necessary and sufficient for the existence of a process.
It is sufficiently complicated that I won't describe it here; see Ma
and R\"ockner's book for the complete treatment.

\subsection{The Ornstein--Uhlenbeck Dirichlet form}

We are going to define the Ornstein--Uhlenbeck process via its
Dirichlet form.  For $F,G \in \mathbb{D}^{1,2}$, let $\mathcal{E}(F,G)
= \inner{DF}{DG}_{L^2(W;H)}$.  This form is obviously bilinear,
symmetric, and positive semidefinite.  With the domain
$\mathbb{D}^{1,2}$, $\mathcal{E}$ is also a closed form (in fact,
$\mathcal{E}_1$ is exactly the Sobolev inner product on
$\mathbb{D}^{1,2}$, which we know is complete).  

\begin{proposition}
  $(\mathcal{E}, \mathbb{D}^{1,2})$ is Markovian.
\end{proposition}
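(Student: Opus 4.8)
The plan is to verify the Markovian property directly. Given $F \in \mathbb{D}^{1,2}$, write $\bar{F} = F \wedge 1 \vee 0$ for the unit truncation; I must show $\bar{F} \in \mathbb{D}^{1,2}$ and $\mathcal{E}(\bar{F},\bar{F}) \le \mathcal{E}(F,F)$, that is, $\norm{D\bar F}_{L^2(W;H)} \le \norm{DF}_{L^2(W;H)}$. The hard part is that the truncation map $\phi(s) = s \wedge 1 \vee 0$ is only Lipschitz, not $C^1$, so the chain rule (Lemma \ref{chain-rule}, together with its extension to $C^1$ maps with bounded derivative noted right after it) does not apply to it directly. The fix is to mollify $\phi$ and then pass to the limit using the closedness of $D$.

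First I would take a mollification $\phi_n = \phi * \rho_n$, with $\rho_n$ a standard approximate identity. Then each $\phi_n \in C^\infty(\R)$, one has $0 \le \phi_n \le 1$, the derivatives satisfy $0 \le \phi_n' = 1_{(0,1)} * \rho_n \le 1$, and $\phi_n \to \phi$ uniformly on $\R$ since $\phi$ is uniformly continuous. As each $\phi_n$ is $C^1$ with bounded derivative, the chain rule gives $\phi_n(F) \in \mathbb{D}^{1,2}$ with $D\phi_n(F) = \phi_n'(F)\,DF$. Two consequences follow: since $\mu$ is a probability measure and $\norm{\phi_n - \phi}_\infty \to 0$, we get $\phi_n(F) \to \bar F$ in $L^2(W,\mu)$; and since $|\phi_n'| \le 1$ pointwise,
\begin{equation*}
  \norm{D\phi_n(F)}_{L^2(W;H)}^2 = \int_W |\phi_n'(F(x))|^2 \norm{DF(x)}_H^2\, \mu(dx) \le \int_W \norm{DF(x)}_H^2 \,\mu(dx) = \mathcal{E}(F,F),
\end{equation*}
so the sequence $\{D\phi_n(F)\}$ is bounded in the Hilbert space $L^2(W;H)$.

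Next I would extract, by weak compactness of bounded sets in a Hilbert space, a subsequence with $D\phi_{n_k}(F) \rightharpoonup \eta$ weakly in $L^2(W;H)$. Since $D : L^2(W,\mu) \to L^2(W;H)$ is a closed operator, its graph is a norm-closed linear subspace of $L^2(W,\mu) \times L^2(W;H)$, hence convex and norm-closed, hence weakly closed. The pairs $(\phi_{n_k}(F), D\phi_{n_k}(F))$ lie in this graph and converge weakly to $(\bar F, \eta)$ (using that strong convergence in the first coordinate is in particular weak convergence), so $(\bar F,\eta)$ lies in the graph; that is, $\bar F \in \mathbb{D}^{1,2}$ and $D\bar F = \eta$. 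Finally, weak lower semicontinuity of the norm gives $\norm{D\bar F}_{L^2(W;H)} = \norm{\eta}_{L^2(W;H)} \le \liminf_k \norm{D\phi_{n_k}(F)}_{L^2(W;H)} \le \sqrt{\mathcal{E}(F,F)}$, and squaring yields $\mathcal{E}(\bar F, \bar F) \le \mathcal{E}(F,F)$, which is exactly the Markovian property. The only genuine subtlety is the one flagged above — getting past the non-smoothness of $\phi$ — and the weak-compactness/closed-graph device handles it without having to identify $D\bar F$ explicitly (e.g. as $1_{\{0 < F < 1\}} DF$, which would further require knowing that $DF$ vanishes a.e. on the level sets $\{F = 0\}$ and $\{F = 1\}$).
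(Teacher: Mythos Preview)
Your proof is correct and follows essentially the same approach as the paper's: approximate the truncation map by smooth functions, apply the chain rule to get a uniform bound on $\norm{D\phi_n(F)}_{L^2(W;H)}$, and then use weak compactness together with the (weak) closedness of the graph of $D$ to conclude. Your mollification $\phi_n = \phi * \rho_n$ is in fact slightly tidier than the paper's choice, since it yields $|\phi_n'| \le 1$ directly and so avoids the auxiliary $\epsilon$ that the paper introduces and then sends to zero; your write-up also spells out the Alaoglu/closed-graph step that the paper leaves as an exercise.
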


\begin{proof}
  Fix $\epsilon > 0$.  Let $\varphi_n \in C^\infty(\R)$ be a sequence
  of smooth functions with $0 \le \varphi_n \le 1$, $|\varphi_n'| \le
  1+\epsilon$, and $\varphi_n(x) \to x \wedge 1 \vee 0$ pointwise.
  (Draw a picture to convince yourself this is possible.)  Then
  $\varphi_n(F) \to F \wedge 1 \vee 0$ in $L^2(W)$ by dominated
  convergence.  Then, by the chain rule, for $F \in \mathbb{D}^{1,2}$,
  we have $\norm{D \varphi_n(F)}_{L^2(W;H)} = \norm{\varphi_n'(F)
  DF}_{L^2(W;H)} \le (1+\epsilon) \norm{DF}_{L^2(W;H)}$.  It follows
  from Alaoglu's theorem that $F \wedge 1 \vee 0 \in
  \mathbb{D}^{1,2}$, and moreover, $\norm{D[F \wedge 1 \vee
  0]}_{L^2(W;H)} \le (1+\epsilon) \norm{DF}_{L^2(W;H)}$.  Letting
  $\epsilon \to 0$ we are done.
\end{proof}

\begin{exercise}
  Fill in the details in the preceding proof.
\end{exercise}

\begin{theorem}
  $(\mathcal{E}, \mathbb{D}^{1,2})$ is quasi-regular.  Therefore,
  there exists a Hunt process $X_t$ whose transition semigroup is
  $T_t$, the semigroup corresponding to $(\mathcal{E}, \mathbb{D}^{1,2})$.
\end{theorem}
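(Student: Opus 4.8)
The plan is to verify the three conditions in Ma and R\"ockner's definition of a quasi-regular Dirichlet form, and then to invoke their general existence theorem. We already know $(\mathcal{E},\mathbb{D}^{1,2})$ is a symmetric Dirichlet form --- it is bilinear and symmetric by inspection, closed because $\mathcal{E}_1$ is the (complete) $\mathbb{D}^{1,2}$-Sobolev norm, and Markovian by the preceding proposition --- so the content is quasi-regularity, which requires: (QR1) an increasing sequence of \emph{compact} sets $K_m \subset W$ with $\operatorname{Cap}(W \setminus K_m) \to 0$ (an $\mathcal{E}$-nest); (QR2) an $\mathcal{E}_1^{1/2}$-dense subset of $\mathbb{D}^{1,2}$ whose members admit $\mathcal{E}$-quasi-continuous $\mu$-versions; and (QR3) a countable family in $\mathbb{D}^{1,2}$ with quasi-continuous versions separating the points of $W$ off some set of capacity zero.

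Conditions (QR2) and (QR3) are easy. For (QR2) use the smooth cylinder functions $\mathcal{F}C_c^\infty(W)$: they are continuous on $W$ (hence quasi-continuous), and they are dense in $\mathbb{D}^{1,2}$ by the very definition of $\mathbb{D}^{1,2}$. For (QR3), each $f \in W^*$ lies in $\mathbb{D}^{1,2}$ --- it is a square-integrable Gaussian random variable and the $\mathbb{D}^{1,2}$-limit of its truncations $\varphi_N \circ f$ for smooth $\varphi_N(s)\to s$, using $Df = Jf$ and dominated convergence in $L^2(W;H)$ --- and it is continuous, hence quasi-continuous; and the countable family $\{f_n\} \subset W^*$ from Lemma \ref{weak-sigma-field}, which satisfies $\norm{x}_W = \sup_n |f_n(x)|$ for every $x$, separates the points of $W$, with empty exceptional set.

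The crux is (QR1): an $\mathcal{E}$-nest of compact sets. Tightness of $\mu$ alone (which holds since $W$ is Polish) is not enough, because we must control \emph{capacities}, not merely measures. The idea is to use the expansion structure of $(W,H,\mu)$. Fix a $q$-orthonormal basis $\{e_k\} \subset W^*$ of $K$ adapted to an increasing sequence of finite-rank projections on $H$ with dense union, as in the proof of Gross's theorem (Theorem \ref{gross-theorem}); then $h_k := Je_k$ is an orthonormal basis of $H$, the $e_k$ are i.i.d.\ $N(0,1)$ under $\mu$, and the truncations $S_n(x) = \sum_{k\le n} e_k(x) h_k$ converge to $x$ in $W$-norm for $\mu$-a.e.\ $x$, so $\rho_n(x) := \norm{x - S_n(x)}_W \to 0$ $\mu$-a.e. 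One then takes
\[
  K_m = \Big( \bigcap_k \{\, x : |e_k(x)| \le c_k^{(m)} \,\} \Big) \;\cap\; \Big( \bigcap_n \{\, x : \rho_n(x) \le \delta_n^{(m)} \,\} \Big),
\]
with $c_k^{(m)} \uparrow \infty$ in $k$ and $\delta_n^{(m)} \downarrow 0$ in $n$: such a $K_m$ is closed, has bounded $e_k$-coordinates, and its points are uniformly $W$-approximated by their finite-rank truncations $S_n$ (which lie in a fixed finite-dimensional subspace), so $K_m$ is totally bounded, hence compact. To bound $\operatorname{Cap}(W \setminus K_m)$ one assembles cutoff functions in $\mathbb{D}^{1,2}$: a smooth function of $|e_k(x)|/c_k^{(m)}$ that equals $1$ where $|e_k(x)| \ge c_k^{(m)}$, of $\mathcal{E}_1$-energy at most $C e^{-(c_k^{(m)})^2/2}$ by the Gaussian tail bound (\ref{gaussian-tail-1d}); and a smooth function of $\rho_n(x)/\delta_n^{(m)}$ (here $\rho_n$ is $H$-Lipschitz, hence in $\mathbb{D}^{1,2}$ with $\norm{D\rho_n}_H$ bounded, by a truncation-plus-weak-compactness argument using that $D$ is closed), whose energy involves $\mu(\rho_n \ge \delta_n^{(m)}/2)$. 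Taking suprema of these cutoffs over $k$ and over $n$ and again using closedness of $D$ produces a single function in $\mathbb{D}^{1,2}$ which is $\ge 1$ on $W \setminus K_m$ and whose $\mathcal{E}_1$-norm is bounded by $\sum_k e^{-(c_k^{(m)})^2/2}$ plus a tail of the $\mu(\rho_n \ge \delta_n^{(m)}/2)$; choosing $c_k^{(m)}$ growing fast in $k$ and $\delta_n^{(m)}$ decaying slowly (calibrated to $n$), while keeping $\mu(W \setminus K_m) \to 0$, forces $\operatorname{Cap}(W \setminus K_m) \to 0$. The main obstacle is exactly this calibration of truncation levels to kill capacities rather than measures; the careful bookkeeping is carried out in Ma and R\"ockner's book, and I would follow it there.

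With (QR1)--(QR3) in hand, $(\mathcal{E},\mathbb{D}^{1,2})$ is a quasi-regular symmetric Dirichlet form on $L^2(W,\mu)$; moreover it is local, being the form of the $H$-gradient, so the associated process has $\mu$-a.s.\ continuous paths. By the general theory (Ma and R\"ockner) there is therefore a Hunt process $X_t$ on $W$ properly associated with $(\mathcal{E},\mathbb{D}^{1,2})$ --- i.e.\ with transition semigroup $T_t$ --- which is the Ornstein--Uhlenbeck process on $W$.
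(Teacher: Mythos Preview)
The paper does not actually prove this theorem; its entire ``proof'' is a one-line citation to \cite[IV.4.b]{ma-rockner-book}. Your proposal goes considerably further: you outline how one would verify Ma--R\"ockner's three quasi-regularity conditions (QR1)--(QR3) in this concrete setting, and then invoke their existence theorem. So your approach is not different from the paper's --- it is the paper's, unpacked.

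Your treatment of (QR2) and (QR3) is correct and essentially complete. For (QR1), the construction of the compact nest $K_m$ via coordinate bounds plus uniform tail control on $\rho_n(x) = \norm{x - S_n(x)}_W$ is the right idea, and your argument that $K_m$ is totally bounded is sound. The capacity estimate is where the real work lies, as you acknowledge; your sketch (build cutoff functions in $\mathbb{D}^{1,2}$ dominating $1_{W\setminus K_m}$, control their $\mathcal{E}_1$-energy via Gaussian tails and the $H$-Lipschitz bound on $\rho_n$, then sum) is plausible but not fully rigorous as written --- in particular, showing that $H$-Lipschitz functions lie in $\mathbb{D}^{1,2}$, and that suprema of countably many such cutoffs stay in $\mathbb{D}^{1,2}$ with controlled energy, both require the weak-compactness/closedness argument you allude to, and the calibration of the constants $c_k^{(m)}$, $\delta_n^{(m)}$ needs to be made explicit. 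Since you explicitly defer these details to Ma--R\"ockner, and the paper does the same wholesale, there is no gap relative to the paper; you have simply given the reader a much better idea of what is actually being cited.
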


\begin{proof}
  See \cite[IV.4.b]{ma-rockner-book}.
\end{proof}

\begin{lemma}
  The operator $D$ is \textbf{local} in the sense that for any $F \in
  \mathbb{D}^{1,2}$, $DF = 0$
  $\mu$-a.e. on $\{F = 0 \}$.
\end{lemma}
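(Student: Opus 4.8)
The plan is to bypass the finite-dimensional theory and deduce locality directly from the chain rule (Lemma~\ref{chain-rule}, in the slightly extended form noted above that allows $\psi\in C^1(\R)$ with bounded first derivative) together with the closedness of $D$. First I would fix a function $\psi\in C^\infty(\R)$ which is odd, vanishes on $[-1/2,1/2]$, satisfies $\psi(t)=t$ for $|t|\ge 1$, and has $|\psi'|\le C$ for some constant $C$; such a $\psi$ is elementary to construct. For $\epsilon>0$ set $\psi_\epsilon(t)=\epsilon\,\psi(t/\epsilon)$, so that $\psi_\epsilon\in C^\infty(\R)$, $\psi_\epsilon'(t)=\psi'(t/\epsilon)$ (hence $|\psi_\epsilon'|\le C$), $\psi_\epsilon(t)=\psi_\epsilon'(t)=0$ for $|t|\le\epsilon/2$, $\psi_\epsilon(t)=t$ and $\psi_\epsilon'(t)=1$ for $|t|\ge\epsilon$, and $|\psi_\epsilon(t)|\le|t|+\epsilon$ for all $t$.

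Next I would apply the chain rule: since $F\in\mathbb{D}^{1,2}$ and $\psi_\epsilon$ is $C^1$ with bounded derivative, $\psi_\epsilon(F)\in\mathbb{D}^{1,2}$ and $D\psi_\epsilon(F)=\psi_\epsilon'(F)\,DF$. Because $\psi_\epsilon'$ vanishes on $[-\epsilon/2,\epsilon/2]$, in particular $\psi_\epsilon'(0)=0$, so $D\psi_\epsilon(F)=0$ $\mu$-a.e.\ on $\{F=0\}$. Now let $\epsilon\downarrow 0$. Since $\psi_\epsilon(t)\to t$ for every $t$ and $|\psi_\epsilon(F)|\le|F|+1\in L^2(W,\mu)$, dominated convergence gives $\psi_\epsilon(F)\to F$ in $L^2(W,\mu)$. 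Also $\psi_\epsilon'(t)=\psi'(t/\epsilon)\to 1_{\{t\ne0\}}$ pointwise (using $\psi'\equiv 1$ off $[-1,1]$ and $\psi'(0)=0$), and $\norm{\psi_\epsilon'(F)\,DF}_H\le C\norm{DF}_H\in L^2(W,\mu)$, so dominated convergence in $L^2(W;H)$ gives $D\psi_\epsilon(F)=\psi_\epsilon'(F)\,DF\to 1_{\{F\ne0\}}DF$. Since $D$ is closed, it follows that $DF=1_{\{F\ne0\}}DF$, i.e.\ $1_{\{F=0\}}DF=0$ $\mu$-a.e., which is the claim.

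The only delicate points are the two dominated-convergence arguments (one in $L^2(W,\mu)$, one in the vector-valued space $L^2(W;H)$) and verifying that $\psi$ can be chosen with the listed properties; I do not expect either to be a real obstacle. If one wanted to be conservative about using the extended chain rule, one could first replace $F$ by $\chi(F)$, where $\chi\in C_c^\infty(\R)$ equals the identity near $0$, to reduce to bounded $F$ (this does not change $DF$ on $\{F=0\}$ since $\chi'(0)=1$), but this refinement appears unnecessary.
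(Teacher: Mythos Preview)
Your proof is correct and follows essentially the same strategy as the paper's: apply the chain rule to a family of smooth functions chosen so that the derivatives converge to an indicator, then use the closedness of $D$ to identify the limit. The only cosmetic differences are that the paper approximates $1_{\{0\}}$ (via compactly supported bumps $\varphi_n$ with antiderivatives $\psi_n\to 0$) and reads off closedness through the pairing with $u\in\dom\delta$, whereas you approximate the identity by functions flat at $0$ and invoke closedness directly; these are dual formulations of the same argument.
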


\begin{proof}
  Let $\varphi_n \in C^\infty_c(\R)$ have $\varphi_n(0) = 1$, $0 \le
  \varphi_n \le 1$, and $\varphi_n$ supported inside
  $\left[-\frac{1}{n}, \frac{1}{n}\right]$; note that $\varphi_n \to
  1_{\{0\}}$ pointwise and boundedly.  Then as $n \to \infty$,
  $\varphi_n(F) DF \to 1_{\{F = 0\}} DF$ in $L^2(W;H)$.  Let $\psi_n(t) =
  \int_{-\infty}^t \varphi_n(s)\,ds$, so that $\varphi_n = \psi_n'$;
  then $\psi_n \to 0$ uniformly.  By the chain rule we have
  $D(\psi_n(F)) = \varphi_n(F) DF$.  Now if we fix $u \in \dom
  \delta$, we have
  \begin{align*}
    \inner{1_{\{F=0\}} DF}{u}_{L^2(W;H)} &= \lim_{n \to \infty}
    \inner{\varphi_n(F) DF}{u}_{L^2(W;H)}\\
    &= \lim_{n \to \infty} \inner{D (\psi_n(F))}{u}_{L^2(W;H)} \\
    &= \lim_{n \to \infty} \inner{\psi_n(F)}{\delta u}_{L^2(W)} = 0
  \end{align*}
  since $\psi_n(F) \to 0$ uniformly and hence in $L^2(W)$.  Since
  $\dom \delta$ is dense in $L^2(W;H)$, we have $1_{\{F=0\}} DF = 0$
  $\mu$-a.e., which is the desired statement.
\end{proof}

\begin{corollary}
  The Ornstein--Uhlenbeck Dirichlet form $(\mathcal{E},
  \mathbb{D}^{1,2})$ is strongly local.
\end{corollary}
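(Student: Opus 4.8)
The plan is to read off strong locality directly from the locality lemma just proved, using only that constant functions belong to $\mathbb{D}^{1,2}$ and are annihilated by $D$. Recall that strong locality means: if $F, G \in \mathbb{D}^{1,2}$ and $F$ equals a constant $c$ on $\supp G$, then $\mathcal{E}(F,G) = \inner{DF}{DG}_{L^2(W;H)} = 0$.

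The first step is to rephrase the hypothesis measure-theoretically. Interpreting $\supp G$ as the support of the finite measure $\abs{G}\,\mu$, we have $\mu(\{G \neq 0\} \setminus \supp G) = 0$, so the assumption $F = c$ on $\supp G$ forces $F - c = 0$ $\mu$-a.e. on $\{G \neq 0\}$. Since constants lie in $\mathbb{D}^{1,2}$ with zero derivative (take $\phi \equiv c$ in the cylinder-function lemma, or approximate $c$ by $c$ times smooth cutoffs), we have $F - c \in \mathbb{D}^{1,2}$ and $D(F-c) = DF$.

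The second step is to split $\mathcal{E}(F,G) = \int_W \inner{DF(x)}{DG(x)}_H \,\mu(dx)$ over the partition $W = \{G = 0\} \cup \{G \neq 0\}$. On $\{G = 0\}$, the locality lemma applied to $G$ gives $DG = 0$ $\mu$-a.e.; on $\{G \neq 0\}$, the locality lemma applied to $F - c$, which vanishes $\mu$-a.e. there, gives $DF = D(F - c) = 0$ $\mu$-a.e. Hence the integrand $\inner{DF}{DG}_H$ is $0$ $\mu$-a.e. on $W$, and therefore $\mathcal{E}(F,G) = 0$.

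The only step that is not completely formal is the first one, the passage from ``$F$ constant on $\supp G$'' to ``$F - c = 0$ $\mu$-a.e. on $\{G \neq 0\}$''; this rests on the standard description of the support of a measure and is the main (and very minor) obstacle. Everything after that is an immediate application of the locality lemma.
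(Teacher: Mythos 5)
Your proof is correct and takes essentially the same route as the paper: both arguments come down to the locality lemma for $D$, giving $DG=0$ $\mu$-a.e.\ on $\{G=0\}$ and $DF=D(F-c)=0$ $\mu$-a.e.\ on $\{G\ne 0\}$ (using $\{G\ne 0\}\subset\supp G$ up to null sets), so the integrand of $\mathcal{E}(F,G)$ vanishes a.e. The only cosmetic difference is that the paper disposes of the constant by bilinearity, writing $\mathcal{E}(F,G)=\mathcal{E}(F-1,G)+\mathcal{E}(1,G)$, whereas you fold it into the integrand via $D(F-c)=DF$; the substance is identical.
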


\begin{proof}
  Let $F,G \in \mathbb{D}^{1,2}$.  Suppose first that $F = 0$ on the support
  of $G$.  By the previous lemma we have (up to $\mu$-null sets) $\{DF
  = 0\} \supset \{F = 0\} \supset \{G \ne 0\} \supset \{DG \ne 0\}$.
  Thus, for a.e. $x$ either $DF(x) = 0$ or $DG(x) = 0$.  So
  $\mathcal{E}(F,G) = \int_X \inner{DF(x)}{DG(x)}_H \,\mu(dx) = 0$.

  If $F = 1$ on the support of $G$, write $\mathcal{E}(F,G) =
  \mathcal{E}(F-1,G) + \mathcal{E}(1,G)$.  The first term vanishes by
  the previous step, while the second term vanishes since $D1 = 0$.
\end{proof}

We now want to investigate the generator $N$ associated to
$(\mathcal{E},\mathbb{D})$.

\begin{lemma}
  For $F \in L^2(W)$, $J_0 F = \int F d\mu$, where $J_0$ is the
  orthogonal projection onto $\mathcal{H}_0$, the constant functions
  in $L^2(W)$.
\end{lemma}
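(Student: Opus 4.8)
The plan is to exploit that $\mathcal{H}_0$ is one-dimensional, spanned by the constant function $1$, which is already a unit vector in $L^2(W,\mu)$ because $\mu$ is a probability measure.

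First I would note that since $\mu(W) = 1$, Cauchy--Schwarz against the constant function $1$ gives $\int_W |F|\,d\mu \le \norm{F}_{L^2(W,\mu)} < \infty$ for every $F \in L^2(W,\mu)$, so the scalar $c := \int_W F\,d\mu$ is well-defined and finite, and the constant function with value $c$ is a genuine element of $L^2(W,\mu)$. Next, recall that $\mathcal{H}_0$ was defined as the space of constant functions, i.e.\ the closed span of $F_{0,\dots,0} \equiv 1$ (the empty product of Hermite polynomials) in the Wiener chaos decomposition. Since $\norm{1}_{L^2(W,\mu)}^2 = \int_W 1\,d\mu = 1$, the single vector $1$ is an orthonormal basis for $\mathcal{H}_0$.

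Then orthogonal projection onto a subspace with orthonormal basis $\{1\}$ is simply $J_0 F = \inner{F}{1}_{L^2(W,\mu)}\cdot 1$, and computing the inner product,
\begin{equation*}
  \inner{F}{1}_{L^2(W,\mu)} = \int_W F(x)\cdot 1 \,\mu(dx) = \int_W F\,d\mu,
\end{equation*}
so $J_0 F$ is the constant function equal to $\int_W F\,d\mu$, which is exactly the claim.

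There is essentially no obstacle here; the only points requiring any care are the (harmless) identification of the number $\int_W F\,d\mu$ with the constant function taking that value as an element of $L^2(W,\mu)$, and the use of $\mu(W) = 1$ both to guarantee $F \in L^1(W,\mu)$ and to make $1$ a \emph{unit} vector so that no normalizing constant appears in the projection formula.
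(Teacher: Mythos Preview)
Your argument is correct and essentially the same as the paper's: both hinge on the observation that $\mathcal{H}_0$ is spanned by the unit vector $1$ and that $\inner{F}{1}_{L^2(W,\mu)} = \int F\,d\mu$. The only cosmetic difference is that the paper phrases it as verifying that $F \mapsto \int F\,d\mu$ satisfies the characterizing properties of the orthogonal projection (identity on $\mathcal{H}_0$, zero on $\mathcal{H}_0^\perp$), whereas you use the explicit one-dimensional projection formula; the content is identical.
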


\begin{proof}
  This holds over any probability space.  Write $EF = \int F d\mu$.
  Clearly $E$ is continuous, $E$ is the identity on the constants
  $\mathcal{H}_0$, and if $F \perp \mathcal{H}_0$, then we have $EF =
  \inner{F}{1}_{L^2(W)} = 0$ since $1 \in \mathcal{H}_0$.  So $E$ must
  be orthogonal projection onto $\mathcal{H}_0$.
\end{proof}

\begin{lemma}\label{poincare}[A Poincar\'e inequality]
  For $F \in \mathbb{D}^{1,2}$, we have
  \begin{equation*}
    \norm{F - \int F d\mu}_{L^2(W)} \le \norm{DF}_{L^2(W;H)}.
  \end{equation*}
\end{lemma}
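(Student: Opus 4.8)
The plan is to use the Wiener chaos decomposition, which collapses the inequality to the triviality that $n \ge 1$ when $n \ge 1$. First I would recall that every $F \in \mathbb{D}^{1,2}$ admits the orthogonal expansion $F = \sum_{n=0}^\infty J_n F$, with the series converging in $L^2(W)$ (indeed in $\mathbb{D}^{1,2}$), and that by the preceding lemma $J_0 F = \int F\,d\mu$. Subtracting, $F - \int F\,d\mu = \sum_{n=1}^\infty J_n F$, and since the chaos subspaces $\mathcal{H}_n$ are pairwise orthogonal in $L^2(W)$, Parseval's identity gives
\begin{equation*}
  \norm{F - \int F\,d\mu}_{L^2(W)}^2 = \sum_{n=1}^\infty \norm{J_n F}_{L^2(W)}^2 .
\end{equation*}

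Next I would invoke Proposition \ref{D-chaos}: because $F$ is assumed to lie in $\mathbb{D}^{1,2}$, the series $\sum_{n=1}^\infty n \norm{J_n F}_{L^2(W)}^2$ converges and equals $\norm{DF}_{L^2(W;H)}^2$. Comparing the two series term by term, and using that each weight $n$ with $n \ge 1$ satisfies $n \ge 1$, we obtain
\begin{equation*}
  \norm{F - \int F\,d\mu}_{L^2(W)}^2 = \sum_{n=1}^\infty \norm{J_n F}_{L^2(W)}^2 \le \sum_{n=1}^\infty n \norm{J_n F}_{L^2(W)}^2 = \norm{DF}_{L^2(W;H)}^2 ,
\end{equation*}
and taking square roots finishes the proof.

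There is no genuine obstacle here; the only things to be careful about are the bookkeeping points that the chaos series for $F - \int F\,d\mu$ starts at $n = 1$ (so the missing $n = 0$ term is exactly what makes the left-hand side an honest variance), and that Proposition \ref{D-chaos} applies verbatim under the standing hypothesis $F \in \mathbb{D}^{1,2}$. It is worth remarking, in passing, that equality holds precisely when $J_n F = 0$ for all $n \ge 2$, i.e. when $F$ differs from a constant by an element of the first chaos $\mathcal{H}_1 = K$; this shows the constant $1$ is sharp and identifies $\mathcal{H}_1$ as the eigenspace of the Ornstein--Uhlenbeck generator achieving the spectral gap.
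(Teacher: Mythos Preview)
Your proof is correct and follows essentially the same approach as the paper: both use the Wiener chaos decomposition together with Proposition \ref{D-chaos} and the trivial inequality $n \ge 1$ for $n \ge 1$. The only cosmetic difference is that the paper first sets $G = F - \int F\,d\mu$ and works with $J_n G$ (noting $DG = DF$), whereas you work directly with $J_n F$; since $J_n G = J_n F$ for $n \ge 1$ and $J_0 G = 0$, the two computations are identical.
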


\begin{proof}
  Set $G = F - \int F d\mu$, so that $J_0 G = \int G d\mu = 0$.  Note that $DF
  = DG$ since $D1 = 0$.  Then by Lemma \ref{D-chaos},
  \begin{equation*}
    \norm{DG}_{L^2(W;H)}^2 = \sum_{n=0}^\infty n \norm{J_n
    G}_{L^2(W)}^2 \ge \sum_{n=1}^\infty \norm{J_n
    G}_{L^2(W)}^2 = \sum_{n=0}^\infty \norm{J_n
    G}_{L^2(W)}^2 = \norm{G}_{L^2(W)}^2. 
  \end{equation*}
\end{proof}

Note that by taking $F(x) = f(x)$ for $f \in W^*$, we can see that the
Poincar\'e inequality is sharp.

\begin{theorem}
  $N = \delta D$.  More precisely, if we set
  \begin{equation*}
    \dom N = \dom \delta D = \{ F \in \mathbb{D}^{1,2}  : DF \in \dom \delta \}
  \end{equation*}
  and $NF = \delta D F$ for $F \in \dom N$, then $(N, \dom N)$ is the
  unique self-adjoint operator satisfying $\dom N \subset
  \mathbb{D}^{1,2}$ and 
  \begin{equation}\label{Ndef} \mathcal{E}(F,G) = \inner{F}{NG}_{L^2(W)}
  \text{ for all } F \in \mathbb{D}^{1,2}, G \in \dom N.
  \end{equation}
\end{theorem}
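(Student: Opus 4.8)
The plan is to notice that, once one has the abstract facts established above — that $D:L^2(W)\to L^2(W;H)$ is densely defined (its domain contains the dense set $\mathcal{F} C_c^\infty(W)$) and closed, with adjoint $\delta=D^*$ — the bulk of the statement is purely formal. Indeed, the identity $\mathcal{E}(F,G)=\inner{F}{NG}_{L^2(W)}$ with $N=\delta D$ is essentially a restatement of the definition of $\delta=D^*$, so the genuine content is (i) that $N=\delta D$ is \emph{self-adjoint}, not merely symmetric, and (ii) uniqueness among self-adjoint operators subordinate to $\mathbb{D}^{1,2}$. For (i) I would simply invoke von Neumann's theorem: for any closed densely defined $A$ between Hilbert spaces, $A^*A$ equipped with its natural domain $\{x\in\dom A: Ax\in\dom A^*\}$ is self-adjoint (and nonnegative). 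Taking $A=D$ gives that $N:=D^*D=\delta D$ with $\dom N=\{F\in\mathbb{D}^{1,2}:DF\in\dom\delta\}$ is self-adjoint; the inclusion $\dom N\subset\mathbb{D}^{1,2}$ is immediate from this description, and for $F\in\mathbb{D}^{1,2}=\dom D$ and $G\in\dom N$ the definition of the adjoint gives
\[
\mathcal{E}(F,G)=\inner{DF}{DG}_{L^2(W;H)}=\inner{F}{\delta(DG)}_{L^2(W)}=\inner{F}{NG}_{L^2(W)}.
\]

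For uniqueness, suppose $(N',\dom N')$ is self-adjoint with $\dom N'\subset\mathbb{D}^{1,2}$ and $\mathcal{E}(F,G)=\inner{F}{N'G}_{L^2(W)}$ for all $F\in\mathbb{D}^{1,2}$, $G\in\dom N'$. Fix $G\in\dom N'$; then for every $F\in\mathbb{D}^{1,2}=\dom D$ we have $\inner{DF}{DG}_{L^2(W;H)}=\inner{F}{N'G}_{L^2(W)}$, which is exactly the assertion that $DG\in\dom(D^*)=\dom\delta$ with $\delta(DG)=N'G$. Hence $G\in\dom N$ and $NG=N'G$, i.e. $N'\subset N$. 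Since $N'$ is symmetric and $N$ is self-adjoint, taking adjoints in $N'\subset N$ yields $N=N^*\subset(N')^*=N'$, so $N=N'$.

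The step needing the most care — such as it is — is the domain bookkeeping: one must check that the domain of $\delta D$ furnished by von Neumann's theorem genuinely coincides with $\{F\in\mathbb{D}^{1,2}:DF\in\dom\delta\}$ (this is built into the statement of that theorem), and that the quantifier ``for all $F\in\mathbb{D}^{1,2}$'' in the hypothesis on $N'$ is precisely the quantifier in the definition of $D^*=\delta$, so that the energy identity for $N'$ can legitimately be re-read as the membership statement $DG\in\dom\delta$. As a cross-check one can identify $N$ explicitly on the Wiener chaos: using $DF_\alpha\in\mathcal{H}_{|\alpha|-1}(H)$, the relation $J_n\delta=\delta J_{n-1}$, and $\inner{DF_\alpha}{DF_\beta}_{L^2(W;H)}=|\alpha|\,[\alpha=\beta]$, one gets $NF_\alpha=\delta D F_\alpha=|\alpha|F_\alpha$, so $N=\sum_n nJ_n$ is the usual self-adjoint ``number operator'' with $\dom N=\{F:\sum_n n^2\norm{J_nF}_{L^2(W)}^2<\infty\}$; I would mention this but run the main argument through the abstract route, which is shorter and avoids re-deriving $\dom(\delta D)$ from the chaos expansion.
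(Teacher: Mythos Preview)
Your proof is correct and follows the same outline the paper sketches: verify that $\mathcal{E}(F,G)=\inner{F}{NG}$ is immediate from the definition of $\delta=D^*$, then establish self-adjointness of $\delta D$ and uniqueness. In fact your argument is \emph{more} complete than the paper's own proof, which leaves both self-adjointness (``Should fill this in?'') and uniqueness (``reference?'') as gaps; your invocation of von Neumann's theorem for $A^*A$ and the short $N'\subset N \Rightarrow N=N^*\subset (N')^*=N'$ argument are exactly the standard ways to close those gaps.
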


\begin{proof}
  It is clear that $\dom N \subset \mathbb{D}^{1,2}$ and that
  (\ref{Ndef}) holds.  Moreover, it is known there is a unique
  self-adjoint operator with this property (reference?).  We have to
  check that $N$ as defined above is in fact self-adjoint.
  (Should fill this in?)
\end{proof}

\begin{proposition}
  $N F_\alpha = |\alpha| F_\alpha$.  That is, the Hermite polynomials
  $F_\alpha$ are eigenfunctions for $N$, with eigenvalues $|\alpha|$.
  So the $\mathcal{H}_n$ are eigenspaces.
\end{proposition}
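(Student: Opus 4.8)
The plan is to leverage the identity $N = \delta D$ established above together with the orthogonality relations for the family $\{D F_\alpha\}$ already recorded, so that no direct computation of $\delta D F_\alpha$ is needed.

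First I would verify that $F_\alpha$ lies in $\dom N = \{F \in \mathbb{D}^{1,2} : DF \in \dom\delta\}$. We already know $F_\alpha \in \mathcal{H}_{|\alpha|} \subset \mathbb{D}^{1,2}$. The formula for $D F_\alpha$ recorded above exhibits it as a \emph{finite} linear combination of terms $G \cdot Je_j$, where $G$ is a polynomial in finitely many of the $e_i$ (hence in every $L^p(W,\mu)$ by Fernique's theorem) and $Je_j \in H$. Proposition \ref{delta-simple} places each such term in $\dom\delta$, and $\dom\delta$ is a linear subspace, so $D F_\alpha \in \dom\delta$ and thus $F_\alpha \in \dom N$. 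In particular $N F_\alpha = \delta D F_\alpha \in L^2(W,\mu)$, so it is enough to compute the $L^2$-inner products of $N F_\alpha$ with the elements of the Hermite orthonormal basis $\{F_\beta\}$ of $L^2(W,\mu)$.

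Next, for an arbitrary multi-index $\beta$, the defining relation (\ref{Ndef}) of $N$ (applied with $G = F_\alpha \in \dom N$ and $F = F_\beta \in \mathbb{D}^{1,2}$) together with the definition $\mathcal{E}(F_\beta, F_\alpha) = \inner{D F_\beta}{D F_\alpha}_{L^2(W;H)}$ gives $\inner{F_\beta}{N F_\alpha}_{L^2(W)} = \inner{D F_\beta}{D F_\alpha}_{L^2(W;H)}$. I would then invoke the facts recorded earlier that $\norm{D F_\alpha}_{L^2(W;H)}^2 = |\alpha|$ and that $\inner{D F_\alpha}{D F_\beta}_{L^2(W;H)} = 0$ when $\alpha \ne \beta$. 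Hence $\inner{F_\beta}{N F_\alpha}_{L^2(W)} = |\alpha|\,\inner{F_\beta}{F_\alpha}_{L^2(W)}$ for every $\beta$, and since $\{F_\beta\}$ is a complete orthonormal system in $L^2(W,\mu)$ this forces $N F_\alpha = |\alpha| F_\alpha$. Since $\{F_\alpha : |\alpha| = n\}$ spans $\mathcal{H}_n$, it follows at once that $\mathcal{H}_n$ is the eigenspace of $N$ with eigenvalue $n$.

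There is no genuinely hard step; the only place deserving care is the domain check $F_\alpha \in \dom N$, where one must note that the $L^{2+\epsilon}$ caveat in the proof of Proposition \ref{delta-simple} is harmless because polynomial functionals of Gaussians have moments of all orders. As an alternative to the orthogonality argument, one can compute $N F_\alpha = \delta D F_\alpha$ by brute force: use the product rule and chain rule (Lemma \ref{chain-rule}) to write $D F_\alpha = \sum_j \bigl(\prod_{i \ne j} g_i(e_i(x))\bigr) g_j'(e_j(x))\, Je_j$ with $g_i = \sqrt{n_i!}\,H_{n_i}$, apply Proposition \ref{delta-simple} to each summand (computing the directional derivative via $e_i(Je_j) = q(e_i, e_j) = \delta_{ij}$), and then collapse the answer with the one-dimensional identity $-H_n'' + s H_n' = n H_n$, which follows immediately from the recorded facts $H_n' = H_{n-1}$ and $n H_n(s) = s H_{n-1}(s) - H_{n-2}(s)$. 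That route yields $N F_\alpha = \sum_j n_j F_\alpha = |\alpha| F_\alpha$ directly, but the orthogonality argument is shorter.
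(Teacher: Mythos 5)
Your proof is correct and follows the same route as the paper: verify $F_\alpha \in \dom N$ (the paper simply says this is easy since $F_\alpha$ is a cylinder function, which you flesh out via Proposition \ref{delta-simple}), then pair $NF_\alpha$ against the orthonormal basis $\{F_\beta\}$ using $\inner{F_\beta}{NF_\alpha}_{L^2(W)} = \inner{DF_\beta}{DF_\alpha}_{L^2(W;H)} = |\alpha|\,\delta_{\alpha\beta}$ and conclude by completeness. Your sketched brute-force alternative via the one-dimensional identity $-H_n'' + sH_n' = nH_n$ is also sound, but the orthogonality argument is exactly what the paper does.
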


\begin{proof}
  Since $F_\alpha$ is a cylinder function, it is easy to see it is in
  the domain of $N$.  Then $\inner{N F_\alpha}{F_\beta}_{L^2(W)} =
  \inner{D F_\alpha}{D F_\beta}_{L^2(W;H)} = |\alpha| \delta_{\alpha
  \beta}$.  Since the $\{F_\beta\}$ are an orthonormal basis for
  $L^2(W)$, we are done.
\end{proof}

There is a natural identification of $\mathcal{H}_n$ with $H^{\otimes
  n}$, which gives an identification of $L^2(W)$ with Fock space
  $\bigoplus_n H^{\otimes n}$.  In quantum mechanics this is the state
  space for a system with an arbitrary number of particles,
  $H^{\otimes n}$ corresponding to those states with exactly $n$
  particles.  $N$ is thus called the number operator because $\inner{N
  F}{F}$ gives the (expected) number of particles in the state $F$.

\begin{proposition}
  $NF = \sum_{n=0}^\infty n J_n F$, where the sum on the right
  converges iff $F \in \dom N$.
\end{proposition}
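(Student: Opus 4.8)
The plan is to diagonalize $N$ against the Wiener chaos decomposition, using the two commutation relations already established: $D J_n = J_{n-1} D$ on $\mathbb{D}^{1,2}$ (Lemma \ref{DJ-commute}) and $J_n \delta = \delta J_{n-1}$ on $\dom \delta$ (the second listed property of $\delta$ above), together with the fact that $\mathcal{H}_n$ is the eigenspace of $N$ for the eigenvalue $n$.

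First I would upgrade the previous proposition, which gives $N F_\alpha = |\alpha| F_\alpha$ for each multi-index $\alpha$, to the statement that $\mathcal{H}_n \subset \dom N$ with $N|_{\mathcal{H}_n} = n\,\mathrm{Id}$. Indeed, $N p = n p$ for any finite linear combination $p$ of the $F_\alpha$ with $|\alpha| = n$, such $p$ are dense in $\mathcal{H}_n$, and $N$ is closed (being self-adjoint), so every $G \in \mathcal{H}_n$ lies in $\dom N$ with $N G = n G$.

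Next, the forward direction. Let $F \in \dom N$, so $F \in \mathbb{D}^{1,2}$ and $DF \in \dom\delta$. For $n \ge 1$, apply $J_n$ to $NF = \delta D F$:
\begin{equation*}
  J_n N F = J_n \delta(DF) = \delta J_{n-1}(DF) = \delta(D J_n F) = N J_n F = n J_n F,
\end{equation*}
using in order the commutation relation for $\delta$, Lemma \ref{DJ-commute}, and the first step (legitimate since $J_n F \in \mathcal{H}_n \subset \dom N$); for $n = 0$ we have $J_0 N F = J_0 \delta(DF) = 0 = 0 \cdot J_0 F$. Since $\{J_n\}$ furnishes the orthogonal decomposition $L^2(W) = \bigoplus_n \mathcal{H}_n$, expanding $NF$ gives $NF = \sum_n J_n N F = \sum_n n J_n F$, with convergence in $L^2(W)$; in particular $\sum_n n^2 \norm{J_n F}_{L^2(W)}^2 = \norm{NF}_{L^2(W)}^2 < \infty$, so the series on the right converges.

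For the converse, suppose $\sum_n n^2 \norm{J_n F}_{L^2(W)}^2 < \infty$, and set $F_m = \sum_{n=0}^m J_n F$. Then $F_m \to F$ in $L^2(W)$, each $F_m \in \dom N$ by the first step, and $N F_m = \sum_{n=0}^m n J_n F$ is Cauchy in $L^2(W)$ (the $J_n F$ being orthogonal), converging to $G := \sum_n n J_n F$; closedness of $N$ then forces $F \in \dom N$ and $NF = G$. (If one prefers to avoid self-adjointness, the same conclusion follows from closedness of $D$ and $\delta$ separately: the hypothesis also gives $\sum_n n \norm{J_n F}_{L^2(W)}^2 < \infty$, hence $D F_m = \sum_{n \le m} D J_n F$ converges in $L^2(W;H)$ by Lemma \ref{D-chaos}, so $F \in \mathbb{D}^{1,2}$ with $DF = \lim D F_m$, while $\delta(D F_m) = \sum_{n\le m} n J_n F \to G$ forces $DF \in \dom\delta$ and $\delta DF = G$.) The only step needing any care is promoting the eigenrelation from the explicit $F_\alpha$ to all of $\mathcal{H}_n$, and that is dispatched by closedness of $N$; I expect no real obstacle beyond bookkeeping with the commutation relations.
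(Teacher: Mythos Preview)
Your proof is correct and follows the same overarching strategy as the paper: use the Wiener chaos decomposition, the fact that each $\mathcal{H}_n$ is an eigenspace of $N$ with eigenvalue $n$, and closedness of $N$. The converse direction (series converges $\Rightarrow$ $F\in\dom N$) is handled identically in both via partial sums and closedness.

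The forward direction is executed a bit differently. The paper computes $\norm{J_n NF}_{L^2}^2 = n^2\norm{J_n F}_{L^2}^2$ by repeated use of the self-adjointness of $N$ and $J_n$ together with $NJ_n = nJ_n$, which yields convergence of $\sum n J_n F$ and then appeals back to the closedness argument. You instead use the concrete commutation relations $J_n\delta = \delta J_{n-1}$ and $DJ_n = J_{n-1}D$ to obtain the coordinate identity $J_n NF = n J_n F$ directly, from which $NF = \sum_n J_n NF = \sum_n nJ_n F$ is immediate. Your route is slightly more direct (it gives the identity rather than just equality of norms) and avoids invoking self-adjointness of $N$, at the cost of needing the $\delta$-commutation property; you are also more explicit than the paper about why all of $\mathcal{H}_n$, not just the $F_\alpha$, lies in $\dom N$.
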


\begin{proof}
  For each $m$, we have
  \begin{equation*}
    N \sum_{n=0}^m J_n F = \sum_{n=0}^m N J_n F  =
    \sum_{n=0}^m n J_n F.
  \end{equation*}
  Since $\sum_{n=0}^m J_n F \to F$ as $m \to \infty$ and $N$ is
  closed, if the right side converges then $F \in \dom N$ and $NF$
  equals the limit of the right side.

  Conversely, if $F \in \dom N$, we have $\infty >
  \norm{NF}^2_{L^2(W)} = \sum_{n=0}^\infty \norm{J_n N F}^2$.  But,
  repeatedly using the self-adjointness of $J_n$ and $N$ and the
  relationships $J_n = J_n^2$ and $N J_n = n J_n$,
  \begin{align*}
    \norm{J_n N F}^2 = \inner{F}{N J_n N F} = n \inner{F}{J_n N F} = n
    \inner{N J_n F}{F} = n^2 \inner{J_n F}{F} = n^2 \norm{J_n F}^2.
  \end{align*}
  Thus $\sum n^2 \norm{J_n F}^2 < \infty$, so $\sum n J_n F$
  converges.
\end{proof}

Let $T_t = e^{-tN}$ be the semigroup generated by $N$.  Note that each
$T_t$ is a contraction on $L^2(W)$, and $T_t$ is strongly continuous
in $t$.

\begin{proposition}\label{Tt-chaos}
  For any $F \in L^2(W)$, 
  \begin{equation} \label{Tt-chaos-eqn}
    T_t F = \sum_{n=0}^\infty e^{-tn} J_n F.
  \end{equation}
\end{proposition}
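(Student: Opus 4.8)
The plan is to verify directly that the operator given by the right-hand side of (\ref{Tt-chaos-eqn}) is precisely the semigroup $T_t = e^{-tN}$. Write $S_t F := \sum_{n=0}^\infty e^{-tn} J_n F$ for $F \in L^2(W)$. Since the $J_n F$ are pairwise orthogonal with $\sum_n \norm{J_n F}_{L^2(W)}^2 = \norm{F}_{L^2(W)}^2 < \infty$ and $0 \le e^{-tn} \le 1$ for $t \ge 0$, the series converges in $L^2(W)$ and $\norm{S_t F}_{L^2(W)} \le \norm{F}_{L^2(W)}$; so each $S_t$ is a well-defined contraction on $L^2(W)$. The goal is then to show $S_t = T_t$.

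First I would check that $\{S_t\}_{t \ge 0}$ is a strongly continuous contraction semigroup. The identities $S_0 = I$ and $S_s S_t = S_{s+t}$ are immediate from $J_n J_m = \delta_{nm} J_n$ (the $J_n$ being the orthogonal projections onto the orthogonal summands $\mathcal{H}_n$ of $L^2(W)$). Strong continuity at $0$ follows from $\norm{S_t F - F}_{L^2(W)}^2 = \sum_n (1 - e^{-tn})^2 \norm{J_n F}_{L^2(W)}^2 \to 0$ as $t \downarrow 0$, by dominated convergence on the counting measure weighted by $\norm{J_n F}_{L^2(W)}^2$ (each summand is dominated by $\norm{J_n F}_{L^2(W)}^2$ and tends to $0$), and then at general $t$ via the semigroup property.

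Next I would set up the abstract Cauchy problem. For $t > 0$ and any $F \in L^2(W)$, the sequence $n e^{-tn}$ is bounded in $n$, so $\sum_n n^2 e^{-2tn}\norm{J_n F}_{L^2(W)}^2 < \infty$, whence $S_t F \in \dom N$ with $N S_t F = \sum_n n e^{-tn} J_n F$ (here I use $N F_\alpha = |\alpha| F_\alpha$, equivalently $N J_n = n J_n$). Differentiating termwise, $h^{-1}(S_{t+h}F - S_t F) = \sum_n e^{-tn}\bigl(\tfrac{e^{-hn}-1}{h}\bigr) J_n F$; for $|h| < t/2$ the coefficients are bounded uniformly in $h$ by $n e^{-tn/2} \le C_t$ (mean value theorem) and converge pointwise to $-n e^{-tn}$, so dominated convergence gives $\tfrac{d}{dt} S_t F = -N S_t F$ in $L^2(W)$ for every $F$ and every $t > 0$. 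Combined with strong continuity at $0$, this shows that for $F \in \dom N$ the curve $u(t) = S_t F$ is a classical solution of $u'(t) = -N u(t)$, $u(0) = F$.

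Finally, uniqueness. Since $T_t = e^{-tN}$ with $N$ nonnegative and self-adjoint (the content of the theorem $N = \delta D$), standard semigroup theory gives that $v(t) = T_t F$ solves the same Cauchy problem for $F \in \dom N$. The difference $w = u - v$ then satisfies $w(0) = 0$, $w(t) \in \dom N$ for $t > 0$, and $\tfrac{d}{dt}\norm{w(t)}_{L^2(W)}^2 = 2\inner{w'(t)}{w(t)}_{L^2(W)} = -2\inner{N w(t)}{w(t)}_{L^2(W)} \le 0$; since $\norm{w(t)}_{L^2(W)} \to 0$ as $t \downarrow 0$ while $\norm{w(t)}_{L^2(W)}^2$ is nonincreasing on $(0,\infty)$, we conclude $w \equiv 0$. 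Hence $S_t = T_t$ on the dense subspace $\dom N$ (it contains every $F_\alpha$), and therefore everywhere by continuity of both operators. The one step needing care — and the reason I route through the energy identity instead of trying to "read off" the generator of $S_t$ — is ensuring that generator is exactly $-N$ rather than a proper extension; the uniqueness argument sidesteps this cleanly. (Alternatively, one can simply invoke the spectral theorem: $\{F_\alpha\}$ is an orthonormal eigenbasis for the self-adjoint operator $N$ with $N F_\alpha = |\alpha| F_\alpha$, so the Borel functional calculus gives $\phi(N) F = \sum_n \phi(n) J_n F$, and $\phi(x) = e^{-tx}$ is exactly (\ref{Tt-chaos-eqn}).)
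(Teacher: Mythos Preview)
Your proof is correct, but it takes a longer road than the paper does. The paper simply observes that each $J_n F$ is an eigenvector of $N$ with eigenvalue $n$, so $t \mapsto T_t J_n F$ satisfies the scalar ODE $\frac{d}{dt} T_t J_n F = -n\, T_t J_n F$ with initial value $J_n F$, whence $T_t J_n F = e^{-tn} J_n F$; then one sums over $n$ using the continuity of $T_t$ and the decomposition $F = \sum_n J_n F$. In other words, the paper computes $T_t$ directly on each eigenspace and assembles, whereas you build the right-hand side as an independent semigroup $S_t$, verify it solves the abstract Cauchy problem, and invoke uniqueness via the energy estimate. Your route is more work but is self-contained and would transplant cleanly to settings where the action on eigenspaces is less transparent; the paper's argument is the minimal one given that the eigenstructure of $N$ is already on the table. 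Your parenthetical remark about the spectral theorem is really the cleanest justification of all and is closest in spirit to what the paper is doing.
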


\begin{proof}
  Since $J_n F$ is an eigenfunction of $N$, we must have
  \begin{equation*}
    \frac{d}{dt} T_t J_n F = T_t N J_n F = n T_t J_n f.
  \end{equation*}
  Since $T_0 J_n F = J_n F$, the only solution of this ODE is $T_t J_n
  F = e^{-tn} J_n F$.  Now sum over $n$.
\end{proof}

\begin{corollary}\label{Tt-exponential-decay}
  $\norm{T_t F - \int F d\mu}_{L^2(W)} \le e^{-t} \norm{F - \int F d\mu}$.
\end{corollary}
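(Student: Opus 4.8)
The plan is to reduce everything to the Wiener chaos expansion and the orthogonality of the projections $J_n$ in $L^2(W,\mu)$. First I would recall that $J_0 F = \int F\,d\mu$ (this was established in the lemma preceding Lemma \ref{poincare}), so that $F - \int F\,d\mu = \sum_{n=1}^\infty J_n F$, with the sum converging in $L^2(W)$ and the terms mutually orthogonal.

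Next I would apply Proposition \ref{Tt-chaos}, which gives $T_t F = \sum_{n=0}^\infty e^{-tn} J_n F$. Subtracting the $n=0$ term (which is exactly $\int F\,d\mu$, since $e^{-t\cdot 0}=1$), we obtain
\begin{equation*}
  T_t F - \int F\,d\mu = \sum_{n=1}^\infty e^{-tn} J_n F.
\end{equation*}
Because the summands $e^{-tn} J_n F$ lie in the pairwise orthogonal subspaces $\mathcal{H}_n$, the Pythagorean theorem yields
\begin{equation*}
  \norm{T_t F - \int F\,d\mu}_{L^2(W)}^2 = \sum_{n=1}^\infty e^{-2tn} \norm{J_n F}_{L^2(W)}^2.
\end{equation*}

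The only inequality needed is the elementary bound $e^{-2tn} \le e^{-2t}$ for all integers $n \ge 1$ and all $t \ge 0$; applying it termwise gives
\begin{equation*}
  \norm{T_t F - \int F\,d\mu}_{L^2(W)}^2 \le e^{-2t} \sum_{n=1}^\infty \norm{J_n F}_{L^2(W)}^2 = e^{-2t} \norm{F - \int F\,d\mu}_{L^2(W)}^2,
\end{equation*}
and taking square roots finishes the argument. There is no real obstacle here: the content is entirely in Proposition \ref{Tt-chaos} and the identification $J_0 F = \int F\,d\mu$, both already available, and the remaining step is the trivial observation that the spectral gap of $N$ is $1$ (the smallest nonzero eigenvalue), which is precisely what makes the exponent $e^{-t}$ appear.
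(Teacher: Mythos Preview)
Your proof is correct and follows essentially the same route as the paper: the paper sets $G = F - \int F\,d\mu$ (so $J_0 G = 0$), applies the chaos expansion $\norm{T_t G}^2 = \sum_{n=1}^\infty e^{-2tn}\norm{J_n G}^2$, and bounds $e^{-2tn} \le e^{-2t}$ for $n \ge 1$. Your version differs only cosmetically in that you expand $T_t F$ first and then subtract the $n=0$ term rather than introducing $G$ at the outset.
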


\begin{proof}
  Let $G = F - \int F d\mu$; in particular $J_0 G = 0$.  Then
  \begin{align*}
    \norm{T_t G}^2 = \sum_{n=1}^\infty e^{-2tn} \norm{J_n G}^2 \le
    e^{-2t} \sum_{n=1}^\infty \norm{J_n G}^2 = e^{-2t} \norm{G}^2. 
  \end{align*}
\end{proof}

This is also a consequence of the Poincar\'e inequality (Lemma
\ref{poincare}) via the spectral theorem.

$T_t$ is the transition semigroup of the Ornstein--Uhlenbeck process
$X_t$, i.e. $T_t F(x) = \mathbb{E}_x[F(X_t)]$ for $\mu$-a.e. $x \in
X$.  To get a better understanding of this process, we'll study $T_t$
and $N$ some more.

The finite-dimensional Ornstein--Uhlenbeck operator is given by
\begin{equation*}
  \tilde{N} \phi (x) = \Delta \phi (x) - x \cdot \grad \phi(x). 
\end{equation*}

The same formula essentially works in infinite dimensions.

\begin{lemma}
  For $F \in \mathcal{F} C^\infty_c(W)$ of the form $F(x) =
  \phi(e_1(x), \dots, e_n(x))$ with $e_i$ $q$-orthonormal, we have
  \begin{equation*}
    N F(x) = (\tilde{N} \phi)(e_1(x), \dots, e_n(x)).
  \end{equation*}
\end{lemma}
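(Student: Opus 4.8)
The plan is to use the identity $N = \delta D$ from the theorem just above and to reduce the computation to the single-term formula for $\delta$ in Proposition \ref{delta-simple}. Throughout I abbreviate $e(x) = (e_1(x),\dots,e_n(x)) \in \R^n$.

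First I would identify the relevant orthonormal vectors in $H$. Set $h_i := i^* e_i = J e_i \in H$. Since $T i^* = m$ is the inclusion $W^* \hookrightarrow K$ and $\{e_i\}$ is $q$-orthonormal, we have $T h_i = e_i$, hence $\inner{h_i}{h_j}_H = q(Th_i, Th_j) = q(e_i, e_j) = \delta_{ij}$, and moreover the Gaussian random variable $\inner{h_i}{\cdot}_H = T h_i$ is just $e_i$. By the cylinder-function formula (\ref{DF-cylinder}),
\begin{equation*}
  DF(x) = \sum_{i=1}^n G_i(x)\, h_i, \qquad G_i(x) := \partial_i\phi(e(x)),
\end{equation*}
and since $\phi \in C_c^\infty(\R^n)$ each $G_i$ is again a smooth cylinder function, hence lies in $\mathbb{D}^{1,2}$ (it is bounded with bounded derivatives). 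So Proposition \ref{delta-simple} applies to each $u_i := G_i h_i$: it gives $u_i \in \dom\delta$ with $\delta u_i(x) = G_i(x)\inner{h_i}{x}_H - \inner{DG_i(x)}{h_i}_H$, and hence $DF = \sum_i u_i \in \dom\delta$, i.e.\ $F \in \dom N$.

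Next I would apply $\delta$ term by term — legitimate since $\dom\delta$ is a linear subspace — and evaluate each piece. Using $\inner{h_i}{\cdot}_H = e_i$ pointwise (here $e_i \in W^*$, so no a.e.\ issue), the first piece is $G_i(x)\inner{h_i}{x}_H = e_i(x)\,\partial_i\phi(e(x))$. For the second, (\ref{DF-cylinder}) applied to the cylinder function $G_i$ gives $DG_i(x) = \sum_{j=1}^n \partial_j\partial_i\phi(e(x))\, h_j$, so orthonormality of $\{h_j\}$ yields $\inner{DG_i(x)}{h_i}_H = \partial_i^2\phi(e(x))$. Summing over $i$,
\begin{equation*}
  NF(x) = \delta(DF)(x) = \sum_{i=1}^n e_i(x)\,\partial_i\phi(e(x)) - \sum_{i=1}^n \partial_i^2\phi(e(x)),
\end{equation*}
which is exactly the finite-dimensional Ornstein--Uhlenbeck operator $\tilde N$ applied to $\phi$ and evaluated at $(e_1(x),\dots,e_n(x))$ — the assertion of the lemma. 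As a consistency check, $\phi \equiv 1$ gives $NF = 0$, $\phi(y) = y_k$ gives $NF = e_k$, and $\phi = H_2(y_k)$ gives $NF = e_k^2 - 1 = 2H_2(e_k)$, matching $N F_\alpha = |\alpha| F_\alpha$ on $\mathcal{H}_0, \mathcal{H}_1, \mathcal{H}_2$.

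There is no genuinely hard step. The two points needing care are: (i) the bookkeeping in the triple identification $e_i \in W^*$, $h_i = i^* e_i \in H$, $\inner{h_i}{\cdot}_H = T h_i \in K$ — together with making the overall sign agree with the displayed definition of $\tilde N$ — where an indexing or sign slip is the main risk; and (ii) verifying $F \in \dom N$, which is immediate here because all the cylinder data is bounded with bounded derivatives, so the integrability caveat flagged in the proof of Proposition \ref{delta-simple} does not bite.
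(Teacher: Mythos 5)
Your proof is correct and is exactly the argument the paper intends, which cites precisely the same three ingredients --- $N=\delta D$, the cylinder-function formula (\ref{DF-cylinder}), and Proposition \ref{delta-simple}, together with the fact that $J$ carries the $q$-orthonormal $e_i$ to an orthonormal set in $H$ --- so you have simply written out the details the paper omits. One remark on the sign you flagged as a risk: your computation yields $NF(x)=\bigl(x\cdot\grad\phi-\Delta\phi\bigr)$ evaluated at $(e_1(x),\dots,e_n(x))$, which is the correct nonnegative operator consistent with $N=\delta D$ and $NF_\alpha=|\alpha|F_\alpha$; the paper's displayed formula $\tilde{N}\phi=\Delta\phi-x\cdot\grad\phi$ is off by a sign, so the discrepancy is in the text, not in your argument.
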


\begin{proof}
  This follows from the formula $N = \delta D$ and (\ref{DF-cylinder})
  and Proposition \ref{delta-simple}, and the fact that $J : (W^*, q)
  \to H$ is an isometry.  Note for finite dimensions, if we take $e_1,
  \dots, e_n$ to be the coordinate functions on $\R^n$, this shows
  that $\tilde{N}$ really is the Ornstein--Uhlenbeck operator.
\end{proof}

\begin{theorem}
  The Ornstein--Uhlenbeck semigroup $T_t$ is given by
  \begin{equation}\label{Tt-formula}
    T_t F(x) = \int_W F\left(e^{-t} x + \sqrt{1-e^{-2t}} y\right)\,\mu(dy).
  \end{equation}
\end{theorem}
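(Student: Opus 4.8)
The plan is to denote the right-hand side of (\ref{Tt-formula}) by $S_t F(x) := \int_W F(e^{-t}x + \sqrt{1-e^{-2t}}\,y)\,\mu(dy)$, to show that $S_t$ is a well-defined contraction on $L^2(W,\mu)$, and then to identify $S_t$ with $T_t$ by checking that the two operators agree on the Hermite orthonormal basis $\{F_\alpha\}$ of $L^2(W)$. Since we already know from Proposition \ref{Tt-chaos} that $T_t F = \sum_n e^{-nt} J_n F$, so that $T_t F_\alpha = e^{-|\alpha|t} F_\alpha$, and since both $S_t$ and $T_t$ are bounded, it suffices to prove $S_t F_\alpha = e^{-|\alpha|t} F_\alpha$; the identity $S_t = T_t$ on all of $L^2(W,\mu)$ then follows by continuity and density of $\operatorname{span}\{F_\alpha\}$.

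First I would check that $S_t$ makes sense on $L^2(W,\mu)$. The map $\Psi : W^2 \to W$, $\Psi(x,y) = e^{-t}x + \sqrt{1-e^{-2t}}\,y$, is continuous and linear, and $\mu^2 = \mu\times\mu$ is a Gaussian measure on $W^2$; hence $\mu^2\circ\Psi^{-1}$ is Gaussian, with covariance form $f \mapsto e^{-2t}q(f,f) + (1-e^{-2t})q(f,f) = q(f,f)$, so by uniqueness of the Fourier transform (Theorem \ref{fourier-unique}) we get $\mu^2\circ\Psi^{-1} = \mu$. Consequently, for $F\in L^2(W,\mu)$ Tonelli gives $\int_W\int_W |F(\Psi(x,y))|\,\mu(dy)\,\mu(dx) = \int_W |F|\,d\mu < \infty$, so $S_t F(x)$ is defined for $\mu$-a.e.\ $x$ and is measurable; and by Jensen followed by the same change of variables, $\int_W |S_tF(x)|^2\,\mu(dx) \le \int_W\int_W |F(\Psi(x,y))|^2\,\mu(dy)\,\mu(dx) = \norm{F}_{L^2(W)}^2$, so $S_t$ is a contraction. (The semigroup property $S_s S_t = S_{s+t}$, $S_0 = I$, can be verified by the same kind of Gaussian-measure computation using $\mu_a * \mu_b = \mu_{a+b}$, but we will not need it, as it falls out of the identification with $T_t$.)

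Now for the eigenfunction computation. Writing $F_\alpha = \prod_{i=1}^k \sqrt{n_i!}\,H_{n_i}(e_i(\cdot))$ with $\{e_i\}\subset W^*$ a $q$-orthonormal family, linearity of $e_i$ gives $e_i(\Psi(x,y)) = e^{-t}e_i(x) + \sqrt{1-e^{-2t}}\,e_i(y)$. Since $q$-orthonormality means the $e_i$ are i.i.d.\ $N(0,1)$ under $\mu$, pushing $\mu$ forward along $y\mapsto (e_1(y),\dots,e_k(y))$ to standard Gaussian measure on $\R^k$ and using Fubini factors $S_t F_\alpha(x)$ into a product of one-dimensional integrals, so the whole problem reduces to the classical Mehler identity
\begin{equation*}
  \int_\R H_n\!\left(e^{-t}a + \sqrt{1-e^{-2t}}\,u\right)\frac{e^{-u^2/2}}{\sqrt{2\pi}}\,du = e^{-nt}H_n(a), \qquad a\in\R.
\end{equation*}
I would prove this by multiplying by $\lambda^n$, summing over $n$, and invoking the Hermite generating function $\sum_{n\ge 0} H_n(s)\lambda^n = e^{\lambda s - \lambda^2/2}$ (itself immediate from the recursions $H_n' = H_{n-1}$, $(n+1)H_{n+1} = sH_n - H_{n-1}$): the left side becomes $\int_\R \exp\!\left(\lambda e^{-t}a + \lambda\sqrt{1-e^{-2t}}\,u - \tfrac12\lambda^2\right)\frac{e^{-u^2/2}}{\sqrt{2\pi}}\,du$, which by the Gaussian moment generating function equals $\exp\!\left(\lambda e^{-t}a - \tfrac12 e^{-2t}\lambda^2\right) = \sum_n e^{-nt}H_n(a)\lambda^n$; comparing coefficients of $\lambda^n$ gives the identity. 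The interchange of sum and integral is harmless, e.g.\ for $|\lambda|$ small where one has an integrable dominating function, both sides being entire in $\lambda$. Plugging back in yields $S_t F_\alpha = e^{-(n_1+\cdots+n_k)t}F_\alpha = e^{-|\alpha|t}F_\alpha$, which completes the proof. The only mildly delicate points are the well-definedness of $S_t$ on $L^2$ — which is why everything is routed through the joint measure $\mu\times\mu$ rather than through translating a rescaled Gaussian, those being mutually singular in infinite dimensions — and the bookkeeping in the one-dimensional Mehler computation; neither is a serious obstacle.
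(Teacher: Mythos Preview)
Your proof is correct and takes a genuinely different route from the paper's. The paper's sketch shows that the right-hand side $R_t$ is itself a semigroup (via the convolution identity $\mu_a * \mu_b = \mu_{a+b}$) and then differentiates at $t=0$ on smooth cylinder functions to recover the finite-dimensional Ornstein--Uhlenbeck operator $\tilde{N}$, concluding by matching generators; it explicitly leaves a ``probably annoying density argument'' unfinished. You instead bypass the generator entirely: having established that $S_t$ is a bounded operator on $L^2(W,\mu)$ (via the neat observation that $\Psi_*(\mu\times\mu) = \mu$), you compute $S_t$ directly on the Hermite basis by reducing to the one-dimensional Mehler identity, and then invoke the already-proved Proposition~\ref{Tt-chaos} to see that $T_t$ acts the same way on that basis. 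Your approach is cleaner here because it exploits the Wiener chaos machinery the paper has already built, avoids any appeal to uniqueness of semigroups with a given generator, and closes completely with no loose ends. The paper's approach, on the other hand, is more self-contained in the sense that it would work even before the chaos decomposition is developed, and it makes the connection to the generator $N$ more explicit.
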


\begin{proof}
  Since this is mostly computation, I'll just sketch it.

  Let $R_t$ denote the right side.  We'll show that $R_t$ is another
  semigroup with the same generator.

  Showing that $R_t$ is a semigroup is easy once you remember that
  $\mu_t$ is a convolution semigroup, or in other words
  \begin{equation*}
    \int_W \int_W G(ax+by) \,\mu(dy)\,\mu(dx) = \int_W
    G\left(\sqrt{a^2+b^2} z\right) \,\mu(dz).
  \end{equation*}

  To check the generator is right, start with the finite dimensional
  case.  If $\phi$ is a nice smooth function on $\R^n$, and $p(y) dy$
  is standard Gaussian measure, then show that
  \begin{equation*}
    \frac{d}{dt}|_{t=0} \int_{\R^n} \phi\left(e^{-t} x + \sqrt{1-e^{-2t}}y\right)
    p(y) dy = \tilde{N} \phi(x).
  \end{equation*}
  (First differentiate under the integral sign.  For the term with the
  $x$, evaluate at $t=0$.  For the term with $y$, integrate by parts,
  remembering that $y p(y) = -\grad p(y)$.  If in doubt, assign it as
  homework to a Math 2220 class.)

  Now if $F$ is a smooth cylinder function on $W$, do the same and use
  the previous lemma, noting that $(e_1, \dots, e_n)$ have a standard
  normal distribution under $\mu$.

  There is probably some annoying density argument as the last step.
  The interested reader can work it out and let me know how it went.
\end{proof}

This shows that at time $t$, $X_t$ started at $x$ has a Gaussian
distribution (derived from $\mu$) with mean $e^{-t} x$ and variance
$1-e^{-2t}$.

Here is a general property of Markovian semigroups that we will use
later:

\begin{lemma}\label{Tt-cauchy-schwarz}
  For bounded nonnegative functions $F,G$, we have
  \begin{equation}\label{Tt-cauchy-schwarz-eqn}
    |T_t (F G)(x)|^2 \le T_t (F^2)(x) T_t (G^2)(x).
  \end{equation}
\end{lemma}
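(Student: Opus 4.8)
The plan is to observe that $T_t$ acts by integration against a probability measure, and then invoke the ordinary Cauchy--Schwarz inequality for that measure. Concretely, fix $x \in W$ and $t > 0$, and let $\nu_{t,x}$ be the pushforward of $\mu$ under the affine map $y \mapsto e^{-t}x + \sqrt{1-e^{-2t}}\,y$; this is a Borel probability measure on $W$, and formula (\ref{Tt-formula}) says precisely that $T_t \Phi(x) = \int_W \Phi\,d\nu_{t,x}$ for every bounded measurable $\Phi : W \to \R$. (Alternatively one can use that $T_t \Phi(x) = \mathbb{E}_x[\Phi(X_t)]$ is integration against the transition probability $p_t(x,\cdot)$, which is likewise a probability measure; either description suffices.)

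Now suppose $F, G$ are bounded and nonnegative. Then $F, G \in L^2(W, \nu_{t,x})$ since $\nu_{t,x}$ is finite, and $FG$, $F^2$, $G^2$ are bounded, so $T_t(FG)(x)$, $T_t(F^2)(x)$, $T_t(G^2)(x)$ are all finite and equal to $\int_W FG\,d\nu_{t,x}$, $\int_W F^2\,d\nu_{t,x}$, $\int_W G^2\,d\nu_{t,x}$ respectively. Applying Cauchy--Schwarz in $L^2(W,\nu_{t,x})$ to $F$ and $G$ gives
\begin{equation*}
  |T_t(FG)(x)|^2 = \left(\int_W F G\,d\nu_{t,x}\right)^2 \le \int_W F^2\,d\nu_{t,x} \int_W G^2\,d\nu_{t,x} = T_t(F^2)(x)\, T_t(G^2)(x),
\end{equation*}
which is exactly (\ref{Tt-cauchy-schwarz-eqn}).

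There is no real obstacle here; the only point deserving a word is that the representation (\ref{Tt-formula}) should be applied to bounded measurable (not merely continuous) $\Phi$, which follows from the same monotone-class/density argument used to establish (\ref{Tt-formula}), or can be sidestepped entirely by working with the transition kernel $p_t(x,\cdot)$ directly. More abstractly, the inequality holds for any Markovian operator $P$ (positivity-preserving with $P1 = 1$): on bounded functions the bilinear form $(\Phi,\Psi)\mapsto P(\Phi\Psi)(x)$ is positive semidefinite, and its associated Cauchy--Schwarz inequality is precisely $|P(FG)(x)|^2 \le P(F^2)(x)\,P(G^2)(x)$.
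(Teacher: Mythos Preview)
Your proof is correct. It is in fact exactly the alternative proof the paper mentions immediately after its own argument: ``use (\ref{Tt-formula}), or the fact that $T_t F(x) = E_x[F(X_t)]$, and Cauchy--Schwarz.''

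The paper's primary proof takes a different route: it uses the scalar identity $ab = \tfrac{1}{2}\inf_{r>0}\bigl(r a^2 + \tfrac{1}{r} b^2\bigr)$ for $a,b \ge 0$, applies $T_t$, and pushes $T_t$ inside the infimum using only linearity and the order-preserving (Markovian) property of $T_t$. That argument never invokes an integral representation of $T_t$; it works for any positive linear operator with $T_t 1 = 1$, with no need to know that $T_t$ arises from a transition kernel. Your main argument is more direct and arguably more transparent, but it leans on the concrete formula (\ref{Tt-formula}) or the existence of a transition probability. Your closing remark---that $(\Phi,\Psi)\mapsto P(\Phi\Psi)(x)$ is a positive semidefinite bilinear form, so Cauchy--Schwarz applies abstractly---recovers the same generality as the paper's approach by a slightly different device, and is a nice observation.
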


\begin{proof}
  Note the following identity: for $a, b \ge 0$,
  \begin{equation*}
    ab = \frac{1}{2} \inf_{r > 0} \left(r a^2 + \frac{1}{r} b^2\right). 
  \end{equation*}
  (One direction is the AM-GM inequality, and the other comes from
  taking $r = b/a$.)  So
  \begin{align*}
    T_t (FG) &= \frac{1}{2} T_t \left(\inf_{r > 0} \left(r F^2 +
    \frac{1}{r} G^2\right)\right) \\
    &\le \frac{1}{2} \inf_{r > 0} \left( r T_t (F^2) + \frac{1}{r} T_t
    (G^2) \right) \\
    &= \sqrt{T_t(F^2) T_t(G^2)}
  \end{align*}
  where in the second line we used the fact that $T_t$ is linear and
  Markovian (i.e. if $f \le g$ then $T_t f \le T_t g$).
\end{proof}

As a special case, taking $G=1$, we have $|T_t F(x)|^2 \le T_t (F^2)(x)$.

Alternative proof: use (\ref{Tt-formula}), or the fact that $T_t F(x)
= E_x[F(X_t)]$, and Cauchy--Schwarz.

\subsection{Log Sobolev inequality}

Recall that in finite dimensions, the classical Sobolev embedding
theorem says that for $\phi \in C^\infty_c(\R^n)$ (or more generally
$\phi \in W^{1,p}(\R^n)$),
\begin{equation}
  \norm{\phi}_{L^{p^*}(\R^n,m)} \le C_{n,p}(\norm{\phi}_{L^p(\R^n, m)} +
  \norm{\grad \phi}_{L^p(\R^n, m)})
\end{equation}
where $\frac{1}{p^*} = \frac{1}{p} - \frac{1}{n}$.  Note everything is
with respect to Lebesgue measure.
In particular, this says that if $\phi$ and $\grad \phi$ are both in
$L^p$, then the integrability of $\phi$ is actually better: we have
$\phi \in L^{p^*}$.  So
\begin{equation*}
  W^{1,p} \subset L^{p^*}
\end{equation*}
and the inclusion is continuous (actually, if the inclusion holds at
all it has to be continuous, by the closed graph theorem).

This theorem is useless in infinite dimensions in two different ways.
First, it involves Lebesgue measure, which doesn't exist.  Second,
when $n = \infty$ we get $p^* = p$ so the conclusion is a triviality
anyway.

In 1975, Len Gross discovered the logarithmic Sobolev inequality
\cite{gross75} which fixes both of these defects by using Gaussian
measure and being dimension-independent.  Thus it has a chance of
holding in infinite dimensions.  In fact, it does.

The log-Sobolev inequality says that in an abstract Wiener space, for
$F \in \mathbb{D}^{1,2}$ with
\begin{equation}\label{log-sobolev-eqn}
  \int |F|^2 \ln |F| \,d\mu \le \norm{F}_{L^2(W,\mu)}^2 \ln
  \norm{F}_{L^2(W,\mu)} + \norm{DF}_{L^2(W;H)}^2.
\end{equation}

If you are worried what happens for $F$ near 0:
\begin{exercise}
  $g(x) = x^2 \ln x$ is bounded below on $(0, \infty)$, and $g(x) \to
  0$ as $x \downarrow 0$.
\end{exercise}
So if we define ``$0^2 \ln 0 = 0$'', there is no concern about the
existence of the integral on the left side (however, what is not
obvious is that it is finite).  What's really of interest are the
places where $|F|$ is large, since then $|F|^2 \ln |F|$ is bigger than $|F|^2$.

It's worth noting that (\ref{log-sobolev-eqn}) also holds in finite
dimensions, but there are no dimension-dependent constants appearing
in it.

A concise way of stating the log Sobolev inequality is to say that
\begin{equation*}
  \mathbb{D}^{1,2} \subset L^2 \ln L
\end{equation*}
where $L^2 \ln L$, by analogy with $L^p$, represents the set of
measurable functions $F$ with $\int |F|^2 \ln |F| < \infty$.  This is
called  an Orlicz space; one can play this game to define $\phi(L)$
spaces for a variety of reasonable functions $\phi$.

Our proof of the log Sobolev inequality hinges on the following
completely innocuous looking commutation relation.

\begin{lemma}\label{Tt-D-commute}
  For $F \in \mathbb{D}^{1,2}$, $D T_t F = e^{-t} T_t DF$.
\end{lemma}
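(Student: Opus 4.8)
The plan is to verify the identity on the Wiener chaos decomposition, where all the relevant operators act diagonally, and then extend by density using closedness. This is the natural strategy since we have already established (Proposition \ref{D-chaos}, Lemma \ref{DJ-commute}, Proposition \ref{Tt-chaos}) that $D$ maps $\mathcal{H}_n$ into $\mathcal{H}_{n-1}(H)$, that $D J_n = J_{n-1} D$, and that $T_t$ acts on $\mathcal{H}_n$ as multiplication by $e^{-tn}$.

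First I would treat the case $F = F_\alpha$ with $|\alpha| = n$. On one side, $T_t F_\alpha = e^{-tn} F_\alpha$, so $D T_t F_\alpha = e^{-tn} D F_\alpha$. On the other side, $D F_\alpha \in \mathcal{H}_{n-1}(H)$, and since $T_t$ acts on $L^2(W;H)$ via the same chaos grading (multiplying $\mathcal{H}_{n-1}(H)$ by $e^{-t(n-1)}$), we get $e^{-t} T_t D F_\alpha = e^{-t} e^{-t(n-1)} D F_\alpha = e^{-tn} D F_\alpha$. So the two sides agree on each basis element $F_\alpha$, hence on finite linear combinations by linearity.

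Next I would extend to general $F \in \mathbb{D}^{1,2}$. Take $F_m \to F$ in $\mathbb{D}^{1,2}$ with each $F_m$ a finite linear combination of the $F_\alpha$ (possible by the proposition that the span of the $\mathcal{H}_n$ is dense in $\mathbb{D}^{1,2}$). Then $F_m \to F$ in $L^2(W)$ and $D F_m \to DF$ in $L^2(W;H)$. Since $T_t$ is a contraction on $L^2(W)$, $T_t F_m \to T_t F$ in $L^2(W)$; and since $T_t$ is a contraction on $L^2(W;H)$, $e^{-t} T_t D F_m \to e^{-t} T_t DF$ in $L^2(W;H)$. We know $D T_t F_m = e^{-t} T_t D F_m$ converges, so by closedness of $D$ we conclude $T_t F \in \mathbb{D}^{1,2}$ (with $D T_t F$ equal to the limit) and $D T_t F = e^{-t} T_t DF$.

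The one point requiring a little care — the main obstacle, such as it is — is the claim that $T_t$ acts as a contraction semigroup on the vector-valued space $L^2(W;H)$ with the chaos grading $\mathcal{H}_{n-1}(H) \mapsto e^{-t(n-1)} \mathcal{H}_{n-1}(H)$. This follows from the orthogonal decomposition $L^2(W;H) = \bigoplus_n \mathcal{H}_n(H)$ together with the observation that $\mathcal{H}_n(H)$ is spanned by elements $F h$ with $F \in \mathcal{H}_n$, $h \in H$, on which one simply defines $\tilde T_t (F h) = (T_t F) h = e^{-tn} F h$; this is manifestly a strongly continuous contraction semigroup, and it is the ``correct'' one in the sense that the computation $D T_t = \tilde T_t D$ holds termwise. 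If one prefers to avoid introducing $\tilde T_t$ explicitly, an alternative is to work entirely within $\mathbb{D}^{1,2}$: verify the identity on $\bigoplus \mathcal{H}_n$, note both sides are $L^2(W;H)$-continuous in $F \in \mathbb{D}^{1,2}$ on that dense subspace (the left via closedness of $D$ composed with continuity of $T_t$ on $\mathbb{D}^{1,2}$, which was proved as a corollary), and conclude. Either way the argument is short; the substance is entirely in the chaos machinery already set up.
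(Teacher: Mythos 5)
Your proof is correct and takes essentially the same route as the paper: both arguments diagonalize everything over the Wiener chaos, using $T_t = \sum_n e^{-tn} J_n$, the commutation $D J_n = J_{n-1} D$, and the chaos definition of $T_t$ on $L^2(W;H)$, so that each side acts on $\mathcal{H}_n$ by $e^{-tn} D$. The only difference is that where the paper differentiates the infinite series termwise and leaves the justification "as an exercise," you verify the identity on basis elements and pass to the limit via closedness of $D$, which supplies exactly that missing detail.
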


You may object that on the right side we are applying $T_t$, an
operator on the real-valued function space $L^2(W)$, to the $H$-valued
function $DF$.  Okay then: we can define $T_t$ on $L^2(W;H)$ in any of
the following ways:
\begin{enumerate}
\item Componentwise: $T_t u =
  \sum_i (T_t \inner{u(\cdot)}{h_i}_H)(x) h_i$ where $h_i$ is an
  orthonormal basis for $H$.
\item Via (\ref{Tt-formula}), replacing the Lebesgue integral with
  Bochner.
\item Via (\ref{Tt-chaos-eqn}): set $T_t u = \sum_{n=0}^\infty e^{-tn}
  J_n u$ where $J_n$ is orthogonal projection onto $\mathcal{H}_n(H)
  \subset L^2(W;H)$.
\end{enumerate}
\begin{exercise}
  Verify that these are all the same.  Also verify the inequality
  \begin{equation}\label{Tt-H-markov}
    \norm{T_t u(x)}_H \le T_t \norm{u}_H (x).
  \end{equation}
\end{exercise}

It's worth noting that for any $F \in L^2$, $T_t F \in
\mathbb{D}^{1,2}$.  This follows either from the spectral theorem, or
from the observation that for any $t$, the sequence $\{n e^{-2tn}\}$
is bounded, so $\sum_n n \norm{J_n T_t F}^2 = \sum_n n e^{2tn}
\norm{J_n F}^2 \le C \sum \norm{J_n F}^2 \le C \norm{F}^2$.  In fact,
more is true: we have $T_t F \in \dom N$, and indeed $T_t F \in \dom N^\infty$.

\begin{proof}[Proof of Lemma \ref{Tt-D-commute}]
  \begin{align*}
    D T_t F &= D \sum_{n=0}^\infty e^{-tn} J_n F \\
    &= \sum_{n=1}^\infty e^{-tn} D J_n F && \text{(recall $D J_0 =
    0$)} \\
    &= \sum_{n=1}^\infty e^{-tn} J_{n-1} DF \\
    &= \sum_{k=0}^\infty e^{-t(k+1)} J_k DF = e^{-t} T_t DF
  \end{align*}
  where we re-indexed by letting $k=n-1$.  We've extended to
  $L^2(W;H)$ some Wiener chaos identities that we only really proved
  for $L^2(W)$; as an exercise you can check the details.
\end{proof}

There's also an infinitesimal version of this commutation:

\begin{lemma}
  For $F \in \mathcal{F} C^\infty_c(W)$, $D N F = (N+1) DF$.
\end{lemma}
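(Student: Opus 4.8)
The plan is to reduce the $H$-valued identity $DNF=(N+1)DF$ to a family of scalar identities, then establish a canonical commutation relation between $D$ and $\delta$. First one should observe that $DNF$ even makes sense: by the preceding lemma $NF(x)=(\tilde N\phi)(e_1(x),\dots,e_n(x))$ with $\tilde N\phi=\Delta\phi-x\cdot\grad\phi$, and $\tilde N\phi\in C^\infty_c(\R^n)$ whenever $\phi\in C^\infty_c(\R^n)$ (both $\Delta\phi$ and $x\cdot\grad\phi$ have compact support), so $NF$ is again a smooth cylinder function, hence in $\mathbb{D}^{1,2}$. Since $N$ acts componentwise on $L^2(W;H)$, for any $h\in H$ we have $\inner{(N+1)DF}{h}_H=(N+1)\partial_h F$ and $\inner{DNF}{h}_H=\partial_h(NF)$, so using $N=\delta D$ the claim becomes
\begin{equation*}
  \partial_h(\delta DF)=\partial_h F+\delta\bigl(D(\partial_h F)\bigr)\qquad\text{for every }h\in H.
\end{equation*}

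The key step is a commutation relation: for a smooth $H$-valued cylinder function $u=\sum_{j=1}^k G_j h_j$ with $G_j\in\mathcal{F}C^\infty_c(W)$, $h_j\in H$, and for any $h\in H$,
\begin{equation*}
  \partial_h\bigl(\delta(u)\bigr)=\inner{u}{h}_H+\delta(\partial_h u),
\end{equation*}
where $\partial_h u=\sum_j(\partial_h G_j)h_j$. I would prove this by expanding $\delta(u)=\sum_j\bigl(G_j\inner{h_j}{\cdot}_H-\inner{DG_j}{h_j}_H\bigr)$ via Proposition~\ref{delta-simple} and linearity, differentiating in the direction $h$, and using that $\inner{h_j}{\cdot}_H$ is Gaussian with $\partial_h\inner{h_j}{\cdot}_H=\inner{h_j}{h}_H$ (this produces the $\inner{u}{h}_H$ term) together with the symmetry of mixed directional derivatives $\partial_h\partial_{h_j}G_j=\partial_{h_j}\partial_h G_j$ for smooth cylinder functions (this cancels the remaining unwanted terms).

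Finally I would apply the commutation relation with $u=DF$: by \eqref{DF-cylinder} we have $DF=\sum_i\partial_i\phi(f_1(\cdot),\dots,f_n(\cdot))\,Jf_i$, a smooth $H$-valued cylinder function, and again by symmetry of mixed derivatives $\partial_h(DF)=D(\partial_h F)$. The relation then gives
\begin{equation*}
  \partial_h(\delta DF)=\inner{DF}{h}_H+\delta\bigl(D(\partial_h F)\bigr)=\partial_h F+N(\partial_h F)=(N+1)\partial_h F,
\end{equation*}
which is exactly what was needed; since $h\in H$ was arbitrary, $DNF=(N+1)DF$.

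The main obstacle is the commutation relation itself: one has to keep track of every term produced by differentiating $\delta(u)$ and verify that everything except $\inner{u}{h}_H+\delta(\partial_h u)$ cancels, which is precisely where the equality of second directional derivatives of smooth cylinder functions gets used. As an alternative that avoids $\delta$ one can argue in Wiener chaos: if $G\in\mathcal{H}_n$ then $NG=nG$ and $DG\in\mathcal{H}_{n-1}(H)$, so $NDG=(n-1)DG$ and hence $DNG=nDG=(N+1)DG$; the identity for a general smooth cylinder function $F$ then follows by applying this to each $J_nF$ and passing $D$ through the series $NF=\sum_n nJ_nF$ using closedness of $D$, the only point to check being the decay $\sum_n n^3\norm{J_nF}_{L^2(W)}^2<\infty$, which holds since $N^kF$ is again a smooth cylinder function for every $k$, so $F\in\dom N^\infty$.
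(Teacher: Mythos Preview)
Your argument is correct, and so is your Wiener-chaos alternative. The paper, however, takes a different and much shorter primary route: it simply differentiates the commutation relation $DT_tF = e^{-t}T_tDF$ of the preceding lemma at $t=0$, which gives $-DNF = -DF - NDF$ immediately. Your approach instead establishes the canonical commutation relation $\partial_h\delta(u) = \inner{u}{h}_H + \delta(\partial_h u)$ by direct computation from the explicit formula for $\delta$ on simple elements, and then specializes to $u=DF$. This is more labor but more self-contained: the paper's semigroup argument ultimately rests on the chaos decomposition (which is how $DT_t=e^{-t}T_tD$ was proved), whereas your computation needs nothing beyond Proposition~\ref{delta-simple} and equality of mixed partials, and it yields the useful commutator $[D,\delta]=I$ as a byproduct. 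Your alternative via the eigenspace decomposition is exactly the paper's second suggested method. One small caution: you state the commutation relation for arbitrary $h_j\in H$, where $\inner{h_j}{\cdot}_H$ is only a.e.\ defined and the pointwise Leibniz step needs care; in your actual application you only use $h_j=Jf_j\in i^*W^*$, so that $\inner{h_j}{\cdot}_H=f_j$ is an honest continuous linear functional and everything is clean.
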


\begin{proof}
  Differentiate the previous lemma at $t=0$.  Or, use Wiener chaos expansion.
\end{proof}

\begin{exercise}  (Not necessarily very interesting)
  Characterize the set of $F$ for which the foregoing identity makes
  sense and is true.
\end{exercise}



We can now prove the log Sobolev inequality (\ref{log-sobolev-eqn}).
This proof is taken from \cite{ustunel2010} which actually contains
several proofs.

\begin{proof}
  First, let $F$ be a smooth cylinder function which is bounded
  above and bounded below away from 0: $0 < a \le F \le b < \infty$.
  Take $G = F^2$; $G$ has the same properties.  Note in particular
  that $G \in \dom N$.
  We have
  \begin{equation}\label{ls1}
    Q := 2 \left(\int F^2 \ln F d\mu - \norm{F}^2 \ln \norm{F}\right) =
    \int G \ln G d\mu - \int G d\mu \ln \int G d\mu. 
  \end{equation}
  and we want to bound this quantity $Q$ by $2 \norm{DF}_{L^2(W;H)}^2$.
  
  Note that for any $G \in L^2(W)$ we have $\lim_{t \to \infty} T_t G
  = J_0 G = \int G d\mu$.  (Use Lemma \ref{Tt-chaos} and monotone
  convergence.)  So we can think of $T_t G$ as a continuous function
  from $[0, \infty]$ to $L^2(W)$.  It is continuously differentiable on
  $(0,\infty)$ and has derivative $-N T_t G = -T_t N G$.  So define $A : [0,\infty] \to L^2(W)$ by $A(t) = (T_t G)\cdot(\ln T_t G)$ (noting
  that as $T_t$ is Markovian, $T_t G$ is bounded above and below, so
  $(T_t G)\cdot(\ln T_t G)$ is also bounded and hence in $L^2$).  Then 
  $Q = \int_W (A(0) - A(\infty)) d\mu$.  Since we want to use the
  fundamental  theorem of calculus, we use the chain rule to see that
  \begin{equation*}
    A'(t) = -(N T_t G) (1 + \ln T_t G).
  \end{equation*}
So by the fundamental theorem
  of calculus, we have
  \begin{align*}
    Q &= - \int_W \int_0^\infty A'(t)\,dt d\mu \\
    &= \int_W \int_0^\infty (N T_t G)(1 + \ln T_t G)\,dt\,d\mu.
  \end{align*}

  There are two integrals in this expression, so of course we want to
  interchange them.  To justify this, we note that $1 + \ln T_t G$ is
  bounded (since $0 < a^2 \le G \le b^2$ and $T_t$ is Markovian, we
  also have $a^2 \le T_t G \le b^2)$), and so it is enough to bound
  \begin{align*}
    \int_W \int_0^\infty |N T_t G| \,dt\,d\mu &= \int_0^\infty \norm{N
    T_t G}_{L^1(W,\mu)} dt \\
    &\le \int_0^\infty \norm{N T_t G}_{L^2(W,\mu)} dt
  \end{align*}
  since $\norm{\cdot}_{L^1} \le \norm{\cdot}_{L^2}$ over a probability
  measure (Cauchy--Schwarz or Jensen).
  Note that $N T_t G = T_t N G$ is continuous from $[0,\infty]$ to
  $L^2(W,\mu)$, so $\norm{N T_t G}_{L^2(W)}$ is continuous in $t$ and
  hence bounded on compact sets.  So we only have to worry about what
  happens for large $t$.  But Corollary \ref{Tt-exponential-decay}
  says that it decays exponentially, and so is integrable.  (Note that
  $\int NG d\mu = \inner{NG}{1}_{L^2(W)} = \inner{DG}{D1}_{L^2(W)} =
  0$.)

  So after applying Fubini's theorem, we get
  \begin{align*}
    Q &= \int_0^\infty \int_W (N T_t G)(1 + \ln T_t G)\,d\mu \,dt \\
      &= \int_0^\infty \inner{N T_t G}{1 + \ln T_t G}_{L^2(W)}\,dt.
  \end{align*}

  Now since $N = \delta D$ we have, using the chain rule,
  \begin{align*}
    \inner{N T_t G}{1 + \ln T_t G}_{L^2(W)} &= \inner{D T_t G}{\cancel{D1} + D
    \ln T_t G}_{L^2(W;H)} \\ 
    &= \inner{D T_t G}{\frac{D T_t G}{T_t G}}_{L^2(W;H)} \\
    &= \int_W \frac{1}{T_t G} \norm{D T_t G}_H^2 d\mu \\
    &= e^{-2t} \int_W \frac{1}{T_t G} \norm{T_t D G}_H^2 d\mu
  \end{align*}
  where we have just used the commutation $D T_t = e^{-t} T_t D$.

  Let's look at $\norm{T_t DG}_H^2$.  Noting that $DG = 2 F DF$, we have
  \begin{align*}
    \norm{T_t DG}_H^2 &\le (T_t \norm{DG}_H)^2 && \text{by
    (\ref{Tt-H-markov})} \\
      &= 4 (T_t (F \norm{DF}_H))^2 \\
      &\le 4 (T_t(F^2)) (T_t \norm{DF}_H^2) && \text{by (\ref{Tt-cauchy-schwarz-eqn})}.
  \end{align*}

  Thus we have reached
  \begin{align*}
     \int_W \frac{1}{T_t G} \norm{T_t D G}_H^2 d\mu \le 4 \int_W T_t \norm{DF}_H^2\,d\mu.
  \end{align*}
  But since $T_t$ is self-adjoint and $T_t 1 = 1$ (or, if you like,
  the fact that $T_t$ commutes with $J_0$, we have $\int_W T_t f d\mu
  = \int f d\mu$ for any $t$.  Thus $\int_W T_t \norm{DF}_H^2 d\mu =
  \int_W \norm{DF}_H^2 \,d\mu = \norm{DF}_{L^2(W;H)}^2$.  So we have
  \begin{align*}
    Q \le \left(4 \int_0^\infty e^{-2t}\,dt  \right) \norm{DF}_{L^2(W;H)}^2
  \end{align*}
  The parenthesized constant equals 2 (consult a Math 1120 student if
  in doubt).  This is what we wanted.

  To extend this to all $F \in \mathbb{D}^{1,2}$, we need some density
  arguments.  Suppose now that $F$ is a smooth cylinder function which
  is bounded, say $|F| \le M$.  Fix $\epsilon > 0$, and for each $n$
  let $\varphi_n \in C^\infty(\R)$ be a positive smooth function, such that:
  \begin{enumerate}
    \item $\varphi_n$ is bounded away from 0;
    \item $\varphi_n \le M$;
    \item $\varphi_n'| \le 1+\epsilon$;
    \item $\varphi_n(x) \to |x|$ pointwise on $[-M,M]$.
  \end{enumerate}
  Thus $\varphi_n(F)$ is a smooth cylinder function, bounded away from
  0 and bounded above, so it satisfies the log Sobolev inequality.
  Since $\varphi_n(F) \to |F|$ pointwise and boundedly, we have
  $\norm{\varphi_n(F)}_{L^2(W)} \to \norm{F}_{L^2(W)}$ by dominated
  convergence.  We also have, by the chain rule, $\norm{D
  \varphi_n(F)}_{L^2(W;H)} \le (1+\epsilon) \norm{DF}_{L^2(W;H)}$.
  Thus
  \begin{equation*}
    \limsup_{n \to \infty} \int_W \varphi_n(F)^2 \ln \varphi_n(F)\,d\mu \le
    \norm{F}^2 \ln \norm{F} + (1+\epsilon) \norm{DF}^2.
  \end{equation*}
  Now since $x^2 \ln x$ is continuous, we have $\varphi_n(F)^2 \ln
  \varphi_n(F) \to |F|^2 \ln |F|$ pointwise.  Since $x^2 \ln x$ is
  bounded below, Fatou's lemma gives
  \begin{equation*}
    \int_W |F|^2 \ln |F| \,d\mu \le \liminf_{n \to \infty} \int_W \varphi_n(F)^2 \ln \varphi_n(F)\,d\mu
  \end{equation*}
  and so this case is done after we send $\epsilon \to 0$.  (Dominated
  convergence could also have been used, which would give equality in
  the last line.)

  Finally, let $F \in \mathbb{D}^{1,2}$.  We can find a sequence of
  bounded cylinder functions $F_n$ such that $F_n \to F$ in $L^2(W)$
  and $DF_n \to DF$ in $L^2(W;H)$.  Passing to a subsequence, we can
  also assume that $F_n \to F$ $\mu$-a.e., and we use Fatou's lemma as
  before to see that the log Sobolev inequality holds in the limit.
\end{proof}

Note that we mostly just used properties that are true for any
Markovian semigroup $T_t$ that is conservative ($T_t 1 = 1$).  The
only exception was the commutation $D T_t = e^{-t} T_t D$.  In fact,
an inequality like $\norm{D T_t F}_H \le C(t) T_t \norm{DF}_H$ would have
been good enough, provided that $C(t)$ is appropriately integrable.  (One of the main results in my thesis was to prove
an inequality like this for a certain finite-dimensional Lie group, in
order to obtain a log-Sobolev inequality by precisely this method.)

Also, you might wonder: since the statement of the log-Sobolev
inequality only involved $D$ and $\mu$, why did we drag the
Ornstein--Uhlenbeck semigroup into it?  Really the only reason was the
fact that $T_\infty F = \int F d\mu$, which is just saying that $T_t$
is the semigroup of a Markov process whose distribution at a certain
time $t_0$ (we took $t_0=\infty$) is the measure $\mu$ we want to use.  If
we want to prove this theorem in finite dimensions, we could instead
use the heat semigroup $P_t$ (which is symmetric with respect to
Lebesgue measure) and take $t=1$, beak Brownian motion at time 1
also has a standard Gaussian distribution.

\section{Absolute continuity and smoothness of distributions}

This section will just hint at some of the very important applications
of Malliavin calculus to proving absolute continuity results.

When presented with a random variable (or random vector) $X$, a very
basic question is ``What is its distribution?'', i.e. what is
$\nu(A) := P(X \in A)$ for Borel sets $A$?  A more basic question is
``Does $X$ has a continuous distribution?'', i.e. is $\nu$
absolutely continuous to Lebesgue measure?  If so, it has a
Radon--Nikodym derivative $f \in L^1(m)$, which is a density function
for $X$.  It may happen that $f$ is continuous or $C^k$ or $C^\infty$,
in which case so much the better.

Given a Brownian motion $B_t$ or similar process, one can cook up lots
of complicated random variables whose distributions may be very hard
to work out.  For example:

\begin{itemize}
\item $X = f(B_t)$ for some fixed $t$ (this is not so hard)
\item $X = f(B_T)$ for some stopping time $T$
\item $X = \sup_{t \in [0,1]} B_t$
\item $X = \int_0^1 Y_t \,dB_t$
\item $X = Z_t$, where $Z$ is the solution to some SDE $dZ_t = f(Z_t) dB_t$.
\end{itemize}

Malliavin calculus gives us some tools to learn something about the
absolute continuity of such random variables, and the smoothness of
their densities.

Let $(W, H, \mu)$ be an abstract Wiener space.  A measurable function
$F : W \to \R$ is then a random variable, and we can ask about its
distribution.  If we're going to use Malliavin calculus, we'd better
concentrate on $F \in \mathbb{D}^{1,p}$.  An obvious obstruction to
absolute continuity would be if $F$ is constant on some set $A$ of
positive $\mu$-measure; in this case, as we have previously shown,
$DF = 0$ on $A$.  The following theorem says if we ensure that $DF$
doesn't vanish, then $F$ must be absolutely continuous.

\begin{theorem}
  Let $F \in \mathbb{D}^{1,1}$, and suppose that $DF$ is nonzero
  $\mu$-a.e.  Then the law of $F$ is absolutely continuous to Lebesgue
  measure.
\end{theorem}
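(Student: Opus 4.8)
The plan is to prove that the law $\nu := \mu\circ F^{-1}$ on $\mathbb{R}$ is absolutely continuous by first reducing to an auxiliary measure that sees the nondegeneracy of $DF$ directly. Replacing $F$ by $\arctan F$ — which lies in $\mathbb{D}^{1,1}$ by the chain rule ($\arctan$ being $C^1$ with bounded derivative) and has $D(\arctan F) = \frac{1}{1+F^2}DF$ still nonzero $\mu$-a.e.\ — and using that the pushforward of an absolutely continuous measure under the $C^1$-diffeomorphism $\tan$ is absolutely continuous, I may assume $F$ is bounded. Instead of $\nu$ I would then study the finite Borel measure $\nu^\ast$ given by $\nu^\ast(A) = \int_W 1_A(F(x))\,\norm{DF(x)}_H\,\mu(dx)$, which is finite since $\norm{DF}_H \in L^1(\mu)$ by definition of $\mathbb{D}^{1,1}$. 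The key point is that \textbf{$\nu^\ast \ll m$ already suffices}: if $A$ is Lebesgue-null then $\nu^\ast(A) = 0$, forcing the nonnegative integrand $1_A(F)\norm{DF}_H$ to vanish $\mu$-a.e., and since $\norm{DF}_H > 0$ $\mu$-a.e.\ this gives $\mu(F\in A) = 0$. (The more direct route would be to write $\mathbb{E}[\varphi'(F)] = \mathbb{E}[\varphi(F)\,\delta(DF/\norm{DF}_H^2)]$ from the integration-by-parts formula for $\delta$, but this needs $DF/\norm{DF}_H^2 \in \dom\delta$, much stronger than $F\in\mathbb{D}^{1,1}$; inserting the weight $\norm{DF}_H$ is exactly what removes that requirement.)

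I would establish $\nu^\ast \ll m$ first in the model case where $F$ is a smooth cylinder function, $F(x) = \psi(e_1(x),\dots,e_N(x))$ with $\psi\in C_c^\infty(\mathbb{R}^N)$ and $\{e_i\}\subset W^\ast$ $q$-orthonormal, so $(e_1,\dots,e_N)$ is standard Gaussian under $\mu$. By (\ref{DF-cylinder}), $DF(x) = \sum_i \partial_i\psi(e_1(x),\dots,e_N(x))\,Je_i$, and since $J:(W^\ast,q)\to H$ is an isometry the $Je_i$ are orthonormal, so $\norm{DF(x)}_H = |\grad\psi(e_1(x),\dots,e_N(x))|$. Hence for bounded Borel $g$, the coarea formula gives
\begin{equation*}
\mathbb{E}\bigl[g(F)\,\norm{DF}_H\bigr] = \int_{\mathbb{R}^N} g(\psi(y))\,|\grad\psi(y)|\,p_N(y)\,dy = \int_{\mathbb{R}} g(t)\,\Bigl(\int_{\psi^{-1}(t)} p_N\,d\mathcal{H}^{N-1}\Bigr)dt,
\end{equation*}
where $p_N$ is the standard Gaussian density; so $\nu^\ast = \rho\,dt$ with $\rho(t) := \int_{\psi^{-1}(t)} p_N\,d\mathcal{H}^{N-1} \ge 0$ and $\rho\in L^1(\mathbb{R})$. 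Note this works even if $\grad\psi$ vanishes on a set of positive Gaussian measure — the weight ignores that region — which matters because the natural finite-dimensional approximants of a general $F$ can be degenerate even when $F$ is not.

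For general bounded $F \in \mathbb{D}^{1,1}$, approximate by bounded smooth cylinder functions $F_n$ with $F_n\to F$ in $L^1(\mu)$ and $DF_n\to DF$ in $L^1(W;H)$ (possible by the definition of $\mathbb{D}^{1,1}$). By the previous step $\nu_n^\ast := (\norm{DF_n}_H\mu)\circ F_n^{-1}$ has an $L^1$ density $\rho_n$, and for bounded Lipschitz $g$,
\begin{equation*}
\bigl|\mathbb{E}[g(F_n)\norm{DF_n}_H] - \mathbb{E}[g(F)\norm{DF}_H]\bigr| \le \operatorname{Lip}(g)\,\mathbb{E}\bigl[|F_n-F|\,\norm{DF_n}_H\bigr] + \norm{g}_\infty\,\norm{DF_n-DF}_{L^1(W;H)} \to 0,
\end{equation*}
the first term vanishing because $\{\norm{DF_n}_H\}$ is uniformly integrable (being $L^1$-convergent) while $F_n\to F$ in probability with $F$ bounded. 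Hence $\rho_n\,dt \to \nu^\ast$ weakly.

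The main obstacle is this final step: a weak limit of absolutely continuous measures need not be absolutely continuous, so weak convergence alone does not give $\nu^\ast \ll m$ — this is precisely the technical core of the Bouleau--Hirsch criterion. I expect the way through is to upgrade the convergence by showing the family $\{\rho_n\,dt\}$ is \emph{uniformly} absolutely continuous with respect to $m$, i.e.\ $\sup_n\nu_n^\ast(U)\to 0$ as $|U|\to 0$ along finite unions of intervals, using not merely the $L^1$-boundedness of $\{\norm{DF_n}_H\}$ but the convergence $DF_n\to DF$ in $L^1(W;H)$ together with the coarea representation of $\rho_n$ to make the estimates independent of $n$; uniform absolute continuity then transfers to the weak limit $\nu^\ast$. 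With $\nu^\ast \ll m$ in hand, the reduction in the first paragraph completes the argument, and none of the remaining steps should be more than bookkeeping.
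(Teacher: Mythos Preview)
Your approach via the weighted measure $\nu^\ast$ and the coarea formula is genuinely different from the paper's, and the reduction ``$\nu^\ast \ll m$ implies $\nu \ll m$'' is correct and elegant. But the final step is a real gap, not bookkeeping, and your sketched fix does not close it. To get uniform absolute continuity you need $\nu_n^\ast(U) = \mathbb{E}[1_U(F_n)\,\norm{DF_n}_H]$ small uniformly in $n$ once $|U|$ is small. The $L^1$ convergence of $DF_n$ lets you replace $\norm{DF_n}_H$ by $\norm{DF}_H$ up to a uniformly small error, but you are then left with $\mathbb{E}[1_U(F_n)\,\norm{DF}_H]$, and controlling this for small $|U|$ amounts to controlling $\mu(F_n \in U)$ --- which is essentially absolute continuity of the law of $F_n$ (or of $F$, after comparing) against Lebesgue measure with a weight, i.e.\ the very thing you are trying to prove. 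The coarea representation of $\rho_n$ does not help: it rewrites $\nu_n^\ast(U)$ as an integral over level sets of $\psi_n$, but those level sets bear no uniform relation to $|U|$ across $n$. The Bouleau--Hirsch line can indeed be pushed through, but it needs a different mechanism (typically a one-dimensional conditioning/slicing argument rather than a global cylinder approximation of $F$), and that is substantially more than what you have written.

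The paper avoids all of this with a short direct argument that never approximates $F$ and never passes to a weak limit of measures. After the same $\arctan$ reduction, take a Lebesgue-null Borel $A \subset [0,1]$, choose uniformly bounded smooth $g_n \to 1_A$ $(m+\nu)$-a.e., and set $\psi_n(t) = \int_0^t g_n$; since $m(A)=0$, $\psi_n \to 0$ pointwise and boundedly, hence $\psi_n(F) \to 0$ in $L^1(W,\mu)$. By the chain rule $D\psi_n(F) = g_n(F)\,DF \to 1_A(F)\,DF$ in $L^1(W;H)$ by dominated convergence. Closedness of $D$ on $L^1$ then forces $1_A(F)\,DF = 0$ a.e., and since $DF \ne 0$ a.e.\ this gives $\mu(F \in A) = 0$. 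The whole argument is a few lines, uses only the closedness of $D$, and requires no coarea formula, no approximation of $F$, and no uniform-absolute-continuity estimate.
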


\begin{proof}
  Let $\nu = \mu \circ F^{-1}$ be the law of $F$; our goal is to show
  $\nu \ll m$.

  By replacing $F$ with something like $\arctan(F)$, we can assume
  that $F$ is bounded; say $0 \le F \le 1$.  So we want to show that
  $\nu$ is absolutely continuous to Lebesgue measure $m$ on $[0,1]$.
  Let $A \subset [0,1]$ be Borel with $m(A) = 0$; we want to show
  $\nu(A) = 0$.

  Choose a sequence $g_n \in C^\infty([0,1])$
  such that $g_n \to 1_A$ $m+\nu$-a.e., and such that the $g_n$ are
  uniformly bounded (say $|g_n| \le 2$).  Set $\psi_n(t) = \int_0^t
  g_n(s) ds$.  Then $\psi_n \in C^\infty$, $|\psi_n| \le 2$, and
  $\psi_n \to 0$ pointwise (everywhere).  

  In particular $\psi_n(F) \to
  0$ $\mu$-a.e. (in fact everywhere), and thus also in $L^1(W, \mu)$
  by bounded convergence.  On the other hand, by the chain
  rule, $D \psi_n (F) = g_n(F) DF$.  Now since $g_n \to 1_A$
  $\nu$-a.e., we have $g_n(F) \to 1_A(F)$ $\mu$-a.e., and boundedly.
  Thus $g_n(F) DF \to 1_A(F) DF$ in $L^1(W; H)$.  Now $D$ is a closed
  operator, so we must have $1_A(F) DF = D0 = 0$.  But by assumption
  $DF \ne 0$ $\mu$-a.e., so we have to have $1_A F = 0$ $\mu$-a.e.,
  that is, $\nu(A) = 0$.
\end{proof}

So knowing that the derivative $DF$ ``never'' vanishes guarantees that
the law of $F$ has a density.  If $DF$ mostly stays away from zero in
the sense that $\norm{DF}_H^{-1} \in L^p(W)$ for some $p$, then this
gives more smoothness (e.g. differentiability) for the density.  See Nualart for precise
statements.

In higher dimensions, if we have a function $F = (F^1, \dots, F^n) : W
\to \R^n$, the object to look at is the ``Jacobian,'' the
matrix-valued function $\gamma_F : W \to \R^{n \times n}$ defined by
$\gamma_F(x)_{ij} = \inner{DF^i(x)}{DF^j(x)}_H$.  If $\gamma_F$ is
almost everywhere nonsingular, then the law of $F$ has a density.   If
we have $(\det \gamma_F)^{-1} \in L^p(W)$ for some $p$, then we get
more smoothness.

Here's another interesting fact.  Recall that the \textbf{support} of
a Borel measure $\nu$ on a topological space $\Omega$ is by definition
the set of all $x \in \Omega$ such that every neighborhood of $x$ has
nonzero $\nu$ measure.  This set is closed.

\begin{proposition}
  If $F \in \mathbb{D}^{1,2}$, then the support of the law of $F$ is
  connected, i.e. is a closed interval in $\R$.
\end{proposition}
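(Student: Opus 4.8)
The plan is to argue by contradiction: suppose the support $S$ of $\nu = \mu \circ F^{-1}$ is disconnected. Then there exist disjoint nonempty closed sets whose union is $S$, and since $S$ is a closed subset of $\mathbb{R}$, I can separate them by a point: there exists $c \in \mathbb{R}$ with $c \notin S$, yet $\nu((-\infty, c)) > 0$ and $\nu((c, \infty)) > 0$. The key consequence of $c \notin S$ is that $c$ has a neighborhood of $\nu$-measure zero, so in fact $\mu(\{x : F(x) = c\}) = 0$ and moreover $\mu(\{x : |F(x) - c| < \delta\}) = 0$ for some $\delta > 0$. Thus the events $\{F < c\}$ and $\{F > c\}$ partition $W$ up to a $\mu$-null set, and both have positive $\mu$-measure.

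Now I want to show that $1_{\{F > c\}}$ lies in $\mathbb{D}^{1,2}$, which by Proposition \ref{zero-one-law} would force its $\mu$-measure to be $0$ or $1$ --- a contradiction. To do this, pick a smooth function $\psi \in C^\infty(\mathbb{R})$ that is identically $0$ on $(-\infty, c - \delta/2]$, identically $1$ on $[c + \delta/2, \infty)$, and takes values in $[0,1]$; since the ``gap'' $(c - \delta, c + \delta)$ carries no $\mu$-mass under $F$, we have $\psi(F) = 1_{\{F > c\}}$ $\mu$-almost everywhere. The function $\psi$ is not compactly supported, so the chain rule of Lemma \ref{chain-rule} does not apply verbatim; but $\psi$ can be written as $\psi = \tilde\psi + (\text{constant})$ near any bounded region, or one truncates: replace $F$ by $G = \arctan(F)$ (which is in $\mathbb{D}^{1,2}$ by the chain rule with a bounded-derivative function, as remarked after Lemma \ref{chain-rule}), and transport the cutoff through the homeomorphism $\arctan$. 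Either way, one arrives at a $\psi \in C^\infty_c(\mathbb{R})$ with $\psi(G) = 1_{\{F > c\}}$ a.e., so Lemma \ref{chain-rule} gives $1_{\{F > c\}} = \psi(G) \in \mathbb{D}^{1,2}$. Then Proposition \ref{zero-one-law} yields $\mu(\{F > c\}) \in \{0, 1\}$, contradicting that both $\{F < c\}$ and $\{F > c\}$ have positive measure. Hence $S$ is connected.

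The main obstacle is the bookkeeping in the previous paragraph: making sure the cutoff function is genuinely $C^\infty_c$ (not merely $C^\infty$ with bounded derivative) so that Lemma \ref{chain-rule} applies as stated, while still having $\psi(\cdot) \equiv 1$ on the part of the range above $c$ that actually carries mass. Composing with $\arctan$ to compactify the range, and using the ``$\mu$-null gap'' to see that the value of $\psi$ outside a bounded interval is irrelevant, handles this cleanly. A secondary point worth checking is the separating-point claim: a disconnected closed subset of $\mathbb{R}$ has a bounded complementary component or a point not in the support lying strictly between two mass-carrying pieces --- this is elementary once one recalls that the support is exactly the complement of the largest open $\nu$-null set. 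Everything else is routine.
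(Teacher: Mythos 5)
Your proof is correct and follows essentially the same route as the paper's: locate a gap in the support carrying no mass, build a smooth step function transitioning on that gap so that its composition with $F$ equals an indicator, and invoke the chain rule together with the zero--one law of Proposition \ref{zero-one-law}. The only difference is cosmetic: the paper sidesteps your compact-support worry by using the remark after Lemma \ref{chain-rule} that the chain rule extends to smooth $\psi$ with bounded derivative, whereas you compactify via $\arctan$; either fix works.
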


\begin{proof}
  Let $\nu = \mu \circ F^{-1}$.  Suppose $\supp \nu$ is not
  connected.  Then there exists $a \in \R$ such that there are points
  of $\supp \nu$ to the left and right of $a$.  Since $\supp \nu$ is
  closed, there is an open interval $(a,b)$ in the complement of
  $\supp \nu$.  That is, we have $\mu(a < F < b) = 0$ but $0 < \mu(F
  \le a) < 1$.  Let $\psi \in C^\infty(\R)$ have $\psi(t) = 1$ for $t
  \le a$ and $\psi(t) = 0$ for $t \ge b$, and moreover take $\psi$ and
  all its derivatives to be bounded.  Then $\psi(F) = 1_{(-\infty,
  a]}(F) = 1_{\{F \le a\}}$.  Since $\psi$ is smooth, $1_{\{F \le a\}}
  = \psi(F) \in
  \mathbb{D}^{1,2}$ by the chain rule (Lemma \ref{chain-rule}).  By
  the zero-one law of Proposition \ref{zero-one-law}, $\mu(F \le a)$
  is either 0 or 1, a contradiction.
\end{proof}

As an example, let's look at the maximum of a continuous process.

Let $(W,H,\mu)$ be an abstract Wiener space.  Suppose we have a
process $\{X_t : t \in [0,1]\}$ defined on $W$, i.e. a measurable map
$X : [0,1] \times W \to \R$, which is a.s. continuous in $t$.  (If we
take $W = C([0,1])$ and $\mu$ the law of some continuous Gaussian
process $Y_t$, then $X_t = Y_t$, in other words
$X_t(\omega)=\omega(t)$, would be an example.  Another natural example
would be to take classical Wiener space and let $X_t$ be the solution
of some SDE.)  Let $M = \sup_{t \in [0,1]} X_t$.  We will show that
under certain conditions, $M$ has an absolutely continuous law.

(Note you can also index $\{X_t\}$ by any other compact metric space
$S$ and the below proofs will go through just fine.  If you take $S$
finite, the results are trivial.  You can take $S = [0,1]^2$ and prove
things about Brownian sheet.  You can even take $S$ to be Cantor space
if you really want (hi Clinton!).)

\begin{lemma}
  Suppose $F_n \in \mathbb{D}^{1,2}$, $F_n \to F$ in $L^2(W)$, and
  $\sup_n \norm{DF_n}_{L^2(W;H)} < \infty$.  Then $F \in
  \mathbb{D}^{1,2}$ and $D F_n \to DF$ weakly in $L^2(W;H)$.
\end{lemma}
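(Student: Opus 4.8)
The plan is to use the closability of $D$ together with a weak compactness argument. Since $\{DF_n\}$ is a bounded sequence in the Hilbert space $L^2(W;H)$, by the Banach--Alaoglu theorem (or the fact that bounded sequences in a Hilbert space have weakly convergent subsequences) there is a subsequence $DF_{n_k}$ converging weakly to some $\eta \in L^2(W;H)$. The goal is then to show $F \in \mathbb{D}^{1,2}$ with $DF = \eta$, and finally to upgrade subsequential weak convergence to weak convergence of the whole sequence.

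First I would identify $\eta$ as a derivative of $F$ using the integration-by-parts characterization. For any $h \in H$ and any smooth cylinder function $G \in \mathcal{F}C_c^\infty(W)$, apply (\ref{product-byparts}):
\begin{equation*}
  \int_W G(x) \inner{DF_{n_k}(x)}{h}_H\,\mu(dx) = -\int_W F_{n_k}(x) \inner{DG(x)}{h}_H\,\mu(dx) + \int_W F_{n_k}(x) G(x) \inner{h}{x}_H\,\mu(dx).
\end{equation*}
On the left side, $G \cdot \inner{\cdot}{h}_H \in L^2(W;H)$ (it is bounded times a fixed vector), so the left side converges to $\int_W G \inner{\eta}{h}_H\,d\mu$ by weak convergence. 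On the right side, $F_{n_k} \to F$ in $L^2(W)$, while $\inner{DG}{h}_H \in L^\infty$ and $G \inner{h}{\cdot}_H \in L^2(W)$ (the latter since $G$ is bounded and $\inner{h}{\cdot}_H$ is Gaussian, hence square-integrable); so the right side converges to $-\int_W F \inner{DG}{h}_H\,d\mu + \int_W F G \inner{h}{\cdot}_H\,d\mu$. This is exactly the statement that $\eta$ satisfies the integration-by-parts identity that $DF$ would satisfy, which, combined with the fact that $D$ is closed, should force $F \in \mathbb{D}^{1,2}$ and $DF = \eta$; alternatively, one reruns the closability proof verbatim. Concretely: pick cylinder functions $G_m \to F$ in $L^2$ with $DG_m$ bounded (this is possible since $F_{n_k}$ themselves work after a diagonal argument, or one invokes that the approximating sequence can be taken with controlled derivative); the above shows $DG_m$ has $\eta$ as its only possible weak limit, and closedness finishes it. Actually the cleanest route: since $D$ is a closed operator, its graph is weakly closed (closed convex sets are weakly closed), so the weak limit $(F,\eta)$ of $(F_{n_k}, DF_{n_k})$ lies in the graph, giving $F \in \mathbb{D}^{1,2}$ and $DF = \eta$ immediately.

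The main obstacle — really just the last cleanup step — is passing from the subsequence to the full sequence. Here I would use a standard subsequence-of-subsequence argument: every subsequence of $\{DF_n\}$ is itself bounded, hence has a further subsequence converging weakly, and by the argument above every such weak limit must equal $DF$ (which is now pinned down uniquely). Since every subsequence has a further subsequence converging weakly to the same limit $DF$, the whole sequence $DF_n$ converges weakly to $DF$. That completes the proof. The one genuinely delicate point to be careful about is justifying that the graph of $D$ is weakly sequentially closed; this follows because $\mathrm{graph}(D) \subset L^2(W) \times L^2(W;H)$ is a closed linear subspace (that is the definition of $D$ being closed), and closed linear subspaces of Hilbert spaces are weakly closed, hence weakly sequentially closed.
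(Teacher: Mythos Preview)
Your proposal is correct and follows essentially the same strategy as the paper: extract a weakly convergent subsequence by Alaoglu, identify the limit as $DF$ via closedness of $D$, then use the subsequence-of-subsequence argument to get convergence of the full sequence. The paper phrases the middle step via the adjoint $\delta$ (showing $\langle u,v\rangle = \langle F,\delta v\rangle$ for $v\in\dom\delta$, hence $F\in\dom\delta^*=\mathbb{D}^{1,2}$), which is exactly your ``closed linear subspace is weakly closed'' observation unpacked; your formulation is arguably the cleaner way to say it.
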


\begin{proof}
  This is really a general fact about closed operators on Hilbert
  space.  Since $\{DF_n\}$ is a bounded sequence in $L^2(W;H)$, by
  Alaoglu's theorem we can pass to a subsequence and assume that
  $DF_n$ converges weakly in $L^2(W;H)$, to some element $u$.  Suppose
  $v \in \dom \delta$.  Then $\inner{DF_n}{v}_{L^2(W;H)} =
  \inner{F_n}{\delta v}_{L^2(W)}$.  The left side converges to
  $\inner{u}{v}_{L^2(W;H)}$ and the right side to $\inner{F}{\delta
  v}_{L^2(W)}$.  Since the left side is continuous in $v$, we have $F
  \in \dom \delta^* = \dom D = \mathbb{D}^{1,2}$.  Moreover, since we
  have $\inner{D F_n}{v} \to \inner{DF}{v}$ for all $v$ in a dense
  subset of $L^2(W;H)$, and $\{D F_n\}$ is bounded, it follows from
  the triangle inequality that $D F_n \to DF$ weakly.  Since we get
  the same limit no matter which weakly convergent subsequence we
  passed to, it must be that the original sequence $D F_n$ also
  converges weakly to $DF$.
\end{proof}

Recall, as we've previously argued, that if $F \in \mathbb{D}^{1,2}$,
then $|F| \in \mathbb{D}^{1,2}$ also, and $\norm{D|F|}_H \le
\norm{DF}_H$ a.e.  (Approximate $|t|$ by smooth
functions with uniformly bounded derivatives.)  It follows that if
$F_1, F_2 \in \mathbb{D}^{1,2}$, then $F_1 \wedge F_2$, $F_1 \vee F_2
\in \mathbb{D}^{1,2}$ also.  ($F_1 \wedge F_2 = F_1 + F_2 - |F_1 -
F_2|$, and $F_1 \vee F_2 = F_1 + F_2 + |F_1 - F_2|$.)  Then by
iteration, if $F_1, \dots, F_n \in \mathbb{D}^{1,2}$, then $\min_k
F_k, \max_k F_k \in \mathbb{D}^{1,2}$ as well.

\begin{lemma}\label{Md12}
  Suppose $X, M$ are as above, and:
  \begin{enumerate}
  \item $\int_W \sup_{t \in [0,1]} |X_t(\omega)|^2 \,\mu(d\omega) <
    \infty$;
  \item For any $t \in [0,1]$, $X_t \in \mathbb{D}^{1,2}$;
  \item The $H$-valued process $DX_t$ has an a.s. continuous version
  (which we henceforth fix);
  \item $\int_W \sup_{t \in [0,1]} \norm{DX_t(\omega)}_H^2
  \,\mu(d\omega) < \infty$.
  \end{enumerate}
  Then $M \in \mathbb{D}^{1,2}$.
\end{lemma}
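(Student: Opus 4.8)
The plan is to approximate $M$ from below by maxima over finite sets and then invoke the lemma just proved. For $n \ge 0$ let $D_n = \{k 2^{-n} : 0 \le k \le 2^n\}$ and set $M_n = \max_{t \in D_n} X_t$. By hypothesis (2) each $X_t$ lies in $\mathbb{D}^{1,2}$, and a finite maximum of elements of $\mathbb{D}^{1,2}$ is again in $\mathbb{D}^{1,2}$ (as recalled just above the statement), so $M_n \in \mathbb{D}^{1,2}$ for every $n$. Since $D_n \subset D_{n+1}$ and $\bigcup_n D_n$ is dense in $[0,1]$, the a.s.\ continuity of $t \mapsto X_t(\omega)$ gives $M_n(\omega) \uparrow M(\omega)$ for $\mu$-a.e.\ $\omega$; combined with the bound $|M_n| \le \sup_{t \in [0,1]} |X_t| \in L^2(W,\mu)$ supplied by hypothesis (1), dominated convergence shows $M_n \to M$ in $L^2(W,\mu)$.

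The main point is to bound $\norm{DM_n}_{L^2(W;H)}$ uniformly in $n$, and for this I would prove the pointwise estimate $\norm{DM_n(\omega)}_H \le \max_{t \in D_n}\norm{DX_t(\omega)}_H$ for $\mu$-a.e.\ $\omega$. It suffices to treat $F_1 \vee F_2$ for $F_1,F_2 \in \mathbb{D}^{1,2}$ and iterate over the finitely many points of $D_n$. Writing $F_1 \vee F_2 = F_1 + (F_2 - F_1)^+$ and approximating $s \mapsto s^+$ by smooth functions with uniformly bounded derivatives (exactly as in the treatment of $|F|$), the chain rule (Lemma \ref{chain-rule}) together with the locality of $D$ --- the lemma that $DG = 0$ $\mu$-a.e.\ on $\{G = 0\}$, applied to $G = F_1 - F_2$ --- yields $D(F_1 \vee F_2) = DF_1$ a.e.\ on $\{F_1 \ge F_2\}$ and $D(F_1 \vee F_2) = DF_2$ a.e.\ on $\{F_1 < F_2\}$, so that $\norm{D(F_1 \vee F_2)}_H \le \max(\norm{DF_1}_H, \norm{DF_2}_H)$ pointwise a.e. Iterating and integrating, using the fixed a.s.-continuous version of $DX_t$ from hypothesis (3) and the integrability from hypothesis (4), gives
\begin{equation*}
  \norm{DM_n}_{L^2(W;H)}^2 = \int_W \norm{DM_n(\omega)}_H^2\,\mu(d\omega) \le \int_W \sup_{t \in [0,1]} \norm{DX_t(\omega)}_H^2\,\mu(d\omega) < \infty ,
\end{equation*}
a bound independent of $n$.

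Having established that $M_n \to M$ in $L^2(W)$ with $\sup_n \norm{DM_n}_{L^2(W;H)} < \infty$, the preceding lemma applies verbatim and gives $M \in \mathbb{D}^{1,2}$ (and, incidentally, $DM_n \to DM$ weakly in $L^2(W;H)$). The step I expect to be the main obstacle is the careful justification of the pointwise derivative bound for a finite maximum: the delicate point is the behavior on the coincidence sets $\{F_i = F_j\}$, which is precisely where locality of $D$ is needed, and one should check that the smooth approximants to $s \mapsto s^+$ can be chosen with derivatives bounded by a constant arbitrarily close to $1$, so that no spurious factor creeps into the final bound.
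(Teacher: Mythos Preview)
Your proof is correct and follows essentially the same route as the paper: approximate $M$ by maxima over an increasing sequence of finite subsets of $[0,1]$, get $L^2$ convergence from monotone/dominated convergence, bound $\norm{DM_n}_H$ pointwise by $\sup_t \norm{DX_t}_H$ using locality of $D$, and then invoke the preceding lemma. The only cosmetic differences are that the paper enumerates the rationals rather than using dyadics, and that it establishes the pointwise derivative bound by partitioning $W$ directly according to which $X_{q_k}$ realizes the maximum (with a tie-breaking rule) and applying locality once on each piece, rather than iterating the two-function case as you do; the content is the same.
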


\begin{proof}
  The first property guarantees $M \in L^2(W)$.  Enumerate the
  rationals in $[0,1]$ as $\{q_n\}$.  Set $M_n = \max\{X_{q_1}, \dots,
  X_{q_n}\}$.  Then $M_n \in \mathbb{D}^{1,2}$ (using item 2).
  Clearly $M_n \uparrow M$ so by monotone convergence $M_n \to M$ in
  $L^2(W)$.  It suffices now to show that $\sup_n
  \norm{DM_n}_{L^2(W;H)} < \infty$.  Fix $n$, and for $k = 1, \dots,
  n$ let $A_k$ be the set of all $\omega$ where the maximum in $M_n$
  is achieved by $X_{q_k}$, with ties going to the smaller $k$.  That
  is,
  \begin{align*}
    A_1 &= \{ \omega : X_{q_1}(\omega) = M_n(\omega)\} \\ 
    A_2 &= \{ \omega : X_{q_1}(\omega) \ne M_n(\omega),
    X_{q_2}(\omega) = M_n(\omega)\} \\
    &\vdots \\
    A_n &= \{ \omega : X_{q_1}(\omega) \ne M_n(\omega), \dots,
    X_{q_{n-1}}(\omega) \ne M_n(\omega), X_{q_n}(\omega) = M_n(\omega)\}
  \end{align*}
  Clearly the $A_k$ are Borel and partition $W$, and $M_n = X_{q_k}$
  on $A_k$.  By the local property of $D$, we have $D M_n = D X_{q_k}$
  a.e. on $A_k$.  In particular, $\norm{D M_n}_H \le \sup_{t \in
  [0,1]} \norm{D X_t}_H$  a.e.  Squaring and integrating both sides,
  we are done by the last assumption.
\end{proof}

\begin{exercise}\label{gpex}
  Let $\{X_t, t \in [0,1]\}$ be a continuous centered Gaussian process.  Then
  we can take $W = C([0,1])$ (or a closed subspace thereof) and $\mu$
  to be the law of the process, and define $X_t$ on $W$ by
  $X_t(\omega) = \omega(t)$.  Verify that the hypotheses of
  Proposition \ref{Md12} are satisfied.
\end{exercise}

\begin{proposition}\label{M-ac}
  Suppose $X_t$ satisfies the hypotheses of the previous theorem, and
  moreover
  \begin{equation*}
    \mu(\{\omega : X_t(\omega) = M(\omega) \implies DX_t(\omega) \ne
    0\}) = 1.
  \end{equation*}
  (Note we are fixing continuous versions of $X_t$ and $DX_t$ so the
  above expression makes sense.)  Then $DM \ne 0$ a.e. and $M$ has an
  absolutely continuous law.
\end{proposition}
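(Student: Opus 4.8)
The plan is to combine Lemma \ref{Md12} with the absolute-continuity criterion proved just above. Since the hypotheses of Lemma \ref{Md12} are assumed, we already have $M \in \mathbb{D}^{1,2} \subset \mathbb{D}^{1,1}$. By the theorem stating that any $F \in \mathbb{D}^{1,1}$ with $DF \neq 0$ $\mu$-a.e.\ has an absolutely continuous law, it suffices to prove $DM \neq 0$ $\mu$-a.e.; the statement about the law of $M$ is then automatic. So the real content is to identify $DM$.

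For that I would reuse the approximation from the proof of Lemma \ref{Md12}: with $\{q_n\}$ an enumeration of the rationals in $[0,1]$ and $M_n = \max\{X_{q_1},\dots,X_{q_n}\}$, let $\tau_n(\omega)$ be the smallest $q_k$ (with $k\le n$) such that $X_{q_k}(\omega)=M_n(\omega)$. The local property of $D$ used there gives $DM_n(\omega)=DX_{\tau_n(\omega)}(\omega)$ for a.e.\ $\omega$, and of course $X_{\tau_n(\omega)}(\omega)=M_n(\omega)$. We know $M_n\uparrow M$ in $L^2(W)$ and $\norm{DM_n}_H\le g:=\sup_{t\in[0,1]}\norm{DX_t}_H$ a.e., with $g\in L^2(W)$ by the fourth hypothesis; hence the lemma preceding Lemma \ref{Md12} applies and $DM_n\to DM$ weakly in $L^2(W;H)$. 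Now set $T(\omega)=\inf\{t\in[0,1]:X_t(\omega)=M(\omega)\}$, which is measurable (its sublevel sets are $\{\omega:\sup_{s\le a}X_s(\omega)=M(\omega)\}$, a countable sup by continuity) and, by continuity, satisfies $X_{T(\omega)}(\omega)=M(\omega)$. I would then show $\tau_n(\omega)\to T(\omega)$ for a.e.\ $\omega$: if $t^*:=T(\omega)$ is the only maximizer of $t\mapsto X_t(\omega)$, then $\sup\{X_t(\omega):|t-t^*|\ge\epsilon\}<M(\omega)$ by compactness and continuity, so, since $M_n(\omega)\to M(\omega)$, the maximizing rational $\tau_n(\omega)$ must eventually land in $(t^*-\epsilon,t^*+\epsilon)$. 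Given $\tau_n(\omega)\to T(\omega)$, continuity of $t\mapsto DX_t(\omega)$ yields $DM_n(\omega)=DX_{\tau_n(\omega)}(\omega)\to DX_{T(\omega)}(\omega)$ in $H$ for a.e.\ $\omega$; since $\norm{DM_n}_H\le g\in L^2$, dominated convergence promotes this to $DM_n\to DX_{T(\cdot)}(\cdot)$ \emph{strongly} in $L^2(W;H)$. Comparing with the weak limit forces $DM(\omega)=DX_{T(\omega)}(\omega)$ a.e., which is nonzero a.e.\ by the standing hypothesis. This gives $DM\neq 0$ a.e., completing the proof.

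The step I expect to be the main obstacle is $\tau_n\to T$ a.e., i.e.\ controlling where the maximum is attained. It is clean exactly when the maximizer is a.s.\ unique — the typical situation, and the case of continuous centered Gaussian processes (Exercise \ref{gpex}) — so one may either add that as a running hypothesis or quote such a uniqueness lemma. Without uniqueness the locations $\tau_n(\omega)$ can oscillate among several maximizers, and the argument above only shows that, for a.e.\ $\omega$, every $H$-limit point of $(DM_n(\omega))_n$ lies in the compact set $\{DX_t(\omega):X_t(\omega)=M(\omega)\}$, which avoids $0$ by hypothesis and is therefore bounded away from $0$. Promoting this pointwise information about the $DM_n$ to the conclusion $DM\neq 0$ a.e.\ — one would pass through Mazur's lemma to obtain a.e.-convergent convex combinations of the $DM_n$, but convex combinations of vectors bounded away from $0$ can still cancel — is the delicate point; either carrying this out carefully or simply assuming a.s.\ uniqueness of the maximizer is what I expect to take the most work.
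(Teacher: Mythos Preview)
Your approach is sound when the maximizer is a.s.\ unique, and you correctly flag that this is where the argument hinges. However, the proposition as stated does \emph{not} assume uniqueness, and the paper proves it in that generality --- so as a proof of the proposition, your proposal has a genuine gap that you yourself identify but do not close. The Mazur/convex-combination route you sketch for the non-unique case indeed fails for exactly the reason you give: convex combinations of vectors bounded away from $0$ can cancel.

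The paper's proof takes a genuinely different route that avoids tracking the argmax altogether. Rather than trying to show $DM = DX_T$ for one particular maximizer $T$, it proves the stronger statement that, on a set of full measure, $DX_t(\omega) = DM(\omega)$ at \emph{every} $t$ with $X_t(\omega) = M(\omega)$. The mechanism: fix $h \in H$ and rationals $r < s$, and consider the ``bad'' set $G$ of $\omega$ on which the supremum over $(r,s)$ already equals $M(\omega)$ \emph{and} $\inner{DX_t(\omega)-DM(\omega)}{h}_H$ is bounded below by some $1/k$ throughout $(r,s)$. On $G$, the discrete maxima $M'_n$ over rationals in $(r,s)$ satisfy $\inner{DM'_n - DM}{h}_H \ge 1/k$ pointwise (since $DM'_n$ equals some $DX_{q_i}$ with $q_i \in (r,s)$), so integrating against $h\,1_G$ gives $\inner{DM'_n - DM'}{h\,1_G}_{L^2(W;H)} \ge \mu(G)/k$; but weak convergence $DM'_n \to DM'$ (and $DM' = DM$ on $G$ by locality, since $M'=M$ there) forces the left side to $0$, hence $\mu(G)=0$. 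A countable union over $r,s,h,k$ covers the complement of the good set.

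What this buys: the paper tests weak convergence against a single cleverly chosen vector $h\,1_G$, which extracts a sign-definite quantity and therefore cannot cancel --- exactly the obstacle that wrecks the Mazur argument. Your approach, by contrast, tries to upgrade weak convergence to strong convergence via a.e.\ pointwise limits, which forces you to control the location of the argmax; the paper never needs to know where the maximum is attained, only that on any interval where it is attained, the derivative cannot uniformly disagree with $DM$ in any fixed direction.
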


\begin{proof}
  It is enough to show
  \begin{equation*}
    \mu(\{\omega : X_t(\omega) = M(\omega) \implies DX_t(\omega) =
    DM(\omega)\}) = 1.
  \end{equation*}
  Call the above set $A$.  (Note that for every fixed $\omega$, $M(\omega) = X_t(\omega)$ for some
  $t$.)  

  Let $E$ be a countable dense subset of $H$.  
  For fixed $r,s \in \Q$, $h \in E$, $k > 0$, let
  \begin{equation*}
    G_{r,s,h,k} = \{ \omega : \sup_{t \in (r,s)} X_t(\omega) =
    M(\omega), {\inner{DX_t(\omega) - DM(\omega)}{h}_H} \ge
    \frac{1}{n} \text{ for all $r < t < s$} \}. 
  \end{equation*}
  Enumerate the rationals in $(r,s)$ as $\{q_i\}$.  If we let $M' =
  \sup_{t \in (r,s)} X_t$, $M_n' = \max \{ X_{q_1}, \dots, X_{q_n}\}$,
  then as we argued before, $M_n' \to M'$ in $L^2(W)$, and $D M_n' \to
  DM'$ weakly in $L^2(W;H)$.  On the other hand, by the local property
  used before, for every $\omega$ there is some $t_i$ with $D M_n' = D
  X_{t_i}$.  Thus for $\omega \in G_{r,s,h,k}$ we have
  ${\inner{DM'_n(\omega) - DM'(\omega)}{h}_H} \ge \frac{1}{n}$ for
  all $r < t < s$.  Integrating this inequality, we have $\inner{D
  M'_n - D M'}{h 1_{G_{r,s,h,k}}}_{L^2(W;H)} \ge \frac{1}{n} \mu(G_{r,s,h,k})$ for all
  $n$.  The left side goes to 0 by weak convergence, so it must be
  that $\mu(G_{r,s,h,k}) = 0$.

  However, $A^c = \bigcup G_{r,s,h,k}$ which is a countable union.
  (If $\omega \in A^c$, there exists $t$ such that $X_t(\omega) =
  M(\omega)$ but $DX_t(\omega) \ne DM(\omega)$.  As such, there must
  exist $h \in E$ with $\inner{DX_t(\omega) - DM(\omega)}{h}_H \ne 0$;
  by replacing $h$ by $-h$ or something very close to it, we can
  assume $\inner{DX_t(\omega) - DM(\omega)}{h}_H > 0$.  As $DX_t$ is
  assumed continuous, there exists $(r,s) \in \Q$ and $k > 0$ such
  that $\inner{DX_t(\omega) - DM(\omega)}{h}_H > \frac{1}{k}$ for all
  $t \in (r,s)$.  So we have $\omega \in G_{r,s,h,k}$.)
\end{proof}

\begin{exercise}
  Again let $X_t$ be a centered Gaussian process as in Exercise
  \ref{gpex} above.  Give an example of a process for which $M$ does
  not have an absolutely continuous law.  However, show that if
  $P(M=0)=0$, then the hypothesis of Proposition \ref{M-ac} is
  satisfied.  (Can we show this always holds whenever $X_t$ is strong Markov?)
\end{exercise}

\appendix

\section{Miscellaneous lemmas}

\begin{lemma}\label{ell2}
  Let $y \in \R^\infty$, and suppose that $\sum y(i) g(i)$ converges
  for every $g \in \ell^2$.  Then $y \in \ell^2$.
\end{lemma}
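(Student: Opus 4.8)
The plan is to invoke the uniform boundedness principle (Banach--Steinhaus) on the Hilbert space $\ell^2$. For each $n$, define a linear functional $\Lambda_n : \ell^2 \to \R$ by
\begin{equation*}
  \Lambda_n(g) = \sum_{i=1}^n y(i) g(i).
\end{equation*}
First I would observe that each $\Lambda_n$ is bounded: it is given by pairing against the finitely supported sequence $y^{(n)} := (y(1), \dots, y(n), 0, 0, \dots) \in \ell^2$, so by Cauchy--Schwarz $\norm{\Lambda_n}_{(\ell^2)^*} = \norm{y^{(n)}}_{\ell^2} = \left(\sum_{i=1}^n |y(i)|^2\right)^{1/2}$ (the reverse inequality comes from testing on $g = y^{(n)}/\norm{y^{(n)}}$).

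Next, the hypothesis says that for each fixed $g \in \ell^2$, the partial sums $\Lambda_n(g) = \sum_{i=1}^n y(i) g(i)$ converge as $n \to \infty$; a convergent sequence of reals is bounded, so $\sup_n |\Lambda_n(g)| < \infty$ for every $g$. Since $\ell^2$ is a Banach space, Banach--Steinhaus then gives $\sup_n \norm{\Lambda_n}_{(\ell^2)^*} < \infty$, i.e. there is a constant $C$ with $\sum_{i=1}^n |y(i)|^2 \le C^2$ for all $n$. Letting $n \to \infty$ yields $\sum_{i=1}^\infty |y(i)|^2 \le C^2 < \infty$, which is exactly $y \in \ell^2$.

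There is no real obstacle here; the only point requiring a word of care is the identification of the operator norm of $\Lambda_n$ with the $\ell^2$-norm of the truncation $y^{(n)}$, which is the standard fact that $\ell^2$ is its own dual, applied in finite dimensions. If one preferred to avoid quoting Banach--Steinhaus, the same conclusion could be reached by a direct gliding-hump argument: assuming $\sum |y(i)|^2 = \infty$, inductively carve out disjoint blocks on which $\sum |y(i)|^2$ is large, normalize within each block, and assemble a single $g \in \ell^2$ for which $\sum y(i) g(i)$ diverges, contradicting the hypothesis. Either route works; I would present the Banach--Steinhaus version as it is shortest.
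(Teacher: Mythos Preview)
Your proof is correct and is essentially identical to the paper's own argument: the paper also defines the partial-sum functionals $H_n(g)=\sum_{i=1}^n y(i)g(i)$, observes pointwise boundedness from convergence, applies the uniform boundedness principle, and identifies $\norm{H_n}_{(\ell^2)^*}^2=\sum_{i=1}^n |y(i)|^2$. The paper likewise notes that an elementary constructive argument exists, matching your gliding-hump remark.
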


\begin{proof}
  For each $n$, let $H_n \in
  (\ell^2)^*$ be the bounded linear functional $H_n(g) = \sum_{i=1}^n
  y(i) g(i)$.  By assumption, for each $g \in \ell^2$, the sequence
  $\{H_n(g)\}$ converges; in particular $\sup_n |H_n(g)| < \infty$.
  So by the uniform boundedness principle, $\sup_n
  ||H_n||_{(\ell^2)^*} < \infty$.  But $||H_n||_{(\ell^2)^*}^2 =
  \sum_{i=1}^n |y(i)|^2$, so $\sum_{i=1}^\infty |y(i)|^2 = \sup_n
  ||H_n||_{(\ell^2)^*}^2 < \infty$ and $y \in \ell^2$.
\end{proof}

For an elementary, constructive proof, see also \cite{piau-ell2-argument}.

\begin{lemma}
  \label{dense-subspace-basis}
  Let $H$ be a separable Hilbert space and $E \subset H$ a dense
  subspace.  There exists an orthonormal basis $\{e_i\}$ for $H$ with
  $\{e_i\} \subset E$.
\end{lemma}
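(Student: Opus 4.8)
The plan is to produce the basis by running the Gram--Schmidt process on a countable dense subset of $E$ itself, so that the output automatically lands in $E$. First I would note that $E$, being a subset of the separable metric space $H$, is separable: if $\{w_n\}$ is a countable dense subset of $H$, then for each $n$ and each $k \ge 1$ we may choose $v_{n,k} \in E$ with $\norm{w_n - v_{n,k}}_H < 1/k$, and the countable family $\{v_{n,k}\}$ is dense in $H$ (given $x \in H$ and $\epsilon > 0$, pick $w_n$ within $\epsilon/2$ of $x$ and then $k$ with $1/k < \epsilon/2$). Relabel this family as a sequence $\{v_n\}_{n=1}^\infty \subset E$.

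Next I would apply Gram--Schmidt to $\{v_n\}$, discarding any $v_n$ lying in the linear span of $v_1, \dots, v_{n-1}$ and normalizing the rest. This produces an orthonormal sequence $\{e_i\}$ (finite if $H$ is finite-dimensional, countably infinite otherwise) such that each $e_i$ is a finite linear combination of finitely many $v_n$; since $E$ is a linear subspace containing every $v_n$, we get $e_i \in E$ for all $i$. Moreover, the construction guarantees that at each surviving stage $\spanop\{e_1,\dots,e_i\}$ equals the span of the corresponding initial segment of the $v_n$, so that $\spanop\{e_i\} = \spanop\{v_n\}$.

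Finally, since $\{v_n\}$ is dense in $H$, the subspace $\spanop\{v_n\}$ is dense, hence $\spanop\{e_i\}$ is dense; combined with orthonormality, this says precisely that $\{e_i\}$ is an orthonormal basis of $H$, and we have already checked $\{e_i\} \subset E$. There is no genuine obstacle in this argument; the only point needing a little care is ensuring the Gram--Schmidt output remains inside $E$, which is exactly the reason we run the process on a dense subset of $E$ rather than on an arbitrary countable dense subset of $H$.
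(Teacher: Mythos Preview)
Your argument is correct and is essentially identical to the paper's: both proofs pick a countable subset of $E$ dense in $H$ (by approximating a countable dense subset of $H$ with elements of $E$), run Gram--Schmidt on it, and observe that the resulting orthonormal sequence stays in the linear subspace $E$ and has dense span. The only cosmetic difference is that you spell out the discarding of linearly dependent vectors and the fact that $e_i \in E$ because $E$ is a subspace, whereas the paper leaves these implicit.
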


\begin{proof}
  Choose a sequence $\{x_i\} \subset E$ which is dense in $H$.  (To
  see that this is possible, let $\{y_k\}$ be a countable dense subset
  of $H$, and choose one $x_i$ inside each ball $B(y_k, 1/m)$.)  Then
  apply Gram-Schmidt to $x_i$ to get an orthonormal sequence $\{e_i\}
  \subset E$ with $x_n \in \spanop\{e_1, \dots, e_n\}$.  Then since
  $\{x_i\} \subset \spanop\{e_i\}$, $\spanop\{e_i\}$ is dense in $H$,
  so $\{e_i\}$ is an orthonormal basis for $H$.
\end{proof}

\begin{lemma}\label{limit-of-gaussian}
  Let $X_n \sim N(0, \sigma_n^2)$ be a sequence of mean-zero Gaussian random variables
  converging in distribution to a finite random variable $X$.  Then
  $X$ is also Gaussian, with mean zero and variance $\sigma^2 = \lim
  \sigma_n^2$ (and the limit exists).
\end{lemma}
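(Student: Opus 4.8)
The plan is to work with characteristic functions, exploiting the explicit formula $\mathbb{E}[e^{i\lambda X_n}] = e^{-\sigma_n^2 \lambda^2/2}$. Since $X_n \to X$ in distribution and the functions $x \mapsto \cos(\lambda x)$, $x \mapsto \sin(\lambda x)$ are bounded and continuous, we get $\mathbb{E}[e^{i\lambda X_n}] \to \mathbb{E}[e^{i\lambda X}]$ for every $\lambda \in \R$. Write $\varphi(\lambda)$ for this limit; it is the characteristic function of the a.s.-finite random variable $X$, hence continuous on all of $\R$, and in particular $\varphi(0) = 1$ with $\varphi$ continuous at $0$.

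Next I would extract the limit of the variances. Fix any $\lambda_0 \neq 0$. Then $e^{-\sigma_n^2 \lambda_0^2/2} \to \varphi(\lambda_0) \in [0,1]$. Since $s \mapsto e^{-s\lambda_0^2/2}$ is a strictly decreasing bijection of $[0,\infty]$ onto $[0,1]$ (with the convention $e^{-\infty} = 0$), its monotonicity forces $\sigma_n^2$ to converge in $[0,\infty]$ to the value $\sigma^2 := -\tfrac{2}{\lambda_0^2}\log \varphi(\lambda_0)$; a priori this limit could be $+\infty$, and this is the one case we must exclude.

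Ruling out $\sigma^2 = +\infty$ is the crux. If $\sigma_n^2 \to \infty$, then for every $\lambda \neq 0$ we would have $e^{-\sigma_n^2\lambda^2/2} \to 0$, so $\varphi(\lambda) = 0$ for all $\lambda \neq 0$ while $\varphi(0) = 1$, contradicting the continuity of $\varphi$ at $0$. This is precisely the step that uses that $X$ is an honest, a.s.-finite random variable (so that $\varphi$ is a genuine, everywhere-continuous characteristic function) rather than a defective limit. Hence $\sigma^2 < \infty$, the limit $\sigma^2 = \lim_n \sigma_n^2$ exists and is finite, and letting $n \to \infty$ in $\mathbb{E}[e^{i\lambda X_n}] = e^{-\sigma_n^2\lambda^2/2}$ gives $\varphi(\lambda) = e^{-\sigma^2\lambda^2/2}$ for all $\lambda \in \R$. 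By the characteristic-function description of Gaussian random variables recalled at the start (which includes the degenerate case $\sigma^2 = 0$, i.e. $X = 0$ a.s.), this shows $X \sim N(0,\sigma^2)$. The only genuine obstacle is the continuity-at-zero argument just given; the remaining steps are routine manipulations.
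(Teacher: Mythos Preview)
Your proof is correct and follows essentially the same approach as the paper: both use characteristic functions, identify $\varphi(\lambda)=\lim e^{-\sigma_n^2\lambda^2/2}$, and rule out $\sigma^2=+\infty$ via continuity of $\varphi$ at $0$. The only cosmetic difference is that the paper extracts the limit of $\sigma_n^2$ by a compactness/subsequence argument, whereas you invert the monotone map $s\mapsto e^{-s\lambda_0^2/2}$ directly; these amount to the same thing.
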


\begin{proof}
  Suppose $\sigma_{n_k}^2$ is a subsequence of $\sigma_n^2$ converging
  to some $\sigma^2 \in [0,+\infty]$.  (By compactness, such a
  subsequence must exist.)  Now taking
  Fourier transforms, we have $e^{- \lambda^2 \sigma_n^2/2} = E[e^{i
  \lambda X_n}] \to E[e^{i \lambda X}]$ for each $X$, so $E[e^{i
  \lambda X}] = e^{-\lambda^2 \sigma^2/2}$.  Moreover, the Fourier
  transform of $X$ must be continuous and equal 1 at $\lambda = 0$,
  which rules out the case $\sigma^2 = +\infty$.  So $X \sim N(0,
  \sigma^2)$.  Since we get the same $\sigma^2$ no matter which
  convergent subsequence of $\sigma_n^2$ we start with, $\sigma_n^2$
  must converge to $\sigma^2$.
\end{proof}

\begin{lemma}\label{smooth-dense}
  Let $\mu$ be any finite Borel measure on $[0,1]$.  Then
  $C^\infty([0,1])$ is dense in $L^p([0,1], \mu)$.
\end{lemma}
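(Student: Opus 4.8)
The plan is to split the approximation into two independent steps: first approximate an arbitrary element of $L^p([0,1],\mu)$ by a \emph{continuous} function, and then approximate that continuous function by a \emph{smooth} one in the supremum norm, which is enough because $\mu$ is finite. Throughout we take $1 \le p < \infty$ (for $p = \infty$ the statement is false).

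For the first step, recall that simple functions (finite linear combinations of indicators of Borel sets) are dense in $L^p([0,1],\mu)$, so it suffices to approximate $1_A$ in $L^p(\mu)$ for an arbitrary Borel set $A \subseteq [0,1]$. Since $[0,1]$ is a compact metric space, the finite Borel measure $\mu$ is regular: it is outer regular by open sets and inner regular by compact sets. Hence, given $\epsilon > 0$, we may choose a compact $K$ and an open $U$ with $K \subseteq A \subseteq U$ and $\mu(U \setminus K) < \epsilon$. By Urysohn's lemma --- which on the metric space $[0,1]$ is explicit, e.g. $\phi(x) = d(x, U^c)\big/\big(d(x, U^c) + d(x, K)\big)$ --- there is a continuous $\phi : [0,1] \to [0,1]$ with $\phi \equiv 1$ on $K$ and $\phi \equiv 0$ on $U^c$. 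Then $1_A$ and $\phi$ agree outside $U \setminus K$, and $|1_A - \phi| \le 1$ there, so $\int_0^1 |1_A - \phi|^p \, d\mu \le \mu(U \setminus K) < \epsilon$. This shows $C([0,1])$ is dense in $L^p([0,1],\mu)$.

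For the second step, the Weierstrass approximation theorem says the polynomials are dense in $C([0,1])$ in the supremum norm, and polynomials lie in $C^\infty([0,1])$. If $g \in C([0,1])$ and $p_n \to g$ uniformly, then since $\mu$ is finite we have $\left(\int_0^1 |g - p_n|^p \, d\mu\right)^{1/p} \le \|g - p_n\|_\infty \, \mu([0,1])^{1/p} \to 0$. Combining the two steps, every element of $L^p([0,1],\mu)$ is an $L^p(\mu)$-limit of continuous functions, each of which is an $L^p(\mu)$-limit of smooth functions; hence $C^\infty([0,1])$ is dense in $L^p([0,1],\mu)$.

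The only mildly delicate ingredient is the regularity of $\mu$, and this is the step I would expect to cite or prove carefully: for a finite Borel measure on a metric space one shows that the collection of sets approximable from inside by closed sets and from outside by open sets is a $\sigma$-algebra containing all closed sets, hence all Borel sets, and on the compact space $[0,1]$ closed subsets are automatically compact. Nothing else in the argument uses anything about $[0,1]$ beyond compactness and metrizability, so the same proof works verbatim on any compact metric space.
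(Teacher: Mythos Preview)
Your proof is correct and complete. It takes a genuinely different route from the paper's argument, however. The paper invokes Dynkin's multiplicative system theorem: it lets $M$ be the set of bounded measurable functions lying in the $L^p(\mu)$-closure of $C^\infty([0,1])$, observes that $M$ is closed under bounded convergence, and then uses Dynkin to conclude that $M$ contains all bounded Borel functions (the key point being that the identity function $x \mapsto x$ is smooth, so the $\sigma$-algebra generated by $C^\infty$ is the full Borel $\sigma$-algebra). Your approach instead goes through the classical regularity/Urysohn route to reach $C([0,1])$ and then Weierstrass to reach polynomials. Your argument is more elementary in that it avoids the multiplicative system machinery, at the cost of having to invoke (or prove) regularity of finite Borel measures on compact metric spaces --- a fact the paper does establish elsewhere (Section~\ref{radon}). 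The paper's argument is slicker and sidesteps regularity entirely, but relies on a tool that may feel like overkill here. Your remark that the case $p=\infty$ fails is a useful caveat that the paper's statement leaves implicit.
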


\begin{proof}
  Use Dynkin's multiplicative system theorem.  Let $M$ consist of all
  $\mu$-versions of all bounded measurable functions in the closure of
  $C^\infty$ in $L^p(\mu)$.  Then $M$ is a vector space closed under
  bounded convergence (since bounded convergence implies $L^p(\mu)$
  convergence) and it contains $C^\infty([0,1])$.  By Dynkin's
  theorem, $M$ contains all bounded $\mathcal{F}$-measurable
  functions, where $\mathcal{F}$ is the smallest $\sigma$-algebra that
  makes all functions from $C^\infty([0,1])$ measurable.  But the
  identity function $f(x) = x$ is in $C^\infty$.  So for any Borel set
  $B$, we must have $B = f^{-1}(B) \in \mathcal{F}$.  Thus
  $\mathcal{F}$ is actually the Borel $\sigma$-algebra, and $M$
  contains all bounded measurable functions.  Since the bounded
  functions are certainly dense in $L^p(\mu)$ (by dominated
  convergence), we are done.
\end{proof}

\section{Radon measures}\label{radon}

\begin{definition}
  A finite Borel measure $\mu$ on a topological space $W$ is said to be
  \textbf{Radon} if for every Borel set $B$, we have
  \begin{equation}\label{inner-regular}
    \mu(B) = \sup\{\mu(K) : K \subset B, K \text{ compact}\}
  \end{equation}
  (we say that such a set $B$ is \textbf{inner regular}).
  Equivalently, $\mu$ is Radon if for every Borel set $B$ and every
  $\epsilon > 0$, there exists a compact $K \subset B$ with $\mu(B
  \backslash K) < \epsilon$.
\end{definition}

\begin{proposition}
  If $X$ is a compact metric space, every finite Borel measure on $X$ is
  Radon.
\end{proposition}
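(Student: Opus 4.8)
The plan is to prove the stronger two-sided statement that $\mu$ is \emph{regular} --- every Borel set can be approximated from inside by closed sets and from outside by open sets --- and then invoke compactness to upgrade ``closed'' to ``compact.'' This reduces the problem to a standard good-sets argument in a metric space.

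First I would introduce the class
\[
  \mathcal{A} = \{B \subset X \text{ Borel} : \forall \epsilon > 0\ \exists\, F \text{ closed}, U \text{ open},\ F \subset B \subset U,\ \mu(U \setminus F) < \epsilon\},
\]
and show it is a $\sigma$-algebra. Closure under complementation is immediate: if $F \subset B \subset U$ works for $B$, then $U^c \subset B^c \subset F^c$ works for $B^c$, with the same measure gap. For closure under countable unions, given $B_n \in \mathcal{A}$ pick $F_n \subset B_n \subset U_n$ with $\mu(U_n \setminus F_n) < \epsilon 2^{-n-1}$; set $U = \bigcup_n U_n$ (open) and $F^{(N)} = \bigcup_{n \le N} F_n$ (closed, a \emph{finite} union). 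Since $\mu$ is finite, continuity of measure gives $\mu\big(\bigcup_n F_n \setminus F^{(N)}\big) \to 0$, so choose $N$ with this quantity $< \epsilon/2$; then from $U \setminus F^{(N)} \subset \big(\bigcup_n (U_n \setminus F_n)\big) \cup \big(\bigcup_n F_n \setminus F^{(N)}\big)$ we get $\mu(U \setminus F^{(N)}) < \epsilon$.

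Next I would check that every closed $C \subset X$ lies in $\mathcal{A}$: take $F = C$ and, for the outer approximation, the open neighborhoods $U_n = \{x \in X : d(x,C) < 1/n\}$. Metrizability is essential here: since $C$ is closed, $U_n \downarrow C$, so finiteness of $\mu$ gives $\mu(U_n) \downarrow \mu(C)$ and hence $\mu(U_n \setminus C) < \epsilon$ for large $n$. Therefore $\mathcal{A}$ is a $\sigma$-algebra containing all closed sets, so $\mathcal{A}$ contains the entire Borel $\sigma$-algebra. Finally, since $X$ is compact, every closed $F \subset X$ is compact, so the inner approximation by closed sets supplied by $\mathcal{A}$ is automatically an inner approximation by compact sets --- which is exactly the Radon condition (\ref{inner-regular}).

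The main obstacle is the $\sigma$-algebra verification, specifically producing a \emph{single} closed set that works for a countable union; this is precisely where the finiteness of $\mu$ and continuity of measure are used in an essential way. Everything else is routine: the passage through neighborhoods $\{d(\cdot,C)<1/n\}$ is the only place metrizability enters, and compactness is needed only at the very end as a cosmetic step replacing ``closed'' by ``compact.''
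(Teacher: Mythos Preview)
Your proof is correct, and while it follows the same ``good-sets'' template as the paper, the execution is genuinely different in two places. First, your class $\mathcal{A}$ records two-sided approximation (closed from inside, open from outside), which makes closure under complements automatic by swapping; the paper instead takes the class of Borel $B$ such that both $B$ and $B^c$ are inner regular by \emph{compact} sets, and handles countable unions by first reducing to disjoint unions. Second, and more substantively, the generator step is different: the paper shows open sets are inner regular by writing each open $U$ as a countable union of compact sets (using second countability of the compact metric space), whereas you show closed sets are in $\mathcal{A}$ via the shrinking metric neighborhoods $\{d(\cdot,C)<1/n\}$. Your route has the advantage that it actually proves the stronger statement that any finite Borel measure on \emph{any} metric space is regular (inner regular by closed sets, outer regular by open sets), with compactness of $X$ invoked only as a one-line coda to turn ``closed'' into ``compact''; the paper's argument, by contrast, leans on compactness earlier, both for $\sigma$-compactness of open sets and for inner regularity of their complements.
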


\begin{proof}
  Let $(X,d)$ be a compact metric space, and $\mu$ a Borel measure.
  Let $\mathcal{F}$ denote the collection of all Borel sets $B$ such
  that $B$ and $B^C$ are both inner regular.  I claim $\mathcal{F}$ is
  a $\sigma$-algebra.  Clearly $\emptyset \in
  \mathcal{F}$ and $\mathcal{F}$ is also closed under complements.
  If $B_1, B_2, \dots \in \mathcal{F}$ are disjoint, and $B =
  \bigcup_n B_n$ then since $\sum_n
  \mu(B_n) = \mu(B) < \infty$, there exists $n$ so large that
  $\sum_{n=N}^\infty \mu(B_n) < \epsilon$.  For $n = 1, \dots, N$,
  choose a compact $K_n \subset B_n$ with $\mu(B_n \backslash K_n) <
  \epsilon/N$.  Then if $K = \bigcup_{n=1}^N K_n$, $K$ is compact, $K
  \subset B$, and $\mu(B \backslash K) < 2 \epsilon$.  So $B$ is inner
  regular.

  Next, $\mathcal{F}$ contains all open sets $U$.  For any open set
  $U$ may be written as a countable union of compact sets $K_n$.  (For every
  $x \in U$ there is an open ball $B(x,r_x)$ contained in $U$, hence
  $\closure{B(x,r_x/2)} \subset U$ also.  Since $X$ is second
  countable we can find a basic open set $V_x$ with $x \in V_x \subset
  B(x, r_x/2)$, so $\closure{V_x} \subset U$.  Then $U = \bigcup_{x
  \in U} \closure{V_x}$.  But this union actually contains only
  countably many distinct sets.)  Thus by countable additivity, $U$ is
  inner regular.  $U^C$ is compact and so obviously inner regular.
  Thus $U \in \mathcal{F}$.  Since $\mathcal{F}$ is a $\sigma$-algebra
  and contains all open sets, it contains all Borel sets.
\end{proof}

\begin{proposition}
  Every complete separable metric space $(X,d)$ is homeomorphic to a Borel
  subset of the compact metric space $[0,1]^\infty$.
\end{proposition}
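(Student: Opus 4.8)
The plan is to combine an explicit embedding of $(X,d)$ into the Hilbert cube with the Alexandrov characterization of completely metrizable subspaces as $G_\delta$ sets. First I would reduce to a bounded metric: replacing $d$ by $\min(d,1)$ changes neither the topology nor the property of completeness, so we may assume $d \le 1$. Fix a countable dense set $\{x_n\}_{n=1}^\infty \subset X$ and define $F : X \to [0,1]^\infty$ by $F(x) = (d(x,x_1), d(x,x_2), \dots)$. Each coordinate is $1$-Lipschitz, so $F$ is continuous. It is injective, since $F(x) = F(y)$ together with a choice $x_{n_k} \to x$ gives $d(y,x_{n_k}) = d(x,x_{n_k}) \to 0$, whence $y = x$. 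And $F^{-1}$ is continuous on $F(X)$: if $F(z_k) \to F(z)$ then $d(z_k, x_n) \to d(z, x_n)$ for every $n$, so given $\epsilon > 0$ we may pick $x_n$ with $d(z,x_n) < \epsilon$ and conclude $d(z_k, z) \le d(z_k, x_n) + d(x_n, z) < 3\epsilon$ for large $k$. Thus $F$ is a homeomorphism of $X$ onto $Y := F(X) \subset [0,1]^\infty$. (Note $[0,1]^\infty$ is compact by Tychonoff and metrizable, e.g.\ by $\rho((a_n),(b_n)) = \sum_n 2^{-n}|a_n - b_n|$, so it really is a compact metric space.)

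It then remains to show that $Y$ is Borel in $[0,1]^\infty$; in fact I would show that $Y$ is a $G_\delta$. This is the only substantive point, and it is the Alexandrov theorem: a subspace $Y$ of a metric space $(Z,\rho)$ which is completely metrizable in its subspace topology is a $G_\delta$ in $Z$. Here $Z = [0,1]^\infty$ and $Y$ carries the complete metric transported from $(X,d)$ through $F$, which I will again call $d$; it induces the $\rho$-subspace topology on $Y$. Since the closure $\overline{Y}$ is closed, hence $G_\delta$, it suffices to realize $Y$ as a $G_\delta$ inside $\overline{Y}$. For $n \ge 1$ let $U_n$ be the union of all sets $W$ open in $Z$ with $\operatorname{diam}_\rho(W) < 1/n$ and $\operatorname{diam}_d(W \cap Y) < 1/n$; each $U_n$ is open in $Z$. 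Because the $d$-topology and the $\rho$-subspace topology on $Y$ agree, every $y \in Y$ has such a $W$ around it (take a small $\rho$-ball intersected with a $Z$-open set that cuts out a small $d$-ball in $Y$), so $Y \subseteq \overline{Y} \cap \bigcap_n U_n$.

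Conversely, suppose $z \in \overline{Y} \cap \bigcap_n U_n$. For each $n$ choose a witnessing $W_n \ni z$ as above, and, using $z \in \overline{Y}$, pick $y_n \in W_1 \cap \dots \cap W_n \cap Y$ (this intersection is a nonempty open neighborhood of $z$, so it meets $Y$). For $m \ge n$ both $y_m$ and $y_n$ lie in $W_n \cap Y$, so $d(y_m, y_n) < 1/n$; thus $(y_n)$ is $d$-Cauchy and, by completeness of $d$, converges to some $y \in Y$ in $d$, hence also in $\rho$. But $\rho(y_n, z) < 1/n$ as well, so $y_n \to z$ in $\rho$, and by uniqueness of limits in $Z$ we get $z = y \in Y$. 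Therefore $Y = \overline{Y} \cap \bigcap_n U_n$ is a $G_\delta$, hence Borel, in $[0,1]^\infty$, and $X \cong Y$ is a homeomorphism onto a Borel subset of a compact metric space.

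The embedding step is routine; the main obstacle is the $G_\delta$ argument in the last two paragraphs — specifically, choosing the open sets $U_n$ correctly, using the agreement of the two topologies on $Y$ to produce the small-$d$-diameter witnessing neighborhoods, and then running the Cauchy-sequence argument under the transported complete metric to recover membership in $Y$.
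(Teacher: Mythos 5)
Your proposal is correct, and the embedding half is exactly the paper's: truncate the metric to $d \le 1$, send $x$ to the sequence of distances to a countable dense set, and check continuity, injectivity, and continuity of the inverse in the same way. Where you genuinely go beyond the paper is the Borel step: the paper simply declares that $F(X)$ is Borel and refers the reader to Srivastava, whereas you prove it, showing $F(X)$ is in fact a $G_\delta$ via the Alexandrov argument. Your argument is sound: $\overline{Y}$ is closed hence $G_\delta$ in the metric space $[0,1]^\infty$, the sets $U_n$ are open, the inclusion $Y \subseteq \overline{Y} \cap \bigcap_n U_n$ uses that the transported complete metric induces the subspace topology (so small-$d$-diameter witnessing neighborhoods exist), and the reverse inclusion correctly extracts a $d$-Cauchy sequence from the nested $W_n$'s, uses completeness to get a limit in $Y$, and matches it with $z$ by uniqueness of limits. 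The net effect is that you obtain the sharper conclusion that $X$ embeds as a $G_\delta$ subset (not merely Borel), at the cost of carrying out the one nontrivial point the paper outsources.
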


\begin{proof}
  Without loss of generality, assume $d \le 1$.  Fix a dense sequence
  $x_1, x_2, \dots$ in $X$ and for each $x \in X$, set $F(x) =
  (d(x,x_1), d(x,x_2), \dots) \in [0,1]^\infty$.  It is easy to check
  that $F$ is continuous.  $F$ is also injective: for any $x \in X$ we
  can choose a subsequence $x_{n_k} \to x$, so that $d(x_{n_k}, x) \to
  0$.  Then if $F(x) = F(y)$, then $d(x_n, x) = d(x_n, y)$ for all $n$,
  so $x_{n_k} \to y$ as well, and $x=y$.  Finally, $F$ has a
  continuous inverse.  Suppose $F(y_m) \to F(y)$.  Choose an $x_n$
  such that $d(x_n, y) < \epsilon$.  We have $F(y_m)_n = d(x_n, y_m)
  \to d(x_n, y) = F(y)_n$, so for sufficiently large $m$, $d(y_m, x_n)
  < \epsilon$, and by the triangle inequality $d(y_m, y) < 2\epsilon$.

  Lastly, we check $F(X)$ is Borel.  Well, this theorem is standard
  and I'm too lazy to write it out.  See, e.g. Srivastava's \emph{A
  course on Borel sets}, section 2.2.
\end{proof}

\begin{corollary}
  Any finite Borel measure $\mu$ on a complete separable metric space $X$ is Radon.
\end{corollary}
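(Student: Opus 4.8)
The plan is to transport the problem to the compact case via the embedding already provided. By the preceding proposition, there is a homeomorphism $F$ from $X$ onto a Borel subset $F(X) \subseteq [0,1]^\infty$. Since $F$ is continuous (hence Borel measurable), we may form the pushforward measure $\nu = \mu \circ F^{-1}$, which is a finite Borel measure on the compact metric space $[0,1]^\infty$. By the earlier proposition on compact metric spaces, $\nu$ is Radon.

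Next I would record the bookkeeping that lets me pull compact sets back. Because $F : X \to F(X)$ is a homeomorphism, it carries Borel subsets of $X$ to relatively Borel subsets of $F(X)$; and since $F(X)$ itself is Borel in $[0,1]^\infty$, every relatively Borel subset of $F(X)$ is Borel in $[0,1]^\infty$. So if $B \subseteq X$ is Borel, then $F(B)$ is a genuine Borel subset of $[0,1]^\infty$. Moreover, since $F$ is injective, for any $A \subseteq B$ we have $\mu(B \setminus A) = \nu(F(B) \setminus F(A))$.

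Now the argument closes quickly. Given $\epsilon > 0$, apply the Radon property of $\nu$ to the Borel set $F(B)$ to obtain a compact $K \subseteq F(B)$ with $\nu(F(B) \setminus K) < \epsilon$. Since $K \subseteq F(X)$ and $F^{-1} : F(X) \to X$ is continuous, the set $F^{-1}(K)$ is a compact subset of $B$, and
\begin{equation*}
  \mu\bigl(B \setminus F^{-1}(K)\bigr) = \nu\bigl(F(B) \setminus K\bigr) < \epsilon.
\end{equation*}
As $\epsilon > 0$ was arbitrary, $B$ is inner regular, so $\mu$ is Radon.

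The only genuinely substantive point is the middle paragraph: that a homeomorphism onto a Borel subspace preserves Borel measurability in the ambient space, so that $F(B)$ is Borel in $[0,1]^\infty$. Everything else is routine manipulation of pushforward measures and continuity of $F^{-1}$ on $F(X)$.
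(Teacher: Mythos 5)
Your proof is correct and follows essentially the same route as the paper: push $\mu$ forward to the compact cube $[0,1]^\infty$ via the Borel embedding, invoke the compact case, and pull compact sets back through $F^{-1}$. You have simply made explicit the details the paper compresses into ``hence so is $\mu$'' --- in particular the (correct) observation that $F(B)$ is Borel in the ambient cube because $F$ is a homeomorphism onto the Borel set $F(X)$.
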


\begin{proof}
  Let $F$ be the above embedding of $X$ into $[0,1]^\infty$.  Then
  $\mu \circ F^{-1}$ defines a Borel measure on $F(X)$.  We can extend
  it to a Borel measure on $[0,1]^\infty$ by setting $\tilde{\mu}(B) = \mu(F^{-1}(B
  \cap F(X)))$, i.e. $\tilde{\mu}$ assigns measure zero to all sets
  outside $F(X)$.  Then we know that $\tilde{\mu}$ is Radon and hence
  so is $\mu$.  
\end{proof}

\begin{exercise}
  As a corollary of this, for any Borel probability measure on a
  Polish space, there is
  a sequence of compact sets $K_n$ such that $\mu(\bigcup K_n) = 1$.
  This is perhaps surprising because compact sets in an infinite
  dimensional Banach space are very thin; in particular they are
  nowhere dense.  For classical Wiener space with Wiener measure, find
  explicit sets $K_n$ with this property.  (Hint: Think of some
  well-known sample path properties of Brownian motion.)
\end{exercise}

\section{Miscellaneous Exercises}

\begin{exercise}\label{topologies-first}
  Let $X$ be a set, and let $\tau_s$ and $\tau_w$ be two topologies on
  $X$ such that $\tau_w \subset \tau_s$.  $\tau_w$ is said to be
  ``weaker'' or ``coarser,'' while $\tau_s$ is ``stronger'' or
  ``finer.''  

  Fill in the following chart.  Here $A \subset X$, and $Y,Z$ are some
  other topological spaces.  All terms such as ``more,'' ``less,''
  ``larger,'' ``smaller'' should be understood in the sense of
  implication or containment.  For instance, since every set which is
  open in $\tau_w$ is also open in $\tau_s$, we might say $\tau_s$ has
  ``more'' open sets and $\tau_w$ has ``fewer.''

  \begin{tabular}{|l|c|c|l|}
    \hline
    Property & $\tau_w$ & $\tau_s$ & Choices \\
    \hline \hline
    Open sets & \phantom{Fewer} &\phantom{Fewer} & More / fewer \\
    \hline
    Closed sets & & & More / fewer \\
    \hline
    Dense sets & & & More / fewer \\
    \hline
    Compact sets & & & More / fewer \\
    \hline
    Connected sets & & & More / fewer \\
    \hline
    Closure $\bar{A}$ & & & Larger / smaller \\
    \hline
    Interior $A^\circ$ & & & Larger / smaller \\
    \hline
    Precompact sets & & & More / fewer \\
    \hline
    Separable sets & & & More / fewer \\
    \hline
    Continuous functions $X \to Y$ & & & More / fewer \\
    \hline
    Continuous functions $Z \to X$ & & & More / fewer \\
    \hline
    Identity map continuous & \multicolumn{2}{|c|}{} & $(X, \tau_s)
    \to (X, \tau_w)$ or vice versa \\
    \hline
    Convergent sequences & & & More / fewer \\
    \hline
  \end{tabular}
  
  \end{exercise}

\begin{exercise} Now suppose that $X$ is a vector space, and $\tau_w \subset \tau_s$ are
  generated by two norms $\nm_w, \nm_s$.  Also let $Y,Z$ be other
  normed spaces.

  \begin{tabular}{|l|c|c|l|}
    \hline
    Property & $\nm_w$ & $\nm_s$ & Choices \\
    \hline \hline
    Size of norm & \phantom{Fewer} & \phantom{Fewer} & $\nm_s \le C \nm_w$ or vice versa, or neither \\ 
    \hline
    Closed (unbounded) operators $X \to Y$ & & & More / fewer \\
    \hline
    Closed (unbounded) operators $Z \to X$ & & & More / fewer \\
    \hline
    Cauchy sequences & & & More / fewer \\
    \hline
  \end{tabular}
  \end{exercise}
  
  \begin{exercise}
    Give an example where $X$ is complete in $\nm_s$ but not in
    $\nm_w$.
  \end{exercise}
  
  \begin{exercise}
    Give an example where $X$ is complete in $\nm_w$ but not in
    $\nm_s$.  (This exercise is ``abstract nonsense,'' i.e. it uses
    the axiom of choice.)
  \end{exercise}

\begin{exercise}
  If $X$ is complete in both $\nm_s$ and $\nm_w$, show that the
  two norms are equivalent, i.e. $c \nm_s \le \nm_w \le C \nm_s$ (and in particular $\tau_s = \tau_w$).
\end{exercise}

\begin{exercise}\label{topologies-last}
 In the previous problem, the assumption that $\tau_w \subset
    \tau_s$ was necessary.  Give an example of a vector space $X$ and
    complete norms $\nm_1$, $\nm_2$ which are not equivalent.
    (Abstract nonsense.)
\end{exercise}

\begin{exercise}\label{adjoint-exercise}
 Let $X,Y$ be Banach spaces with $X$ reflexive, $T : X \to Y$ a
  bounded operator, and $T^* : Y^* \to X^*$ its adjoint.
  \begin{enumerate}
  \item If $T$ is injective, then $T^*$ has dense range.
  \item If $T$ has dense range, then $T^*$ is injective.
  \end{enumerate}
\end{exercise}

\begin{exercise}
 For classical Wiener space $(W, \mu)$, find an explicit sequence
  of compact sets $K_n \subset W$ with $\mu\left(\bigcup_n K_n\right) = 1$.
\end{exercise}

\section{Questions for Nate}
\begin{enumerate}
\item Is a Gaussian Borel measure on a separable Banach space always
  Radon?  (Yes, a finite Borel measure on a Polish space is always
  Radon.  See Bogachev Theorem A.3.11.)

\item Compute the Cameron-Martin space $H$ for various continuous
  Gaussian processes (Ornstein--Uhlenbeck, fractional Brownian
  motion).

\item Why should Brownian motion ``live'' in the space $C([0,1])$
  instead of the smaller H\"older space $C^{0,\alpha}([0,1])$ for
  $\alpha < 1/2$?

\item What's the relationship between Brownian motion on classical
  Wiener space and various other 2-parameter Gaussian processes
  (e.g. Brownian sheet)?  (Compute covariances.)
\end{enumerate}

\bibliographystyle{plainnat}
\bibliography{allpapers}

\end{document}